\DeclareMathOperator\Supp{Supp}
\DeclareMathOperator\ssup{sup}
\DeclareMathOperator\iinf{inf}
\DeclareMathOperator\Int{int}
\newtheorem{thm}{Theorem}[section]
\newtheorem{theorem}{Theorem}[section]
\theoremstyle{definition}
\newtheorem{definition}[theorem]{Definition}
\newtheorem{cor}[thm]{Corollary}
\newtheorem{lem}[thm]{Lemma}
\newtheorem{prop}[thm]{Proposition}
\DeclarePairedDelimiter\floor{\lfloor}{\rfloor}
\title{On Ulam widths of finitely presented infinite simple groups.}
\author{James Hyde and Yash Lodha}
\date{\today}
\newcommand{\R}{\mathbf{R}}
\newcommand{\Z}{\mathbf{Z}}
\begin{document}

\maketitle
\begin{abstract}
A fundamental notion in group theory, which originates in an article of Ulam and von Neumann from $1947$ is \emph{uniform simplicity}. A group $G$ is said to be \emph{$n$-uniformly simple} for $n \in \mathbf{N}$ if for every $f,g\in G\setminus \{id\}$, there is a product of no more than $n$ conjugates of $g$ and $g^{-1}$
that equals $f$.
Then $G$ is \emph{uniformly simple} if it is \emph{$n$-uniformly simple} for some $n \in \mathbf{N}$,
and we refer to the smallest such $n$ as the \emph{Ulam width}, denoted as $\mathcal{R}(G)$. 
If $G$ is simple but not uniformly simple, one declares $\mathcal{R}(G)=\infty$. 
In this article, we construct for each $n\in \mathbf{N}$, a finitely presented infinite simple group $G$ such that $n<\mathcal{R}(G)<\infty$. These are the first such examples among the class of finitely presented infinite simple groups. 
For the class of finitely generated (but not finitely presentable) infinite simple groups, the existence of such examples was settled in the work of Muranov \cite{Muranov}. However, this had remained open for the class of finitely presented infinite simple groups.
Our examples are also of type $F_{\infty}$, which means that they are fundamental groups of aspherical CW complexes with finitely many cells in each dimension.
Uniformly simple groups are in particular \emph{uniformly perfect}: there is an $n\in \mathbf{N}$ such that every element of the group can be expressed as a product of at most $n$ commutators of elements in the group.
We also show that the analogous notion of width for uniform perfection is unbounded for our family of finitely presented infinite simple groups.
To our knowledge, this is also the first such family.

\end{abstract}
\section{Introduction.}

The notion of a simple group is fundamental in group theory. 
A stronger notion that emerges in this setting is uniform simplicity, 
which goes back to an article of Ulam and von Neumann \cite{UlamVN}, where it was demonstrated that the identity component of the group of homeomorphisms of the circle or the $2$-sphere is a simple group. 
In his book \cite{Ulam}, Ulam explains ``For every $f$
and $\phi$ non-trivial and isotopic to identity homeomorphisms of the circle or the $2$-sphere,
there exists a fixed number $N$ of conjugates of $f$ or $f^{-1}$ whose product is $\phi$". This number was shown to be at most $23$ and Ulam raised the question of finding the optimal bound.
These notions were also discussed in the Scottish book \cite{Scottish} in connection with the work of Nunnally and Ulam.
Other authors have denoted uniformly simple groups as \emph{boundedly simple} or \emph{groups with finite covering number} \cite{Gismatullin}, \cite{GordeevSaxl} and \cite{EllersGordeevHerzog}. 
Not all simple groups are uniform simple. For instance, take the infinite alternating group.
On the other end of the spectrum, constructions of Osin and Hull-Osin provide finitely generated infinite groups which have Ulam width one \cite{Osin,HullOsin,HullOsinC}.

Nunnally proved that various groups of homeomorphisms are $3$-uniformly simple \cite{Nunnally}, and Tsuboi showed that the identity component of the group of $C^r$-diffeomorphisms of a compact connected $n$-manifold with handle decomposition without handles of index $\frac{n}{2}$ is $(16n+28)$-uniformly simple \cite{Tsuboi}. An elegant general criterion for uniform simplicity was provided in \cite{GalGis}, where it was demonstrated that many groups of dynamical origin are $n$-uniformly simple where $n$ is either $6$ or $9$, depending on the type of example. Uniform simplicity was established for various other groups of interest such as certain groups of interval exchange transformations and certain families of generalized Thompson's groups (with the upper bounds for the Ulam width being less than $25$ in all cases considered) in \cite{GuelmanLiousse}. We remark that these results provide upper bounds for the Ulam width, and it is in general a hard problem to determine the precise width. 

It follows from the work of Muranov that for each $n\in \mathbf{N}$, there exists a finitely generated (but not finitely presentable) infinite uniformly simple group $G$ such that $n<\mathcal{R}(G)<\infty$ (see \cite{Muranov}). 
Not only did this remain open for the class of finitely presented infinite simple groups, but among such groups that appear in the literature, $\mathcal{R}(G)$ (whenever determined) is either infinity or bounded above by small constants (see for example, \cite{GuelmanLiousse} and \cite{GalGis}).
In this article, we answer this in the affirmative:

\begin{thm}
For each $n\in \mathbf{N}$, there is a finitely presented (and type $F_{\infty}$) infinite simple group $G$ such that $n<\mathcal{R}(G)<\infty$.
In particular, $\mathcal{R}(G)$ is unbounded for the class of finitely presented infinite uniformly simple groups.
\end{thm}

To prove this, we provide the following construction of a new class of finitely presented infinite simple groups:

\begin{definition}\label{ourgroup}

For $x,y\in\mathbf{R}$, we define: 
$$(x,y)_{\mathbf{Z}}:=
\begin{cases}
|(x,y)\cap \mathbf{Z}| & \text{ if } x\leq y.\\
-|(y,x)\cap \mathbf{Z}| &\text{ if }y<x.
\end{cases}$$

Here $|.|$ denotes the cardinality. 
For $n\geq 2$, we define $\Omega_n\leq \textup{Homeo}^+(\mathbf{R})$ as the group of homeomorphisms $f\in \textup{Homeo}^+(\mathbf{R})$ satisfying the following: 

\begin{enumerate}

\item $f$ is piecewise linear with breakpoints (where the left and right derivatives do not coincide) in $\mathbf{Z}[\frac{1}{2}]$, and $\mathbf{Z}[\frac{1}{2}]\cdot f=\mathbf{Z}[\frac{1}{2}]$.

\item $f$ commutes with the translation $t\mapsto t+1$.

\item For each $x\in \mathbf{R}\setminus \mathbf{Z}[\frac{1}{2}]$, we have $\log_2(f'(x))\cong (x,x\cdot f)_{\mathbf{Z}} \text{ (mod }n)$.

\end{enumerate}
Here $f'(x)$ denotes the derivative of $f$ at $x$.
We alert the reader to the fact that when going from right actions to derivatives (the latter using function notation), the order of composition needs to be consistent. For instance, the derivative of the element $fg$ at $x$ is $(g\circ f)'(x)$.
One may check that $\Omega_n$ is indeed a subgroup of $\textup{Homeo}^+(\mathbf{R})$. Let us verify that the elements in Definition \ref{ourgroup} are closed under composition (the facts that $\Omega_n$ contains the identity and is closed under inversion are elementary and left to the reader). 
Let $f,g\in \Omega_n$. 
The element $fg$ clearly satisfies conditions $(1),(2)$ of Definition \ref{ourgroup}.
Condition $(3)$ for $fg$ is the assertion that for all $x\in \mathbf{R}\setminus \mathbf{Z}[\frac{1}{2}]$:  $$(x,x\cdot fg)_{\mathbf{Z}}\cong \log_2((g\circ f)'(x))\text{ (mod }n)$$
Since $f,g\in \Omega_n$, we know that for each $x\in \mathbf{R}\setminus \mathbf{Z}[\frac{1}{2}]$: $$(x,x\cdot f)_{\mathbf{Z}}\cong \log_2(f'(x))\text{ (mod }n)\qquad (x,x\cdot g)_{\mathbf{Z}}\cong \log_2(g'(x))\text{ (mod }n)$$
Note that $(x,y)_{\mathbf{Z}}+(y,z)_{\mathbf{Z}}=(x,z)_{\mathbf{Z}}$ for all $x,y,z\in \mathbf{R}\setminus \mathbf{Z}$.
The chain rule implies that: $$\log_2((g\circ f)'(x))=\log_2(f'(x))+\log_2(g'(f(x)))\cong (x,x\cdot f)_{\mathbf{Z}}+(x\cdot f,x\cdot fg)_{\mathbf{Z}}\text{ (mod }n)\cong (x,x\cdot fg)_{\mathbf{Z}}\text{ (mod }n)$$
which means that $fg\in \Omega_n$ as required, confirming that $\Omega_n$ is a group.
Next, the definition implies that: $$\langle t\to t+m\mid m\in \mathbf{Z}\rangle\cap \Omega_n=\langle t\to t+m\mid m\in n\mathbf{Z}\rangle$$
Since elements of $\Omega_n$ commute with integer translations, this action descends to an action of the group:
$$\Gamma_n:=\Omega_n/\langle t\to t+m\mid m\in n\mathbf{Z}\rangle$$ on $\mathbf{S}^1\cong \mathbf{R}/\mathbf{Z}$.
We denote by $\Gamma_n'$ the commutator subgroup of $\Gamma_n$.
\end{definition}

\begin{thm}\label{main}
For each $n\in \mathbf{N}, n\geq 2$, the group $\Gamma_n'$ is uniformly simple, finitely presented, and of type $F_{\infty}$. Moreover, $\mathcal{R}(\Gamma_n')\geq \floor{\frac{n}{2}}$.
\end{thm}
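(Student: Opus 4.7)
The plan is to prove the statement in three parts: (a) uniform simplicity of $\Gamma_n'$, so that $\mathcal{R}(\Gamma_n')<\infty$; (b) the type $F_\infty$ property, which implies finite presentability; and (c) the lower bound $\mathcal{R}(\Gamma_n')\geq \floor{n/2}$, which is the main novelty. For (a), I would apply a dynamical transitivity criterion in the spirit of Gal--Gismatullin to the action of $\Gamma_n'$ on $S^1$: this group contains a rich supply of elements supported in arcs of arbitrarily small length and acts transitively enough on tuples of dyadic points; together these give a uniform simplicity constant depending on $n$ but finite. For (b), the standard Stein--Farley framework for Thompson-like groups should adapt, via a contractible $\mathrm{CAT}(0)$ cube complex parametrised by dyadic subdivisions of $S^1$ decorated with the mod-$n$ slope--displacement residue recorded in condition (3), on which $\Gamma_n$ acts cocompactly with type $F_\infty$ stabilisers; Brown's criterion then yields the conclusion.

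The key idea for the lower bound (c) is to use the integer-count $(\cdot,\cdot)_\mathbf{Z}$ of Definition \ref{ourgroup} as a telescoping displacement invariant. Choose $g\in\Gamma_n'\setminus\{\mathrm{id}\}$ whose support on $S^1$ is contained in a proper open arc $A$; such $g$ abound, for instance as a nontrivial commutator of two elements of $\Gamma_n$ each supported in $A$. Every conjugate $a=hg^{\pm1}h^{-1}$ in $\Gamma_n'$ has support $h(\mathrm{supp}(g))\subseteq h(A)$, still a proper open arc of $S^1$ since $h\in \mathrm{Homeo}^+(S^1)$ carries proper open arcs to proper open arcs. Lifting to $\mathbf{R}$, each connected component of the support of $a$ is an open interval of length strictly less than $1$; moreover $a$ fixes the boundary of each component and therefore preserves each component setwise, so that $a(y)$ lies in the same component as $y$, giving $|a(y)-y|<1$ and hence $(y,a(y))_\mathbf{Z}\in\{-1,0,1\}$ for every $y\in\mathbf{R}\setminus\mathbf{Z}[\tfrac12]$.

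Now suppose $f=a_1a_2\cdots a_k$ in $\Gamma_n'$ with each $a_i$ a conjugate of $g^{\pm1}$. For $x\in\mathbf{R}\setminus\mathbf{Z}[\tfrac12]$, set $y_0=x$ and $y_i=a_i(y_{i-1})$, following the convention of the excerpt in which $(fg)(x)=g(f(x))$, so that $y_k=f(x)$; each $y_i$ is non-integer since the $a_i$ preserve $\mathbf{Z}[\tfrac12]$. The additivity $(a,b)_\mathbf{Z}+(b,c)_\mathbf{Z}=(a,c)_\mathbf{Z}$ from the excerpt telescopes to
\[
(x,f(x))_\mathbf{Z}\;=\;\sum_{i=1}^{k}(y_{i-1},a_i(y_{i-1}))_\mathbf{Z},
\]
and every summand lies in $\{-1,0,1\}$ by the previous paragraph, so $|(x,f(x))_\mathbf{Z}|\leq k$ for every $x$. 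It thus suffices to exhibit an explicit $f\in\Gamma_n'$ together with some $x_0\in\mathbf{R}\setminus\mathbf{Z}[\tfrac12]$ satisfying $(x_0,f(x_0))_\mathbf{Z}=\floor{n/2}$: take a piecewise linear homeomorphism whose lift shifts $x_0$ past exactly $\floor{n/2}$ consecutive integers, with slope $2^{\floor{n/2}}$ at $x_0$ (matching $\floor{n/2}\pmod{n}$) and enough additional breakpoints to satisfy condition (3) globally, then present it as a commutator of two elements of $\Gamma_n$ to land inside $\Gamma_n'$. The two principal obstacles I anticipate are the explicit production of this witness element $f$ satisfying condition (3) throughout $\mathbf{R}$ (the slope pattern near integer preimages requires care), and the setup of the Stein--Farley complex accommodating the modular twist (3); the telescoping inequality itself is immediate once the conjugation-stable support bound is in place.
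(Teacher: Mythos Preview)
Your sketch for the lower bound (c) is essentially the paper's argument, phrased through the integer-count $(\cdot,\cdot)_{\mathbf Z}$ rather than raw displacement: conjugates (in $\Omega_n'$) of an element supported in a proper arc move every point by less than $1$, and a telescoping sum bounds the total displacement. One point you gloss over: when you ``lift to $\mathbf R$'' you are implicitly choosing, for each conjugate $a_i$, the unique preimage in $\Omega_n'$ that fixes points outside the support. The product of those preimages is then \emph{some} lift $\tilde f$ of $f$, not a preferred one; so your conclusion is that \emph{some} lift satisfies $|(x,\tilde f(x))_{\mathbf Z}|\le k$ for all $x$. Hence your witness $f$ must have the property that \emph{every} lift satisfies $|(x_0,\tilde f(x_0))_{\mathbf Z}|\ge\lfloor n/2\rfloor$ for some $x_0$, not just one lift. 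This is easy (lifts differ by translation by $n$), but you should say it. The paper obtains the witness cleanly via minimality of $\Omega_n'$ rather than by hand-building a PL map; either works.

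The genuine gap is in (a). The Gal--Gismatullin criterion, and the related criteria in Guelman--Liousse, yield uniform simplicity with a \emph{universal} constant (typically $6$ or $9$) when they apply; combined with your own lower bound $\mathcal R(\Gamma_n')\ge\lfloor n/2\rfloor$ this already tells you such criteria cannot apply here for large $n$. The obstruction is concrete: the stabiliser of $0$ in $\Gamma_n$ is the copy of $F_{2^n}$, and one has a strict containment $F_{2^n}'\subsetneq F_{2^n}\cap\Gamma_n'=(\Gamma_n')^c$---there are elements of $\Gamma_n'$ supported in a proper arc that are not commutators of elements supported in that arc. This mismatch between local and global commutator structure is exactly what breaks the ``rich supply of compactly supported elements plus transitivity'' mechanism you invoke. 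The paper's route is to construct two derivative-jump homomorphisms, one $\Xi$ on $F_{2^n}^c$ indexed by $F_{2^n}$-orbits of dyadics and one $\gimel$ on $\Gamma_n$ indexed by $\Gamma_n$-orbits, and to analyse their interaction to show that a finite-index subgroup $\Delta_n\le(\Gamma_n')^c$ admits a weak generating set each of whose members is conjugate in $\Gamma_n'$ into $F_{2^n}'$. This feeds into a tailored bounded-covering lemma. This is the technical heart of the paper, and nothing in your proposal approximates it.

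For (b), the paper does not build a Stein--Farley cube complex; it uses Brown's criterion on the full simplex on $\mathbf Z[\tfrac12]/\mathbf Z$, reducing to showing that pointwise stabilisers of finite dyadic subsets are of type $F_\infty$, which is handled via ascending HNN extensions over copies of $F_{2^n}$ and a ``casing pair'' device. Your cube-complex idea may well be made to work, but the modular slope constraint (3) does not obviously fit the usual dyadic-subdivision poset, and you have not said what the vertex set or the descending links would be.
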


We remark that the most difficult part of this result (beyond the novelty of the idea for the construction itself) is the proof of uniform simplicity of $\Gamma_n'$. The most natural strategies to demonstrate this, including the criteria provided in \cite{GalGis} and \cite{GuelmanLiousse}, fail for conceptual reasons. The main issue is that there are surprising elements in the stabilizer of $0$ in $\Gamma_n'$ that do not emerge as products of commutators of elements in this stabilizer (the details emerge later in the paper). To prove uniform simplicity our strategy requires an intricate analysis of certain homomorphisms produced using two different derivative cocycles, one defined on the rigid stabilizer of an interval in the associated circle action, and the other defined on the entire circle. This is the technical heart of the paper, and our method may also find applications in other settings to prove uniform simplicity. Indeed, the authors believe that our method of analysis of such homomorphisms is novel and will find use even beyond the context of uniform simplicity.

A group $G$ is \emph{$n$-uniformly perfect} if every element can be expressed as a product of at most $n$ commutators of elements in the group,
and it is \emph{uniformly perfect} if it is $n$-uniformly perfect for some $n\in \mathbf{Z}$.
The smallest such number is denoted as the \emph{commutator width of $G$}, or $\mathcal{P}(G)$.
For perfect groups that are not uniformly perfect, we declare $\mathcal{P}(G)=\infty$.
The notion of uniform perfection is of fundamental importance in group theory and topology.
It was conjectured by Ore in $1951$ that finite simple groups are $1$-uniformly perfect (i.e. every element is a commutator), and this was only proven in $2010$
\cite{Ore}. Moreover, uniform perfection has consequences for stable commutator length and bounded cohomology. Indeed, stable commutator length vanishes for all elements of uniformly perfect groups, and hence the group does not admit non-trivial homogeneous quasimorphisms. We refer the reader to Calegari's book \cite{calegari} for details. In this situation, commutator width of the group emerges as a natural invariant. For the class of finitely generated (but not finitely presentable) simple groups, it was demonstrated by Muranov in \cite{Muranov} in 2007 that this width can be arbitrarily large.
However, to our knowledge, it remained open whether this can be the case for finitely presented infinite simple groups.
Our construction also settles this problem:

\begin{thm}\label{mainperfect}
For each $n\in \mathbf{N}, n\geq 2$, we have $\floor{\frac{n}{4}}\leq \mathcal{P}(\Gamma_n')< \infty$.
In particular, $\mathcal{P}(G)$ is unbounded for the class of finitely presented infinite simple groups.
\end{thm}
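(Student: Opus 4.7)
The upper bound $\mathcal{P}(\Gamma_n') < \infty$ is immediate from Theorem \ref{main}. Setting $N := \mathcal{R}(\Gamma_n')$ and picking any non-trivial commutator $c = [a,b] \in \Gamma_n'$, uniform simplicity expresses every $f \in \Gamma_n'$ as a product of at most $N$ conjugates of $c^{\pm 1}$; since each conjugate $h c^{\pm 1} h^{-1} = [hah^{-1}, hbh^{-1}]^{\pm 1}$ is itself a commutator, we conclude $\mathcal{P}(\Gamma_n') \leq N$.

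For the lower bound, the plan is to reuse the invariant built in the proof of $\mathcal{R}(\Gamma_n') \geq \floor{n/2}$ from Theorem \ref{main}. That bound presumably rests on a quasi-homomorphism $\phi \colon \Gamma_n' \to \mathbf{Z}/n\mathbf{Z}$ (or an associated $\mathbf{R}$-valued quasimorphism) assembled from the two derivative cocycles described in the paper---one on the rigid stabilizer of an interval, the other on the whole circle---together with an explicit witness $w \in \Gamma_n'$ with $|\phi(w)|$ of order $n/2$. Since $\Gamma_n'$ is simple and non-abelian, hence perfect, $\phi$ cannot be a homomorphism to an abelian group: it must be a proper quasimorphism, which is exactly what allows it to take non-trivial values on products of commutators.

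For commutator width, the same $\phi$ applies via the standard Bavard-type inequality: a homogeneous quasimorphism with defect $D$ satisfies $|\phi([f,g])| \leq D$ for all $f,g$, hence $|\phi(c_1 \cdots c_k)| \leq (2k-1)D$ for a product of $k$ commutators. One can see this concretely from the elementary identity $[f,g] = (fgf^{-1}) \cdot g^{-1}$, which writes each commutator as a product of two elements in the conjugacy class of $g^{\pm 1}$, so that $k$ commutators constitute $2k$ conjugates that $\phi$ controls linearly. Matching against $|\phi(w)| \geq n/2$ and normalizing $D = 1$ (as is implicit in the $\floor{n/2}$ Ulam-width bound) yields $\mathcal{P}(\Gamma_n') \geq \floor{n/4}$.

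The main obstacle is entirely absorbed into the construction of $\phi$ in the proof of Theorem \ref{main}; the present theorem is then a routine extension via the above quasimorphism/commutator-length comparison. The essential conceptual ingredient is that the two derivative cocycles combine to produce a genuine \emph{quasimorphism} on $\Gamma_n'$, rather than a homomorphism (which would vanish on the perfect group $\Gamma_n'$ and hence see no commutators); this non-triviality is what simultaneously delivers the Ulam-width lower bound and, after the factor-of-two loss intrinsic to the commutator-length inequality, the commutator-width lower bound $\floor{n/4}$.
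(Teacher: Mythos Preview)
Your upper bound argument is correct and matches the paper's.

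Your lower bound argument, however, rests on a false premise. You guess that the Ulam-width lower bound in Theorem~\ref{main} comes from a homogeneous quasimorphism $\phi$ on $\Gamma_n'$ and then invoke a Bavard-type inequality. But no such $\phi$ can exist: a nontrivial homogeneous $\mathbf{R}$-valued quasimorphism is unbounded, and the very inequality you quote, $|\phi(\text{product of }k\text{ commutators})|\le (2k-1)D$, would then force $\mathcal{P}(\Gamma_n')=\infty$, contradicting the finiteness you just established. Your alternative, a map to $\mathbf{Z}/n\mathbf{Z}$, lands in a bounded group and supports no Bavard inequality at all. You have also misread the role of the derivative cocycles: the maps $\Xi$ and $\gimel$ are the machinery for the \emph{upper} bound---they are what drives the proof of uniform simplicity via Lemma~\ref{BC}---and play no part in any lower bound.

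The paper's actual argument is an elementary displacement estimate in the cover $\Omega_n'$, parallel to Proposition~\ref{mainproplb}. Every element of $\Omega_n'$ commutes with $t\mapsto t+1$, so for $g_1,g_2\in\Omega_n'$ the displacement function $x\mapsto x\cdot g_i-x$ has oscillation less than $1$; a short telescoping computation then gives $|x\cdot[g_1,g_2]-x|<2$ for all $x\in\mathbf{R}$. Hence a product of $k$ commutators in $\Omega_n'$ moves each point by less than $2k$. By minimality there is $f\in\Omega_n'$ with $0\cdot f\in(\floor{n/2},\floor{n/2}+1)$; if its image in $\Gamma_n'$ were a product of $k\le\floor{n/4}-1$ commutators, lifting would give $f=[h_1,l_1]\cdots[h_k,l_k]\,t_n^m$ with $|0\cdot f-nm|<2k$ for some $m\in\mathbf{Z}$, which is impossible. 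Note that your heuristic ``a commutator is two conjugates'' does not recover this: in Proposition~\ref{mainproplb} the conjugates are of a \emph{fixed} element with a fixed point, whereas $[a,b]=(aba^{-1})b^{-1}$ involves conjugates of an arbitrary $b$, whose displacement need not be small. The bound $2$ genuinely uses the cancellation inside the commutator of $1$-periodic maps.
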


We remark that the lower bound in Theorem \ref{main} is an immediate corollary of Theorem \ref{mainperfect}.

\subsection{Some fundamental remarks on our construction.}

We encourage the reader to verify using the definition that the stabilizer of $0$ in $\Omega_n$ is naturally isomorphic to the Higman-Thompson group $F_{2^n}$.
(The definition of Higman-Thompson groups is recalled in Subsection \ref{HigmanThompson}). 
This fact (stated in Lemma \ref{HigmanSubgroups}) is an important starting point of our construction, and shall be used repeatedly in this article.
Our construction above is related to our recent construction of finitely presented simple left orderable groups in \cite{HydeLodhaFPSimple}. However, there are significant differences between the groups. In particular, the groups constructed in \cite{HydeLodhaFPSimple} are not uniformly simple. Indeed, they admit a nontrivial homogeneous quasimorphism and as a consequence have infinite commutator width. In particular, they are far from being uniformly perfect or uniformly simple. 
Also, the groups $\{\Gamma_n'\}_{n\geq 2}$ constructed in this paper are not left orderable, since they contain torsion elements, and the groups constructed in \cite{HydeLodhaFPSimple} are left orderable.
Moreover, one faces distinct technical challenges in dealing with the construction of this paper compared to the groups in \cite{HydeLodhaFPSimple}, although the proofs establishing the finiteness property type $F_{\infty}$ are similar.

Since a first order sentence witnesses $\mathcal{R}(G)=n$, our groups emerge as a natural infinite family of pairwise non-elementarily equivalent finitely presented infinite simple groups.
It follows from the definition that the groups $\{\Gamma_n'\}_{n\geq 2}$ naturally embed in Thompson's group $T$, and hence admit actions by $C^{\infty}$-diffeomorphisms 
of the circle (applying the main result of \cite{GhysSergiescu}). In particular, they do not have Kazhdan's property $(T)$ (from a direct application of the results in \cite{NavasKazhdan}, \cite{Cornulier}, \cite{LMBT}). In fact, since the groups embed into Thompson's group $T$, they have the Haagerup property (see \cite{Hughes}). 

\textbf{Acknowledgements:} The authors thank Yves Cornulier, Francesco Fournier-Facio and Matt Brin for their feedback. The second author was partially supported by the NSF CAREER grant 2240136.


\section{Preliminaries.}
All actions in this paper will be right actions. We fix the following notation. For a group $G$ and elements $f,g\in G$, we denote: 
$G':=[G,G], G'':=[G',G'], f^g:=g^{-1}fg, [f,g]:=f^{-1}g^{-1}fg$.
Given a subset $X\subseteq G$, we denote by $\langle \langle X\rangle \rangle_G$ as the smallest normal subgroup in $G$ containing $X$. 
Our convention will be to identify a group with an action on a $1$-manifold without declaring specific notation for the action. 
The nature of the action will be made clear from the context.

Let $M$ be a connected $1$-manifold, and $G\leq \textup{Homeo}^+(M)$.
Given $X\subset M,x\in M$, we refer to the pointwise (or point stabilizers) as $\textup{Stab}_G(X),\textup{Stab}_G(x)$ respectively.
Given $f\in G$, define the \emph{support of $f$} as $\Supp(f)=\{x\in M\mid x\cdot f\neq x\}$.
Given $I\subseteq M$, denote 
the \emph{rigid stabilizer of $I$} as $\textup{Rstab}_{G}(I)=\{f\in G\mid \Supp(f)\subseteq I\}$.
For $x\in M$, whenever they exist, we denote $f'(x),f'_-(x),f'_+(x)$ as the derivative, left derivative and right derivative, respectively.
The action of $G$ on $M$ is \emph{minimal} if all orbits are dense in $M$. 
The action is \emph{proximal} if for every proper nonempty compact subset $U\subset M$ and nonempty open subset $V\subset M$,
there is an element $f\in G$ such that $U\cdot f\subset V$.
An element $f\in \textup{Homeo}^+(\mathbf{R})$ is said to be \emph{$1$-periodic} if it commutes with all integer translations. A subgroup $G\leq \textup{Homeo}^+(\mathbf{R})$ is said to be $1$-periodic if every element of $G$ is $1$-periodic. 
We say that a $1$-periodic group is \emph{$1$-periodically proximal} if for each pair of nonempty open intervals $(u_1,v_1),(u_2,v_2)$ such that $v_1-u_1<1$, there is a group element $f$ such that $((u_1,v_1)+\mathbf{Z})\cdot f\subset (u_2,v_2)+\mathbf{Z}$.
The following was observed in \cite{HydeLodhaFPSimple} (Lemma $2.1$).

\begin{prop}\label{generalminimal}
Let $G\leq \textup{Homeo}^+(\mathbf{R})$ be a $1$-periodic subgroup such that: 
\begin{enumerate}[itemsep=0pt,parsep=0pt]
\item For each $n\in \mathbf{Z}$, every $x\in (n,n+1)$ and every nonempty open set $U\subset (n,n+1)$, there is an $f\in G$ such that $x\cdot f\in U$.
\item $\mathbf{Z}$ is not $G$-invariant.
\end{enumerate}
Then the action of $G$ on $\mathbf{R}$ is minimal. If such an action also does not preserve a Radon measure on $\mathbf{R}$, then it is $1$-periodically proximal.
\end{prop}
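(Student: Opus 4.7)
My plan is to establish minimality first, then derive $1$-periodic proximality from minimality and the no-invariant-Radon-measure hypothesis by descending to $\mathbf{S}^1$ and invoking the classical dichotomy for minimal circle actions. The substantive part is the minimality argument; the rest is a standard reduction.

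For minimality, fix $x\in\mathbf{R}$ and set $C:=\overline{x\cdot G}$. Condition (1) implies that for every $n\in\mathbf{Z}$ the intersection $C\cap(n,n+1)$ is either empty or the whole open interval: any orbit point of $x$ in $(n,n+1)$ has dense orbit there by (1), and any limit of orbit points inside $(n,n+1)$ forces actual orbit points into $(n,n+1)$. Hence $C=\bigcup_{n\in S}[n,n+1]\cup E$ for some $S\subseteq\mathbf{Z}$ and some set $E\subseteq\mathbf{Z}$ of isolated integer points of $C$. Now invoke condition (2): pick $g\in G$ with $\alpha:=0\cdot g\notin\mathbf{Z}$, which by $1$-periodicity is equivalent to $\mathbf{Z}$ not being $G$-invariant. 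Then $n\cdot g=n+\alpha$ for every $n\in\mathbf{Z}$ and $(n,n+1)\cdot g=(n+\alpha,n+1+\alpha)$; since $\alpha$ is non-integer, this shifted interval straddles two consecutive integers, and $C\cdot g=C$ forces both of them to lie in $S$ whenever $n\in S$. The same reasoning applied to $g^{-1}$ (whose analogous constant $0\cdot g^{-1}$ lies on the opposite side of $0$ from $\alpha$) gives closure of $S$ under non-trivial shifts in both directions, so $S$ is either empty or all of $\mathbf{Z}$. The empty case is ruled out because $x\in C$: if $x\notin\mathbf{Z}$ this is immediate, and if $x\in\mathbf{Z}$ then $x\cdot g=x+\alpha\in C$ is a non-integer. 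Hence $C=\mathbf{R}$.

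For $1$-periodic proximality, $1$-periodicity makes the $G$-action descend to a minimal action on $\mathbf{S}^1=\mathbf{R}/\mathbf{Z}$ by orientation-preserving homeomorphisms. A $G$-invariant probability measure on $\mathbf{S}^1$ would pull back to a $G$-invariant, $\mathbf{Z}$-periodic Radon measure on $\mathbf{R}$, contradicting the hypothesis; so the circle action has no invariant probability measure. The classical dichotomy for minimal circle actions then yields strong proximality: for every proper closed arc $K\subset\mathbf{S}^1$ and every nonempty open arc $V\subset\mathbf{S}^1$ there exists $f\in G$ with $K\cdot f\subset V$. Given $(u_1,v_1),(u_2,v_2)$ with $v_1-u_1<1$, the closure in $\mathbf{S}^1$ of the image of $(u_1,v_1)$ is a proper closed arc, so strong proximality produces an $f$ mapping it into the image of $(u_2,v_2)$; lifting back to $\mathbf{R}$ using the $1$-periodicity of $f$ yields the required inclusion $((u_1,v_1)+\mathbf{Z})\cdot f\subset(u_2,v_2)+\mathbf{Z}$.

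The chief subtlety I anticipate is in the minimality step, specifically verifying that the successor/predecessor propagation really forces $S=\mathbf{Z}$ rather than some proper translation-closed subset, while correctly tracking the isolated integer points in $E$; once this combinatorics is cleanly set up, the remaining arguments are either immediate from the definitions or a standard invocation of a well-known theorem on minimal circle actions.
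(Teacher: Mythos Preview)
The paper does not actually prove this proposition: it is quoted verbatim from the authors' earlier paper \cite{HydeLodhaFPSimple} (as Lemma~2.1 there), with no argument given here. So there is no in-paper proof to compare your proposal against.

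That said, your argument is correct. A couple of small points are worth tightening. First, the sentence ``gives closure of $S$ under non-trivial shifts in both directions, so $S$ is either empty or all of $\mathbf{Z}$'' hides the key step: if $\alpha\in(k,k+1)$ then $n\in S$ yields $n+k,n+k+1\in S$ via $g$, and since $0\cdot g^{-1}\in(-k-1,-k)$ (because $g$ is increasing and $(-k-1)\cdot g<0<(-k)\cdot g$), it yields $n-k-1,n-k\in S$ via $g^{-1}$; composing the two gives $n\pm 1\in S$, whence $S=\mathbf{Z}$. Second, the set $E$ of isolated integer points in $C$ plays no role once $S=\mathbf{Z}$ is established, so you can drop it. For the proximality half, the passage to the quotient circle action, the pullback of a hypothetical invariant probability measure to a $\mathbf{Z}$-periodic invariant Radon measure on $\mathbf{R}$, and the appeal to the minimal-circle-action dichotomy (no invariant measure $\Rightarrow$ strong proximality) are all standard and correctly invoked.
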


\subsection{Finiteness properties of groups.}

The finiteness properties type $\mathbf{F}_{n}$ and type $\mathbf{F}_{\infty}$ are of fundamental importance in geometric group theory since they are quasi-isometry invariants of groups \cite{Alonso},\cite{Geoghegan}. 
A group is said to be \emph{of type $\mathbf{F}_n$} if it admits an 
Eilenberg-Maclane complex with a finite $n$-skeleton, and is \emph{of type $\mathbf{F}_{\infty}$} if it is of type $\mathbf{F}_n$ for each $n\in \mathbf{N}$.
We recall a special case of what is called Brown's criterion
(Proposition $1.1$ in \cite{brown}).

\begin{prop}\label{brownscriterion}
 Let $\Gamma$ be a group that acts on a cell complex $X$
by cell permuting homeomorphisms such that $X$ is contractible, $X/ \Gamma$ has finitely many cells in each dimension, and the pointwise stabilizer of each cell is of type $\mathbf{F}_{\infty}$.
 Then $\Gamma$ is of type $\mathbf{F}_{\infty}$.
\end{prop}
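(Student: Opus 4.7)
The plan is to produce, for each $n \in \mathbf{N}$, a $K(\Gamma, 1)$ whose $n$-skeleton is finite; since type $\mathbf{F}_n$ for every $n$ is type $\mathbf{F}_\infty$, this suffices. The natural tool is a Borel-type construction: replace each orbit of cells of $X$ by a ``thickening'' built from a classifying space of its stabilizer, then assemble these into a contractible free $\Gamma$-CW complex whose quotient by $\Gamma$ is the desired classifying space. Since $X$ itself is contractible, the thickened complex remains contractible after a spectral sequence check, while the stabilizer hypothesis provides ``fibers'' of finite type.

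First I would fix a set of orbit representatives $\{\sigma_i\}$ for the cells of $X$, with only finitely many in each dimension by hypothesis. For each $i$, the stabilizer $\Gamma_{\sigma_i}$ is of type $\mathbf{F}_\infty$, so admits a $K(\Gamma_{\sigma_i}, 1)$ with finite skeleta; let $\widetilde{Y_i}$ be its universal cover, a contractible free $\Gamma_{\sigma_i}$-CW complex with finitely many cells in each dimension modulo $\Gamma_{\sigma_i}$. I would then construct a $\Gamma$-CW complex $\widetilde Z$ by replacing each orbit $\Gamma\cdot \sigma_i$ of $k$-cells in $X$ with the induced piece $\Gamma \times_{\Gamma_{\sigma_i}} (\sigma_i \times \widetilde{Y_i})$ and gluing these along the attaching maps of $X$ using the inclusions $\Gamma_{\sigma_i}\hookrightarrow \Gamma_{\tau}$ whenever $\tau$ is a face of $\sigma_i$. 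The quotient $Z = \widetilde Z/\Gamma$ is then a candidate classifying space.

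To verify this works, two things need checking. First, $\widetilde Z$ must be contractible: I would run the equivariant spectral sequence of the filtration of $\widetilde Z$ by the skeleta of $X$, whose $E^2$-term is assembled from $H_*(\Gamma_{\sigma_i}; H_*(\widetilde{Y_i}))$; this collapses because each $\widetilde{Y_i}$ is contractible, reducing to the cellular homology of $X$, which vanishes in positive degrees, with a van Kampen argument for simple connectivity. Second, $Z$ must have finite $n$-skeleton: a $k$-cell of $Z$ with $k \leq n$ is indexed by a pair consisting of an orbit representative $\sigma_i$ of dimension $d \leq n$ together with a cell of $Y_i$ in dimension $k-d$, and only finitely many such pairs exist by orbit-finiteness of $X$ and the finite-type hypothesis on each $Y_i$.

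The main obstacle I expect is the careful geometric bookkeeping of the attaching maps in the thickening construction, which must be compatible across face inclusions to yield a genuine $\Gamma$-CW complex. An alternative that sidesteps this difficulty is to work purely at the level of the equivariant chain complex: build a free resolution of $\mathbf{Z}$ over $\mathbf{Z}\Gamma$ whose degree-$k$ part is a direct sum, indexed by orbit cells $\sigma_i$ of dimension $d \leq k$, of the degree-$(k-d)$ term of a projective resolution of $\mathbf{Z}$ over $\mathbf{Z}\Gamma_{\sigma_i}$ that is finitely generated in each degree, and then invoke the equivalence between type $\mathbf{F}_\infty$ and admitting such a resolution. This algebraic formulation essentially reproduces Brown's original argument and avoids constructing an explicit $K(\Gamma,1)$ altogether.
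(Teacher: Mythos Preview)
The paper does not supply a proof of this proposition: it is quoted as a special case of Brown's criterion with a reference to Proposition~1.1 of \cite{brown}, so there is no in-paper argument to compare your proposal against. Your sketch is a reasonable outline of the standard Borel-construction proof and is essentially how Brown's result is established.

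One caution on your alternative algebraic route: assembling finitely generated projective resolutions over the stabilizers into a resolution over $\mathbf{Z}\Gamma$ yields type $\mathrm{FP}_\infty$, not type $\mathbf{F}_\infty$; the two notions agree only once $\Gamma$ is already known to be finitely presented. To conclude $\mathbf{F}_\infty$ you still need the geometric thickening (or at least its $2$-skeleton) to produce a finite presentation, so the algebraic shortcut does not entirely bypass the geometric bookkeeping you flagged as the main obstacle.
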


The following is a standard fact (see \cite{Geoghegan} for a proof).

\begin{prop}\label{extensionfiniteness}
 Consider a group extension $1\to N\to G\to H\to 1$. If $N,H$ are of type $\mathbf{F}_{\infty}$ then $G$ also has type $\textbf{F}_{\infty}$. In particular, a finite direct product of type $\textbf{F}_{\infty}$ groups is also of type $\textbf{F}_{\infty}$.
\end{prop}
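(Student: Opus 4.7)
The plan is to deduce the proposition from Brown's criterion (Proposition \ref{brownscriterion}) by pulling back a classifying space for $H$ along the quotient map $\pi : G \to H$. This avoids the spectral-sequence route (which a priori only gives homological finiteness $\mathbf{FP}_\infty$) and yields the topological $\mathbf{F}_\infty$ conclusion directly.

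The first step is to exhibit a single contractible CW complex $\tilde Y$ carrying a free, cellular action of $H$ such that $\tilde Y / H$ has finitely many cells in each dimension. Such a $\tilde Y$ exists whenever $H$ is of type $\mathbf{F}_\infty$: starting from the presentation $2$-complex of a finite presentation and working up skeleton by skeleton, at each stage $n$ one uses the $\mathbf{F}_n$ property of $H$ to kill $\pi_{n-1}$ by attaching finitely many $n$-cells per $H$-orbit, and then passes to the universal cover. The second step is to let $G$ act on $\tilde Y$ via $\pi$, i.e.\ by $g \cdot y := \pi(g)\cdot y$. Then $N = \ker\pi$ acts trivially, and since the $H$-action is free, the pointwise stabilizer in $G$ of every cell of $\tilde Y$ equals $\pi^{-1}(\mathrm{id}) = N$, which is of type $\mathbf{F}_\infty$ by hypothesis. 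The orbit complex $\tilde Y / G$ agrees with $\tilde Y / H$, so it has finitely many cells in each dimension. Since $\tilde Y$ is contractible, Proposition \ref{brownscriterion} applies and shows that $G$ is of type $\mathbf{F}_\infty$.

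The ``in particular'' direct-product statement then follows by induction on $k$, applied to the trivial extensions $1 \to A_k \to A_1 \times \cdots \times A_k \to A_1 \times \cdots \times A_{k-1} \to 1$. The only genuinely delicate ingredient in the sketch above is extracting from the defining property of $\mathbf{F}_\infty$ a \emph{single} $K(H,1)$ that is finite in every skeleton at once (the definition only promises, for each $n$, \emph{some} $K(H,1)$ with finite $n$-skeleton). This is a standard inductive skeleton-by-skeleton construction documented in \cite{Geoghegan}, and is the step I expect to require the most care; everything after it is essentially a direct application of Brown's criterion.
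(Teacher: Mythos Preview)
Your argument is correct. Note, however, that the paper does not supply its own proof of this proposition: it is stated as a standard fact with a reference to \cite{Geoghegan}, so there is nothing in the paper to compare against directly.

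That said, your route via Brown's criterion is one of the standard ways to establish this result, and it has the virtue of using only machinery already recorded in the paper (Proposition~\ref{brownscriterion}). The alternative one often sees is the Borel-construction argument: take CW models $X_N$, $X_H$ for $K(N,1)$, $K(H,1)$ with finite skeleta, form the twisted product $\widetilde{X_H} \times_{G} (G \times_N X_N)$, and check it is a $K(G,1)$ with finite skeleta. Your approach sidesteps building an explicit $K(G,1)$ by letting Brown's criterion absorb the work, at the cost of needing the cell stabilizers to be exactly $N$ --- which you correctly obtain from freeness of the $H$-action on $\widetilde Y$.

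Your identification of the one genuinely nontrivial step --- producing a \emph{single} $K(H,1)$ with finite $n$-skeleton for every $n$, rather than a different complex for each $n$ --- is accurate; this is Theorem~7.2.2 (or thereabouts) in \cite{Geoghegan}, and once granted, everything else is routine.
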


Given a group $H$ and an isomorphism $\phi:H\to K$, where $K<H$ is a proper subgroup, the group $\langle H, t\mid t^{-1}ht=\phi(h)\text{ for }h\in H\rangle$ is called an \emph{ascending HNN extension} with \emph{base group} $H$. 
The following is a criterion to verify whether a group admits such a structure (See Lemma $3.1$ in \cite{GMSW} for a proof):
\begin{lem}\label{AHNN}
Let $G$ be a group that satisfies the following. There exist subgroups $H_1< H_2< G$ and an element $f\in G$ such that $f^{-1}H_2f=H_1$,
and $\langle H_2,f\rangle=G$.
Then $G$ admits the structure of an ascending HNN extension with base group $H_2$.
\end{lem}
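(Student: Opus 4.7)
The natural candidate is the homomorphism $\psi:\tilde{G}\to G$ from the abstract HNN extension $\tilde{G}:=\langle H_2, t\mid t^{-1}ht=\phi(h) \text{ for } h\in H_2\rangle$, with $\phi(h):=f^{-1}hf$, sending $H_2$ identically and $t\mapsto f$. It is well-defined because the defining HNN relations literally hold in $G$, and surjective because $\langle H_2,f\rangle=G$. The whole content will be to prove injectivity.

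My plan is to construct matching normal forms on both sides. In $G$, the inclusion $f^{-1}H_2f=H_1\subseteq H_2$ provides an ascending chain $H_2\subseteq fH_2f^{-1}\subseteq f^2H_2f^{-2}\subseteq\cdots$, whose union $K:=\bigcup_{n\ge 0} f^nH_2f^{-n}$ is a subgroup normalized by $f$. The hypothesis that no nontrivial power of $f$ lies in $H_2$ is used precisely here: any relation $f^j=f^nhf^{-n}$ with $h\in H_2$ rearranges to $h=f^j$, forcing $j=0$, so $\langle f\rangle\cap K=\{e\}$. Combined with $\langle K,f\rangle=G$, this shows every element of $G$ admits a unique expression $k\cdot f^j$ with $k\in K$, $j\in\mathbf{Z}$.

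The parallel construction in $\tilde{G}$ produces $\tilde{K}:=\bigcup_{n\ge 0} t^nH_2t^{-n}$; invoking Britton's lemma to record that $H_2$ embeds in $\tilde{G}$ and that $t$ has infinite order there, the same reasoning yields a unique normal form $\tilde{k}\cdot t^j$ for elements of $\tilde{G}$. Since $\psi$ carries $t^nht^{-n}$ to $f^nhf^{-n}$, it sends $\tilde{k}t^j$ to $\psi(\tilde{k})\cdot f^j$, so injectivity of $\psi$ reduces to injectivity on $\tilde{K}$. This is immediate: if $\psi(t^nht^{-n})=f^nhf^{-n}=e$ in $G$ then $h=e$, and hence $t^nht^{-n}=e$ in $\tilde{G}$. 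The genuinely delicate ingredient will be the Britton's-lemma step used to secure the $\tilde{G}$-side normal form; everything else amounts to bookkeeping around the ascending chain.
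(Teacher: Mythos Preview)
Your argument is correct. The paper itself does not supply a proof of this lemma; it simply cites Lemma~3.1 of \cite{GMSW}. Your approach---building the canonical map from the abstract HNN extension, passing to the ascending union $K=\bigcup_{n\ge 0} f^nH_2f^{-n}$ (respectively $\tilde K$), and using Britton's lemma to secure the normal form on the $\tilde G$ side---is the standard one, and each step goes through as you describe. In particular, your use of the hypothesis ``no nontrivial power of $f$ lies in $H_2$'' to get $\langle f\rangle\cap K=\{e\}$ is exactly right: $f^j=f^nhf^{-n}$ forces $h=f^j\in H_2$, hence $j=0$.
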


Finiteness properties of ascending HNN extensions are well behaved (see the end of Section $2$ in \cite{BDH}):
\begin{prop}\label{AHNNF}
Let $G$ be an ascending HNN extension with base group $H$. If $H$ has type $\mathbf{F}_{\infty}$, then $G$ has type $\mathbf{F}_{\infty}$.
\end{prop}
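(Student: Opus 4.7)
The plan is to apply Brown's criterion (Proposition \ref{brownscriterion}) directly to the action of $G$ on its Bass--Serre tree. Writing $G = \langle H, t \mid t^{-1}ht = \phi(h),\, h \in H \rangle$ with $\phi\colon H \to K$ an isomorphism onto a proper subgroup $K < H$, Bass--Serre theory associates to this HNN decomposition a tree $T$ on which $G$ acts without inversions, with vertex set $G/H$ and edge set $G/K$. The quotient $G \backslash T$ is the underlying graph of the splitting, namely a single vertex with a single edge attached as a loop, so $G \backslash T$ has exactly one cell in dimensions $0$ and $1$, and none in higher dimensions.

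Next I would identify the cell stabilizers. Vertex stabilizers for this action are the $G$-conjugates of $H$, while edge stabilizers are the $G$-conjugates of $K = \phi(H)$. Since $\phi$ is by hypothesis an isomorphism from $H$ onto $K$, we have $K \cong H$, so every cell stabilizer is isomorphic to $H$, and therefore of type $\mathbf{F}_\infty$ by hypothesis.

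Finally, since $T$ is a tree it is contractible, the quotient $G \backslash T$ has finitely many cells in each dimension, and every cell stabilizer is of type $\mathbf{F}_\infty$. Brown's criterion (Proposition \ref{brownscriterion}) then yields that $G$ is of type $\mathbf{F}_\infty$.

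There is no serious obstacle here: the statement is essentially a packaging of Bass--Serre theory with Brown's criterion. The only point that requires care is making sure both vertex and edge stabilizers really are of type $\mathbf{F}_\infty$; in the \emph{ascending} case this is immediate because the associated subgroup $K$ is isomorphic to the whole base group $H$ via $\phi$, rather than being merely some subgroup whose finiteness properties would need to be controlled separately.
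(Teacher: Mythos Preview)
The paper does not actually supply a proof of this proposition; it merely cites the end of Section~2 of \cite{BDH}. Your argument via the Bass--Serre tree together with Brown's criterion is correct and is the standard route to this fact. One small point: Proposition~\ref{brownscriterion} as stated in the paper asks for the \emph{pointwise} stabilizer of each cell to be of type~$\mathbf{F}_\infty$; since the Bass--Serre action is without inversions, the setwise and pointwise stabilizers of each edge coincide, so your identification of edge stabilizers with conjugates of $K\cong H$ is indeed what is needed.
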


Finally, we state the following standard fact concerning finiteness properties of quotients (see Theorem $7.2.21$ in \cite{Geoghegan}).

\begin{thm}\label{Finftyquotients}
Let $G$ be a group of type $\mathbf{F}_{\infty}$ and let $N\leq G$ be a normal subgroup which is also of type $\mathbf{F}_{\infty}$. Then the quotient $G/N$ is also of type $\mathbf{F}_{\infty}$.
\end{thm}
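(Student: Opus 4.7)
The plan is to produce a $K(Q,1)$ of finite type (where $Q:=G/N$), from which type $\mathbf{F}_\infty$ of $Q$ will follow. I would start by fixing a $K(G,1)$ complex $X$ with finitely many cells in each dimension, which exists by the hypothesis on $G$. Its universal cover $\tilde{X}$ is then contractible, with a free cellular $G$-action having finitely many orbits of cells in each dimension.

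Next, I would consider $Y := \tilde{X}/N$. Since $N$ acts freely on the contractible space $\tilde{X}$, the quotient $Y$ is a model for $K(N,1)$. Because $N \trianglelefteq G$, the group $Q$ acts naturally on $Y$, and a short check shows that this action is free (if $g \cdot x$ and $x$ lie in the same $N$-orbit, then freeness of the $G$-action on $\tilde{X}$ forces $g \in N$, so its image in $Q$ is trivial). Since $Y/Q = \tilde{X}/G = X$, the complex $Y$ has finitely many $Q$-orbits of cells in each dimension.

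The remaining task is to build a contractible free $Q$-CW complex $E$ of finite type modulo $Q$; the quotient $E/Q$ will then be the desired $K(Q,1)$. Starting from $Y$, one inductively attaches $Q$-orbits of cells to kill the homotopy groups. Since $Y$ is aspherical, only $\pi_1(Y) = N$ needs to be killed initially, and the finite generation of $N$ (a consequence of type $\mathbf{F}_\infty$) lets us do this with finitely many $Q$-orbits of $2$-cells. The main obstacle, which I expect to be the hardest part, is verifying that at each subsequent dimension $k$ the relative homotopy module $\pi_k$ of the partially built complex remains finitely generated as a $\mathbf{Z}Q$-module, so that only finitely many $Q$-orbits of $(k+1)$-cells are needed. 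This control comes from the full force of the type $\mathbf{F}_\infty$ hypothesis on $N$ combined with an inductive Hurewicz argument applied to the fibration $Y \to X$; this is precisely the content of Theorem $7.2.21$ in \cite{Geoghegan}, which I would invoke rather than reproduce in full.
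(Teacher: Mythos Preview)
The paper does not give its own proof of this statement; it is stated in the preliminaries as a standard fact and referenced to Theorem~7.2.21 in \cite{Geoghegan}. Your proposal ultimately invokes exactly the same reference, so you are in complete agreement with the paper on how this result is obtained. The sketch you provide before citing Geoghegan is a reasonable outline of the construction underlying that theorem, but since both you and the paper treat it as a black box, there is nothing further to compare.
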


\subsection{Weak generating sets.}

Given a group $G$ and a subset $S\subseteq G$, recall that that $S$ \emph{weakly generates $G$} or is a \emph{weak generating set of $G$} if the image of $S$ under the abelianization map generates the abelianization.
A weak generating set $S$ is said to be \emph{minimal} if no proper subset of it is a weak generating set.
The following is a straightforward observation. 

\begin{lem}\label{roots}
Let $G$ be a group so that $G/[G,G]\cong \mathbf{Z}^k$ for some $k\in \mathbf{N}$ and let $S\subset G, |S|=k$ be such that there is a homomorphism of $G$ with image an abelian group, so that the image of $S$ under this homomorphism generates a copy of $\mathbf{Z}^{k}$.
Then the group $H=\langle S,[G,G]\rangle$ is a finite index subgroup of 
$G$ and $S$ is a weak generating set for $H$.
\end{lem}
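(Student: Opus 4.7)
The strategy is to exploit the hypothesized homomorphism $\phi: G \to A$ throughout. Since $A$ is abelian, $\phi$ factors as $\phi = \bar\phi \circ \pi$, where $\pi: G \twoheadrightarrow G/[G,G] \cong \mathbf{Z}^k$ is the abelianization map and $\bar\phi: \mathbf{Z}^k \to A$ is the induced homomorphism.

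For the finite-index claim, I would argue as follows. The hypothesis says that $\phi(s_1), \ldots, \phi(s_k)$ generate a subgroup of $A$ isomorphic to $\mathbf{Z}^k$, so they are $\mathbf{Z}$-linearly independent in $A$. Since $\bar\phi$ is a homomorphism of abelian groups, any linear relation among $\pi(s_1), \ldots, \pi(s_k)$ in $\mathbf{Z}^k$ would descend to a relation among $\phi(s_1), \ldots, \phi(s_k)$ in $A$; hence the $\pi(s_i)$ are themselves linearly independent in $G/[G,G] \cong \mathbf{Z}^k$. They therefore span a finite-index subgroup $L \leq \mathbf{Z}^k$, of index $|\det(\pi(s_i))|$, and the preimage $\pi^{-1}(L) = \langle S, \ker \pi \rangle = \langle S, [G,G]\rangle$ equals $H$. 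Hence $[G:H] = [\mathbf{Z}^k : L] < \infty$.

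For the weak generation claim, I would restrict $\phi$ to $H$ to obtain a surjection $\phi|_H: H \twoheadrightarrow \phi(H) = \langle \phi(s_1), \ldots, \phi(s_k)\rangle \cong \mathbf{Z}^k$, which factors through the abelianization $H/[H,H]$ since its image is abelian. Under this factorization, the images $\bar s_i$ of $S$ in $H/[H,H]$ hit a basis of $\phi(H)$, so they are $\mathbf{Z}$-linearly independent and generate a free abelian subgroup of rank $k$. Since $H = \langle S, [G,G]\rangle$, the abelianization $H/[H,H]$ is generated by $\bar S$ together with the image of $[G,G]$. The weak generation conclusion then follows by identifying the image of $[G,G]$ in $H/[H,H]$ with a subgroup of $\langle \bar S \rangle$, using that the natural surjection $H/[H,H] \twoheadrightarrow H/[G,G] = L$ already sends $\bar S$ to a basis of $L \cong \mathbf{Z}^k$.

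The main obstacle is the last step, namely controlling the image of $[G,G]$ in $H/[H,H]$. A commutator $[g_1, g_2]$ with $g_i \in G$ need not a priori lie in $[H,H]$; to handle it, I would write each $g_i$ as a product of an element of $H$ and a coset representative of $G/H$ (which is a finite abelian group since $[G,G] \subseteq H$), and then expand $[g_1, g_2]$ via the standard commutator identities $[xy,z] = [x,z]^y [y,z]$ and $[x,yz] = [x,z][x,y]^z$. The resulting expression is a product of commutators of elements of $H$ (which die in $H/[H,H]$) together with "cross-coset" terms that can be absorbed into $\langle \bar S \rangle$ using the fact that $\bar S$ generates a rank-$k$ subgroup. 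This calculation is the technical heart of the lemma.
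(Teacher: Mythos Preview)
Your finite-index argument is correct and matches the paper's: factor through $\pi: G \to G/[G,G] \cong \mathbf{Z}^k$, note that $\pi(S)$ is $\mathbf{Z}$-linearly independent and hence spans a finite-index sublattice $L$, and observe $H = \pi^{-1}(L)$. For the weak-generation claim, however, your instinct that controlling the image of $[G,G]$ in $H/[H,H]$ is the crux is exactly right---but your commutator-identity approach cannot succeed, because the lemma as stated is actually false. Take $G = F_2 = \langle a, b\rangle$ (so $k=2$) and $S = \{a^2, b\}$; the abelianization map itself witnesses the hypothesis. Then $H = \langle a^2, b, [G,G]\rangle$ is the index-$2$ subgroup with free Schreier basis $\{a^2,\, b,\, aba^{-1}\}$, so $H/[H,H] \cong \mathbf{Z}^3$, which two elements cannot generate. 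Concretely, the image of $[G,G]$ in $H/[H,H]$ is spanned by $\overline{aba^{-1}} - \bar b$, which does not lie in $\langle \bar S\rangle = \langle \overline{a^2}, \bar b\rangle$, so your ``absorption into $\langle \bar S\rangle$'' step already fails here.

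The paper's own proof just asserts that weak generation ``follows immediately'', which is only valid under the unstated extra hypothesis that $[G,G]$ is perfect. If $[G,G] = [[G,G],[G,G]]$, then from $[G,G] \leq H$ one gets $[G,G] = [[G,G],[G,G]] \leq [H,H] \leq [G,G]$, hence $[H,H] = [G,G]$, and then $H/[H,H] = H/[G,G]$ is generated by $\pi(S)$ by construction. This hypothesis does hold in every application in the paper (the relevant $[G,G]$ is the simple group $F_{2^n}'$), so the downstream results are unaffected; but the general lemma needs that assumption, and no amount of commutator juggling will produce a proof without it.
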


\begin{proof}
By our hypothesis, there is a homomorphism $G\to K$ where $K$ is an abelian group such that the image of $S$ under this freely generates a copy of $\mathbf{Z}^{|S|}$. Since this must pass through the abelianization map, this means that the image of $S$ in the abelianization map also generates a copy of $\mathbf{Z}^{|S|}=\mathbf{Z}^k$, and indeed $K=G/[G,G]$. Since $|S|$ equals the rank of the torsion-free abelianization $G/[G,G]$, the image of $S$ under this map is a finite index subgroup of the abelianization, and whose inverse image in $G$ is the finite index subgroup $H=\langle S,[G,G]\rangle$. The fact that $S$ is a weak generating set for $H$ follows immediately.
\end{proof}

\subsection{Conjugacy classes and bounded coverings.}\label{BCsubsec}

Given a group $G$ and an element $g\in G$, recall that $C_g(G)$ denotes the conjugacy class of $g$ in $G$.
Given a subgroup $H\leq G$, we denote by $C_H(G):=\bigcup_{h\in H} C_h(G)$.
The group $G$ is said to be \emph{$m$-boundedly covered by $H$} if 
for each $g\in G$ there exist $g_1,...,g_k\in C_H(G), k\leq m$ such that 
$g=g_1...g_k$.
The group $G$ is said to be \emph{boundedly covered by $H$} if it is $m$-boundedly covered by $H$ for some $m\in \mathbf{N}$.

In a group $G$ a subgroup $K$ is said to be \emph{$m$-boundedly covered by a subgroup $H$} if 
for each $g\in K$ there exist $g_1,...,g_k\in C_H(G), k\leq m$ such that 
$g=g_1...g_k$.
In a group $G$ a subgroup $K$ is said to be \emph{boundedly covered by a subgroup $H$} if there exists an $m\in \mathbf{N}$ such that $K$ is $m$-boundedly covered by $H$. Note that here we consider conjugacy in the group $G$.
The following is a straightforward lemma that follows from the definitions, and that will be among the ingredients in the proof of uniform simplicity of our groups. In what follows, $\leq_{\text{ f.i. }}$ denotes a subgroup of finite index.

\begin{lem}\label{BC}
Let $G$ be a simple group containing subgroups: $$H_1\leq H_3\leq_{f.i.} H_2\leq G$$
(where $\leq_{f.i.}$ denotes a finite index subgroup) such that:
\begin{enumerate}
\item $H_1$ is $l$-uniformly simple for some $l\in \mathbf{N}$.
\item For each $f\in G\setminus \{id\}$, there exist $\alpha_1, \alpha_2\in G$ such that $[[f,\alpha_1],\alpha_2]\in H_1\setminus \{id\}$.
\item $H_3$ is $m_1$-boundedly covered by $H_1$.
\item There exists $m_2\in \mathbf{N}$ such that for each $g\in G\setminus \{id\}$, there exist 
$g_1,...,g_k\in C_{H_1}(G), k\leq m_2$ and $h\in H_2$ such that 
$g=hg_1...g_k$.
\end{enumerate}
Then $G$ is uniformly simple.
\end{lem}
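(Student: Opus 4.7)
The plan is to establish that $G$ is $N$-uniformly simple for an explicit constant $N$ depending only on $l$, $m_1$, $m_2$, the index $[H_2:H_3]$, and a one-time choice of coset representatives. I find it convenient to introduce the shorthand $B_k$ for the set of products of at most $k$ elements of $C_{H_1}$ (with $B_0 = \{e\}$); the argument then splits naturally into two stages.

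Stage one aims at a uniform bound $G \subseteq B_M$. Condition (2) forces $H_1 \neq \{e\}$, and combined with the simplicity of $G$ this gives $\langle C_{H_1}\rangle = \langle\langle H_1\rangle\rangle_G = G$, so every individual element of $G$ lies in $B_k$ for some $k$. I fix once and for all a set of right coset representatives $y_1,\ldots,y_j$ for $H_3$ in $H_2$ with $j=[H_2:H_3]$, pick $k_i$ with $y_i\in B_{k_i}$, and put $N:=\max_i k_i$. Condition (3) then gives $H_3\subseteq B_{m_1}$, hence $H_2 = \bigsqcup_i H_3 y_i \subseteq B_{m_1+N}$; applying condition (4) to an arbitrary $f\in G\setminus\{id\}$ yields $f\in H_2\cdot B_{m_2}\subseteq B_M$ with $M:=N+m_1+m_2$.

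Stage two converts ``product of at most $M$ elements of $C_{H_1}$'' into a product of conjugates of $g^{\pm 1}$. Given $g\in G\setminus\{id\}$, condition (2) supplies $\alpha_1,\alpha_2\in G$ with $h_0:=[[g,\alpha_1],\alpha_2]\in H_1\setminus\{id\}$; expanding the iterated commutator as $(g^{-1})^{\alpha_1}\,g\,(g^{-1})^{\alpha_2}\,g^{\alpha_1\alpha_2}$ realises $h_0$ as a product of exactly $4$ conjugates of $g^{\pm 1}$. By condition (1) every $v\in H_1$ is a product of at most $l$ conjugates of $h_0^{\pm 1}$ inside $H_1\leq G$; conjugating such an expression by any $x\in G$ shows every element of $C_{H_1}$ is a product of at most $l$ conjugates of $h_0^{\pm 1}$ in $G$, and each of those unfolds into $4$ conjugates of $g^{\pm 1}$. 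Combining with Stage one, $f$ factors as at most $4lM$ conjugates of $g^{\pm 1}$, establishing $\mathcal{R}(G)\leq 4lM$.

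The one subtlety I expect is in Stage one, specifically how to deal with the $H_2$ factor returned by condition (4). The tempting move of iterating condition (4) on this factor does not terminate, since each application reintroduces a new $H_2$ element; the correct observation is that the chain $H_1\leq H_3\leq_{f.i.}H_2$ reduces the problem to controlling only the \emph{finitely many} fixed coset representatives $y_i$, and for this finite set the bare simplicity of $G$ is already enough to produce a $B_{k_i}$-bound without any uniformity requirement. Once this point is granted, the remainder of the proof is bookkeeping.
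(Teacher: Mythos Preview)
Your proof is correct and follows essentially the same strategy as the paper: decompose an arbitrary element via condition (4), split the $H_2$ factor into a fixed coset representative times an $H_3$ element, handle the finitely many coset representatives using only the bare simplicity of $G$, and finally use condition (2) together with the $l$-uniform simplicity of $H_1$ to convert everything into conjugates of $g^{\pm1}$. Your two-stage packaging---first proving $G\subseteq B_M$ with $M$ fixed, and only then invoking condition (2)---is in fact slightly cleaner than the paper's version: the paper expresses each coset representative $p_i$ directly as a product of $q_i$ elements of $C_{\alpha}\cup C_{\alpha^{-1}}$ with $\alpha=[[f,\alpha_1],\alpha_2]$ depending on $f$, so the $q_i$ a~priori vary with $f$, whereas your route through $C_{H_1}$ makes the independence of the bound from $f,g$ transparent.
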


\begin{proof}
Since we consider conjugacy in the ambient group $G$, for simplicity of notation we shall denote occurrences of $C_f(G)$ as simply $C_f$ in this proof.
We check the definition of uniformly simplicity.
Fix a pair of nontrivial $f,g\in G$.
Let $g=hg_1...g_k$ as in part $(4)$ of the hypothesis.
Since $H_3$ is finite index in $H_2$, we choose representatives $\{p_1,...,p_j\}$ for the left cosets of $H_3$ in $H_2$, and for some $j\in \mathbf{N}$.  
It follows that $h=p_ih_1$ for some $1\leq i\leq j$ and $h_1\in H_3$.
So we obtain: $$g=hg_1...g_k=p_ih_1g_1...g_k$$

First observe that $\alpha=[[f,\alpha_1],\alpha_2]$ from part $(2)$ can be expressed as a product of four elements in $C_f\cup C_{f^{-1}}$.
Next, from part $(1)$, each $g_i\in H_1$ can be expressed as a product of at most $l$ elements in $C_{\alpha}\cup C_{\alpha^{-1}}$.
This means that the element $g_1...g_k$ can be expressed as a product of at most $4lk\leq 4lm_2$ elements in 
$C_f\cup C_{f^{-1}}$. 

A similar argument, using the fact that $H_3$ is $m_1$-boundedly covered by $H_1$,
shows that $h_1$ can be expressed as a product of at most $4lm_1$ elements in 
$C_f\cup C_{f^{-1}}$. Combining these two facts and part $(4)$ of the hypothesis, we conclude that the element $h_1g_1...g_k$
can be expressed as a product of at most $4lm_1+4lm_2$ elements in 
$C_f\cup C_{f^{-1}}$.

Consider $\alpha=[[f,\alpha_1],\alpha_2]\in H_1\setminus \{id\}$ as defined above. 
Fix $\beta\in H_1\setminus \{id\}$ (independent of our choices of $f,g$).
Recall that we chose representatives $\{p_1,...,p_j\}$ for the left cosets of $H_3$ in $H_2$.
Since $G$ is simple, for each $p_i$ there is a $q_i\in \mathbf{N}$ such that $p_i$
is a product of at most $q_i$ many elements in $C_{\beta}\cup C_{\beta^{-1}}$.
Fix $q=\text{max}\{q_1,...,q_j\}$.
Moreover, as previously observed $\alpha$ is a product of $4$ elements in $C_f\cup C_{f^{-1}}$
and so from the first part of the hypothesis $\beta$ is a product of at most $4l$ elements in $C_f\cup C_{f^{-1}}$.
It follows that each $p_i$ is a product of at most $4lq$ many elements in $C_f\cup C_{f^{-1}}$.
Then we obtain that the element: $$g=hg_1...g_k=p_ih_1g_1...g_k$$
is a product of at most $4lq+4lm_1+4lm_2$ elements in $C_f\cup C_{f^{-1}}$.
Since this bound is independent of the choice of $f,g\in G\setminus \{id\}$, we are done.
\end{proof}

Another straightforward but useful lemma is the following. 

\begin{lem}\label{BC1}
Let $G$ be a group and $H\leq G$ a subgroup such that $H$ admits a weak generating set $\{f_1,...,f_k\}$, and so that for each $1\leq i\leq k$ we have $f_i\in C_{H'}(G)$.
Then $H$ is $(k+1)$-boundedly covered by $H'$ in $G$.
\end{lem}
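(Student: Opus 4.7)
The plan is to leverage the weak generating property of $\{f_1,\ldots,f_k\}$ together with the hypothesis that each $f_i\in C_{H'}$: an arbitrary $h\in H$ should be expressible as a product of $k$ powers $f_i^{n_i}$, each of which is automatically in $C_{H'}$, followed by a single corrective element lying in $H'$ itself. The argument proceeds by projecting to the abelianization and then lifting back.

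Concretely, I would fix $h\in H$ and consider its image $\bar{h}$ in $H^{ab}=H/[H,H]$. Since by hypothesis the images $\bar{f}_1,\ldots,\bar{f}_k$ generate $H^{ab}$, there exist integers $n_1,\ldots,n_k\in\mathbf{Z}$ with $\bar{h}=\bar{f}_1^{n_1}\cdots \bar{f}_k^{n_k}$. Lifting, the element $c:=(f_1^{n_1}\cdots f_k^{n_k})^{-1}h$ then lies in $[H,H]=H'$, yielding the decomposition
$$h\;=\;f_1^{n_1}\cdots f_k^{n_k}\cdot c.$$
Next I would verify that each factor on the right is in $C_{H'}$. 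For each $i$, writing $f_i=g_i^{-1}u_ig_i$ with $g_i\in G$ and $u_i\in H'$ (which is possible by the assumption $f_i\in C_{H'}$), we get $f_i^{n_i}=g_i^{-1}u_i^{n_i}g_i$, and since $H'$ is a subgroup we have $u_i^{n_i}\in H'$, so $f_i^{n_i}\in C_{u_i^{n_i}}\subseteq C_{H'}$. Moreover, $c\in H'\subseteq C_{H'}$ trivially, by conjugation with the identity of $G$. Thus $h$ has been realized as a product of exactly $k+1$ elements of $C_{H'}$, and since $h\in H$ was arbitrary, $H$ is $(k+1)$-boundedly covered by $H'$ in $G$.

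There is essentially no serious obstacle in this argument: the statement is a direct consequence of unpacking the definitions of weak generating set and bounded covering, combined with the elementary observations that conjugation commutes with taking powers and that $H'$ is closed under powers. The only point worth flagging is that the conjugators $g_i$ witnessing $f_i\in C_{H'}$ are permitted to lie in the ambient group $G$ rather than in $H$, which is precisely what the definition of $C_{H'}$ allows and what makes the conclusion "$(k+1)$-boundedly covered \emph{in} $G$" the correct formulation.
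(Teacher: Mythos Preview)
Your proof is correct and follows essentially the same approach as the paper: write an arbitrary element of $H$ as $f_1^{n_1}\cdots f_k^{n_k}c$ with $c\in H'$, then observe that each factor lies in $C_{H'}$. You have simply supplied more detail than the paper does (explicitly checking that $f_i^{n_i}\in C_{H'}$ via closure of $H'$ under powers, and noting where the conjugators live).
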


\begin{proof}
By the definition of a weak generating set, each element of $H$ can be written as an expression of the form 
$f_1^{l_1}...f_{k}^{l_k}h$ where $l_1,...,l_k\in \mathbf{Z}$ and $h\in H'$.
Since each of the elements $f_1^{l_1},...,f_{k}^{l_k},h$ is conjugate to an element of $H'$ by an element of $G$,
our conclusion follows.
\end{proof}

\subsection{The Higman-Thompson groups \texorpdfstring{$F_n$}.}\label{HigmanThompson}
For each $n\in \mathbf{N},n\geq 2$, the Higman-Thompson group $F_n$ is the group of orientation-preserving piecewise linear homeomorphisms $f:[0,1]\to [0,1]$ so that the slopes of $f$, whenever they exist, lie in $\{n^m\mid m\in \mathbf{Z}\}$ and breakpoints lie in $\mathbf{Z}[\frac{1}{n}]\cap (0,1)$. 
We shall refer to this as the \emph{standard action} of $F_n$.
In \cite{brown}, Brown proved the following. 

\begin{thm}\label{brownthm}
For each $n\geq 2$, the group $F_n$ is of type $\mathbf{F}_{\infty}$.
\end{thm}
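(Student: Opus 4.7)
The plan is to invoke Brown's criterion (Proposition \ref{brownscriterion}) by constructing a contractible CW complex $X$ on which $F_n$ acts cellularly with cocompact filtration and tractable cell stabilizers. The natural candidate is the Stein--Farley style complex whose vertices correspond to $n$-adic interval subdivisions of $[0,1]$ (equivalently, finite rooted $n$-ary trees with labelled leaves), and whose higher cells encode intervals in the poset of such subdivisions ordered by refinement. The standard action of $F_n$ on $[0,1]$ permutes subdivisions and induces a cell-permuting action on $X$.

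First I would verify contractibility of $X$: any two subdivisions admit a common refinement (take the coarsest joint refinement), so the refinement poset is directed and its order complex is contractible via a standard colimit/cone argument. Next I would identify the cell stabilizers. The stabilizer of the vertex corresponding to a subdivision with $k$ intervals is isomorphic to the direct product of $k$ copies of $F_n$, via restriction of an element to each subinterval; stabilizers of higher cells are finite-index subgroups of such products. Since the class of groups of type $\mathbf{F}_{\infty}$ is closed under finite direct products by Proposition \ref{extensionfiniteness}, these stabilizers are of type $\mathbf{F}_{\infty}$ as soon as $F_n$ itself is, so the argument must be bootstrapped along the filtration rather than used in a naive induction.

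The third step is to filter $X$ by $F_n$-invariant subcomplexes $X^{(\leq m)}$ consisting of cells supported on subdivisions with at most $m$ intervals. The quotient $X^{(\leq m)}/F_n$ has finitely many cells in each dimension, since there are only finitely many $F_n$-orbits of subdivisions with a bounded number of pieces. The technical heart of the argument --- and the main obstacle --- is a discrete Morse analysis for the leaf-count function, showing that the descending link of each vertex (a certain flag complex encoding ``caret collapses'' or matchings on $n$-ary forests) is $k$-connected whenever $m$ is large enough in terms of $k$. This connectivity is typically established through a nerve-theoretic or shellability argument on the combinatorics of $n$-ary carets. Once in place, applying Proposition \ref{brownscriterion} levelwise through the filtration yields that $F_n$ is of type $\mathbf{F}_{\infty}$.
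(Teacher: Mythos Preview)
The paper does not prove this theorem: it is quoted as a result of Brown \cite{brown} and used as a black box throughout, so there is no proof here to compare your attempt against.

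Your sketch follows the standard Stein--Farley route and is broadly correct, but two points need attention. First, Proposition~\ref{brownscriterion} as stated in this paper is the \emph{cocompact} form of Brown's criterion, whereas your complex of subdivisions is not $F_n$-cocompact; what you actually need is the filtered version (a directed system of cocompact subcomplexes whose connectivity tends to infinity), which is a genuinely different statement not recorded here. Second, the circularity you correctly identify --- vertex stabilizers are products $F_n^k$, so assuming they are of type $\mathbf{F}_\infty$ presupposes the conclusion --- is real, and ``bootstrap along the filtration'' is too vague as written. The standard fix is to prove type $\mathbf{F}_m$ by induction on $m$: the filtered Brown criterion for type $\mathbf{F}_m$ only requires cell stabilizers of type $\mathbf{F}_{m-1}$, which the inductive hypothesis supplies once one has checked the descending-link connectivity you describe. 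Alternatively, one can pass to a model on which $F_n$ acts freely (taking vertices to be tree-pair diagrams rather than subdivisions), which removes the circularity outright.
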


In \cite{brown}, Brown isolated the presentation: 
$F_n:=\langle(f_i)_{i\in \mathbf{N}}\mid f_i^{-1} f_jf_i=f_{j+n-1}\text{ for }i<j\rangle$
and proved:

\begin{thm}\label{abelianization}
The group $F_n$ admits a generating set of cardinality $n$, and its abelianization is $\mathbf{Z}^n$. The derived subgroup $F_n'$ is simple and every proper normal subgroup of $F_n$ contains $F_n'$. Moreover, if $F_n'\leq H\leq F_n$ then the derived subgroup of $H$ also equals $F_{n}'$.
\end{thm}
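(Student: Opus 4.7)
The plan is to derive the first two claims directly from Brown's presentation, then deduce the normal-subgroup structure from a standard support-shrinking commutator argument, and finally handle the intermediate-subgroup statement by a short abstract argument using simplicity.

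\textbf{Generating set and abelianization.} Setting $i=0$ in the relator $f_i^{-1}f_jf_i = f_{j+n-1}$ yields $f_{j+n-1} = f_0^{-1}f_jf_0$ for all $j\geq 1$, so a straightforward induction on $j$ shows that $\{f_0,\ldots,f_{n-1}\}$ generates $F_n$. Taking Brown's presentation as given, the abelianization of $F_n$ equals the abelianization of the presentation: modulo the commutator subgroup, the relators collapse to $f_j = f_{j+n-1}$ for every $j \geq 1$. The generator $f_0$ is unconstrained, and the remaining $\{f_j : j \geq 1\}$ partition into exactly $n-1$ equivalence classes under $j \mapsto j+n-1$, so the abelianization is free abelian of rank $n$.

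\textbf{Normal subgroup containment and simplicity of $F_n'$.} Let $N \trianglelefteq F_n$ be a nontrivial normal subgroup and pick $1 \neq g \in N$. Using the rich transitivity of the standard $F_n$-action on pairs of dyadic subintervals of $(0,1)$, I would choose $h_1, h_2 \in F_n$ with carefully placed supports so that the iterated commutator $[[g, h_1], h_2]$ is a nontrivial element of $N$ supported inside an arbitrarily small dyadic subinterval $J \subset (0,1)$. Conjugating by elements of $F_n$ then populates $N$ with nontrivial elements supported in every such $J$. Since every element of $F_n$ that fixes neighborhoods of $\{0,1\}$ decomposes as a finite product of elements each supported in some such $J$, $N$ contains all such elements. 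Identifying $F_n'$ as the common kernel of the $n$ abelianization cocycles (which can be expressed via slope and breakpoint data at $0$, $1$, and interior dyadic points) shows that every element of $F_n'$ is a product of elements fixing neighborhoods of $\{0,1\}$, giving $F_n' \subseteq N$. The identical support-shrinking scheme, carried out inside $F_n'$ (which still acts with enough transitivity on dyadic subintervals), shows that any nontrivial normal subgroup of $F_n'$ equals $F_n'$, hence $F_n'$ is simple.

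\textbf{Derived subgroups of intermediate $H$ and main obstacle.} If $F_n' \leq H \leq F_n$, then $[H,H] \leq [F_n,F_n] = F_n'$, while $[H,H]$ is normal in $H$ and therefore normalized by $F_n' \leq H$, so $[H,H]$ is a normal subgroup of the simple group $F_n'$. Non-triviality of $[H,H]$ (forced by the non-abelianness of $F_n'$) yields $[H,H] = F_n'$. The main obstacle I anticipate is the support-shrinking step: one must verify both that the iterated commutator $[[g,h_1],h_2]$ can be arranged to remain nontrivial while its support concentrates inside a prescribed dyadic $J$, and that the elements of $F_n$ fixing neighborhoods of $\{0,1\}$ really do exhaust $F_n'$. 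The latter identification requires a clean, explicit description of the $n$ abelianization cocycles in terms of PL slope/breakpoint invariants, and this is the technical heart of the argument; the rest reduces to routine manipulations.
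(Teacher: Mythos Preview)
The paper does not supply its own proof of this theorem; it is quoted as a result of Brown \cite{brown}, so there is no in-paper argument to compare against. Your plan is a sound outline of the standard proof.

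A few remarks. Your derivation of the generating set and abelianization from Brown's presentation is correct and complete. Your final paragraph on intermediate subgroups is also clean and correct: the only point to make explicit is that $F_n'$ is infinite (hence non-abelian simple), which forces $[H,H]\neq 1$.

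The middle part is, as you say, only a plan. One spot deserves care: you write that once $N$ contains nontrivial elements supported in every small dyadic interval $J$, and since every compactly supported element of $F_n$ is a product of elements each supported in some such $J$, you get $F_n'\subseteq N$. As stated this has a gap: knowing that $N$ contains \emph{some} nontrivial element in each $\Upsilon_{F_n}(J)$ does not immediately give you \emph{every} element supported in $J$, so the product decomposition alone is not enough. The usual fix is to use the nontrivial element of $N$ with small support together with proximality to run a Higman--Epstein style commutator trick showing directly that every commutator $[a,b]$ with $a,b\in F_n^c$ lies in $N$; since $F_n'=(F_n^c)'$ (equivalently, $F_n'\leq F_n^c$ and $F_n'$ is perfect), this yields $F_n'\subseteq N$. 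Your identification of $F_n'$ via slope/breakpoint cocycles is the right picture, but note that for $n>2$ the compactly supported subgroup $F_n^c$ strictly contains $F_n'$ (its abelianization is $\mathbf{Z}^{n-2}$, as in Lemma~\ref{Fncompactab}), so the interior cocycles really are needed to cut $F_n^c$ down to $F_n'$.
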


The following standard facts also emerge in \cite{brown}, and were also proved in \cite{HydeLodhaFPSimple} (see Lemma $3.3$). 
Recall that if $M$ is a connected $1$-manifold and $G\leq \textup{Homeo}^+(M)$, the action is \emph{proximal} if for every proper nonempty compact subset $U\subset M$ and nonempty open subset $V\subset M$,
there is an element $f\in G$ such that $U\cdot f\subset V$.

\begin{lem}\label{Fnproximal}
The standard actions of $F_n,F_n'$ on $(0,1)$ are proximal. Moreover, for each $a,b\in [0,1]\cap \mathbf{Z}[\frac{1}{n}]$ and $a<b$, it holds that
$\textup{Rstab}_{F_n}([a,b])\cong F_n$.
\end{lem}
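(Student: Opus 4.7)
The plan is to establish the three assertions---proximality of $F_n$, proximality of $F_n'$, and the rigid stabilizer isomorphism $\textup{Rstab}_{F_n}([a,b])\cong F_n$---using standard Higman--Thompson tree-pair techniques.

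\emph{Proximality of $F_n$.} Given a proper compact $U\subset(0,1)$ and a nonempty open $V\subset(0,1)$, I would first choose standard $n$-adic intervals $[a,b]$ and $[c,d]$ (i.e.\ of the form $[kn^{-m},(k+1)n^{-m}]$) with $U\subseteq[a,b]\subsetneq(0,1)$ and $[c,d]\subseteq V$. I then construct $f\in F_n$ mapping $[a,b]$ linearly onto $[c,d]$ with slope $(d-c)/(b-a)\in\{n^m:m\in\mathbf{Z}\}$, and extend to a PL self-homeomorphism of $[0,1]$ by subdividing $[0,a]$ and $[b,1]$ into standard $n$-adic pieces matched against subdivisions of $[0,c]$ and $[d,1]$; after sufficient refinement all piece-by-piece slopes can be made powers of $n$. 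Then $f(U)\subseteq[c,d]\subseteq V$.

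\emph{Proximality of $F_n'$.} Shrink $[c,d]$ further so that $[c,d]\subsetneq V$ with $n$-adic room on both sides, and obtain $f_0\in F_n$ with $f_0(U)\subseteq[c,d]$. I would then pick $\phi\in F_n$ supported in $(0,1)\setminus[c,d]$ with $\phi f_0\in F_n'$; since $\phi$ fixes $f_0(U)$ pointwise, $\phi f_0(U)=f_0(U)\subseteq V$. Such $\phi$ exists because the subgroup of $F_n$ supported in $(0,1)\setminus[c,d]$ contains the rigid stabilizers of $[0,c]$ and $[d,1]$ (each isomorphic to $F_n$ by the third step), and these surject onto $F_n^{\mathrm{ab}}\cong\mathbf{Z}^n$, whose generators are localized near the endpoints $0$ and $1$.

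\emph{Rigid stabilizer isomorphism.} When $b-a$ is a power of $n$, conjugation by the scaling $[a,b]\to[0,1]$ (a PL map with power-of-$n$ slope and $n$-adic breakpoints), extended by identity on $(0,1)\setminus[a,b]$, gives the isomorphism immediately. In general, I would proceed combinatorially: canonically decompose $[a,b]$ into standard $n$-adic subintervals and describe $\textup{Rstab}_{F_n}([a,b])$ via tree-pair data consisting of two refinements of this decomposition (by further $n$-fold subdivisions) with matched leaf counts and an order-preserving leaf bijection. The slope admissibility is automatic since ratios of standard $n$-adic lengths are powers of $n$, and the resulting tree-pair group is abstractly isomorphic to the tree-pair presentation of $F_n$ acting on $[0,1]$. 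The main obstacle is this non-standard case: no single PL conjugation realizes the isomorphism in general, so one must argue via the tree-pair combinatorial formalism and verify that admissibility conditions correspond on both sides.
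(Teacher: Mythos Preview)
The paper does not actually prove this lemma; it is quoted as a standard fact with references to Brown and to the authors' earlier paper (Lemma~3.3 there). Your outline is sound and follows the usual route. Two refinements are worth noting.

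For the proximality of $F_n'$, your justification that the generators of $F_n^{\mathrm{ab}}$ are ``localized near the endpoints $0$ and $1$'' is only literally correct for $n=2$. When $n>2$ the abelianization $\mathbf{Z}^n$ also records $n-2$ internal invariants (sums of slope-jumps over the $F_n$-orbits in $\mathbf{Z}[\tfrac{1}{n}]\cap(0,1)$, in the spirit of the paper's map~$\Xi$). Your conclusion still holds, though: $\textup{Rstab}_{F_n}([0,c])$ realizes an arbitrary slope at $0$ and arbitrary values of the internal invariants (each orbit meets $(0,c)$), so together with $\textup{Rstab}_{F_n}([d,1])$ it surjects onto $F_n/F_n'$, and the correcting element $\phi$ exists.

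For the rigid-stabilizer isomorphism, you correctly observe that a single PL conjugation with slopes in $n^{\mathbf{Z}}$ need not exist between $[a,b]$ and $[0,1]$ (e.g.\ $n=3$, $[a,b]=[0,\tfrac{2}{3}]$, obstructed by $\theta_3$). The fact underlying your tree-pair argument is that the forest-pair groups $F_{n,r}$ are all abstractly isomorphic to $F_n$ regardless of $r$; this is in Brown and is precisely what the combinatorial description you give establishes.
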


$F_{n}^c\leq F_{n}$ is the subgroup consisting of elements of $F_{n}$ whose germs at $0$ and $1$ are trivial. Equivalently, these are elements $f\in F_{n}$ for which there is some compact interval $I\subset (0,1)$ such that $Supp(f)\subset I$.
The following is a well known fact about this group \cite{brown}.
\begin{lem}\label{Fncompactab}
The derived subgroup of $F_{n}^c$ coincides with $F_{n}'$ and the abelianization of $F_{n}^c$ is isomorphic to $\mathbf{Z}^{n-2}$.
\end{lem}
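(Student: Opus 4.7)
The plan is to prove the two assertions in sequence, leaning on the structural facts about $F_n$ collected in Theorem~\ref{abelianization}.

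\textbf{Step 1: A homomorphism capturing the germs at the endpoints.} I will define
\[
\psi:F_n\to\mathbf{Z}^2,\qquad \psi(f)=\bigl(\log_n f'_+(0),\;\log_n f'_-(1)\bigr).
\]
This is well defined because slopes of elements of $F_n$ lie in $n^{\mathbf{Z}}$, and it is a homomorphism by the chain rule. Surjectivity will be shown by exhibiting an explicit element with log-slope data $(-1,0)$ (and, symmetrically, $(0,-1)$); for instance, for $n=2$, the map that is $t\mapsto t/2$ on $[0,1/2]$, linear with slope $2$ on $[1/2,3/4]$, and the identity on $[3/4,1]$ has log-slopes $(-1,0)$. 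The analogous construction works for arbitrary $n\ge 2$.

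\textbf{Step 2: Identification of $F_n^c$ as $\ker\psi$.} An element of $F_n$ fixing $0$ and $1$ has slope $1$ near an endpoint if and only if it is the identity on a neighborhood of that endpoint (since it is piecewise linear and fixes the endpoint). Thus $\ker\psi$ is precisely the set of elements with trivial germs at $0$ and $1$, namely $F_n^c$. In particular, $F_n^c\trianglelefteq F_n$ and $F_n/F_n^c\cong\mathbf{Z}^2$.

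\textbf{Step 3: $F_n'\subseteq F_n^c$.} Because $F_n^c$ is normal in $F_n$, the intersection $F_n'\cap F_n^c$ is normal in $F_n'$. Theorem~\ref{abelianization} asserts that $F_n'$ is simple, so this intersection is either trivial or all of $F_n'$. It is clearly nontrivial: pick any two elements of $F_n$ whose supports are contained in a common compact subinterval of $(0,1)$ and do not commute (e.g.\ two generators of $\mathrm{Rstab}_{F_n}([1/n,1-1/n])\cong F_n$, cf.\ Lemma~\ref{Fnproximal}); their commutator lies in $F_n'\cap F_n^c$. Hence $F_n'\subseteq F_n^c$.

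\textbf{Step 4: Computing the derived subgroup and abelianization.} With $F_n'\leq F_n^c\leq F_n$, the last clause of Theorem~\ref{abelianization} gives $[F_n^c,F_n^c]=F_n'$, proving the first assertion. For the second, the containment $F_n'\leq F_n^c\leq F_n$ yields a short exact sequence of abelian groups
\[
0\longrightarrow F_n^c/F_n'\longrightarrow F_n/F_n'\longrightarrow F_n/F_n^c\longrightarrow 0,
\]
which by Steps~1--2 and Theorem~\ref{abelianization} reads $0\to F_n^c/F_n'\to \mathbf{Z}^n\to\mathbf{Z}^2\to 0$. Since $\mathbf{Z}^2$ is free, the sequence splits, forcing $F_n^c/[F_n^c,F_n^c]=F_n^c/F_n'\cong\mathbf{Z}^{n-2}$.

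The only step with any content is Step~3 (the inclusion $F_n'\subseteq F_n^c$); everything else is bookkeeping around the chain rule and the free-abelian splitting. I anticipate no genuine obstacle, as the simplicity of $F_n'$ quoted from Theorem~\ref{abelianization} reduces that step to exhibiting a single nontrivial compactly supported commutator.
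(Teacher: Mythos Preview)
Your argument is correct. Note, however, that the paper does not actually give a proof of this lemma: it simply records it as ``a well known fact'' and cites \cite{brown}. So there is no paper-side proof to compare against; your write-up supplies what the paper omits, and it follows the standard route (the germ homomorphism to $\mathbf{Z}^2$, then the sandwich $F_n'\leq F_n^c\leq F_n$).

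One minor simplification: Step~3 is more elaborate than necessary. Once you have established in Step~2 that $F_n/F_n^c\cong\mathbf{Z}^2$ is abelian, the inclusion $F_n'\subseteq F_n^c$ is automatic, since $F_n'$ is by definition the intersection of all normal subgroups with abelian quotient. Invoking the simplicity of $F_n'$ and exhibiting a nontrivial compactly supported commutator is valid but superfluous. With this shortcut, the whole proof collapses to Steps~1, 2, and~4, and the last clause of Theorem~\ref{abelianization} does the remaining work.
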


In \cite{GalGis} (Theorem $1.1$), it was shown that groups that admit a proximal order-preserving action on a linearly ordered set, for which all elements have bounded support, satisfy that their derived subgroups are $6$-uniformly simple. Since $F_n'$ admits such an action on $(0,1)$ and $F_n'=F_n''$, we have the following:
\begin{thm}\label{commwid}
(Gal, Gismatullin \cite{GalGis}) The group $F_n'$ is $6$-uniformly simple.
\end{thm}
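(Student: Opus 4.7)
The plan is to apply Theorem $1.1$ of \cite{GalGis} as quoted in the excerpt: the derived subgroup of any group admitting a proximal, order-preserving action on a linearly ordered set in which every element has bounded support is $6$-uniformly simple. The strategy is therefore to verify these two hypotheses for $F_n'$ acting on $(0,1)$, and then to identify the resulting $6$-uniformly simple derived subgroup with $F_n'$ itself.

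First, I would take the standard action of $F_n'$ on the linearly ordered set $(0,1)$. Proximality of this action is precisely the first assertion of Lemma \ref{Fnproximal}, so this hypothesis is immediate. Second, I would check that every element of $F_n'$ has bounded support in $(0,1)$. By Lemma \ref{Fncompactab}, the derived subgroup of $F_n^c$ equals $F_n'$, so in particular $F_n' \leq F_n^c$; and by definition every element of $F_n^c$ has support contained in a compact subinterval of $(0,1)$, which is bounded with respect to the order on $(0,1)$.

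Third, I would invoke the Gal--Gismatullin theorem with input $F_n'$ to conclude that the derived subgroup $[F_n',F_n'] = F_n''$ is $6$-uniformly simple. To upgrade this to $F_n'$ itself, I would apply the final clause of Theorem \ref{abelianization}: for any subgroup $H$ with $F_n' \leq H \leq F_n$, the derived subgroup of $H$ is again $F_n'$. Taking $H = F_n'$ yields $F_n'' = F_n'$, so $F_n'$ is $6$-uniformly simple as required.

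There is no substantive obstacle in this proof beyond the verifications above, since all the difficulty has been abstracted into the cited result of Gal--Gismatullin. The one conceptual point worth highlighting is the self-perfection $F_n' = F_n''$: without this, the cited theorem would only yield uniform simplicity of $F_n''$ rather than of $F_n'$ itself, and it is precisely Brown's description of normal subgroups of $F_n$ recorded in Theorem \ref{abelianization} that closes this gap.
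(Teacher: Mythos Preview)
Your proposal is correct and follows exactly the argument sketched in the paper: apply the Gal--Gismatullin criterion to the proximal action of $F_n'$ on $(0,1)$ with bounded supports, then use $F_n''=F_n'$ to pass from the derived subgroup back to $F_n'$ itself. You have simply supplied explicit references (Lemmas \ref{Fnproximal}, \ref{Fncompactab}, and Theorem \ref{abelianization}) for the ingredients the paper cites more tersely.
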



Define the map: $$\theta_n:\mathbf{Z}\left[\frac{1}{n}\right]\to \mathbf{Z}/(n-1)\mathbf{Z}\qquad \frac{k}{n^m}\mapsto k (\text{mod }n-1)$$ 
We observed the following in \cite{HydeLodhaFPSimple} (Proposition $3.5$).

\begin{prop}\label{Fntrans}
Fix $n\in \mathbf{N},n\geq 2$. For each $k\geq 1$, consider two linearly ordered $k$-tuples $(x_1,...,x_k)$ and $(y_1,...,y_k)$ in $\mathbf{Z}[\frac{1}{n}]\cap (0,1)$ (here the linear order is induced from the natural one on $\mathbf{R}$). Then the following are equivalent:
\begin{enumerate}
\item There is an element $f\in F_n$ such that $x_i\cdot f= y_i$ for each $1\leq i\leq k$.
\item There is an element $f\in F_n'$ such that $x_i\cdot f= y_i$ for each $1\leq i\leq k$.
\item $\theta_n(x_i)=\theta_n(y_i)$ for each $1\leq i\leq k$. 
\end{enumerate}
In particular, the orbits of the action of $F_n,F_n',F_n^c$ are precisely the fibers of $\theta_n$ in $(0,1)$, and the set $\{\frac{1}{n},...,\frac{n-1}{n}\}$ is a transversal.
\end{prop}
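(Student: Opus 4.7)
The direction (2) $\Rightarrow$ (1) is immediate since $F_n' \leq F_n$. For (1) $\Rightarrow$ (3), my plan is to first observe that $\theta_n$ is a homomorphism from the additive group $(\mathbf{Z}[\tfrac{1}{n}], +)$ to $\mathbf{Z}/(n-1)\mathbf{Z}$ (using $n \equiv 1 \pmod{n-1}$) and that $\theta_n(n^s y) = \theta_n(y)$ for every $s \in \mathbf{Z}$. Given $f \in F_n$ and $x \in \mathbf{Z}[\tfrac{1}{n}] \cap (0,1)$, suppose $f$ is linear on $[a,b] \ni x$ with slope $n^s$, where $a \in \mathbf{Z}[\tfrac{1}{n}]$. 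Then $x \cdot f = a \cdot f + n^s (x - a)$, and therefore
\[
\theta_n(x \cdot f) = \theta_n(a \cdot f) + \theta_n(n^s(x-a)) = \theta_n(a \cdot f) + \theta_n(x - a).
\]
An induction on the number of breakpoints of $f$ in $[0, x]$, with base case $a = 0$ (where $a \cdot f = 0$), then yields $\theta_n(x \cdot f) = \theta_n(x)$.

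For (3) $\Rightarrow$ (2), which is the substantive direction, my plan is induction on $k$. For the base case $k=1$: given $x, y \in \mathbf{Z}[\tfrac{1}{n}] \cap (0, 1)$ with $\theta_n(x) = \theta_n(y)$, I would first construct some $f_0 \in F_n$ with $x \cdot f_0 = y$ by piecing together PL maps $[0, x] \to [0, y]$ and $[x, 1] \to [y, 1]$; any two such intervals with dyadic endpoints are PL-equivalent via maps with slopes in $\{n^m : m \in \mathbf{Z}\}$ and breakpoints in $\mathbf{Z}[\tfrac{1}{n}]$. Then I would multiply $f_0$ by a suitable element of the point stabilizer $\textup{Stab}_{F_n}(y) = \textup{Rstab}_{F_n}([0, y]) \times \textup{Rstab}_{F_n}([y, 1])$ chosen to annihilate the class $[f_0] \in F_n / F_n' \cong \mathbf{Z}^n$ (Theorem \ref{abelianization}), yielding an element of $F_n'$ still mapping $x$ to $y$. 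The key input is that $\textup{Stab}_{F_n}(y)$ surjects onto $F_n / F_n'$: each rigid-stabilizer factor is isomorphic to $F_n$ (Lemma \ref{Fnproximal}), and together the two factors realize the two endpoint germ invariants along with the $n-2$ ``interior'' invariants furnished by $F_n^c / F_n' \cong \mathbf{Z}^{n-2}$ (Lemma \ref{Fncompactab}). For the inductive step, apply the base case to produce $f_1 \in F_n'$ with $x_1 \cdot f_1 = y_1$; by the already-established (1) $\Rightarrow$ (3) applied to $f_1$, the tuples $(x_2 \cdot f_1, \dots, x_k \cdot f_1)$ and $(y_2, \dots, y_k)$ inside $(y_1, 1) \cap \mathbf{Z}[\tfrac{1}{n}]$ still have matching $\theta_n$-values, and the inductive hypothesis applied inside $\textup{Rstab}_{F_n'}([y_1, 1]) \cong F_n'$ produces the required element.

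I expect the main obstacle to be the base-case surjectivity $\textup{Stab}_{F_n}(y) \to F_n / F_n'$, which requires a reasonably explicit grasp of the $n$ generators of the abelianization and verification that both rigid-stabilizer factors collectively hit each of them. A minor subtlety in the inductive step is that the isomorphism $\textup{Rstab}_{F_n}([y_1, 1]) \cong F_n$ coming from rescaling can shift $\theta_n$ by a global constant on $[y_1, 1] \cap \mathbf{Z}[\tfrac{1}{n}]$, but since this shift is uniform, the matching $\theta_n$-condition is preserved on both sides. The final assertions about orbits follow from the equivalence applied at $k = 1$ (and the sandwich $F_n' \leq F_n^c \leq F_n$ for $F_n^c$), while the transversal claim is immediate from $\theta_n(i/n) = i \bmod (n-1)$ for $1 \leq i \leq n-1$ enumerating $\mathbf{Z}/(n-1)\mathbf{Z}$ bijectively.
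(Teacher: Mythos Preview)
The paper does not actually prove this proposition; it is quoted from \cite{HydeLodhaFPSimple} (Proposition~3.5), so there is no in-paper argument to compare against and your outline must stand on its own.

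Your overall strategy is the standard one and is correct, but the base case of (3) $\Rightarrow$ (2) contains a real gap in justification. You assert that ``any two such intervals with dyadic endpoints are PL-equivalent via maps with slopes in $\{n^m : m \in \mathbf{Z}\}$ and breakpoints in $\mathbf{Z}[\tfrac{1}{n}]$'', but for $n>2$ this is false: the intervals $[0,x]$ and $[0,y]$ with $x,y\in\mathbf{Z}[\tfrac1n]\cap(0,1)$ are PL-equivalent in this sense \emph{if and only if} $\theta_n(x)=\theta_n(y)$. (For $n=3$ there is no such map $[0,\tfrac13]\to[0,\tfrac23]$: subdividing a standard $n$-adic interval changes the leaf count by a multiple of $n-1$, and $1\not\equiv 2\pmod 2$.) Your use of the word ``dyadic'' suggests you had the case $n=2$ in mind, where the claim is indeed unconditional. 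The repair is immediate---the hypothesis $\theta_n(x)=\theta_n(y)$ is exactly what allows $[0,x]$ and $[0,y]$ (and symmetrically $[x,1]$ and $[y,1]$) to be subdivided into the same number of standard $n$-adic subintervals---but you must say so, since this is where the hypothesis of (3) is actually consumed.

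Two smaller points. In the inductive step you should write $\textup{Rstab}_{F_n}([y_1,1])'$ rather than $\textup{Rstab}_{F_n'}([y_1,1])$: it is the derived subgroup of the rigid stabilizer (a copy of $F_n$ by Lemma~\ref{Fnproximal}) that you are invoking, and that derived subgroup automatically lies in $F_n'$; the group you wrote need not coincide with it. And your surjectivity argument for $\textup{Stab}_{F_n}(y)\to F_n/F_n'$ is correct but terse: it is worth noting explicitly that the $n-2$ ``interior'' invariants are realized by elements supported in an arbitrarily small subinterval (since each orbit $O_i$ is dense), hence already by $\textup{Rstab}^c_{F_n}([0,y])$ alone.
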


The following lemma was also proved in \cite{HydeLodhaFPSimple} (Lemma $3.7$).

\begin{lem}\label{lemorbits2}
If $f:\mathbf{R}_{\geq 0}\to \mathbf{R}_{\geq 0}$ is a piecewise linear homeomorphism with slopes in 
$\{n^k\mid k\in \mathbf{Z}\}$ and breakpoints in $\mathbf{Z}[\frac{1}{n}]$, then $\theta_n(x\cdot f)=\theta_n(x)$ for each $x\in \mathbf{Z}[\frac{1}{n}], x\geq 0$.
\end{lem}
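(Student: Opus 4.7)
The plan is to exploit the fact that $\theta_n$ is a group homomorphism from the additive group $(\mathbf{Z}[\frac{1}{n}], +)$ to $\mathbf{Z}/(n-1)\mathbf{Z}$ which, moreover, is invariant under multiplication by powers of $n$. The first step is to verify these two algebraic properties. Both follow from the observation that $n \equiv 1 \pmod{n-1}$: if $x = k/n^m$ and $y = l/n^m$ are written with a common denominator, then $\theta_n(x+y) = (k+l) \pmod{n-1} = \theta_n(x)+\theta_n(y)$, while $\theta_n(n^j \cdot x) = k n^j \pmod{n-1} = k \pmod{n-1} = \theta_n(x)$ for any $j \in \mathbf{Z}$.

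Next, I would list the breakpoints of $f$ in $[0,\infty)$ as $0 = b_0 < b_1 < b_2 < \cdots$ (possibly finite or accumulating only at $\infty$), all of which lie in $\mathbf{Z}[\frac{1}{n}]$ by hypothesis. Since $f$ is a homeomorphism of $\mathbf{R}_{\geq 0}$, we have $b_0 \cdot f = 0$. I would then establish by induction on $i$ that $\theta_n(b_i \cdot f) = \theta_n(b_i)$ and that $b_i \cdot f \in \mathbf{Z}[\frac{1}{n}]$. For the inductive step, $f$ restricted to $[b_i, b_{i+1}]$ is linear with slope $n^k$ for some $k \in \mathbf{Z}$, so
\[
b_{i+1} \cdot f \;=\; b_i \cdot f + n^k(b_{i+1} - b_i).
\]
Applying $\theta_n$ and using both the homomorphism and scale-invariance properties above yields
\[
\theta_n(b_{i+1} \cdot f) - \theta_n(b_i \cdot f) \;=\; \theta_n\bigl(n^k(b_{i+1}-b_i)\bigr) \;=\; \theta_n(b_{i+1}) - \theta_n(b_i),
\]
and the inductive hypothesis gives the conclusion (including $b_{i+1}\cdot f \in \mathbf{Z}[\frac{1}{n}]$).

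Finally, for an arbitrary $x \in \mathbf{Z}[\frac{1}{n}]$ with $x \geq 0$, there is either a unique $i$ with $b_i \leq x < b_{i+1}$ or $x \geq b_i$ for every breakpoint; on the corresponding linear piece, $x \cdot f = b_i \cdot f + n^k(x - b_i)$ for the appropriate slope $n^k$, so exactly the same computation as above gives $\theta_n(x \cdot f) = \theta_n(x)$. There is no real obstacle here; the only points requiring a moment's care are the two algebraic properties of $\theta_n$ in the first step and the observation that the image under $f$ of each breakpoint indeed lies in $\mathbf{Z}[\frac{1}{n}]$ (so that $\theta_n$ may be applied at each stage of the induction), both of which are immediate.
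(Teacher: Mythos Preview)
Your proof is correct. The paper does not actually prove this lemma; it simply cites it as Lemma~3.7 of \cite{HydeLodhaFPSimple}. Your argument---first verifying that $\theta_n$ is an additive homomorphism on $\mathbf{Z}[\frac{1}{n}]$ that is invariant under multiplication by powers of $n$, then inducting along the (locally finite) sequence of breakpoints starting from $0\cdot f=0$---is the natural and standard way to establish this fact, and is essentially what one would expect the cited reference to contain.
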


Finally, we shall need one more ingredient which was also proved in \cite{HydeLodhaFPSimple} (Lemma $3.16$).
For a closed interval $J\subset (0,1)$, we denote by $\textup{Rstab}_{F_n}^c(J)$ the group consisting of elements 
in $\textup{Rstab}_{F_n}(J)$ whose germs at $inf(J),sup(J)$ are trivial.

\begin{lem}\label{FPlem2}
Consider the standard action of $F_{n}$ on $[0,1]$.
Let $I, J\subset [0,1)$ be closed intervals with endpoints in $\mathbf{Z}[\frac{1}{n}]$ such that $J\subset \Int(I)$. 
Then $\textup{Rstab}_{F_n}(I)'\cap \textup{Rstab}_{F_n}^c(J)=\textup{Rstab}_{F_n}(J)'$.
\end{lem}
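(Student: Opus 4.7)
The forward inclusion $\textup{Rstab}_{F_n}(J)' \subseteq \textup{Rstab}_{F_n}(I)' \cap \textup{Rstab}_{F_n}^c(J)$ is immediate: since $\textup{Rstab}_{F_n}(J) \le \textup{Rstab}_{F_n}(I)$, commutators in the smaller group are commutators in the larger one, and combining Lemma \ref{Fnproximal} (giving $\textup{Rstab}_{F_n}(J) \cong F_n$) with Lemma \ref{Fncompactab} transported via this isomorphism yields $\textup{Rstab}_{F_n}(J)' \subseteq \textup{Rstab}_{F_n}^c(J)$.

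For the reverse inclusion, my plan is to build homomorphisms on $\textup{Rstab}_{F_n}(J)$ that jointly detect its abelianization and show they all vanish on any $f \in \textup{Rstab}_{F_n}(I)' \cap \textup{Rstab}_{F_n}^c(J)$. For each residue class $k \in \mathbf{Z}/(n-1)\mathbf{Z}$, I define
\[
\psi_k \colon F_n \to \mathbf{Z}, \qquad \psi_k(g) \;=\; \sum_{x \in \theta_n^{-1}(k) \cap (0,1)} \bigl(\log_n g'(x^+) - \log_n g'(x^-)\bigr),
\]
a finite sum because $g$ has finitely many breakpoints. The chain rule, together with Lemma \ref{lemorbits2} (which ensures $g$ permutes each $F_n$-orbit $\theta_n^{-1}(k)$), shows $\psi_k$ is a group homomorphism. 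Define analogously $\psi_k^I$ on $\textup{Rstab}_{F_n}(I)$ (summing jumps over orbits in $\Int(I)$), $\psi_k^J$ on $\textup{Rstab}_{F_n}(J)$ (summing jumps over orbits in $\Int(J)$), and the endpoint log-slope homomorphisms $\chi_0^J, \chi_1^J \colon \textup{Rstab}_{F_n}(J) \to \mathbf{Z}$. By Lemma \ref{Fnproximal} and Theorem \ref{abelianization} we have $\textup{Rstab}_{F_n}(J)^{ab} \cong \mathbf{Z}^n$, and I will verify that these $n+1$ homomorphisms, subject to the single relation $\sum_k \psi_k^J = \chi_1^J - \chi_0^J$, span the dual of this abelianization.

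Granted the spanning claim, the argument concludes quickly. For $f \in \textup{Rstab}_{F_n}(I)' \cap \textup{Rstab}_{F_n}^c(J)$, membership in $\textup{Rstab}_{F_n}^c(J)$ gives $\chi_0^J(f) = \chi_1^J(f) = 0$ and places every breakpoint of $f$ in $\Int(J)$. Since the $F_n$-orbit decomposition of $(0,1) \cap \mathbf{Z}[\frac{1}{n}]$ restricts to both the $\textup{Rstab}_{F_n}(I)$- and $\textup{Rstab}_{F_n}(J)$-orbit decompositions on $\Int(I)$ and $\Int(J)$ respectively (Proposition \ref{Fntrans}), the values $\psi_k^I(f)$ and $\psi_k^J(f)$ coincide. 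Because $\psi_k^I$ is a homomorphism vanishing on $\textup{Rstab}_{F_n}(I)'$, we get $\psi_k^J(f) = 0$ for all $k$, and the spanning statement then forces $f \in \textup{Rstab}_{F_n}(J)'$.

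The main obstacle is the spanning claim itself. Via the rescaling isomorphism of Lemma \ref{Fnproximal}, it reduces to showing that on $F_n$ the homomorphisms $\chi_0, \chi_1, \psi_0, \ldots, \psi_{n-2}$ induce an injection $F_n^{ab} \hookrightarrow \mathbf{Z}^{n+1}$. Since $F_n^{ab} \cong \mathbf{Z}^n$ and these $n+1$ homomorphisms satisfy one relation, it suffices to exhibit $n$ elements of $F_n$ — for instance Brown's standard generators $f_0, \ldots, f_{n-1}$ — whose images in $\mathbf{Z}^{n+1}$ are linearly independent modulo that relation; this is a direct but slightly tedious piecewise-linear computation.
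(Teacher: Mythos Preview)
The paper does not actually prove this lemma; it is quoted from \cite{HydeLodhaFPSimple} (Lemma~3.16), so there is no in-paper proof to compare against. That said, your approach is sound and in fact anticipates machinery the paper builds later: your orbit-indexed slope-jump homomorphisms $\psi_k$ are precisely (up to indexing) the maps $\Xi_i$ defined in Section~\ref{minimalweak}, and your ``spanning claim'' that their common kernel on $F_n^c$ is $F_n'$ is exactly the content of Lemma~\ref{weakbasis}(3) in the special case $n$ replaced by $2^n$. So your argument is very much in the spirit of the paper.

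Two small points worth tightening. First, the appeal to Proposition~\ref{Fntrans} for the restricted orbit decomposition is slightly imprecise: you do not actually need to know the $\textup{Rstab}_{F_n}(I)$-orbits, only that each set $\theta_n^{-1}(k)\cap\Int(I)$ is $\textup{Rstab}_{F_n}(I)$-invariant (immediate from Lemma~\ref{lemorbits2}), which is all that is required for $\psi_k^I$ to be a homomorphism and to agree with $\psi_k^J$ on elements supported in $\Int(J)$. Second, when you transport the spanning claim to the standard copy of $F_n$ via the conjugating PL map $J\to[0,1]$, note that this map may shift $\theta_n$-values by a constant; this only permutes the collection $\{\psi_k\}$, so the spanning claim is unaffected, but it is worth saying. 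The deferred computation with Brown's generators is routine; alternatively you can imitate the paper's proof of Lemma~\ref{weakbasis}(2)--(3) by exhibiting bump functions analogous to the $\zeta_k$, which makes the linear independence transparent after reduction mod~$2$.
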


We define the \emph{$1$-periodic action of $F_{n}$} as the unique $1$-periodic subgroup of $\textup{Homeo}^+(\mathbf{R})$ which pointwise fixes $\mathbf{Z}$ and whose restriction to $[0,1]$ coincides with the standard action of $F_n$ defined above. Note that since this is $1$-periodic, it will act in a similar way on each interval $[n,n+1]$ for $n\in \mathbf{Z}$.

\section{The groups \texorpdfstring{$\{\Omega_n\}_{n\geq 2}$}.}
In this section we shall prove various structural results concerning the groups $\{\Omega_n\}_{n\geq 2}$, which shall help us prove our main theorem.
The following lemma follows immediately from our definitions.
We alert the reader to the fact that $\mathbf{Z}[\frac{1}{2^n}]=\mathbf{Z}[\frac{1}{2}]$.

\begin{lem}\label{HigmanSubgroups}
The stabilizer of $0$ in $\Omega_n$ coincides with the $1$-periodic action of the Higman-Thompson group $F_{2^n}$.
\end{lem}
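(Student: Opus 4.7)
The plan is to verify the equality $\textup{Stab}_{\Omega_n}(0) = $ ($1$-periodic action of $F_{2^n}$) by checking each containment, with the key observation being that condition (3) of Definition \ref{ourgroup} becomes vacuous on any interval whose endpoints are fixed integers.

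First, I would take $f \in \textup{Stab}_{\Omega_n}(0)$. Since $f$ commutes with $t \mapsto t + 1$ by condition (2) and $0 \cdot f = 0$, an easy induction gives that $f$ pointwise fixes $\mathbf{Z}$. Hence $f$ preserves each unit interval $[k, k+1]$, and $1$-periodicity reduces the problem to understanding $f|_{[0,1]}$. For $x \in (0,1) \setminus \mathbf{Z}[\frac{1}{2}]$, monotonicity and the fact that $f$ fixes $0$ and $1$ give $x \cdot f \in (0,1)$, so no integer lies between $x$ and $x \cdot f$, and $(x, x \cdot f)_{\mathbf{Z}} = 0$. Condition (3) then forces $\log_2(x \cdot f') \equiv 0 \pmod n$, i.e., the slopes of $f|_{[0,1]}$ all lie in $\{(2^n)^m : m \in \mathbf{Z}\}$. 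Combined with condition (1) and the identity $\mathbf{Z}[\frac{1}{2}] = \mathbf{Z}[\frac{1}{2^n}]$, this is exactly the defining data of an element of $F_{2^n}$ in its standard action on $[0,1]$.

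For the converse, let $g$ be the $1$-periodic extension of an element of the standard $F_{2^n}$. Condition (2) holds by construction, and condition (1) is immediate since the slopes of $g$ are powers of $2^n$ (in particular powers of $2$), the breakpoints lie in $\mathbf{Z}[\frac{1}{2^n}] = \mathbf{Z}[\frac{1}{2}]$, and $g$ preserves $\mathbf{Z}[\frac{1}{2}]$. For condition (3), if $x \in (k, k+1) \setminus \mathbf{Z}[\frac{1}{2}]$ then $x \cdot g \in (k, k+1)$ since $g$ fixes each integer, so $(x, x \cdot g)_{\mathbf{Z}} = 0$, while $\log_2(x \cdot g')$ is a multiple of $n$; the congruence $0 \equiv 0 \pmod n$ is satisfied.

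There is no substantive obstacle here: the statement is essentially a calculation unpacking the definitions, and no prior Lemma from the excerpt is needed. The only mild point that deserves to be flagged explicitly is the identification $\mathbf{Z}[\frac{1}{2}] = \mathbf{Z}[\frac{1}{2^n}]$ together with the observation that the slope alphabet $\{s : \log_2 s \equiv 0 \pmod n\}$ prescribed by condition (3) on a fixed unit interval coincides with the slope alphabet $\{(2^n)^m\}$ of $F_{2^n}$.
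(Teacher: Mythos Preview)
Your proposal is correct and is precisely the direct verification the paper has in mind: the paper states that the lemma ``follows immediately from our definitions'' and elsewhere explicitly invites the reader to check it, without supplying a proof. Your argument unpacks exactly the two points the paper alludes to, namely that fixing $\mathbf{Z}$ forces $(x,x\cdot f)_{\mathbf{Z}}=0$ so condition~(3) collapses to the $F_{2^n}$ slope condition, and that $\mathbf{Z}[\tfrac{1}{2}]=\mathbf{Z}[\tfrac{1}{2^n}]$ handles the breakpoint condition.
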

We will denote this $1$-periodic subgroup as $F_{2^n}$ throughout the rest of the article.
Also, throughtout this article, we fix the notation:
$$t_n:\mathbf{R}\to \mathbf{R}\qquad x\cdot t_n=x+n\qquad \Lambda_n=\langle t_n\rangle=\{ x\to x+m\mid m\in n\mathbf{Z}\}$$
\begin{lem}\label{center}
The center of $\Omega_n$, denoted as $\mathcal{Z}(\Omega_n)$, coincides with $\Lambda_n$.
\end{lem}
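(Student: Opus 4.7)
The plan is to prove the two containments separately. The containment $\Lambda_n \subseteq \mathcal{Z}(\Omega_n)$ is immediate: condition (2) of Definition \ref{ourgroup} forces every element of $\Omega_n$ to commute with the unit translation, and one checks directly (via the derivative condition) that $t_n \in \Omega_n$, so every element of $\Lambda_n = \langle t_n \rangle$ is central.

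For the reverse containment, let $f \in \mathcal{Z}(\Omega_n)$; the goal is to show first that $f$ must be a translation, and then to use condition (3) to pin down which translations are allowed. The first step is to invoke Lemma \ref{HigmanSubgroups} and observe that $f$ centralizes the $1$-periodic copy of $F_{2^n}$ inside $\Omega_n$. Because $F_{2^n}$ pointwise fixes $\mathbf{Z}$ and acts minimally on each $(m,m+1)$, its global fixed set in $\mathbf{R}$ is exactly $\mathbf{Z}$, and any centralizing homeomorphism must preserve it; combined with the $1$-periodicity of $f$, this yields $f(m) = m + k$ for all $m \in \mathbf{Z}$, where $k := f(0)$. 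I would then introduce the auxiliary homeomorphism $\phi := f \circ t_{-k}$ of $\mathbf{R}$ (treated purely as an element of $\Homeo^+(\mathbf{R})$, since $t_{-k}$ need not lie in $\Omega_n$). Because $t_{-k}$ commutes both with the $1$-periodic map $f$ and with every element of the $1$-periodic group $F_{2^n}$, the map $\phi$ also centralizes $F_{2^n}$, fixes $0$, and preserves $(0,1)$ setwise. Restricting $\phi$ to $(0,1)$, I would apply Lemma \ref{Fnproximal} together with the standard fact that a proximal action admits no nontrivial centralizer in $\Homeo^+$ of the underlying interval, to conclude $\phi|_{(0,1)} = \mathrm{id}$. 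By $1$-periodicity this gives $f = t_k$ on all of $\mathbf{R}$.

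To finish, I would plug $f = t_k$ back into condition (3) of Definition \ref{ourgroup}: for $x \in (0,1) \setminus \mathbf{Z}[\tfrac{1}{2}]$ the derivative is $1$, so $\log_2(x \cdot t_k') = 0$, while $(x, x+k)_{\mathbf{Z}} = k$. Thus condition (3) forces $k \equiv 0 \pmod{n}$, i.e., $f \in \Lambda_n$, completing the equality $\mathcal{Z}(\Omega_n) = \Lambda_n$.

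The hard part will be the proximal-centralizer step, i.e., ruling out that $\phi|_{(0,1)}$ could be a nontrivial commuting homeomorphism. The cleanest route is the classical argument via proximality: if $\phi(x) \neq x$, choose a small open $U \ni x$ with $\phi(U) \cap U = \emptyset$ and a compact $K \supset \{x, \phi(x)\}$, then use Lemma \ref{Fnproximal} to produce $g \in F_{2^n}$ with $K \cdot g \subset U$, and derive a contradiction from $\phi(x \cdot g) = \phi(x) \cdot g \in U$ while $x \cdot g \in U$. An alternative route is to exhibit, for each dyadic rational $p \in (0,1) \cap \mathbf{Z}[1/2^n]$, an element of $F_{2^n}$ whose only fixed point in $(0,1)$ is $p$, and then note that $\phi$ must fix the dense set of such $p$.
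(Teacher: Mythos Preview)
Your proof is correct and follows essentially the same approach as the paper: both reduce to showing that the centralizer of the $1$-periodic copy of $F_{2^n}$ in $\Homeo^+(\mathbf{R})$ consists exactly of integer translations, and then intersect with $\Omega_n$ using condition~(3) (equivalently, the computation $\langle t\mapsto t+m \mid m\in\mathbf{Z}\rangle \cap \Omega_n = \Lambda_n$ already recorded in Definition~\ref{ourgroup}). The only difference is that the paper asserts the centralizer statement as ``easy to see'' without further justification, whereas you supply an explicit argument via proximality (Lemma~\ref{Fnproximal}); your detailed version is a legitimate unpacking of that claim.
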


\begin{proof}
It is easy to see that the centralizer of the $1$-periodic action of $F_{2^n}$ in $\textup{Homeo}^+(\mathbf{R})$ coincides with the cyclic group of integer translations. Indeed, this also centralizes $\Omega_n$ in $\textup{Homeo}^+(\mathbf{R})$. We conclude by the fact (mentioned after Definition \ref{ourgroup}) that: $$\langle t\to t+m\mid m\in \mathbf{Z}\rangle\cap \Omega_n=\langle t\to t+m\mid m\in n\mathbf{Z}\rangle=\Lambda_n$$
\end{proof}

The following is a key lemma that allows us to study the orbits of the action of $\Omega_n$ on $\mathbf{R}$.
\begin{lem}\label{GammaOrbits}
Let $f\in \Omega_n$ and $x\in [0,1]\cap \mathbf{Z}[\frac{1}{2}]$ satisfy $y=x\cdot f\in [0,1]$. Then we have $\theta_{2^n}(x)=\theta_{2^n}(y)$.
\end{lem}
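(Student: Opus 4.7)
My plan is to view $\theta_{2^n}$ as a ring homomorphism $\mathbf{Z}[\frac{1}{2}] \to \mathbf{Z}/(2^n-1)\mathbf{Z}$ (well-defined and multiplicative since $\mathbf{Z}[\frac{1}{2}] = \mathbf{Z}[\frac{1}{2^n}]$ and $2^n \equiv 1 \pmod{2^n-1}$) and to extract from condition $(3)$ of Definition \ref{ourgroup} a global identity rooted in $1$-periodicity. To set up, refine the partition of $[0,1]$ induced by $f$ to include both the slope-breakpoints and the preimages of integers (all in $\mathbf{Z}[\frac{1}{2}]$), obtaining $0 = x_0 < x_1 < \cdots < x_N = 1$ such that on each open piece $(x_{i-1}, x_i)$ the map $f$ is linear with slope $2^{k_i}$ and satisfies $(x_{i-1}, x_i) \cdot f \subset (m_i, m_i+1)$ for a unique $m_i \in \mathbf{Z}$. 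For non-dyadic $x_i' \in (x_{i-1}, x_i)$ one computes $(x_i', x_i' \cdot f)_{\mathbf{Z}} = m_i$, so condition $(3)$ forces $k_i \equiv m_i \pmod{n}$, and hence $2^{k_i} \equiv 2^{m_i} \pmod{2^n-1}$.

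Telescoping via $\theta_{2^n}$ and the linearity of $f$ on each piece yields, for every $x \in [0, 1] \cap \mathbf{Z}[\frac{1}{2}]$,
\[
\theta_{2^n}(x \cdot f) - \theta_{2^n}(x) \equiv \theta_{2^n}(0 \cdot f) + \sum_{[x_{i-1}, x_i] \subseteq [0, x]} (2^{m_i} - 1)\, \theta_{2^n}(x_i - x_{i-1}) \pmod{2^n - 1}.
\]
Applying this with $x = 1$ and using $1 \cdot f = 0 \cdot f + 1$ to compute the left-hand side as $\theta_{2^n}(0 \cdot f)$, the identity collapses to the key global relation
\[
\sum_{i=1}^{N} (2^{m_i} - 1)\, \theta_{2^n}(x_i - x_{i-1}) \equiv 0 \pmod{2^n - 1}. \qquad (\star)
\]

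The hypothesis $x \cdot f \in [0, 1]$ forces $[0, 1] \cdot f = [0 \cdot f,\, 0 \cdot f + 1]$ to meet $[0, 1]$, so $0 \cdot f \in [-1, 1]$. I will first rule out $0 \cdot f = \pm 1$ using $(\star)$: if $0 \cdot f = 1$, every $m_i$ equals $1$ and $(\star)$ reads $\theta_{2^n}(1) \equiv 0$, i.e.\ $1 \equiv 0 \pmod{2^n - 1}$, impossible; the case $0 \cdot f = -1$ analogously gives $-\frac{1}{2} \equiv 0$, impossible. For $0 \cdot f \in (0, 1)$, let $a \in (0, 1)$ be the unique point with $a \cdot f = 1$; then $m_i = 0$ on $[0, a]$ and $m_i = 1$ on $[a, 1]$, so $(\star)$ becomes $\theta_{2^n}(1 - a) \equiv 0$, i.e.\ $\theta_{2^n}(a) = 1$. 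Applying the telescoping formula to $[0, a]$ then gives $\theta_{2^n}(0 \cdot f) = \theta_{2^n}(a \cdot f) - \theta_{2^n}(a) = 1 - 1 = 0$. The case $0 \cdot f \in (-1, 0)$ is symmetric via the unique $b \in (0, 1)$ with $b \cdot f = 0$, yielding $\theta_{2^n}(b) = 0$ and $\theta_{2^n}(0 \cdot f) = 0$.

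To conclude, apply the telescoping formula on $[0, x]$. If $0 \cdot f \geq 0$, then $[0, x] \cdot f \subseteq [0, 1]$ (since $f$ is increasing and $x \cdot f \leq 1$), every $m_i$ in the sum equals $0$, and the right-hand side reduces to $\theta_{2^n}(0 \cdot f) = 0$. If $0 \cdot f \in (-1, 0)$, then necessarily $x \geq b$ and the sum splits: the $[0, b]$ portion contributes $(2^{-1} - 1)\theta_{2^n}(b) = 0$ (since $\theta_{2^n}(b) = 0$), while the $[b, x]$ portion contributes $0$; again $\theta_{2^n}(x \cdot f) - \theta_{2^n}(x) = 0$. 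The main conceptual hurdle is that the telescoping identity in isolation is tautological---it merely restates condition $(3)$; only after coupling it with $1$-periodicity through $(\star)$ does condition $(3)$ become a genuine constraint that pins down both the integer window containing $0 \cdot f$ and the value $\theta_{2^n}(0 \cdot f) = 0$, after which the local computation on $[0, x]$ is forced to collapse.
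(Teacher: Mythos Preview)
Your proof is correct and takes a genuinely different route from the paper's. The paper reduces to the case $0\cdot f\geq 0$ (by replacing $f$ with $f^{-1}$), then builds an auxiliary piecewise-linear map $g:\mathbf{R}_{\geq 0}\to\mathbf{R}_{\geq 0}$ by post-composing $f$ with suitable affine pieces so that all slopes of $g$ lie in $\{(2^n)^m\mid m\in\mathbf{Z}\}$; it then invokes the earlier Lemma~\ref{lemorbits2} (such maps preserve $\theta_{2^n}$) to deduce first $\theta_{2^n}(z_1)=0$ for $z_1=0\cdot f$ and then $\theta_{2^n}(x)=\theta_{2^n}(x\cdot f)$ directly. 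Your argument instead treats $\theta_{2^n}$ as a ring homomorphism, extracts from condition~(3) the congruence $2^{k_i}\equiv 2^{m_i}\pmod{2^n-1}$ on each linear piece, and telescopes to obtain the global relation~$(\star)$; the case split on the position of $0\cdot f$ then does the work. The paper's approach is shorter and recycles Lemma~\ref{lemorbits2}, while yours is self-contained, makes the algebraic structure of $\theta_{2^n}$ explicit, and isolates $(\star)$ as the single consequence of $1$-periodicity that drives everything---a clean conceptual payoff. Both arrive at the same intermediate fact $\theta_{2^n}(0\cdot f)=0$ before finishing.
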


\begin{proof}
Switching $f$ to $f^{-1}$ and $x$ to $y$ if needed, assume that $z_1=0\cdot f\geq 0$.
Note that $1\cdot f=1+z_1$.
Let $z_2\in [0,1]$ be such that $z_2\cdot f=1$.
Define $g:\mathbf{R}_{\geq 0}\to \mathbf{R}_{\geq 0}$ as:
$$t\cdot g:=\begin{cases}t\cdot f-z_1\text{ if }t\in [0,z_2]\\
\frac{(t\cdot f)-1}{2}+1-z_1\text{ if }z_2\leq t\leq 1\\
t-\frac{z_1}{2}\text{ if }t\geq 1
\end{cases}$$
We claim that $g$ is a piecewise linear map $\mathbf{R}_{\geq 0}\to \mathbf{R}_{\geq 0}$ with slopes in $\{(2^n)^k\mid k\in \mathbf{Z}\}$ and breakpoints in $\mathbf{Z}[\frac{1}{2^n}]=\mathbf{Z}[\frac{1}{2}]$. 
The condition on breakpoints is clear, and the condition on slopes is apparent for points in $((0,z_2)\cup (1,\infty))\setminus \mathbf{Z}[\frac{1}{2}]$.
For $p\in (z_2,1)\setminus \mathbf{Z}[\frac{1}{2}]$ note that $\log_2(f'(p))\cong 1\mod n$ by definition of $f$ being in $\Omega_n$, and so the map $t\cdot g=\frac{(t\cdot f)-1}{2}+1-z_1$ satisfies that for each $q\in  (z_2,1)\setminus \mathbf{Z}[\frac{1}{2}]$, we have that $\log_{2}(g'(q))=0\mod n$ and so $g'(p)$ is of the form $(2^n)^m$ for some $m\in \mathbf{Z}$.

Applying Lemma 
\ref{lemorbits2}, we get $\theta_{2^n}(p)=\theta_{2^n}(p\cdot g)$ for each $p\in \mathbf{Z}[\frac{1}{2}], p\geq 0$.
So we get: $$\theta_{2^n}(1)=\theta_{2^n}(1\cdot g)= \theta_{2^n}\left(1-\frac{z_1}{2}\right)=\theta_{2^n}(1)+\theta_{2^n}\left(\frac{z_1}{2}\right)$$
which implies that $\theta_{2^n}(\frac{z_1}{2})=0$ and so $\theta_{2^n}(z_1)=2\cdot \theta_{2^n}(\frac{z_1}{2})=0$.
For $x\in [0,z_2]$, we obtain: $$\theta_{2^n}(x)=\theta_{2^n}(x\cdot g)=\theta_{2^n}(x\cdot f-z_1)=\theta_{2^n}(x\cdot f)-\theta_{2^n}(z_1)=\theta_{2^n}(x\cdot f)=\theta_{2^n}(y)$$
\end{proof}

\subsection{Special elements in \texorpdfstring{$\{\Omega_n\}_{n\geq 2}$}.}\label{specialelementsconstruction}

An element $g\in \Omega_n$ is said to be \emph{special} if $0\cdot g\in (0,1)$.
Here is an explicit example of such an element.
For $I:=\left[-\frac{1}{2^n}, \frac{1}{2^{n-1}}\right]$, we define $\lambda:I\to I$ as follows:

$$\lambda:=\begin{cases}
[-\frac{1}{2^n},\frac{1-2^n}{2^{2n}}]\mapsto [-\frac{1}{2^n},0] & \text{ linear with slope } 2^n.\\
[\frac{1-2^n}{2^{2n}},0]\mapsto [0, \frac{2^{n}-1}{2^{2n-1}}]&\text{ linear with slope }2.\\
[0,\frac{1}{2^{n-1}}]\mapsto [\frac{2^{n}-1}{2^{2n-1}},\frac{1}{2^{n-1}}]
&\text{ linear with slope }\frac{1}{2^n}.
\end{cases}$$




Define the $1$-periodic homeomorphism $\tau:\mathbf{R}\to \mathbf{R}$ satisfying $\Supp(\tau)=\Int(I)+\mathbf{Z}$ and $\tau\restriction I=\lambda$. 
Here $\tau\restriction I=\lambda$ denotes that for all $x\in I$, we have $x\cdot \tau=x\cdot \lambda$.
Clearly, $\tau\in \Omega_n$ is a special element. 


A nice consequence of the Lemma \ref{GammaOrbits} is that we obtain a finite generating set for $\Omega_n$.

\begin{lem}\label{Omegagenset}
Let $f\in \Omega_n$ be a special element. Then $\Omega_n=\langle f, F_{2^n}\rangle$.
\end{lem}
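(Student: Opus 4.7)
Set $H := \langle f, F_{2^n}\rangle$, so $F_{2^n} \leq H \leq \Omega_n$. Since Lemma~\ref{HigmanSubgroups} gives $F_{2^n} = \textup{Stab}_{\Omega_n}(0)$, the equality $H = \Omega_n$ reduces to showing the $H$-orbit of $0$ equals the $\Omega_n$-orbit of $0$: indeed, given any $g \in \Omega_n$, once we produce $h \in H$ with $0 \cdot h = 0 \cdot g$, the element $gh^{-1}$ fixes $0$ and hence lies in $F_{2^n} \leq H$, yielding $g \in H$. The inclusion $H\cdot 0 \subseteq \Omega_n \cdot 0$ is immediate, so the entire argument reduces to the claim: for every $y \in \Omega_n\cdot 0$, there is $h \in H$ with $0\cdot h = y$. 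The plan is to carry this out by induction on $|\lfloor y\rfloor|$.

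The base case is $y \in (-1,1)$. The value $y=0$ is trivial. For $y \in (0,1)$, set $a := 0 \cdot f$; applying Lemma~\ref{GammaOrbits} to both $f$ and $g$ gives $\theta_{2^n}(a) = 0 = \theta_{2^n}(y)$, and Proposition~\ref{Fntrans} supplies $h_1 \in F_{2^n}$ with $a \cdot h_1 = y$, so $h := fh_1$ works. The mirror case $y \in (-1,0)$ uses $f^{-1}$ together with the $1$-periodicity of $F_{2^n}$: Lemma~\ref{GammaOrbits} applied to $f^{-1}$ at $x = 1$ shows $\theta_{2^n}(f^{-1}(1)) = 1$, and applied to $g$ at $x = 1$ (using $1 \cdot g = y + 1 \in (0,1)$) shows $\theta_{2^n}(y+1) = 1$, so Proposition~\ref{Fntrans} provides $h_1 \in F_{2^n}$ with $f^{-1}(1)\cdot h_1 = y + 1$; by $1$-periodicity, $(0\cdot f^{-1})\cdot h_1 = y$, so $h := f^{-1}h_1$ does the job.

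For the inductive step, suppose $\lfloor y\rfloor = k \geq 1$ (the symmetric case $k \leq -1$ is handled with $f$ in place of $f^{-1}$). I will build $h_0 \in H$ with $|\lfloor y\cdot h_0\rfloor| < k$, so that the inductive hypothesis applied to $gh_0 \in \Omega_n$ returns $gh_0 \in H$, hence $g \in H$. The construction rests on two facts. First, the $1$-periodicity of $f$ gives $f(k) = k + a$ and $f(k - 1 + p) = k$ where $p := f^{-1}(1) \in (0,1)$; hence $f^{-1}$ maps the interval $(k, k+a)$ bijectively onto $(k - 1 + p, k) \subset (k-1,k)$. Second, the $1$-periodic action of $F_{2^n}$ on $(k, k+1)$ is proximal by Lemma~\ref{Fnproximal}. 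Thus when $y \in (k, k+1)$, proximality yields $h_1 \in F_{2^n}$ with $y\cdot h_1 \in (k, k+a)$; setting $h_0 := h_1 f^{-1}$ then gives $y \cdot h_0 \in (k-1, k)$. When $y = k \in \mathbf{Z}$, directly $f^{-1}(k) = k-1+p \in (k-1,k)$, so $h_0 := f^{-1}$ suffices.

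The hardest part will be cleanly verifying that the inductive reduction preserves membership in $\Omega_n\cdot 0$ (automatic since $h_0 \in H \leq \Omega_n$) and that $|\lfloor y\cdot h_0\rfloor|$ strictly decreases at every step, so that the induction terminates in the base case $(-1,1)$. The base case, resting on the $\theta_{2^n}$-orbit classification from Lemma~\ref{GammaOrbits} combined with the transitivity statement of Proposition~\ref{Fntrans}, is where the specific derivative-cocycle structure of $\Omega_n$ (not merely its $1$-periodicity) plays a decisive role.
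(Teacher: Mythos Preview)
Your proof is correct and shares its decisive step with the paper's: once the image of $0$ under an arbitrary $g\in\Omega_n$ has been brought into $(0,1)$, both arguments invoke Lemma~\ref{GammaOrbits} and Proposition~\ref{Fntrans} to match it with $0\cdot f$ via an element of $F_{2^n}$, and then conclude by Lemma~\ref{HigmanSubgroups}. The difference lies entirely in how that reduction to $(0,1)$ is achieved. The paper applies Proposition~\ref{generalminimal} once to deduce that $\langle f,F_{2^n}\rangle$ already acts minimally on $\mathbf{R}$, so a single element of $H$ moves $0\cdot g$ into $(0,1)$. You instead perform an explicit ``walk'' toward the fundamental domain: at each stage you use minimality of $F_{2^n}$ on $(k,k+1)$ to push $y$ near an integer endpoint, then apply $f^{\pm 1}$ to cross it, decreasing $|\lfloor y\rfloor|$ by one. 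This is effectively a bare-hands reproof of the special case of Proposition~\ref{generalminimal} needed here. Your route is more elementary and self-contained (it never appeals to the Radon-measure or translation-number machinery sitting behind Proposition~\ref{generalminimal}), while the paper's is considerably shorter since the minimality criterion is already in hand. One small remark: in your inductive step you cite proximality from Lemma~\ref{Fnproximal}, but only minimality of $F_{2^n}$ on $(k,k+1)$ is actually used; and the boundary case $y\in\mathbf{Z}$ in the negative direction needs the same extra application of $f$ you gave for $y=k$ in the positive direction, which your ``symmetric'' gloss covers.
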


\begin{proof}
Observe that $F_{2^n}\restriction (0,1)$ is minimal (from Lemma \ref{Fnproximal}). This fact, together with the existence of special elements as demonstrated above, means that Proposition \ref{generalminimal} is applicable to the group $\langle f,F_{2^n}\rangle$. It follows that $\langle f,F_{2^n}\rangle$ acts minimally on $\mathbf{R}$.
Let $g\in \Omega_n$ be a nontrivial element. It follows from minimality that there is an 
$f_1\in \langle f,F_{2^n}\rangle$ such that $0\cdot gf_1\in (0,1)$.
From Lemma \ref{GammaOrbits}, $\theta_{2^n}(0\cdot gf_1)=\theta_{2^n}(0\cdot f)$.
It follows from Proposition \ref{Fntrans} that there is an $f_2\in F_{2^n}$ such that 
$0\cdot gf_1f_2=0\cdot f$. Therefore, 
$0\cdot gf_1f_2f^{-1}=0$ and so $gf_1f_2f^{-1}\in F_{2^n}$ from Lemma \ref{HigmanSubgroups}.
This means that $g\in\langle f,F_{2^n}\rangle$, finishing the proof.
\end{proof}

Recall from the previous section that a $1$-periodic group is \emph{$1$-periodically proximal} if for each pair of nonempty open intervals $(u_1,v_1),(u_2,v_2)$ such that $v_1-u_1<1$, there is a group element $f$ such that $((u_1,v_1)+\mathbf{Z})\cdot f\subset (u_2,v_2)+\mathbf{Z}$.

\begin{lem}\label{periodproximal0}
The action of $\Omega_n$ on $\mathbf{R}$ is $1$-periodically proximal.
\end{lem}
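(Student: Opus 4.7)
The plan is to apply Proposition \ref{generalminimal} to $G=\Omega_n$, which reduces the task to verifying the two numbered conditions of that proposition together with the absence of any $\Omega_n$-invariant Radon measure on $\mathbf{R}$.

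First I would dispatch the two conditions. For condition $(1)$, fix $k\in\mathbf{Z}$, a point $x\in(k,k+1)$, and a nonempty open $U\subset(k,k+1)$. By Lemma \ref{HigmanSubgroups}, $\Omega_n$ contains the $1$-periodic action of $F_{2^n}$, and by Lemma \ref{Fnproximal} the standard action of $F_{2^n}$ on $(0,1)$ is proximal; by $1$-periodicity the $1$-periodic $F_{2^n}$ therefore acts proximally on every $(k,k+1)$, so some $f\in F_{2^n}\subset\Omega_n$ sends $x$ into $U$. Condition $(2)$ is witnessed by the special element $\tau\in\Omega_n$ constructed just above: its explicit definition gives $0\cdot\tau=\frac{2^n-1}{2^{2n-1}}\in(0,1)$, so $\mathbf{Z}$ is not $\Omega_n$-invariant.

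The core of the argument is to rule out an invariant Radon measure. Suppose $\mu$ is such a measure. Restricted to $(0,1)$, $\mu$ is $F_{2^n}$-invariant (as the $1$-periodic $F_{2^n}$ pointwise fixes $\mathbf{Z}$), and the $F_{2^n}$-action on $(0,1)$ is proximal. If $\mu|_{(0,1)}$ were nontrivial, Radon regularity would yield a compact $K\subset(0,1)$ with $\mu(K)>0$; proximality would then produce, for every nonempty open $V\subset(0,1)$, some $f\in F_{2^n}$ with $K\cdot f\subset V$, giving $\mu(V)\geq\mu(K)>0$. Picking $V=(a,b)$ with $[a,b]\subset(0,1)$ so that $\mu(V)<\infty$, and partitioning $V$ into countably many pairwise disjoint nonempty open subintervals, we would obtain $\mu(V)\geq\sum_i\mu(K)=\infty$, a contradiction. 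Hence $\mu|_{(k,k+1)}=0$ for every $k\in\mathbf{Z}$ by $1$-periodicity. To eliminate any mass at the integers themselves, note that $\tau$ commutes with the unit translation, so $k\cdot\tau=(0\cdot\tau)+k\in(k,k+1)$; invariance of $\mu$ then forces $\mu(\{k\})=\mu(\{k\cdot\tau\})\leq\mu((k,k+1))=0$. Therefore $\mu\equiv 0$, a contradiction.

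Applying Proposition \ref{generalminimal} concludes that the $\Omega_n$-action on $\mathbf{R}$ is $1$-periodically proximal. The main obstacle is the Radon-measure step: while the countable-partition trick leveraging proximality is standard, it must be combined with a separate argument killing atoms at the integers, and this is exactly where the special element $\tau$ is indispensable, since $F_{2^n}$ alone pointwise fixes $\mathbf{Z}$ and could perfectly well support an atomic invariant measure concentrated there.
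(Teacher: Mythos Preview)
Your proof is correct and follows the same overall scheme as the paper: verify the two numbered hypotheses of Proposition~\ref{generalminimal} using the $1$-periodic $F_{2^n}$ and the special element $\tau$, then exclude an invariant Radon measure.

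The only genuine difference is in the Radon-measure step. The paper invokes the translation number homomorphism: in the presence of an invariant Radon measure and no global fixed point, there is a nontrivial homomorphism $\Omega_n\to\mathbf{R}$ whose kernel contains every element with a fixed point; since $\Omega_n=\langle\tau,F_{2^n}\rangle$ is generated by elements with fixed points, this is impossible. Your argument is instead a direct computation: proximality of $F_{2^n}$ on $(0,1)$ forces any invariant Radon measure to vanish on each $(k,k+1)$, and then $\tau$ kills atoms at the integers. Your route is more elementary and self-contained (no appeal to an external reference), while the paper's is shorter once one is willing to cite the translation-number machinery. One cosmetic point: your phrase ``partitioning $V$ into countably many pairwise disjoint nonempty open subintervals'' is not literally possible (the endpoints are missed), but all you need is countably many pairwise disjoint nonempty open subintervals of $V$, and your inequality $\mu(V)\geq\sum_i\mu(K)=\infty$ goes through verbatim with that phrasing.
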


\begin{proof}
It suffices to check the full hypothesis of Proposition \ref{generalminimal}. 
Recall from Lemma \ref{Omegagenset} that $\Omega_n=\langle \tau,F_{2^n}\rangle$
where $\tau$ is the special element constructed above.
We know that $F_{2^n}\restriction (0,1)$ is minimal (from Lemma \ref{Fnproximal}) and hence the first two conditions in Proposition \ref{generalminimal} are applicable to $\Omega_n=\langle \tau,F_{2^n}\rangle$.
We just need to check that the action does not preserve a Radon measure on $\mathbf{R}$, which was the final ingredient in the hypothesis of Proposition \ref{generalminimal}.
In the presence of such an invariant measure and the absence of global fixed points, 
there exists a so called translation number homomorphism $\Omega_n\to \mathbf{R}$ which is nontrivial and for which each element of the group that admits a fixed point must lie in the kernel.
(See the discussion on page $128$ of \cite{GOD} for details).
Since the group $\Omega_n$ is generated by $\tau$ and $F_{2^n}$, and since all elements in this generating set admit fix points in $\mathbf{R}$, this is a contradiction.
\end{proof}

\begin{lem}\label{specialelementsgalore}
For each open interval $I$ containing $0$, one can find a special element $f\in \Omega_n'$ 
such that $Supp(f)\subset I+\mathbf{Z}$ and $f$ is conjugate to an element of $F_{2^n}'$.
\end{lem}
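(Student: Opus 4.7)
The plan is to construct $f$ as a conjugate $\eta^g = g^{-1}\eta g$, where $\eta \in F_{2^n}'$ is a carefully chosen bump and $g \in \Omega_n$ is built in two stages. Since $F_{2^n}' \subset \Omega_n'$ and $\Omega_n'$ is normal, any such $f$ automatically lies in $\Omega_n'$ and is conjugate to $\eta \in F_{2^n}'$ by construction, so only the specialness and the support constraint will require checking.

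First, set $c := 0 \cdot \tau = (2^n - 1)/2^{2n-1}$; rewriting $c = (2^{n+1}-2)/2^{2n}$ and using $2^{n+1} - 2 = 2(2^n - 1)$ gives $\theta_{2^n}(c) = 0$, so $c$ is in the $\Omega_n$-orbit of $0$, and in fact $\tau^{-1}(c) = 0$. Choose dyadic $a,b$ with $a = c - 1/2^M < c < c + 1/2^M = b$ for $M$ large (so $(a,b) \subset (0, 1/2^{n-1})$). By Lemma \ref{Fnproximal}, $\textup{Rstab}_{F_{2^n}}([a, b]) \cong F_{2^n}$, and its commutator subgroup is contained in $F_{2^n}'$; applying Proposition \ref{Fntrans} after the affine rescaling identifying $(a,b)$ with $(0,1)$, I produce $\eta$ in this rigid-stabilizer commutator that sends $c$ to a dyadic $c' \in (c, b)$. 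Then $\Supp(\eta) \subset (a, b) + \mathbf{Z}$ and $c \cdot \eta > c$.

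Next, I build $g = g_1 g_2$. Take $g_1 := \tau^{-1}$; the piecewise-linear formula for $\tau$ gives $J_1 := g_1((a,b)) = (-1/2^{M+1},\ 1/2^{M-n})$, a small neighborhood of $0$ inside $(-1, 1)$. Writing $I = (-u, v)$, I use the proximality of the standard $F_{2^n}$-action on $[0, 1]$ (Lemma \ref{Fnproximal}) to choose $g_2 \in F_{2^n}$ whose restriction to $[0, 1]$ has sufficiently small slopes near $0$ and near $1$ so that $g_2(1/2^{M-n}) < v$ and $g_2(1 - 1/2^{M+1}) > 1 - u$; by $1$-periodicity, $g_2(-1/2^{M+1}) = g_2(1 - 1/2^{M+1}) - 1 > -u$, and hence $g_2(J_1) \subset I$.

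With $g := g_1 g_2 \in \Omega_n$ and $f := \eta^g$, the remaining checks are immediate. For support, $\Supp(f) = \Supp(\eta) \cdot g \subset (a, b) \cdot g = g_2(J_1) \subset I \subset I + \mathbf{Z}$. For specialness, $g^{-1}(0) = c$, and $c \cdot \eta > c$ together with order-preservation of $g$ gives $0 \cdot f = g(c \cdot \eta) > g(c) = 0$, while also $0 \cdot f \in g_2(J_1) \subset I \subset (-1/4, 1/4)$, so $0 \cdot f \in (0, 1)$. The main obstacle is producing $\eta$ in the first step, securing simultaneously membership in $F_{2^n}'$, support inside the prescribed small dyadic interval, and the rightward motion $c \cdot \eta > c$; this is resolved by observing that the rigid-stabilizer isomorphism of Lemma \ref{Fnproximal} restricts to commutator subgroups, and then invoking Proposition \ref{Fntrans} to choose the target $c' > c$ within the appropriate rescaled $\theta_{2^n}$-orbit.
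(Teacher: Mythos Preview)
Your proof is correct, but it takes a more explicit and somewhat longer route than the paper's. The paper's argument is a two-line application of Lemma~\ref{periodproximal0}: using $1$-periodic proximality of $\Omega_n$, it picks $g_1\in\Omega_n$ with $I\cdot g_1\subset(0,1)$, then selects $g_2\in F_{2^n}'$ supported in $I\cdot g_1+\mathbf{Z}$ with $(0\cdot g_1)\cdot g_2>0\cdot g_1$, and sets $f=g_1g_2g_1^{-1}$. You instead work by hand with the concrete special element $\tau$: you plant the bump $\eta\in F_{2^n}'$ near $c=0\cdot\tau$ and conjugate it back to a neighborhood of $0$ via $\tau^{-1}$ followed by a compression in $F_{2^n}$.

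What each approach buys: the paper's version is shorter and conceptually cleaner, but it leans on Lemma~\ref{periodproximal0}, which in turn invokes Proposition~\ref{generalminimal} and a translation-number argument. Your argument avoids that machinery entirely, using only the explicit formula for $\tau$ together with Lemma~\ref{Fnproximal}; in that sense it is more self-contained. Two minor remarks on your write-up: the computation $\theta_{2^n}(c)=0$ is not actually used anywhere (you already know $c=0\cdot\tau$ directly), and the $g_2$ step is superfluous once $M$ is taken large enough that $J_1\subset I$; your citation of ``proximality'' for producing small endpoint-slopes is also slightly off, since what you really want there is two-point transitivity (Proposition~\ref{Fntrans}) or the freedom of germs at $0,1$ in $F_{2^n}$. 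None of this affects correctness.
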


\begin{proof}
It suffices to consider the case where $I\subset (-\frac{1}{4},\frac{1}{4})$.
Using Lemma \ref{periodproximal0}, we find an element $g_1\in \Omega_n$ such that $I\cdot g_1\subset (0,1)$.
Next, we find an element $g_2\in F_{2^n}'$ such that $0\cdot g_1g_2>0\cdot g_1$ and $Supp(g_2)\subset I\cdot g_1+\mathbf{Z}$.
Then $f=g_1g_2g_1^{-1}$ is the required element.
\end{proof}

We can finally show a strengthening of Proposition \ref{periodproximal0}.

\begin{prop}\label{periodproximal}
The action of $\Omega_n'$ on $\mathbf{R}$ is $1$-periodically proximal.
\end{prop}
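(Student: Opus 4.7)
The plan is to apply Proposition \ref{generalminimal} to a carefully chosen subgroup $H \leq \Omega_n'$ and then transfer the resulting $1$-periodic proximality to $\Omega_n'$ (which is legitimate since any $H$ with the property inherits it to any overgroup inside the $1$-periodic homeomorphisms). Concretely, I fix a small open interval $I \subset (-\tfrac{1}{4},\tfrac{1}{4})$ containing $0$, invoke Lemma \ref{specialelementsgalore} to produce a special element $f \in \Omega_n'$ with $\Supp(f) \subset I + \mathbf{Z}$, and set $H := \langle f, F_{2^n}' \rangle$. Note $H \leq \Omega_n'$: since $F_{2^n}'=[F_{2^n},F_{2^n}]\leq[\Omega_n,\Omega_n]=\Omega_n'$ and $f\in\Omega_n'$ by choice. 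Observe also that $H$ is $1$-periodic (commutators of $1$-periodic homeomorphisms commute with $t_1$, and $f$ is $1$-periodic by construction).

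Next I would check the two explicit hypotheses of Proposition \ref{generalminimal} for $H$. Condition (1) (local near-transitivity on each open integer interval) follows because $F_{2^n}'$ acts proximally (hence minimally) on $(0,1)$ by Lemma \ref{Fnproximal}, and the $1$-periodicity of $F_{2^n}'\leq H$ lifts this to each interval $(k,k+1)$. Condition (2) ($\mathbf{Z}$ is not $H$-invariant) is immediate from $f$ being special, since $0\cdot f \in (0,1)\setminus \mathbf{Z}$.

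The main remaining point is to exclude an $H$-invariant Radon measure on $\mathbf{R}$, which I would handle exactly as in the proof of Lemma \ref{periodproximal0}. First rule out a global fixed point: $F_{2^n}'$ has no global fixed point in any $(k,k+1)$ by proximality, and integer points are not fixed by $f$. Hence, in the presence of an invariant Radon measure, the discussion on page $128$ of \cite{GOD} would yield a nontrivial translation number homomorphism $H\to \mathbf{R}$ whose kernel contains every element of $H$ possessing a fixed point in $\mathbf{R}$. But $f$ has fixed points (its support is a proper subset of $\mathbf{R}$) and every element of $F_{2^n}'$ fixes $\mathbf{Z}$ pointwise, so both generators of $H$ would be killed, contradicting the nontriviality of the homomorphism.

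Combining these verifications, Proposition \ref{generalminimal} gives that $H$ acts $1$-periodically proximally on $\mathbf{R}$, and because $H\leq \Omega_n'$ the same property passes to $\Omega_n'$. I do not anticipate a serious obstacle: all the work of producing a nontrivial element of $\Omega_n'$ with fixed points and small support near $\mathbf{Z}$ has been encapsulated in Lemma \ref{specialelementsgalore}, and the rest of the argument is a near-verbatim copy of the proof of Lemma \ref{periodproximal0}.
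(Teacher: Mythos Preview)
Your proposal is correct and follows essentially the same approach as the paper: choose a special element $f\in\Omega_n'$ via Lemma~\ref{specialelementsgalore}, set $H=\langle f,F_{2^n}'\rangle\leq\Omega_n'$, verify the hypotheses of Proposition~\ref{generalminimal} using minimality of $F_{2^n}'$ on $(0,1)$ and the translation-number argument from Lemma~\ref{periodproximal0}, and conclude. The only differences are cosmetic---you spell out a few details (no global fixed points, $F_{2^n}'\leq\Omega_n'$) that the paper leaves implicit.
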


\begin{proof}
We will show this for the group $\langle f,F_{2^n}'\rangle$
where $f\in \Omega_n'$ is any special element constructed in Lemma \ref{specialelementsgalore}.
Since $\langle f,F_{2^n}'\rangle\leq \Omega_n'$, this will suffice.
We must check the hypothesis of Proposition \ref{generalminimal} for the group $\langle f,F_{2^n}'\rangle$.
Observe that $F_{2^n}'\restriction (0,1)$ is minimal (from Lemma \ref{Fnproximal}) and hence the first two conditions in Proposition \ref{generalminimal} are applicable to the group. 
It remains to check that the action does not preserve a Radon measure on $\mathbf{R}$. This is the same argument as in the proof of Lemma \ref{periodproximal0}, namely that each generator in the generating set $f,F_{2^n}'$ admits a fixed point in $\mathbf{R}$ and hence the group does not admit a nontrivial translation number homomorphism.
\end{proof}

This can be used to slightly strengthen Lemma \ref{specialelementsgalore}, using the same proof as before, but choosing the conjugating element
to be in $\Omega_n'$ using the fact that the action of $\Omega_n'$ on $\mathbf{R}$ is $1$-periodically proximal.

\begin{lem}\label{specialelementsgalore1}
For each open interval $I\subset (\frac{-1}{4},\frac{1}{4})$ containing $0$, one can find a special element $f\in \Omega_n'$ 
such that $Supp(f)\subset I+\mathbf{Z}$ and $f$ is conjugate to an element of $F_{2^n}'$ by an element of $\Omega_n'$.
\end{lem}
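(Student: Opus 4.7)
The plan is to mimic the proof of Lemma \ref{specialelementsgalore} essentially verbatim, replacing its single use of Lemma \ref{periodproximal0} (which only gave $1$-periodic proximality of $\Omega_n$) with Proposition \ref{periodproximal} (which upgrades this to $\Omega_n'$). This is precisely the reason Proposition \ref{periodproximal} was isolated in the first place.

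Concretely, I would first apply Proposition \ref{periodproximal} to the intervals $I\subset (-\tfrac14,\tfrac14)$ and a suitably chosen open subinterval $(u,v)\subset (0,1)$ bounded away from the endpoints of $(0,1)$. This produces an element $g_1\in \Omega_n'$ with $(I+\mathbf{Z})\cdot g_1 \subset (u,v)+\mathbf{Z}$. Since $g_1$ is $1$-periodic, $I\cdot g_1$ is a connected interval landing in a single component $(u+m,v+m)$ of $(u,v)+\mathbf{Z}$; replacing the target interval by its translate we may simply assume $I\cdot g_1\subset (0,1)$, just as in the original proof. Next, because the standard action of $F_{2^n}'$ on $(0,1)$ is proximal (Lemma \ref{Fnproximal}), pick $g_2\in F_{2^n}'$ whose support inside $(0,1)$ lies in a compact subinterval of $I\cdot g_1$ and such that $0\cdot g_1 g_2 > 0\cdot g_1$ (e.g.\ take $g_2$ to move $0\cdot g_1$ strictly to the right). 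By periodicity the total support of $g_2$ is contained in $(I\cdot g_1)+\mathbf{Z}$.

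Setting $f := g_1 g_2 g_1^{-1}$, the element $f$ is by construction conjugate to $g_2\in F_{2^n}'$ via the element $g_1\in \Omega_n'$, which is the new content of this strengthening. Membership $f\in \Omega_n'$ follows since $g_1\in \Omega_n'$ and $g_2\in F_{2^n}'=[F_{2^n},F_{2^n}]\leq [\Omega_n,\Omega_n]=\Omega_n'$. The support condition $\Supp(f)=\Supp(g_2)\cdot g_1^{-1}\subset (I\cdot g_1+\mathbf{Z})\cdot g_1^{-1}=I+\mathbf{Z}$ holds by $1$-periodicity of $g_1$, and the inequality $0\cdot g_1 g_2 > 0\cdot g_1$ immediately gives $0\cdot f > 0$, while $\Supp(f)\subset I+\mathbf{Z}\subset (-\tfrac14,\tfrac14)+\mathbf{Z}$ forces $0\cdot f < \tfrac14 < 1$, so $f$ is special.

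There is really no conceptual obstacle here, since all the substantive work — establishing $1$-periodic proximality of the commutator subgroup — was already done in Proposition \ref{periodproximal}. The only point that requires a moment of care is verifying that the conjugator $g_1$ can be chosen in $\Omega_n'$ rather than merely in $\Omega_n$, and this is exactly what Proposition \ref{periodproximal} provides. Everything else in the argument (choice of $g_2$, support tracking, verification that $f$ is special) is identical to Lemma \ref{specialelementsgalore}.
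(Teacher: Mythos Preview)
Your proposal is correct and follows exactly the approach the paper takes: the paper's own proof simply says to repeat the argument of Lemma \ref{specialelementsgalore} verbatim, replacing the appeal to Lemma \ref{periodproximal0} by Proposition \ref{periodproximal} so that the conjugator $g_1$ lands in $\Omega_n'$. Your additional verifications (support tracking, membership in $\Omega_n'$, and the inequality $0<0\cdot f<1$ ensuring $f$ is special) are accurate and fill in details the paper leaves implicit.
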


Another useful lemma is the following.

\begin{lem}\label{specialelementsgalore2}
Let $x\in (0,1)$ and let $y\in (x-1,x)$. Then we can find an element $f\in \Omega_n'$ 
such that $x\cdot f=x$ and $y\cdot f\in (0,1)$.
\end{lem}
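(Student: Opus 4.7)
The case $y \in (0, x)$ is trivial: take $f = \mathrm{id} \in \Omega_n'$. Assume henceforth $y \in (x-1, 0]$. Since any $f \in \Omega_n$ commutes with integer translations, the condition $x \cdot f = x$ forces $f$ to fix $x + \mathbf{Z}$ and to preserve each interval $(x + k - 1, x + k)$; in particular $y \cdot f \in (x-1, x)$, so requiring $y \cdot f \in (0, 1)$ is equivalent to $y \cdot f > 0$. I will mimic the conjugation strategy of Lemma \ref{specialelementsgalore} and build $f = g_1 g_2 g_1^{-1}$ with $g_1 \in \Omega_n$ and $g_2 \in F_{2^n}' \subset \Omega_n'$; normality of $\Omega_n'$ in $\Omega_n$ then yields $f \in \Omega_n'$.

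First I would pick dyadic $a, b \in \mathbf{Z}[\tfrac12]$ with $0 < a < b < x$ and set $K := (a, b)$; since $b < x < a + 1$, the set $K + \mathbf{Z}$ is disjoint from $x + \mathbf{Z}$. Then pick dyadic $y', y'' \in \mathbf{Z}[\tfrac12]$ with $x - 1 < y' < y$ and $0 < y'' < x$ and set $J := (y', y'')$: an open interval of length less than $1$ containing both $y$ and $0$ and disjoint (together with its $\mathbf{Z}$-translates) from $x + \mathbf{Z}$. The crucial step is to exhibit $g_1 \in \Omega_n$ with $g_1(J) = K$ \emph{exactly}, i.e.\ $g_1(y') = a$ and $g_1(y'') = b$. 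This requires choosing the four dyadic endpoints with compatible orbits, a finite combinatorial condition governed by Lemma \ref{GammaOrbits}, Proposition \ref{Fntrans}, and the slope-parity constraint in Definition \ref{ourgroup}; it can be arranged by adjusting the endpoints in the allowed ranges. Once $g_1$ is produced, its $1$-periodicity forces it to send the complementary interval $(y'', y' + 1)$ (inside the fundamental domain $[y', y' + 1]$) onto $(b, a + 1)$, and this latter interval is disjoint from $K + \mathbf{Z}$. Since $x \in (y'', y' + 1)$, we obtain the key conclusion $g_1(x) \in (b, a + 1)$ and hence $g_1(x) \notin K + \mathbf{Z}$.

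Next, by Lemma \ref{Fnproximal}, $\mathrm{Rstab}_{F_{2^n}}([a, b]) \cong F_{2^n}$, and its derived subgroup lies inside $F_{2^n}' \subset \Omega_n'$ and acts proximally on $(a, b)$. Since $g_1(y), g_1(0) \in K$ with $g_1(y) < g_1(0)$ by order-preservation, proximality lets me select $g_2$ in this derived subgroup with $\mathrm{Supp}(g_2) \subset K + \mathbf{Z}$ and $g_2(g_1(y)) > g_1(0)$. Setting $f := g_1 g_2 g_1^{-1}$: since $g_1(x) \notin \mathrm{Supp}(g_2)$, the element $g_2$ fixes $g_1(x)$, so $x \cdot f = g_1^{-1}(g_2(g_1(x))) = g_1^{-1}(g_1(x)) = x$; and $y \cdot f = g_1^{-1}(g_2(g_1(y))) > g_1^{-1}(g_1(0)) = 0$ because $g_1^{-1}$ is order-preserving, so $y \cdot f \in (0, x) \subset (0, 1)$, as required.

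The main obstacle is the existence of $g_1 \in \Omega_n$ realizing $g_1(J) = K$ exactly. Everything else (the disjointness of $g_1(x)$ from $K + \mathbf{Z}$, the search for $g_2$, the final verification) is then essentially forced by $1$-periodicity and the proximality of $F_{2^n}'$. The nontrivial content thus sits in arranging the dyadic endpoints $y', y'', a, b$ so that the compatibility conditions imposed by Definition \ref{ourgroup} (PL breakpoints in $\mathbf{Z}[\tfrac12]$, slopes whose $\log_2$-residues mod $n$ match the integer-crossing pattern, and consistency with orbit-invariance on $\mathbf{Z}[\tfrac12]$) admit a solution, after which one builds $g_1$ piecewise in the standard way.
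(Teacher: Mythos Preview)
Your conjugation strategy is sound and does yield a valid proof, but the step you flag as the ``main obstacle'' --- producing $g_1\in\Omega_n$ with $g_1(J)=K$ for \emph{pre-chosen} $K$ --- is left as an unproved assertion. It is true, but the clean way to see it is precisely to \emph{not} choose $a,b$ in advance: use the $1$-periodic proximality of $\Omega_n'$ (Proposition~\ref{periodproximal}) to find $g_1\in\Omega_n'$ with $(\overline{J}+\mathbf{Z})\cdot g_1\subset (0,1)+\mathbf{Z}$, and then \emph{define} $K:=J\cdot g_1$. This gives exact endpoint matching for free, and your argument that $g_1(x)\in(b,a+1)$ is disjoint from $K+\mathbf{Z}$ then goes through verbatim. (Incidentally, the condition $b<x$ you impose on $K$ is never used: only $g_1(x)\notin K+\mathbf{Z}$ matters, and that follows from exact matching regardless of where $K$ sits in $(0,1)$.) If you insist on pre-choosing $K$, you would need to combine proximality with the $2$-transitivity in Proposition~\ref{Fntrans}, which works but is an extra paragraph.

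The paper takes a genuinely different and shorter route: instead of conjugating, it writes $f=g_2g_1$ as a \emph{product} of two elements of $\Omega_n'$ each fixing $x$. Here $g_1$ is a special element with $\mathrm{Supp}(g_1)\subset(x-1,x)+\mathbf{Z}$, supplied directly by Lemma~\ref{specialelementsgalore1} (so $g_1$ fixes $x$ and sends a small left-neighbourhood of $0$ past $0$), and $g_2\in F_{2^n}'$ is chosen to fix $x$ while pushing $y$ into that left-neighbourhood $(0\cdot g_1^{-1},0)$. This avoids any endpoint-matching or two-point transitivity issue entirely. Your approach has the mild advantage that it works with any $g_1\in\Omega_n$ (relying only on normality of $\Omega_n'$), whereas the paper's approach needs both factors already in $\Omega_n'$; but the paper's argument is two lines once Lemma~\ref{specialelementsgalore1} is in hand.
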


\begin{proof}
If $y\in (0,x)$ then we can simply choose the identity element, and so assume that $y\in (x-1,0)$.
Using Lemma \ref{specialelementsgalore1}, we find a special element $g_1\in \Omega_n'$ such that $Supp(g_1)\subset (x-1,x)+\mathbf{Z}$.
Next, using Proposition \ref{Fntrans} we find $g_2\in F_{2^n}'$ such that $x\cdot g_2=x$ and $y\cdot g_2 \in (0\cdot g_1^{-1},0)$. It follows that 
$x\cdot g_2g_1=x$ and $y\cdot g_2g_1\in (0,1)$ and so $f=g_2g_1$ is the required element.
\end{proof}


\subsection{The orbit structure of the action of $\Omega_n'$ on $\mathbf{Z}[\frac{1}{2}]$.}

\begin{prop}\label{omegaorbits}
The action of $\Omega_n$ on $\mathbf{Z}[\frac{1}{2}]$ has precisely $2^n-1$ orbits, and $\{\frac{1}{2^n},..., \frac{2^n-1}{2^n}\}$ is a transversal for these. The same holds for the action of $\Omega_n'$, and hence the orbits of $\Omega_n,\Omega_n'$ on $\mathbf{Z}[\frac{1}{2}]$ coincide.
\end{prop}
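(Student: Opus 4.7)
The plan is to identify the $\Omega_n$- and $\Omega_n'$-orbits on $(0,1) \cap \mathbf{Z}[\frac{1}{2}]$ with the fibers of $\theta_{2^n}$, and then to use minimality of the actions on $\mathbf{R}$ to extend the conclusion to all of $\mathbf{Z}[\frac{1}{2}]$.

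For $\Omega_n$: the values $\theta_{2^n}(\frac{k}{2^n}) = k \pmod{2^n - 1}$ for $k = 1, \ldots, 2^n - 1$ cover every residue class modulo $2^n - 1$ exactly once, so Lemma \ref{GammaOrbits} (applied within $[0,1]$) already shows that no two transversal points lie in the same $\Omega_n$-orbit, giving at least $2^n - 1$ distinct orbits on $\mathbf{Z}[\frac{1}{2}]$. For the reverse inclusion, the subgroup $F_{2^n} \leq \Omega_n$ from Lemma \ref{HigmanSubgroups} already acts transitively on each $\theta_{2^n}$-fiber in $(0,1) \cap \mathbf{Z}[\frac{1}{2}]$ by Proposition \ref{Fntrans}, so every such point lies in the $\Omega_n$-orbit of a transversal point. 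To extend this classification to all of $\mathbf{Z}[\frac{1}{2}]$, I would invoke that the $\Omega_n$-action on $\mathbf{R}$ is minimal: this is established alongside $1$-periodic proximality in Lemma \ref{periodproximal0} via Proposition \ref{generalminimal}. Minimality, combined with the fact that $\Omega_n$ preserves $\mathbf{Z}[\frac{1}{2}]$, ensures that every $\Omega_n$-orbit in $\mathbf{Z}[\frac{1}{2}]$ meets $(0,1) \cap \mathbf{Z}[\frac{1}{2}]$, reducing the count to the previous step.

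For $\Omega_n'$, I would run the same argument using Proposition \ref{periodproximal} in place of Lemma \ref{periodproximal0}, and observing that $F_{2^n}' \leq \Omega_n'$ (since $F_{2^n} \leq \Omega_n$ forces $[F_{2^n}, F_{2^n}] \leq [\Omega_n, \Omega_n]$). Proposition \ref{Fntrans} asserts that $F_{2^n}'$ has the same orbits on $(0,1) \cap \mathbf{Z}[\frac{1}{2}]$ as $F_{2^n}$, namely the $\theta_{2^n}$-fibers. Since the $\Omega_n'$-orbits on this set are sandwiched between those of $F_{2^n}'$ and those of $\Omega_n$, all three partitions must coincide, so the $\Omega_n'$-orbits on $\mathbf{Z}[\frac{1}{2}]$ match those of $\Omega_n$. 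The proof is essentially bookkeeping around Lemma \ref{GammaOrbits}, Proposition \ref{Fntrans}, and the minimality already established; there is no substantive obstacle, since all the key structural inputs are by this point in place.
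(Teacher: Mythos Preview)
Your argument is correct and, for the separation of transversal points, is in fact more direct than the paper's own proof. Both proofs handle the ``every orbit meets the transversal'' direction identically: minimality pushes an arbitrary dyadic into $(0,1)$, and then Proposition~\ref{Fntrans} (for $F_{2^n}$ or $F_{2^n}'$) lands it on a transversal point. Where you differ is in showing that distinct transversal points $\frac{i}{2^n}$ and $\frac{j}{2^n}$ cannot lie in the same $\Omega_n$-orbit. You simply invoke Lemma~\ref{GammaOrbits}: any $f\in\Omega_n$ sending $\frac{i}{2^n}$ to $\frac{j}{2^n}$ keeps both points in $[0,1]$, so $\theta_{2^n}(\frac{i}{2^n})=\theta_{2^n}(\frac{j}{2^n})$ forces $i=j$. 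The paper instead argues by contradiction, using Lemma~\ref{specialelementsgalore2} and a special element to modify such an $f$ into an element of $F_{2^n}$ still sending $\frac{i}{2^n}$ to $\frac{j}{2^n}$, contradicting Proposition~\ref{Fntrans}. Your route is shorter and uses only what Lemma~\ref{GammaOrbits} already provides; the paper's route is self-contained in a different sense (it reduces everything to the $F_{2^n}$ orbit structure) but is redundant given that Lemma~\ref{GammaOrbits} has already been proved. Your sandwich argument for $\Omega_n'$ is also clean and equivalent to what the paper does.
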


\begin{proof}
First, note for each $x\in \mathbf{Z}[\frac{1}{2}]$, from minimality there is an $f\in \Omega_n$ such that $x\cdot f\in (0,1)\cap \mathbf{Z}[\frac{1}{2}]$. From Proposition \ref{Fntrans}, $\{\frac{1}{2^n},..., \frac{2^n-1}{2^n}\}$ is a transversal for the orbits of the action of $F_{2^n}$ on $(0,1)\cap \mathbf{Z}[\frac{1}{2}]$. So we can find $g\in F_{2^n}$ such that $(x\cdot f)\cdot g\in \{\frac{1}{2^n},..., \frac{2^n-1}{2^n}\}$. Therefore, each orbit in the action of $\Omega_n$ on $\mathbf{Z}[\frac{1}{2}]$ meets $\{\frac{1}{2^n},..., \frac{2^n-1}{2^n}\}$. Using the same argument, we also conclude this for $\Omega_n'$.

Now assume by way of contradiction that there are $1\leq i<j\leq 2^n-1$ such that there is an
$f\in \Omega_n$ such that $\frac{i}{2^n}\cdot f=\frac{j}{2^n}$. Note that $0\cdot f\in (\frac{j}{2^n}-1,\frac{j}{2^n})$.
Using Lemma \ref{specialelementsgalore2}, we choose an element $f_1\in \Omega_n'$ such that: $$\frac{j}{2^n}\cdot f_1=\frac{j}{2^n}\qquad 0\cdot ff_1\in (0,\frac{j}{2^n})$$ Applying Lemma \ref{specialelementsgalore}, we find a special element $\lambda\in \Omega_n'$ such that 
$Supp(\lambda)\subset (\frac{j}{2^n}-1,\frac{j}{2^n})+\mathbf{Z}$. Using Proposition \ref{Fntrans} and Lemma \ref{GammaOrbits}, we can find an element $f_2\in F_{2^n}'$ such that: $$(0\cdot ff_1)\cdot f_2=0\cdot \lambda\qquad \frac{j}{2^n}\cdot f_2=\frac{j}{2^n}$$ It follows that $0\cdot ff_1f_2\lambda^{-1}=0$ and so $g=ff_1f_2\lambda^{-1}\in F_{2^n}$. Moreover, since $f_1,f_2,\lambda$ fix $\frac{j}{2^n}$, we obtain that $\frac{i}{2^n}\cdot g=\frac{j}{2^n}$. This contradicts the fact that $\{\frac{1}{2^n},..., \frac{2^n-1}{2^n}\}$ is a transversal for the orbits of the action of $F_{2^n}$ on $(0,1)\cap \mathbf{Z}[\frac{1}{2}]$.
\end{proof}

We record the following lemma which is an immediate consequence of Proposition \ref{omegaorbits}.

\begin{lem}\label{gammaorbits}
Identify $\mathbf{S^1}\cong \mathbf{R}/\mathbf{Z}$ set-theoretically with $[0,1)$ and consider the action of $\Gamma_n',\Gamma_n$ on $[0,1)$.
The orbits of the actions of $\Gamma_n'$ and $\Gamma_n$ on $[0,1)\cap \mathbf{Z}[\frac{1}{2}]$ coincide.
\end{lem}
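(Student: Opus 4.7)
The plan is to lift the question from $\mathbf{S}^1 \cong \mathbf{R}/\mathbf{Z}$ back to $\mathbf{R}$ via the covering map and reduce to Proposition \ref{omegaorbits}. Since $\Gamma_n' \leq \Gamma_n$, every $\Gamma_n'$-orbit is contained in a $\Gamma_n$-orbit, so only the reverse inclusion needs work.

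First I would record that the projection $\pi : \Omega_n \to \Gamma_n = \Omega_n/\Lambda_n$ sends $\Omega_n'$ onto $\Gamma_n'$: since $\pi$ is surjective, the image of the commutator subgroup is the commutator subgroup of the image, i.e. $\pi(\Omega_n') = [\Gamma_n, \Gamma_n] = \Gamma_n'$. Consequently, the $\Gamma_n'$-action on $\mathbf{S}^1$ is obtained by descending the $\Omega_n'$-action on $\mathbf{R}$ through the quotient $\mathbf{R} \to \mathbf{R}/\mathbf{Z}$, and similarly for $\Gamma_n$ and $\Omega_n$.

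Next I would translate the orbit relation. Fix $x,y \in [0,1)\cap \mathbf{Z}[\frac{1}{2}]$ lying in the same $\Gamma_n$-orbit. Choose a lift $f \in \Omega_n$ representing an element of $\Gamma_n$ sending $x$ to $y$ in $\mathbf{S}^1$; then $x\cdot f = y + m$ for some $m\in \mathbf{Z}$. Since $y+m \in \mathbf{Z}[\frac{1}{2}]$, this exhibits $x$ and $y+m$ as lying in the same $\Omega_n$-orbit on $\mathbf{Z}[\frac{1}{2}]$. By Proposition \ref{omegaorbits}, the $\Omega_n$- and $\Omega_n'$-orbits on $\mathbf{Z}[\frac{1}{2}]$ coincide, so there exists $f' \in \Omega_n'$ with $x \cdot f' = y + m$. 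Projecting $f'$ to $\Gamma_n'$ and identifying $\mathbf{S}^1$ with $[0,1)$, this gives an element of $\Gamma_n'$ sending $x$ to $y$, which concludes the argument.

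There is no real obstacle here beyond unwinding the definition of the quotient action; the content is entirely inherited from Proposition \ref{omegaorbits}, together with the elementary observation that commutator subgroups survive quotients.
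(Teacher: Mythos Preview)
Your proof is correct and is essentially the same approach the paper takes: the paper simply records this lemma as an immediate consequence of Proposition \ref{omegaorbits}, and your argument spells out exactly how that deduction goes via lifting to $\Omega_n$ and using $\pi(\Omega_n')=\Gamma_n'$.
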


The following technical lemma will also be used later in the article.

\begin{lem}\label{zerofix}
Let $f\in \Omega_n'$ be such that $0\cdot f=0$.
Then we can find $g_1,g_2\in \Omega_n'$ such that $g_1$ and $g_2$ are conjugate to elements of $F_{2^n}'$ and $fg_1g_2$ fixes pointwise a neighborhood of $0$.
\end{lem}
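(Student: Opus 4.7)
Because $f\in\Omega_n'$ fixes $0$, Lemma \ref{HigmanSubgroups} places $f$ in the $1$-periodic action of $F_{2^n}$, so $f$ is piecewise linear near $0$ with $f'(0^+)=2^{na}$ and $f'(0^-)=2^{nb}$ for some integers $a,b$. Since $fg_1g_2$ will remain piecewise linear near $0$, it suffices to produce $g_1,g_2$ such that $fg_1g_2$ fixes $0$ with slope $1$ on each side; any such PL homeomorphism is automatically the identity on a neighborhood of $0$. My plan is therefore to arrange $g_1$ to cancel the right slope while acting trivially on the left, and $g_2$ to cancel the left slope while acting trivially on the right.

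To build $g_1$, I would first pick $p\in(0,1)\cap\mathbf{Z}[\tfrac{1}{2}]$ with $\theta_{2^n}(p)=0$ (for instance $p=1-2^{-n}$). By Proposition \ref{omegaorbits}, there exists $h_1\in\Omega_n'$ with $p\cdot h_1=0$, and since $h_1$ is $1$-periodic and orientation preserving one has $h_1(0)\in(-1,0)$, so $h_1([p,1])=[0,h_1(1)]\subset[0,1]$. I would then construct $\gamma_1\in F_{2^n}'$ (in the $1$-periodic action) supported in $(p,1)+\mathbf{Z}$ and fixing $p$ with $\gamma_1'(p^+)=2^{-na}$: glue a linear contracting piece of slope $2^{-na}$ on $[p,p+\delta]$ to an interpolation in $F_{2^n}$ on $[p+\delta,1-\eta]$, whose existence follows from Proposition \ref{Fntrans} together with the orbit identity $\theta_{2^n}(p+\delta)=\theta_{2^n}(p+2^{-na}\delta)$ (which holds because $2^{na}\equiv 1\pmod{2^n-1}$), and absorb any residual obstruction to lying in $F_{2^n}'=[F_{2^n}^c,F_{2^n}^c]$ by a correcting element of $F_{2^n}^c$ supported near $1$. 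Setting $g_1:=h_1^{-1}\gamma_1 h_1\in\Omega_n'$, one checks $\mathrm{Supp}(g_1)=\mathrm{Supp}(\gamma_1)\cdot h_1\subset(0,h_1(1))+\mathbf{Z}\subset(0,1)+\mathbf{Z}$, so $g_1$ is identity on a left neighborhood of $0$, and the chain rule gives $g_1'(0^+)=\gamma_1'(p^+)=2^{-na}$. A mirror construction, using a point $q\in(0,1)\cap\mathbf{Z}[\tfrac{1}{2}]$ with $\theta_{2^n}(q)=0$, an $h_2\in\Omega_n'$ with $q\cdot h_2=0$, and a $\gamma_2\in F_{2^n}'$ supported in $(0,q)+\mathbf{Z}$ with left slope $2^{-nb}$ at $q$, produces $g_2\in\Omega_n'$ identity on a right neighborhood of $0$ with $g_2'(0^-)=2^{-nb}$.

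With both in hand, $fg_1g_2$ fixes $0$, and by the chain rule its slope at $0^+$ is $2^{na}\cdot 2^{-na}\cdot 1=1$ (since $g_2$ is identity on the right) while at $0^-$ it is $2^{nb}\cdot 1\cdot 2^{-nb}=1$ (since $g_1$ is identity on the left). By the PL observation in the first paragraph, $fg_1g_2$ is then the identity on a neighborhood of $0$. The main technical obstacle is producing the $\gamma_i\in F_{2^n}'$ with prescribed one-sided slopes at $p$ and $q$ and trivial germs at integers: the existence reduces to the orbit identity above together with a standard PL interpolation inside $F_{2^n}$, with the abelianization correction being the only real subtlety.
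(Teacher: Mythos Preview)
Your approach is correct and follows the same essential strategy as the paper: conjugate elements of $F_{2^n}'$ with prescribed one-sided slopes at interior dyadic points by elements of $\Omega_n'$ that carry those points to $0$, so that the conjugates correct the right and left germs of $f$ at $0$ separately.

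The execution differs slightly. The paper picks a single special element $h$ with support in $(-\tfrac14,\tfrac14)+\mathbf{Z}$, sets $x=0\cdot h$, $y=1\cdot h^{-1}$, chooses $k_1,k_2\in F_{2^n}'$ supported in $(x,y)$ with the required right slope at $x$ (respectively left slope at $y$), and takes $g_1=hk_1^{-1}h^{-1}$, $g_2=h^{-1}k_2^{-1}h$. This is marginally cleaner: one conjugating element (and its inverse) handles both sides, and the conjugacy of $g_i$ to $F_{2^n}'$ is visible by construction. Your version instead invokes Proposition~\ref{omegaorbits} to produce two separate conjugators $h_1,h_2\in\Omega_n'$, which works equally well but uses a slightly heavier tool.

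You are right to flag the existence of $\gamma_i\in F_{2^n}'$ (not merely $F_{2^n}^c$) with prescribed one-sided slope at an interior dyadic point as the only genuine subtlety; the paper simply asserts the existence of such $k_1,k_2$ without comment. Your sketch via an abelianization correction supported away from the relevant endpoint is the right idea; alternatively, one can use the commutator trick from the proof of Lemma~\ref{FPlem1} (take $h$ with the desired slope at the endpoint and $f_2$ moving that endpoint off the support of $h$, so that $[f_2,h]$ retains the slope while lying in $F_{2^n}'$).
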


\begin{proof}
Let $h$ be a special element with support in $(-\frac{1}{4},\frac{1}{4})+\mathbf{Z}$ and let $x=0\cdot h, y=1\cdot h^{-1}$.
Note that since $h$ fixes a point in $(0,1)$, it holds that $0<x<y<1$.
Now let $k_1,k_2\in F_{2^n}'$ be such that:
\begin{enumerate}
\item $Supp(k_1), Supp(k_2)\subseteq (x,y)+\mathbf{Z}$.
\item The right slope of $k_1$ at $x$ equals the right slope of $f$ at $0$.
\item The left slope of $k_2$ at $y$ equals the left slope of $f$ at $1$.
\end{enumerate}
Then the required elements are $g_1=h k_1^{-1}h^{-1}, g_2=h^{-1}k_2^{-1}h$.
\end{proof}

\subsection{Large subgroups of $\{\Omega_n\}_{n\geq 2}$ and the simplicity of $\{\Gamma_n'\}_{n\geq 2}$.}
A subgroup $G\leq \Omega_n$ is said to be \emph{large} if:
\begin{enumerate}
\item $G$ contains as a subgroup the $1$-periodic copy of $F_{2^n}'$.
\item $G$ contains a special element of $\Omega_n$.
\end{enumerate}
We remark that it is a consequence of Proposition \ref{generalminimal} that large subgroups of $\Omega_n$ act minimally on $\mathbf{R}$.
Let $G\leq \Omega_n$ be a large subgroup. 
We say that a normal subgroup $N\unlhd G$ is \emph{non-central} if $N$ contains a nontrivial element which is not some integer translation. 

\begin{lem}\label{cornulier}
Let $G\leq \Omega_n$ be a large subgroup and $N\leq G$ a non-central normal subgroup, then the following holds:
\begin{enumerate}[itemsep=0pt,parsep=0pt]
\item $N$ contains the $1$-periodic copy of $F_{2^n}'$.
\item $N$ contains a special element. 
\item $N$ acts minimally on $\mathbf{R}$. 
\item $G$ is generated by $G_{<1}=\{g\in G\mid 0\leq 0\cdot g<1\}$.
\item For each $f\in G$ such that $0\cdot f\in (0,1)$, there is a $g\in N$ such that $0\cdot fg^{-1}=0$.
\item $G/N$ is abelian.
\end{enumerate}
\end{lem}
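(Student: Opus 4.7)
The strategy is to prove the six parts in order, with part (1) the technical core. Let $\sigma\in G$ be a special element (given by the definition of largeness); by the argument of Proposition \ref{periodproximal} applied to $\langle\sigma,F_{2^n}'\rangle\leq G$, the group $G$ itself acts $1$-periodically proximally on $\mathbf{R}$. The other main tools are simplicity of $F_{2^n}'$ (Theorem \ref{abelianization}), the orbit description of Proposition \ref{Fntrans}, and Lemma \ref{GammaOrbits}.

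For part (1), since $F_{2^n}'\leq G$, the intersection $N\cap F_{2^n}'$ is normal in the simple group $F_{2^n}'$, so it suffices to exhibit a single nontrivial element of $N\cap F_{2^n}'$. Given $h\in N\setminus(\Lambda_n\cup\{id\})$, the plan is to find $\alpha\in G$ with $\alpha h\alpha^{-1}$ pointwise fixing $\mathbf{Z}$---equivalently $0\cdot\alpha^{-1}\in\textup{Fix}(h)$, by $1$-periodicity---placing $\alpha h\alpha^{-1}\in F_{2^n}\cap N$ via Lemma \ref{HigmanSubgroups}. Choosing $f\in F_{2^n}'$ supported inside a region where $\alpha h\alpha^{-1}$ acts nontrivially but avoiding germs at integers then yields a nontrivial commutator $[\alpha h\alpha^{-1},f]\in F_{2^n}'\cap N$. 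The main obstacle is the conjugation step: if $\textup{Fix}(h)$ contains an open $\mathbf{Z}$-periodic interval, $1$-periodic proximality places $0\cdot\alpha^{-1}$ there directly; otherwise we first replace $h$ by an auxiliary element of $N$ with an open fixed interval, obtained as $[h,f_0]$ for $f_0\in F_{2^n}'$ supported in a small interval $I\subset[0,1]$ with $I\cdot h\cap I=\emptyset$---such $I$ exists since slopes in $\Omega_n$ lie in $2^{\mathbf{Z}}$ and $h\notin\Lambda_n$ forces some slope different from $1$.

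For (2), given $h\in N\setminus\Lambda_n$ and $F_{2^n}'\leq N$ from (1), we use that the displacement $x\mapsto x\cdot h-x$ (or its $h^{-1}$ analogue) is non-constant and hits $(0,1)$ at some point since $h\notin\Lambda_n$; $1$-periodic proximality then produces a conjugate of $h^{\pm 1}$ in $N$ sending $0$ into $(0,1)$. Part (3) follows from Proposition \ref{generalminimal}: $F_{2^n}'\leq N$ acts minimally on each $(k,k+1)$, and the element from (2) shows $\mathbf{Z}$ is not $N$-invariant.

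For (4), the key is to populate $G_{<1}$ with elements whose basepoint approaches $1$: by Lemma \ref{GammaOrbits}, $\theta_{2^n}(0\cdot\sigma)=0$, and a direct computation gives $\theta_{2^n}(1-2^{-mn})=0$ for every $m\geq 1$, so Proposition \ref{Fntrans} supplies $f\in F_{2^n}'$ with $0\cdot(f\sigma f^{-1})=1-2^{-mn}$. Given $g\in G$ with $\lfloor 0\cdot g\rfloor=k\neq 0$, choosing $h_1\in G_{<1}$ of this form with $0\cdot h_1$ strictly exceeding the fractional part of $0\cdot g$ gives $0\cdot(gh_1^{-1})\in[k-1,k)$ by $1$-periodicity of $h_1^{-1}$, and induction on $|k|$ yields $g\in\langle G_{<1}\rangle$. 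For (5), Lemma \ref{GammaOrbits} forces $\theta_{2^n}(0\cdot f)=\theta_{2^n}(0\cdot f_0)=0$ for the element $f_0\in N$ from (2) and the given $f$; Proposition \ref{Fntrans} yields $h\in F_{2^n}'\leq N$ with $(0\cdot f_0)\cdot h=0\cdot f$, so $g=f_0 h\in N$ satisfies $0\cdot(fg^{-1})=0$. For (6), combining (5) and (4) shows every $h\in G_{<1}$ factors as $(hg^{-1})\cdot g$ with $hg^{-1}\in\textup{Stab}_G(0)$ and $g\in N$, so $G=\textup{Stab}_G(0)\cdot N$ and $G/N\cong\textup{Stab}_G(0)/(\textup{Stab}_G(0)\cap N)$; since $F_{2^n}'\leq\textup{Stab}_G(0)\cap N$ and $F_{2^n}/F_{2^n}'$ is abelian (Theorem \ref{abelianization}), this quotient is abelian.
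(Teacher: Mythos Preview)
Your overall strategy is sound and parts (3), (5), (6) match the paper. Two points deserve attention.

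\textbf{Part (2) has a genuine gap.} The claim that the displacement of $h$ (or $h^{-1}$) hits $(0,1)$ ``since $h\notin\Lambda_n$'' is false in general: for nontrivial $g\in F_{2^n}'$ the element $t_n g\in\Omega_n\setminus\Lambda_n$ has displacement confined to $(n-1,n+1)$, so the justification you give does not work. The fix is easy once you actually use part~(1): take $h\in F_{2^n}'\setminus\{id\}\leq N$, whose displacement vanishes at integers and is non-constant, so its range (for $h$ or $h^{-1}$) meets $(0,1)$; then your conjugation argument via minimality of $G$ goes through. The paper avoids displacements entirely: for a special $f\in G$ and a suitably chosen $g\in F_{2^n}'\leq N$, the element $fgf^{-1}g^{-1}\in N$ sends $0$ into $(0,1)$ by a direct two-line computation.

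\textbf{Parts (1) and (4) work but are more roundabout than the paper.} In~(1) the case split and conjugation step are avoidable: the paper picks a small open $U$ with $(U+\mathbf{Z})\cdot f\cap(U+\mathbf{Z})=\emptyset$ and $U,\,U\cdot f$ disjoint from $\mathbf{Z}$; then for any nontrivial $g\in F_{2^n}'$ supported in $U+\mathbf{Z}$, the commutator $[g,f]$ already fixes $\mathbf{Z}$ pointwise and lies in $N\cap F_{2^n}$, no conjugation needed. (Also, your condition ``$I\cdot h\cap I=\emptyset$'' should read $(I\cdot h+\mathbf{Z})\cap(I+\mathbf{Z})=\emptyset$, since $f_0$ is $1$-periodic; otherwise $[h,f_0]$ can be trivial.) In~(4) your induction via the points $1-2^{-mn}$ is correct for $k>0$, but the $k<0$ case is not symmetric under $h_1\mapsto h_1^{-1}$ and is not spelled out. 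The paper's argument is shorter and uniform: since $\langle G_{<1}\rangle$ contains $F_{2^n}'$ and a special element, it acts minimally by Proposition~\ref{generalminimal}, so for any $g\in G$ there is $g'\in\langle G_{<1}\rangle$ with $(0\cdot g)\cdot g'\in(0,1)$, giving $gg'\in G_{<1}$ and hence $g\in\langle G_{<1}\rangle$.
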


\begin{proof}
{\bf Part $(1)$}. Fix a nontrivial $f\in N$ that is not an integer translation. So there is an $x\in \mathbf{R}\setminus \mathbf{Z}$ such that $x\cdot f, x-x\cdot f\notin \mathbf{Z}$. We find a small open interval $U,|U|<1$ containing $x$ such that $$(U+\mathbf{Z})\cdot f\cap (U+\mathbf{Z})=\emptyset\qquad (U\cap \mathbf{Z})=\emptyset\qquad ((U\cdot f)\cap \mathbf{Z})=\emptyset.$$ Let $g\in F_{2^n}'\leq G$ be a nontrivial element with support in $U+\mathbf{Z}$. Then $h=[g,f]$ is nontrivial and pointwise fixes $\mathbf{Z}$, so is in $(N\cap F_{2^n})\setminus \{1\}$. Since no nontrivial element of $F_{2^n}$ centralizes $F_{2^n}'$, there is a $k\in F_{2^n}'$ such that $[h,k]\in F_{2^n}'\leq G$ is nontrivial. Since $N$ is normal in $G$, it follows that $[h,k]\in N\cap F_{2^n}'$.
Since $F_{2^n}'$ is simple and $F_{2^n}'\cap N$ is normal and nontrivial in $F_{2^n}'$, it follows that $F_{2^n}'\leq N$.\\

{\bf Part $(2)$}. Using minimality, we choose $f\in G$ such that $0\cdot f\in (0,1)$. Let $U\subset (0,1)$ be an open interval such that $(U\cdot f^{-1})\subset (0,1)$.
Using the minimality of the action of $F_{2^n}'$ on $(0,1)$, we find $g\in F_{2^n}'\leq N$ such that $(0\cdot f)\cdot g\in U$. Then the required element in $N$ is $fgf^{-1}g^{-1}$,
since $0\cdot fgf^{-1}\in (0,1)$ and $(0,1)\cdot g^{-1}=(0,1)$.\\

{\bf Part $(3)$}. We apply Proposition \ref{generalminimal} (just the portion needed for minimality), using the previous parts to verify the hypothesis. \\

{\bf Part $(4)$}. From our hypothesis and part $(1)$, $F_{2^n}'\leq G_{<1}$. From part $(2)$ there is an $f\in G_{<1}$ such that $0\cdot f \in (0,1)$.
Applying Proposition \ref{generalminimal}, the group $H$ generated by $G_{<1}$ acts minimally on $\mathbf{R}$.
We must show that $G=H$.
So given $h\in G$, using minimality of $H$ we can find a $g\in H$ such that $(0\cdot h)\cdot g\in (0,1)$. Therefore, $hg\in H$ implies that $h\in H$, and hence $G=H$.\\

{\bf Part $(5)$}. From part $(2)$, choose $g_1\in N$ such that $0\cdot g_1\in (0,1)$.
By Lemma \ref{GammaOrbits} we know that $\theta_{2^n}(0)=\theta_{2^n}(0\cdot f)=\theta_{2^n}(0\cdot g_1)$. From Proposition \ref{Fntrans}, there is a $g_2\in F_{2^n}'\leq N$ satisfying that $0\cdot g_1=(0\cdot f)\cdot g_2$. It follows that $0\cdot fg_2g_1^{-1}=0$ and hence $g=g_1g_2^{-1}\in N$ is the required element satisfying that $0\cdot fg^{-1}=0$.\\

{\bf Part $(6)$}. Let $\phi:G\to G/N$.
From part $(1)$, $F_{2^n}'\leq N$, and so from Theorem \ref{abelianization} $\phi(F_{2^n}\cap G)$ is abelian. So the proof reduces to the claim: $\phi(G)=\phi(F_{2^n}\cap G)$.
From part $(4)$, $G_{<1}=\{g\in G\mid 0\leq 0\cdot g<1\}$ generates $G$.
Let $f\in G_{<1}$.
Using part $(5)$, find $g\in N$ such that $0\cdot fg^{-1}=0$.
It follows that $\phi(f)=\phi(fg^{-1})$ and $fg^{-1}\in F_{2^n}$. So our claim follows.
\end{proof}




Note that by definition, both $\Omega_n$ and $\Omega_n'$ are large subgroups of $\Omega_n$ and hence Lemma \ref{cornulier} is applicable to both. This can be used to give a quick proof of the simplicity of $\Gamma_n'$.

\begin{prop}\label{SimpleProp}
$\Gamma_n'$ is simple.  
\end{prop}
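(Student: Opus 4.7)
The plan is to first establish that $\Gamma_n'$ is perfect, and then show that every nontrivial normal subgroup of $\Gamma_n'$ must contain $\Gamma_n''$. Combining these two immediately gives simplicity of $\Gamma_n'$ (which is nonzero since $F_{2^n}' \cap \Lambda_n = \{e\}$ forces $F_{2^n}'$ to inject into $\Gamma_n'$). Both steps will proceed by applying Lemma \ref{cornulier}(6)---the fact that large subgroups modulo non-central normal subgroups are abelian---to a carefully chosen ambient large subgroup and normal subgroup.

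For perfectness, let $\pi : \Omega_n \to \Gamma_n$ denote the quotient by $\Lambda_n = \mathcal{Z}(\Omega_n)$. Since $\Lambda_n$ is central, $\pi(\Omega_n') = \Gamma_n'$ and $\pi(\Omega_n'') = \Gamma_n''$. I would take $N := \Omega_n'' \cdot \Lambda_n = \pi^{-1}(\Gamma_n'')$; this is normal in $\Omega_n$ (since $\Omega_n''$ is characteristic and $\Lambda_n$ central), and it is non-central because $\Omega_n''$ contains $F_{2^n}' = [F_{2^n}', F_{2^n}']$ (using that the Higman--Thompson derived subgroup is simple, hence perfect), and $F_{2^n}'$ contains elements of bounded support, which are manifestly not integer translations. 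Applying Lemma \ref{cornulier}(6) to the large subgroup $\Omega_n$ with this $N$ yields $\Omega_n / N$ abelian, so $\Omega_n' \leq \Omega_n'' \cdot \Lambda_n$. Projecting through $\pi$ gives $\Gamma_n' \leq \Gamma_n''$, hence equality.

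For the simplicity step, let $M$ be a nontrivial normal subgroup of $\Gamma_n'$ and consider $\tilde M := \pi^{-1}(M) \cap \Omega_n'$. Note that $\pi^{-1}(\Gamma_n') = \Omega_n' \cdot \Lambda_n$, and $\pi^{-1}(M)$ is normal in $\pi^{-1}(\Gamma_n')$, so (since $\Omega_n'$ is normal in $\Omega_n' \cdot \Lambda_n$) $\tilde M$ is normal in $\Omega_n'$, with $\pi(\tilde M) = M$. Any lift to $\tilde M$ of a nontrivial element of $M$ lies outside $\Lambda_n$, making $\tilde M$ non-central in the large subgroup $\Omega_n'$. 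A second application of Lemma \ref{cornulier}(6), this time to $\Omega_n'$ and $\tilde M$, gives $\Omega_n' / \tilde M$ abelian, hence $\tilde M \supseteq \Omega_n''$. Projecting, $M \supseteq \Gamma_n''$, which equals $\Gamma_n'$ by the previous step.

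The main conceptual obstacle is perfectness: applying Lemma \ref{cornulier}(6) directly to $\Omega_n'$ with its own derived subgroup $\Omega_n''$ yields only the tautology that the abelianization of $\Omega_n'$ is abelian. The crucial move is instead to pass to the larger ambient group $\Omega_n$ and use the central translations $\Lambda_n$ as additional room, so that abelianness of $\Omega_n / (\Omega_n'' \cdot \Lambda_n)$ becomes a substantive constraint forcing $\Omega_n' \leq \Omega_n'' \cdot \Lambda_n$, which kills the potential abelianization of $\Gamma_n'$ upon projection.
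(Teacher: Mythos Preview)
Your proof is correct and follows essentially the same strategy as the paper: apply Lemma \ref{cornulier}(6) once at the level of $\Omega_n$ to get $\Gamma_n'=\Gamma_n''$, and once at the level of $\Omega_n'$ to see that every nontrivial normal subgroup of $\Gamma_n'$ contains $\Gamma_n''$. The only cosmetic difference is that the paper phrases the first step as ``every proper quotient of $\Gamma_n$ is abelian'' (taking $N=\langle\langle g,\Lambda_n\rangle\rangle_{\Omega_n}$ for an arbitrary lift $g$), whereas you take the specific $N=\Omega_n''\cdot\Lambda_n$; these yield the same conclusion.
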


\begin{proof}
Let $f\in \Gamma_n\setminus \{id\}$, and choose a lift $g\in \Omega_n$ of $f$. Then $\Gamma_n/\langle \langle f \rangle \rangle_{\Gamma_n}=\Omega_n/\langle \langle g,\mathcal{Z}(\Omega_n)\rangle\rangle_{\Omega_n}$ and since the latter is abelian from Lemma \ref{cornulier} part $(6)$, so is the former. We conclude that every proper quotient of $\Gamma_n$ is abelian, hence $\Gamma_n'=\Gamma_n''$. A similar argument shows that every proper quotient of $\Gamma_n'$ is abelian, which combined with the fact that $\Gamma_n'=\Gamma_n''$ shows that $\Gamma_n'$ is simple.
\end{proof}

We will also need the following lemma.
Note that in the latter part of the statement, $F_{2^n}\leq \Gamma_n$ is the natural copy of $F_{2^n}$ acting on $[0,1)\cong \mathbf{S}^1\cong \mathbf{R}/\mathbf{Z}$.

\begin{lem}\label{GammaGen}
Let $f\in \Omega_n'$ be any special element. Then $\Omega_n'=\langle f, (F_{2^n}\cap \Omega_n')\rangle$.
Moreover, if $f'$ is the image of $f$ in $\Gamma_n'$ then $\Gamma_n'=\langle f', (F_{2^n}\cap \Gamma_n')\rangle$.
\end{lem}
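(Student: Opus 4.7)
The approach is to adapt the argument of Lemma \ref{Omegagenset} so that all intermediate elements can be produced inside the derived subgroup. Let $G := \langle f, F_{2^n}\cap \Omega_n'\rangle \leq \Omega_n'$. The first step is to verify that $G$ acts minimally on $\mathbf{R}$ by invoking Proposition \ref{generalminimal}: condition (1) is immediate because $F_{2^n}' \leq F_{2^n}\cap \Omega_n'$ (since $F_{2^n}' = [F_{2^n},F_{2^n}] \leq [\Omega_n,\Omega_n] = \Omega_n'$), and by Lemma \ref{Fnproximal} the $1$-periodic action of $F_{2^n}'$ is minimal on each $(m,m+1)$; condition (2) holds because $f$ is special, so $0\cdot f\in (0,1)$ shows $\mathbf{Z}$ is not $G$-invariant.

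Given any $g\in \Omega_n'$, minimality provides $f_1 \in G$ such that $0\cdot gf_1 \in (0,1)$. By Lemma \ref{GammaOrbits}, $\theta_{2^n}(0\cdot gf_1) = \theta_{2^n}(0\cdot f)$. The crucial point now is that Proposition \ref{Fntrans} yields an element $f_2 \in F_{2^n}'$ (not merely $F_{2^n}$) with $(0\cdot gf_1)\cdot f_2 = 0\cdot f$; this $f_2$ lies in $F_{2^n}\cap \Omega_n' \leq G$. Then $gf_1f_2f^{-1}$ fixes $0$, so by Lemma \ref{HigmanSubgroups} it lies in $F_{2^n}$, and since $g,f_1,f_2,f \in \Omega_n'$, it lies in $F_{2^n}\cap \Omega_n' \leq G$. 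Hence $g\in G$, proving $\Omega_n' = G$.

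For the second assertion, let $\pi: \Omega_n \to \Gamma_n$ be the quotient map. Since $\pi$ is surjective, $\pi(\Omega_n') = \Gamma_n'$, and $\pi$ sends $F_{2^n}\cap \Omega_n'$ into the subgroup of $\Gamma_n$ stabilizing $0\in \mathbf{S}^1$ intersected with $\Gamma_n'$, i.e., into $F_{2^n}\cap \Gamma_n'$ (with $F_{2^n} \leq \Gamma_n$ as described in the statement). Applying $\pi$ to the first part therefore gives
$$\Gamma_n' = \pi(\Omega_n') = \pi\langle f, F_{2^n}\cap \Omega_n'\rangle = \langle f', \pi(F_{2^n}\cap \Omega_n')\rangle \leq \langle f', F_{2^n}\cap \Gamma_n'\rangle \leq \Gamma_n',$$
and equality throughout yields the desired conclusion.

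The only potentially delicate point in this plan is ensuring that the replacement element $f_2$ can be chosen in $F_{2^n}'$ rather than only in $F_{2^n}$; this is precisely what the equivalence of (2) and (3) in Proposition \ref{Fntrans} supplies, so no new obstacle arises.
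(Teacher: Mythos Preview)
Your proof is correct and follows essentially the same route as the paper's: establish minimality of $G=\langle f, F_{2^n}\cap \Omega_n'\rangle$, use it together with Lemma~\ref{GammaOrbits} and Proposition~\ref{Fntrans} to move $0\cdot g$ to $0\cdot f$ via an element of $G$, and then observe that the resulting element fixing $0$ lies in $F_{2^n}\cap \Omega_n'$. The only cosmetic difference is that the paper obtains minimality by noting $G$ is a large subgroup and citing Lemma~\ref{cornulier}, whereas you invoke Proposition~\ref{generalminimal} directly; these are equivalent, since Lemma~\ref{cornulier} itself rests on Proposition~\ref{generalminimal}.
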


\begin{proof}
Let $h\in \Omega_n'$. Note that $G=\langle f, (F_{2^n}\cap \Omega_n')\rangle$ is a large subgroup since it contains $F_{2^n}'$ and a special element.
Using Lemma \ref{cornulier}, we see that $G$ acts minimally on $\mathbf{R}$. In particular, there is an element $g_1\in G$ such that $(0\cdot h)\cdot g_1\in (0,1)$.
Applying Lemma \ref{GammaOrbits} and Proposition \ref{Fntrans}, we find an element $g_2\in F_{2^n}'$ such that $0\cdot hg_1g_2=0\cdot f$.
It follows that $0\cdot hg_1g_2f^{-1}=0$ and hence $hg_1g_2f^{-1}\in (F_{2^n}\cap \Omega_n')\leq G$. Since $f,g_1,g_2\in G$, it follows that $h\in G$ as required.
The second part follows immediately since $\Gamma_n'$ is the image of $\Omega_n'$ under the quotient $\Omega_n\to \Gamma_n$.
\end{proof}

\section{The groups \texorpdfstring{$\{\Theta_n\}_{n\geq 2}$}.}\label{minimalweak}
Recall that $F_{2^n}^c$ is the subgroup of $F_{2^n}$ consisting of elements that pointwise fix a neighborhood of $0$.
Recall from Lemma \ref{Fncompactab} that the abelianization of $F_{2^n}^c$ is $\mathbf{Z}^{2^n-2}$.
In this section we construct a finite index subgroup of $F_{2^n}^c$ and provide a special choice of a weak generating set for it.
We remark that initially this group may appear to be somewhat ad hoc, but it shall emerge as a very natural object in our eventual proof of uniform simplicity.
Recall from Proposition \ref{Fntrans} that the $F_{2^n}^c$-orbits on $ (0,1)\cap \mathbf{Z}[\frac{1}{2}]$ admit $\{\frac{i}{2^n}\mid 1\leq i\leq 2^n-1\}$ as a transversal.
For each $1\leq i\leq 2^n-1$, let $O_i\subset (0,1)\cap\mathbf{Z}[\frac{1}{2}]$ be the $F_{2^n}^c$-orbit of $\frac{i}{2^n}$.
Also recall from Proposition \ref{Fntrans} that $O_i$ is the preimage of $i$ under the map: $$\theta_{2^n}:(0,1)\cap \mathbf{Z}\left[\frac{1}{2}\right]\to \mathbf{Z}/(2^n-1)\mathbf{Z}\qquad \frac{k}{(2^n)^m}\mapsto k (\text{mod }2^n-1).$$
We shall use this orbit structure to build a map $\Xi:F_{2^n}^c\to \mathbf{Z}^{2^n-2}$.
For each $x\in  (0,1)\cap \mathbf{Z}[\frac{1}{2}]$, define: $$D_f(x)=\log_{2^n}f_+^{'}(x) -\log_{2^n}f_-^{'}(x)$$
It is a consequence our definitions that for all but finitely many $x$, $D_f(x)=0$.
For each $1\leq i\leq 2^n-1$, define: 
$$\Xi_i:F_{2^n}^c\to \mathbf{Z}_i\qquad \Xi_i(f)=\sum_{x\in O_{i}} D_f(x)$$
where the $\mathbf{Z}_i$ are isomorphic copies of $\mathbf{Z}$ distinguished by the index $i$.
It is easy to see that the maps $\Xi_i$ are group homomorphisms.  
It will be useful for us to drop the index $i=2$. The reason for this will become apparent later when we observe that $O_2$ is $\Gamma_n$-invariant. We denote: $$\mathcal{V}_n= \bigoplus\limits_{i\in \{1\}\cup \{3,...,2^n-1\}} \mathbf{Z}_i$$ and define the following map:
$$\Xi:F_{2^n}^c\to \mathcal{V}_n\qquad \Xi(f)=(\Xi_1(f),\Xi_3(f),...,\Xi_{2^n-1}(f))$$
Note that since the map $\Xi:F_{2^n}^c\to \mathcal{V}_n$ is a homomorphism to an abelian group, it follows that for all $f,g\in F_{2^n}^c$ we have $\Xi(f^{-1}gf)=\Xi(g)$. A related technical lemma is the following:

\begin{lem}\label{Xiconjugacygeneral}
Let $g\in F_{2^n}^c$ and let $I\subset (0,1)$ be a closed interval with dyadic endpoints such that $Supp(g)\subset I$.
Let $\mathbf{S}^1\cong \mathbf{R}/\mathbf{Z}$ and let $f\in \textup{Homeo}^+(\mathbf{S^1})$ be such that:
\begin{enumerate}
\item $I\cdot f\subset (0,1)$.
\item $f\restriction I$ is a piecewise linear map with breakpoints (finitely many) that lie in $\mathbf{Z}[\frac{1}{2}]\cap I$
and slopes (whenever they exist) in $\{(2^n)^m\mid m\in \mathbf{Z}\}$.
\item $f$ maps $O_i\cap I$ to $O_i\cap (I\cdot f)$ for each $1\leq i\leq 2^n-1$.
\end{enumerate}
Then $\Xi(f^{-1} gf)=\Xi(g)$.
\end{lem}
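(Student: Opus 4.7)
The plan is to combine a chain-rule identity for the cocycle $D$ with an orbit-preserving change of variable that exploits condition (3) on $f$.

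First I would verify that $h := f^{-1}gf$ actually lies in $F_{2^n}^c$, so that $\Xi(h)$ is defined. Since $g$ is identity outside $I$, the map $h$ is identity outside $I\cdot f \subset (0,1)$, hence has trivial germs at $0$ and $1$. Conditions (1) and (2) on $f$, together with $g \in F_{2^n}$, ensure that breakpoints of $h$ lie in $\mathbf{Z}[\frac{1}{2}]$ and slopes of $h$ lie in $\{(2^n)^k \mid k \in \mathbf{Z}\}$.

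The main computation is the cocycle identity, obtained by applying the chain rule to the one-sided derivatives of $h$ and taking $\log_{2^n}$:
\[
D_h(y) \;=\; D_{f^{-1}}(y) + D_g(y \cdot f^{-1}) + D_f(y \cdot f^{-1} \cdot g)
\]
for $y \in (0,1) \cap \mathbf{Z}[\frac{1}{2}]$. Since $h$ is trivial outside $I\cdot f$, summing over $y \in O_i$ collapses to a sum over $y \in O_i \cap (I\cdot f)$ which splits into three pieces. For the middle piece I change variables $z = y\cdot f^{-1}$; condition (3) makes this a bijection $O_i\cap (I\cdot f)\to O_i\cap I$, and because $D_g$ is supported on the (necessarily in $I$) breakpoints of $g$, this piece equals $\Xi_i(g)$.

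For the remaining two pieces I would invoke the elementary identity $D_{f^{-1}}(y) = -D_f(y\cdot f^{-1})$, which comes from inverting one-sided slopes. The first sum then becomes $-\sum_{z \in O_i\cap I} D_f(z)$ after the substitution $z = y\cdot f^{-1}$. For the third sum, the composite $f^{-1}g$ is a bijection $O_i\cap (I\cdot f) \to O_i\cap I$ (using condition (3) on $f$ and the fact that $g \in F_{2^n}^c$ preserves each $O_i$ and maps $I$ to itself), so the substitution $w = y\cdot f^{-1}\cdot g$ turns the third sum into $\sum_{w\in O_i\cap I} D_f(w)$. These two sums cancel, yielding $\Xi_i(h) = \Xi_i(g)$ for each $i \in \{1, 3, \dots, 2^n-1\}$, and hence $\Xi(h) = \Xi(g)$.

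The step I expect to require the most care is the bookkeeping at the endpoints of $I$. Both $D_g$ and $D_f$ may pick up nontrivial contributions at the dyadic endpoints $a, b$ of $I$: $g$ switches from identity to possibly nontrivial behavior at $a, b$, and $f$ may genuinely have distinct one-sided derivatives at $a, b$ since $f$'s behavior outside $I$ is unconstrained beyond continuity. I must therefore verify that the chain-rule identity and the identity $D_{f^{-1}}(y) = -D_f(y\cdot f^{-1})$ remain valid at these boundary points (they do, by direct one-sided computation), and that the bijections used in the changes of variable consistently send boundary points to boundary points so that no stray endpoint contribution is dropped or double-counted.
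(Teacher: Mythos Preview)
Your approach is correct but takes a different route from the paper. The paper's proof is essentially two lines: it observes that conditions (1)--(3) are exactly what is needed to extend $f\restriction I$ to a genuine element $h\in F_{2^n}^c$ (condition (3) guarantees that the endpoints of $I$ and of $I\cdot f$ lie in matching $F_{2^n}$-orbits, so one can fill in PL maps $[0,a]\to[0,a']$ and $[b,1]\to[b',1]$ via Proposition~\ref{Fntrans}); then $f^{-1}gf=h^{-1}gh$ since $g$ is supported in $I$, and $\Xi(h^{-1}gh)=\Xi(g)$ simply because $\Xi$ is a homomorphism to an abelian group. Your direct chain-rule computation with the substitutions $z=y\cdot f^{-1}$ and $w=y\cdot f^{-1}g$ is valid and self-contained; the endpoint worry you raise is real (indeed $D_f(a)$ need not even be an integer, since $f$ is unconstrained on the outside of $I$), but since $g$ fixes the endpoints of $I$ those possibly ill-defined quantities occur with opposite signs in your first and third sums and cancel formally, leaving only well-defined contributions. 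The paper's route is shorter and more conceptual; yours avoids the extension step entirely and makes the invariance mechanism explicit.
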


\begin{proof}
Using the given conditions, we can find an element $h\in F_{2^n}^c$ such that $h\restriction I=f\restriction I$.
It follows that $\Xi(f^{-1} gf)=\Xi(h^{-1}gh)=\Xi(g)$ since $h\in F_{2^n}^c$. 
\end{proof}

Next, we let $\nu_i$ denote the generator of $\mathbf{Z}_i$ that corresponds to $1\in \mathbf{Z}$. We write elements of $\mathcal{V}_n$ as $\mathbf{Z}$-linear combinations of $\nu_1,\nu_3,...,\nu_{2^n-1}$.
For each $k\in \mathbf{N}$, we shall also denote by $\nu_k$ the element $\nu_l$ where $1\leq l\leq 2^n-1$ and $k\cong l\mod 2^n-1$.
This shall allow us to perform arithmetic modulo $2^n-1$ on the indices of $\nu_i$.
For notational completeness, we also include a symbol for $\nu_2$, but in our calculations we shall only be concerned with $\nu_k$ where the residue of $k\mod 2^n-1$ lies in $\{1,3,...,2^{n}-1\}$.

For $1\leq k\leq 2^n-2$, define $\zeta_k$ as a piecewise linear homeomorphism of $[0,1]$ whose support equals $(\frac{2}{2^{4n}},\frac{2+2^{n+1}k}{2^{4n}})$ and:
$$\zeta_k:=\begin{cases}
[\frac{2}{2^{4n}},\frac{2+k}{2^{4n}}]\mapsto [\frac{2}{2^{4n}},\frac{2+2^nk}{2^{4n}}]&\text{ linear with slope }2^n.\\
[\frac{2+k}{2^{4n}},\frac{2+2^nk}{2^{4n}}]\mapsto [\frac{2+2^nk}{2^{4n}},\frac{2+(2^{n+1}-1)k}{2^{4n}}]&\text{ linear with slope }1.\\
[\frac{2+2^nk}{2^{4n}},\frac{2+2^{n+1}k}{2^{4n}}]\mapsto [\frac{2+(2^{n+1}-1)k}{2^{4n}}, \frac{2+2^{n+1}k}{2^{4n}}]&\text{ linear with slope }\frac{1}{2^n}.\\
\end{cases}$$

We fix the notation $\Theta_n=\langle \zeta_1,...,\zeta_{2^n-2}, F_{2^n}'\rangle$ (note that $\zeta_2$ is included).
This group will play an important role in the proof of uniform simplicity.
The following lemma provides a key tool from this subsection.

\begin{lem}\label{weakbasis}
The following hold:
\begin{enumerate}
\item For each $1\leq k \leq 2^n-2$, $\Xi(\zeta_k)= -2\nu_{2+k}+\nu_{2+2k}$.

\item The elements $\Xi(\zeta_1),...,\Xi(\zeta_{2^n-2})$ freely generate a subgroup isomorphic to $\mathbf{Z}^{2^n-2}$. 

\item The kernel of $\Xi$ equals $F_{2^n}'$, and hence $\Xi$ is just the abelianization map: $$F_{2^n}^c\to F_{2^n}^c/[F_{2^n}^c,F_{2^n}^c]=F_{2^n}^c/F_{2^n}'$$

\item $\langle \Xi(\zeta_1),...,\Xi(\zeta_{2^n-2})\rangle$  is a finite index subgroup of $\Xi(F_{2^n}^c)$.

\item The group $\Theta_n=\langle \zeta_1,...,\zeta_{2^n-2},F_{2^n}'\rangle$ is a finite index subgroup of $F_{2^n}^c$.
And the set $\{\zeta_1,...,\zeta_{2^n-2}\}$ forms a weak generating set for $\Theta_n$.

\end{enumerate}
\end{lem}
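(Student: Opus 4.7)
Part (1) is a direct calculation at the four breakpoints $\frac{2}{2^{4n}}$, $\frac{2+k}{2^{4n}}$, $\frac{2+2^n k}{2^{4n}}$, $\frac{2+2^{n+1}k}{2^{4n}}$ of $\zeta_k$. The piecewise slopes $1,2^n,1,\tfrac{1}{2^n},1$ give $D_{\zeta_k}=+1,-1,-1,+1$ respectively at these four points. Using $2^n\equiv 1$ and $2^{n+1}\equiv 2$ modulo $2^n-1$, the map $\theta_{2^n}$ sends these four points to the residues $2$, $2+k$, $2+k$, $2+2k$ respectively. Summing over orbits yields $\Xi_2(\zeta_k)=+1$, $\Xi_{2+k}(\zeta_k)=-2$, $\Xi_{2+2k}(\zeta_k)=+1$, and all other components vanish; since $\nu_2$ is dropped from $\mathcal{V}_n$, we obtain the stated formula. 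For $k\in\{1,\dots,2^n-2\}$, the residues $2+k$ and $2+2k$ modulo $2^n-1$ are distinct and both different from $2$ (using $\gcd(2,2^n-1)=1$), so there is no collision.

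For part (2), the plan is to show $\mathbf{Z}$-linear independence of the $\Xi(\zeta_k)$. Setting $\sum_k a_k\Xi(\zeta_k)=0$ and reading the coefficient of each $\nu_j$ (for $j\in\{1,3,\dots,2^n-1\}$) yields, using $2^{-1}\equiv 2^{n-1}\pmod{2^n-1}$, the relation $a_{2^{n-1}k}=2 a_k$ for every $k\in\{1,\dots,2^n-2\}$ (the index $2^{n-1}k$ being reduced mod $2^n-1$; this is a bijection of $\{1,\dots,2^n-2\}$ since $\gcd(2^{n-1},2^n-1)=1$). Iterating $n$ times and using $(2^{n-1})^n=2^{n(n-1)}=(2^n)^{n-1}\equiv 1\pmod{2^n-1}$ gives $a_k=2^n a_k$, so $(2^n-1)a_k=0$ in $\mathbf{Z}$, hence $a_k=0$. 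Thus the $\Xi(\zeta_k)$ are $\mathbf{Z}$-linearly independent in the torsion-free group $\mathcal{V}_n$ and freely generate a rank $2^n-2$ free abelian subgroup.

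Parts (3)--(5) then follow formally. Since $\mathcal{V}_n$ is abelian, Lemma \ref{Fncompactab} gives $F_{2^n}'=[F_{2^n}^c,F_{2^n}^c]\subseteq \ker\Xi$, so $\Xi$ descends to $\bar\Xi: F_{2^n}^c/F_{2^n}'\cong \mathbf{Z}^{2^n-2}\to \mathcal{V}_n$. By part (2), the image of $\bar\Xi$ contains a rank $2^n-2$ subgroup and thus has full rank; since $\mathbf{Z}^{2^n-2}$ is torsion-free, the kernel of $\bar\Xi$ is rank zero and torsion-free, hence trivial, proving part (3). Part (4) is immediate: $\langle\Xi(\zeta_1),\dots,\Xi(\zeta_{2^n-2})\rangle$ sits inside $\Xi(F_{2^n}^c)$, both free abelian of rank $2^n-2$, and any such containment is of finite index. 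For part (5), I would apply Lemma \ref{roots} with $G=F_{2^n}^c$, $k=2^n-2$, $S=\{\zeta_1,\dots,\zeta_{2^n-2}\}$, and the homomorphism $\Xi$ to the abelian group $\mathcal{V}_n$; part (2) verifies the hypothesis that $\Xi(S)$ generates a copy of $\mathbf{Z}^{2^n-2}$, and the conclusion is precisely that $\Theta_n=\langle S,F_{2^n}'\rangle$ is of finite index in $F_{2^n}^c$ and that $S$ is a weak generating set for $\Theta_n$. The main obstacle is the linear independence calculation in part (2); the remaining parts are bookkeeping given Lemmas \ref{Fncompactab} and \ref{roots}.
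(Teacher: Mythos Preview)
Your proof is correct. Parts (1), (3), (4), (5) follow the same route as the paper, including the appeal to Lemma~\ref{roots} for part (5).

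For part (2) your argument differs from the paper's. The paper simply reduces modulo $2$: under the natural map $\mathcal{V}_n\to(\mathbf{Z}/2\mathbf{Z})^{2^n-2}$, the element $\Xi(\zeta_k)=-2\nu_{2+k}+\nu_{2+2k}$ is sent to the standard basis vector with a single $1$ in coordinate $2+2k$, and since $k\mapsto 2+2k\pmod{2^n-1}$ is a bijection from $\{1,\dots,2^n-2\}$ onto $\{1,3,\dots,2^n-1\}$, these images form the full standard basis and hence are independent. Your approach instead extracts the recursion $a_{2^{n-1}k}=2a_k$ and iterates it $n$ times to obtain $(2^n-1)a_k=0$. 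Both are valid; the paper's mod-$2$ reduction is shorter and makes the independence visible at a glance, while your recursion is self-contained and avoids passing to a quotient. Either method suffices.
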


\begin{proof}
{\bf Part $1$}: This follows from a direct calculation of $((\zeta_k)_-'(x),(\zeta_k)_+'(x))$ at the breakpoints $\frac{2}{2^{4n}}, \frac{2+k}{2^{4n}},\frac{2+2^nk}{2^{4n}}, \frac{2+2^{n+1}k}{2^{4n}}$ which are $(1, 2^n)$, $(2^n,1)$, $(1,2^{-n})$ and $(2^{-n},1)$ respectively, and since:
$$2+2^{n+1}k\cong 2+2k\mod 2^n-1\qquad 2+2^nk, 2+k\cong 2+k \mod 2^n-1$$
Note that this is well defined since for any $1\leq k \leq 2^n-2$: $$2+k,2+2k\not\cong 2 \mod 2^n-1$$
and the calculation does not consider the value $((\zeta_k)_-'(x),(\zeta_k)_+'(x))$ at $\frac{2}{2^{4n}}$ since $\frac{2}{2^{4n}}\in O_2$, and recall that $\mathcal{V}_n= \bigoplus\limits_{i\in \{1\}\cup \{3,...,2^n-1\}} \mathbf{Z}_i$.

{\bf Part $2$}: It is enough to pass to the natural homomorphism $\nu:\mathcal{V}_n\to (\mathbf{Z}/2\mathbf{Z})^{2^n-2}$, where we see that the images of $\nu(\Xi(\zeta_1)),...,\nu(\Xi(\zeta_{2^n-2}))$ are of the form $\Xi(\zeta_k)=(0,...,0,1,0,...,0)$ and freely generate a copy of $(\mathbf{Z}/2\mathbf{Z})^{2^n-2}$. Hence they also freely generate a copy of $(\mathbf{Z})^{2^n-2}$ in $\mathcal{V}_n$. 
(This follows from the elementary observation that if a nontrivial linear combination of vectors in $\mathbf{Z}^k$ with integer coefficients equals zero, then upon dividing the expression by the greatest common divisor of the coefficients, we may assume that at least one of the coefficients is odd in such a linear combination.) 

 
{\bf Part $3$}: First note that $\Xi$ being a map to an abelian group, must pass through the abelianization of $F_{2^n}^c$ which is isomorphic to $\mathbf{Z}^{2^n-2}$. 
Moreover, the image of $\Xi$ is a torsion-free abelian group and contains a subgroup $\langle \Xi(\zeta_1),...,\Xi(\zeta_{2^n-2})\rangle$, which is isomorphic to $\mathbf{Z}^{2^n-2}$. This means that in fact $\Xi$ is the abeliniazation map, or else $\Xi(F_{2^n}^c)$ would be a torsion-free abelian group of rank less than $2^n-2$ that contains such a subgroup, a contradiction.

{\bf Part $4$}: We have noted in the previous paragraph that $\langle \Xi(\zeta_1),...,\Xi(\zeta_{2^n-2})\rangle$, which is isomorphic to $\mathbf{Z}^{2^n-2}$,
is a subgroup of $\Xi(F_{2^n}^c)\cong \mathbf{Z}^{2^n-2}$. It follows that $\langle \Xi(\zeta_1),...,\Xi(\zeta_{2^n-2})\rangle$ is a finite index subgroup of $\Xi(F_{2^n}^c)$. 

{\bf Part $5$}: This follows from the previous part since $\Theta_n$ is the pre-image of $\langle \Xi(\zeta_1),...,\Xi(\zeta_{2^n-2})\rangle$ under the map $\Xi$, together with Lemma \ref{roots}.
\end{proof}
We remark that it is not clear to us what the index of $\Theta_n$ in $F_{2^n}^c$ is.

\section{A weak generating set for $\Delta_n$.}\label{SectionDeltan}

Recall that we identify $\mathbf{S}^1\cong \mathbf{R}/\mathbf{Z}$.
We define $(\Gamma_{n}')^c$ as the subgroup of $\Gamma_n'$ consisting of elements which fix $0$ and whose germs at $0$ are trivial.
In other words, there are precisely the elements of $\Gamma_n'$ whose support is contained in some compact sub-interval of $(0,1)$.
Also, note that $(\Gamma_{n}')^c=F_{2^n}^c\cap \Gamma_n'$.
We define $\Delta_n= (\Gamma_{n}')^c\cap \Theta_n$, where $\Theta_n$ is the group defined in the previous section. 
Since $\Theta_n$ is a finite index subgroup of $F_{2^n}^c$, and $(\Gamma_n')^c\leq F_{2^n}^c$, we obtain that $\Delta_n$ is a finite index subgroup of $(\Gamma_n')^c$.

First, we observe the following. Recall that in Proposition \ref{Fntrans} and in Section \ref{minimalweak}, we denoted the orbits of the groups $F_{2^n},F_{2^n}',F_{2^n}^c$ on $(0,1)\cap \mathbf{Z}[\frac{1}{2}]$ (which coincide) as $O_1,...,O_{2^n-1}$ and chose the transversal as $\{\frac{i}{2^n}\mid 1\leq i\leq 2^n-1\}$ so that $\frac{i}{2^n}\in O_i$.
In what follows, $\leq_{\text{ f.i. }}$ denotes a subgroup of finite index.
\begin{lem}\label{deltan}
The following hold:
\begin{enumerate}
\item $F_{2^n}'\leq \Delta_n\leq \Theta_n\leq_{\text{ f.i. }} F_{2^n}^c\leq F_{2^n}$ and $\Delta_n\leq_{\text{ f.i. }} (\Gamma_{n}')^c\leq F_{2^n}^c$.
\item $\Delta_n'=\Theta_n'=F_{2^n}'$.
\item The orbits of the action of $\Delta_n$ and $\Theta_n$ on $(0,1)\cap \mathbf{Z}[\frac{1}{2}]$ are $O_1,...,O_{2^n-1}$.
\end{enumerate}
\end{lem}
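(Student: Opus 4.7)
The lemma is a bookkeeping statement whose three parts all follow by combining the defining recipe $\Delta_n=(\Gamma_n')^c\cap\Theta_n$, $\Theta_n=\langle\zeta_1,\ldots,\zeta_{2^n-2},F_{2^n}'\rangle$ and $(\Gamma_n')^c=F_{2^n}^c\cap\Gamma_n'$ with three earlier results: Lemma \ref{weakbasis}(5) asserting $\Theta_n\leq_{\text{f.i.}} F_{2^n}^c$; Theorem \ref{abelianization} asserting that any group $H$ with $F_{2^n}'\leq H\leq F_{2^n}$ satisfies $H'=F_{2^n}'$; and Proposition \ref{Fntrans} asserting that $F_{2^n}$, $F_{2^n}'$, $F_{2^n}^c$ all have the same orbits $O_1,\ldots,O_{2^n-1}$ on $(0,1)\cap\mathbf{Z}[\tfrac{1}{2}]$.

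For part (1), the chain $F_{2^n}'\leq\Theta_n\leq F_{2^n}^c\leq F_{2^n}$ is immediate from the definition of $\Theta_n$ together with Lemma \ref{weakbasis}(5). To get $F_{2^n}'\leq\Delta_n$ it remains to observe $F_{2^n}'\leq(\Gamma_n')^c$: by Theorem \ref{abelianization}, $F_{2^n}'$ is simple and hence perfect, so inside $\Omega_n$ (under the $1$-periodic embedding of Lemma \ref{HigmanSubgroups}) we have $F_{2^n}'=[F_{2^n}',F_{2^n}']\leq[\Omega_n,\Omega_n]=\Omega_n'$, and each element of $F_{2^n}'$ fixes $\mathbf{Z}$ pointwise with support in compact subintervals of the translates $(k,k+1)$, so it descends to an element of $\Gamma_n'$ with compact support in $(0,1)\subset \mathbf{S}^1$, i.e.\ to $(\Gamma_n')^c$. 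Finally, since $(\Gamma_n')^c\leq F_{2^n}^c$ and $\Theta_n$ has finite index in $F_{2^n}^c$, the intersection $\Delta_n=\Theta_n\cap(\Gamma_n')^c$ has finite index in $(\Gamma_n')^c$ by the standard fact about intersections with finite-index subgroups.

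For part (2), both $\Delta_n$ and $\Theta_n$ satisfy $F_{2^n}'\leq\Delta_n\leq\Theta_n\leq F_{2^n}^c\leq F_{2^n}$, so Theorem \ref{abelianization} applies to each and gives $\Delta_n'=\Theta_n'=F_{2^n}'$ directly. For part (3), the same sandwiching together with Proposition \ref{Fntrans} forces the orbit decomposition of any intermediate group on $(0,1)\cap\mathbf{Z}[\tfrac{1}{2}]$ to coincide with the common decomposition $O_1,\ldots,O_{2^n-1}$ of $F_{2^n}'$ and $F_{2^n}^c$. There is no real obstacle: all of the substantive work lies in Lemma \ref{weakbasis} and in the foundational Higman–Thompson facts already recalled.
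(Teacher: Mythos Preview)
Your proof is correct and follows essentially the same approach as the paper: both arguments rest on the sandwich $F_{2^n}'\leq\Delta_n,\Theta_n\leq F_{2^n}$, invoke Lemma~\ref{weakbasis}(5) for the finite-index statement, deduce Part~(2) from Theorem~\ref{abelianization} (the paper spells out $F_{2^n}''\leq\Delta_n'\leq F_{2^n}'$ with $F_{2^n}''=F_{2^n}'$, which is just that theorem unpacked), and deduce Part~(3) from the coincidence of $F_{2^n}'$- and $F_{2^n}$-orbits in Proposition~\ref{Fntrans}. Your justification of $F_{2^n}'\leq(\Gamma_n')^c$ via perfectness of $F_{2^n}'$ is a slightly more explicit rendering of the paper's one-line ``image of the diagonal copy of $F_{2^n}'$ in $\Omega_n'$''.
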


\begin{proof}
{\bf Part $1$}: The fact that $\Delta_n$ contains $F_{2^n}'$ as a subgroup
follows immediately by considering the image in $\Gamma_n'$ of the diagonal copy of $F_{2^n}'$ in $\Omega_n'$.
This image lies in both $(\Gamma_{n}')^c$ and $\Theta_n$.
We have already observed that $\Delta_n$ is a finite index subgroup of $(\Gamma_n')^c$.

{\bf Part $2$}: Since $F_{2^n}'\leq \Delta_n\leq F_{2^n}$, it holds that $F_{2^n}''\leq \Delta_n'\leq F_{2^n}'$. Using $F_{2^n}''=F_{2^n}'$, we conclude that $\Delta_n'=F_{2^n}'$. The same for $\Theta_n'$.

{\bf Part $3$}: These are precisely the orbits of the actions of $F_{2^n},F_{2^n}'$ on $(0,1)\cap \mathbf{Z}[\frac{1}{2}]$. And $F_{2^n}'\leq \Delta_n,\Theta_n\leq F_{2^n}$.
\end{proof}

We also refer to the natural lift of $\Delta_n$ in $\Omega_n'$ (contained in the analog of $F_{2^n}^c$ in $\Omega_n'$) as $\Delta_n$. Which group is being denoted by the notation will be made clear from the context. The same applies to the natural copies of $F_{2^n}',F_{2^n}^c,F_{2^n}$ in $\Gamma_n$ and $\Omega_n$. 
The goal of this section is to prove the following proposition, which is a key ingredient in the proof of uniform simplicity of $\Gamma_n'$. 

\begin{prop}\label{weakbasisc}
There exist elements $f,g\in \Gamma_n'$ such that:
\begin{enumerate}
\item The elements in $\{[\zeta_i^{-1},f]\mid 1\leq i\leq 2^n-2\}$ form a weak generating set for $\Delta_n$.
\item For each $1\leq i\leq 2^n-2$, we have $g^{-1}[\zeta_i^{-1},f]g\in F_{2^n}'=\Delta_n'$.
\end{enumerate}
\end{prop}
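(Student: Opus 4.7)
The plan is to construct $f\in\Gamma_n'$ as a \emph{wrapping} special element and $g\in\Gamma_n'$ as a transporter that sends all relevant supports into a small subinterval of $(0,1)$.

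For $f$, I would apply Lemma \ref{specialelementsgalore1} to obtain $f\in\Gamma_n'$ with support $V\subset(-\tfrac14,\tfrac14)+\mathbf{Z}$ a small $1$-periodic neighborhood of $0$ containing every $\Supp(\zeta_i)$, together with a chosen lift $\tilde f\in\Omega_n'$ satisfying $\tilde f(0)\in(k,k+1)$ for a fixed $k\in\{1,\dots,n-1\}$. On $\mathbf{S}^1$, the element $f$ then wraps $\Supp(\zeta_i)\subset(0,\tfrac14)$ by $k$ units. The four breakpoints $p_1,p_2,p_3,p_4$ of $\zeta_i$ lie in orbits $O_2,O_{2+i},O_{2+i},O_{2+2i}$, and their images under $f$ lie in $O_{2-k},O_{2+i-k},O_{2+i-k},O_{2+2i-k}$ by Lemma \ref{GammaOrbits} combined with the $-k$ shift induced by the wrap on $\mathbf{S}^1$. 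Crucially the contribution of $p_1\in O_2$ is dropped from $\Xi(\zeta_i)$, but its image $f(p_1)\in O_{2-k}\neq O_2$ is \emph{not} dropped from $\Xi(f^{-1}\zeta_i^{-1}f)$, yielding an extra $-\nu_{2-k}$ term. Collecting all contributions gives
\[
\Xi([f,\zeta_i])=2\nu_{2+i-k}-\nu_{2+2i-k}-\nu_{2-k}-2\nu_{2+i}+\nu_{2+2i},
\]
with the convention $\nu_j=0$ when $j\equiv 2\bmod 2^n-1$. A linear independence check on the associated $(2^n-2)\times(2^n-2)$ integer matrix---feasible for instance for $k=1$ via reduction modulo $2$ and direct cofactor expansion---then shows these vectors span a rank-$(2^n-2)$ subgroup of $\mathcal{V}_n$. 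Since they lie in $\Xi(\Delta_n)\leq\Xi(\Theta_n)\cong\mathbf{Z}^{2^n-2}$, this subgroup is of finite index in $\Xi(\Delta_n)$, so $\{[f,\zeta_i]\}$ is a weak generating set for $\Delta_n$ by Lemma \ref{roots}, establishing~(1).

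For~(2), I would use Proposition \ref{periodproximal} to choose $g\in\Gamma_n'$ whose action on $\mathbf{S}^1$ sends the compact neighborhood $V\cup\bigcup_i \Supp(\zeta_i)$ of $0$ into a compact subinterval $J\subset(0,1)$ of length strictly less than~$1$. Then both $g^{-1}fg$ and $g^{-1}\zeta_i g$ have support contained in $J$, fix $0$ and $1$ with trivial germs, and---crucially---their slopes are forced by condition~(3) of Definition~\ref{ourgroup} to be powers of $2^n$: for any $x\in J$ the point $x\cdot h$ (for $h$ either of these conjugates) also lies in $J$, so $(x,x\cdot h)_{\mathbf{Z}}=0$. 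Thus both conjugates lie in $F_{2^n}^c$, and
\[
g^{-1}[f,\zeta_i]g=[g^{-1}fg,g^{-1}\zeta_i g]\in (F_{2^n}^c)'=F_{2^n}'=\Delta_n'
\]
by Lemma \ref{Fncompactab}, giving~(2). The main obstacle is the linear-independence verification: the coefficient matrix's structure depends intricately on the residues modulo $2^n-1$, with ``degenerate'' indices where $k\equiv i$ or $k\equiv 2i\bmod 2^n-1$ requiring careful bookkeeping; however, the convention $\nu_j=0$ for $j\equiv 2$ absorbs these cases uniformly, and the determinant can be confirmed nonzero by reduction to a small finite field.
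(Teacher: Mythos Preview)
Your approach has two substantive gaps, one computational and one structural.

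\textbf{The orbit permutation is wrong.} You claim that a degree-$k$ wrap sends $O_j$ to $O_{j-k}$, appealing to Lemma~\ref{GammaOrbits} plus a ``$-k$ shift''. But Lemma~\ref{GammaOrbits} only applies when the image stays in $[0,1]$; once you cross $0$, the defining condition~(3) of $\Omega_n$ forces slope $2\bmod 2^n$ near the crossing, and this multiplies the $\theta_{2^n}$-class rather than shifting it. Concretely, Lemma~\ref{orbitscircleaction} shows the degree-$1$ map sends $O_j$ to $O_{2(j-1)}$, and Lemma~\ref{forwardorbits} records that $i\mapsto 2(i-1)$ fixes $2$. So $O_2$ is $\Gamma_n$-invariant: the breakpoint $p_1\in O_2$ stays in $O_2$ under \emph{any} $f\in\Gamma_n'$, and the ``extra $-\nu_{2-k}$ term'' you isolate never appears. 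The correct computation (Lemma~\ref{perm1}) gives $\Xi(f^{-1}\zeta_i f)=\Xi(\zeta_{2i})$ for degree $1$, hence $\Xi([f,\zeta_i])=\Xi(\zeta_i)-\Xi(\zeta_{2i})$.

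\textbf{The rank claim is impossible.} You assert that the $2^n-2$ vectors $\Xi([f,\zeta_i])$ span a rank-$(2^n-2)$ subgroup of $\mathcal V_n$. They cannot: these vectors lie in $\Xi(\Delta_n)$, and the whole point of the paper's argument is that $\Xi(\Delta_n)$ has rank strictly less than $2^n-2$. Indeed the homomorphism $\gimel:\Gamma_n\to\bigoplus_{1\le j\le\eta_n}\mathbf Z_j$ vanishes on $\Gamma_n'$, so $\Delta_n\subset\ker(\gimel|_{\Theta_n})=:\Pi_n$; the paper then proves the reverse inclusion (Lemma~\ref{same}), giving $\Xi(\Delta_n)=\Psi_n=\ker(\varsigma)$, which has rank $(2^n-2)-\eta_n$. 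Any honest linear-independence check on your $2^n-2$ vectors must therefore fail. Equally, Lemma~\ref{roots} would at best yield a weak generating set for a \emph{finite-index subgroup} of $\Delta_n$, not for $\Delta_n$ itself; to hit $\Delta_n$ on the nose one needs the identification $\Delta_n=\Pi_n$ and the explicit generating set for $\Psi_n$ in Lemma~\ref{psigenset}.

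A minor point: your $f$ cannot simultaneously be a special element (Lemma~\ref{specialelementsgalore1} forces $0\cdot\tilde f\in(0,1)$) and satisfy $0\cdot\tilde f\in(k,k+1)$ with $k\ge 1$; and if $\Supp(f)$ is contained in an arc of length $<1$, the achievable degrees on $\mathcal I_n$ are only $0$ and $n-1$.

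Your argument for part~(2), on the other hand, is correct and is in fact cleaner than the paper's: once $\Supp(f)\cup\bigcup_i\Supp(\zeta_i)$ fits in an arc of length $<1$, a single $1$-periodically proximal conjugation by $g\in\Gamma_n'$ pushes both $g^{-1}fg$ and $g^{-1}\zeta_i g$ into $F_{2^n}^c$ (condition~(3) of Definition~\ref{ourgroup} forces the slopes), and then $[F_{2^n}^c,F_{2^n}^c]=F_{2^n}'$ finishes. The paper instead engineers $g$ as a special element commuting with every $\zeta_i$ and uses the degree-$0$ criterion of Lemma~\ref{degree0}; your route avoids that bookkeeping, at the price of needing $f$ to have small support---which is compatible with the degree-$(n-1)$ version of part~(1) once the orbit map is corrected.
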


\subsection{The circle action of \texorpdfstring{$\{\Gamma_n'\}_{n\geq 2}$}.}

Now we study the orbits of the action of $\{\Gamma_n'\}_{n\geq 2}$ on the set of dyadics in the circle.
To avoid the ambiguity with the identification of $0,1$ in $\mathbf{R}/\mathbf{Z}$, we simply identify the circle (set-theoretically) with $[0,1)$ and study the orbits of the actions of our groups on the dyadics in this set.
Consider an element $f\in \Gamma_n'$ and a closed interval $I\subset (0,1)$ (which may be a single point) such that
$I\cup (I\cdot f)$ does not contain $0$.
To each such pair, we associate an integer, called the \emph{degree of the pair $(f,I)$} as follows.
Let $\tilde{f}$ be a lift of $f$ in $\Omega_n$ and view $I$ as the same interval in $[0,1)$ but now also as a sub-interval of $\mathbf{R}$. From our hypothesis, it follows that there is an $m\in \mathbf{Z}$ such that $I\cdot \tilde{f}\subset (m,m+1)$.
Then the degree of the pair $(f,I)$ is $k$ where $k$ is the residue of $m$ modulo $n$, i.e. $0\leq k\leq n-1$ and $k\cong m\mod n$. 
It is clear from the definition that the degree is independent of the choice of lift (since all lifts differ by translations that are integer multiples of $n$). An informal way of understanding the degree of the pair $(f,I)$ using the action on the circle is precisely how many times $I$ crosses $0$ counter-clockwise in $\mathbf{R}/\mathbf{Z}$ when the element $f$ is applied to it, modulo $n$.

\begin{lem}\label{degreelem}
Let $I\subset (0,1)$ be a closed interval, $k\in \{0,1,...,n-1\}$ and $U\subset (0,1)$ a nonempty open set. 
Then there is an element $f\in \Gamma_n'$ such that $I\cdot f\subset U$ and the degree of the pair $(f,I)$ is $k$.
\end{lem}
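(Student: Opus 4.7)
The plan is to first construct, for each $k\in\{0,\ldots,n-1\}$, an explicit witness in $\Omega_n'$ realizing degree $k$ on a fixed reference interval, and then use $1$-periodic proximality (Proposition \ref{periodproximal}) to adjust both endpoints so that any given $I$ maps into any given $U$.

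First I would fix a special element $\tau\in \Omega_n'$ with support contained in $(-1/8,1/8)+\mathbf{Z}$, provided by Lemma \ref{specialelementsgalore1}. Since $\tau$ is special, $\tau(0)\in(0,1)$, and $1$-periodicity forces $\tau(1)=\tau(0)+1\in(1,2)$. By the intermediate value theorem, $\tau$ crosses the integer $1$ at some $x_0\in(0,1)$, and for any closed dyadic interval $I_1\subset (x_0,1)$ one has $\tau(I_1)\subset (1,2)$, so the pair $(\tau,I_1)$ has degree $1$. Since $\tau(I_1)-1$ is a proper compact subset of $(0,1)$, proximality of $F_{2^n}'$ on $(0,1)$ (Lemma \ref{Fnproximal}) yields $f\in F_{2^n}'\leq \Omega_n'$ with $f\left(\tau(I_1)-1\right)\subset \textup{Int}(I_1)$; by $1$-periodicity of $f$, this gives $f(\tau(I_1))\subset I_1+1$. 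Setting $h_k:=(\tau f)^k$, a short induction shows $I_1\cdot h_k\subset I_1+k$ in $\mathbf{R}$, so $(h_k,I_1)$ has degree $k$ for every $k\geq 0$.

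To finish, given arbitrary $I,U,k$, I would apply Proposition \ref{periodproximal} twice to produce $g_1,g_2\in \Omega_n'$ with $I\cdot g_1\subset \textup{Int}(I_1)+m_1$ and $I_1\cdot g_2\subset U+m_2$ for some $m_1,m_2\in \mathbf{Z}$; since $I$ and $I_1$ are connected with $|I|,|I_1|<1$, each image lies in a single integer translate. Choosing $j\in\{0,\ldots,n-1\}$ with $j\equiv k-m_1-m_2\pmod{n}$ and setting $\tilde f:=g_1 h_j g_2\in \Omega_n'$, a direct computation using that $g_2$ commutes with integer translations yields $I\cdot\tilde f\subset U+(m_1+j+m_2)$ with $m_1+j+m_2\equiv k\pmod{n}$. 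The image of $\tilde f$ in $\Gamma_n'$ therefore satisfies $I\cdot f\subset U$ in the circle and has degree exactly $k$.

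The main obstacle is ensuring $\tau$ lies in $\Omega_n'$, not merely $\Omega_n$: the naive guesses such as $t_1$ fail to satisfy condition $(3)$ of Definition \ref{ourgroup}, and even more elaborate piecewise linear candidates in $\Omega_n$ need not be products of commutators. This is precisely what Lemma \ref{specialelementsgalore1} resolves, by producing special elements of $\Omega_n'$ with arbitrarily small support around $\mathbf{Z}$. Once such a $\tau$ is fixed, the remainder is bookkeeping with lifts to $\mathbf{R}$ and the fact that all lifts commute with integer translations.
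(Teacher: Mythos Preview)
Your proof is correct. The paper's own argument is much shorter: it simply lifts $U$ to $U_1\subset(k,k+1)$ and invokes Proposition~\ref{periodproximal} to produce $f\in\Omega_n'$ with $I\cdot f\subset U_1$, after which the image in $\Gamma_n'$ has degree $k$ by definition.

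The difference is that you make explicit a step the paper elides. Strictly speaking, $1$-periodic proximality only yields $(I+\mathbf{Z})\cdot f\subset U_1+\mathbf{Z}$, so $I\cdot f$ lands in \emph{some} integer translate of $U_1$, not necessarily in $U_1$ itself. Your construction of the ``degree shifters'' $h_j=(\tau f)^j$ --- built from a special element $\tau\in\Omega_n'$ and an $F_{2^n}'$-element chosen via proximality on $(0,1)$ --- is exactly what is needed to correct for this unknown translate and force the degree to be $k$. The paper's authors surely have this mechanism in mind (it is used explicitly in the proof of Lemma~\ref{unisimlem2}), but your version spells it out. A couple of minor points: you mix left-action notation $\tau(I_1)$ with the paper's right-action convention $I_1\cdot\tau$, and when applying Proposition~\ref{periodproximal} with target $U$ you should first pass to an open subinterval of $U$; neither affects the argument.
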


\begin{proof}
Let $U_1\subset (k,k+1)\subset \mathbf{R}$ be an open set whose image under the map $\mathbf{R}\to \mathbf{R}/\mathbf{Z}$ is $U$.
Using Proposition \ref{periodproximal}, we find $f\in\Omega_n'$ such that $I\cdot f\subset U_1$. It follows that the image of $f$ in $\Gamma_n'$ has the required property.
\end{proof}








The action of the groups $F_{2^n},F_{2^n}^c,F_{2^n}'$ on $[0,1)\cap \mathbf{Z}[\frac{1}{2}]$ have the orbits $\{0\},O_1,...,O_{2^n-1}$.
Since the orbits $O_1,...,O_{2^n-1}$ were originally defined for the action of these groups on $(0,1)\cap \mathbf{Z}[\frac{1}{2}]$,
this did not include the point $0$. It is natural to include the point $0$ in the set $O_{2^n-1}$ since $0\cong 2^n-1\mod 2^n-1$, and since the image of $0$ under a special element lies in $O_{2^n-1}$ thanks to Lemma \ref{GammaOrbits}.
For the rest of the article, we shall do so whenever considering the action on $[0,1)$.
For each $k\in \mathbf{N}$, we shall denote by $O_k$ the orbit $O_l$ where $1\leq l\leq 2^n-1$ is such that $l\cong k\mod 2^n-1$.

Since $F_{2^n}'\leq \Gamma_n'$, each orbit of the action of $\Gamma_n'$ on $[0,1)\cap \mathbf{Z}[\frac{1}{2}]$ is a union of some orbits of the action of $F_{2^n}'$ on $[0,1)\cap \mathbf{Z}[\frac{1}{2}]$. 
It follows that there is a partition of $\{1,...,2^n-1\}$ consisting of sets 
$\chi_0,\chi_1,...,\chi_{\eta_n}$ for some $\eta_n\in \mathbf{N}$ such that the orbits of the action of $\Gamma_n'$ on $[0,1)\cap \mathbf{Z}[\frac{1}{2}]$
are precisely: $$\Phi_i=\bigcup_{j\in \chi_i} O_j\qquad 0\leq i\leq \eta_n$$
We remark that the precise calculation of the value of $\eta_n$, while an interesting curiosity by itself, will not be relevant to our arguments
which do not depend on knowledge of the precise value. 
(It turns out that when $n$ is an odd prime, $\eta_n=\frac{2^n-2}{n}$.
This will be left as an exercise for the reader, and the proofs in this section will provide the necessary hints).
However, in Lemma \ref{forwardorbits} we shall provide a description of the sets $\chi_0,...,\chi_{\eta_n}$, and hence of the orbits $\Phi_i=\bigcup_{j\in \chi_i} O_j$ for each $0\leq i\leq \eta_n$.

We modify the special element $\tau$ constructed in Subsection \ref{specialelementsconstruction}. 
Let $I:=[-\frac{1}{2^n}, \frac{1}{2^{n-1}}]$. We had defined $\lambda:I\to I$ as follows:
$$\lambda:=\begin{cases}
[-\frac{1}{2^n},\frac{1-2^n}{2^{2n}}]\mapsto [-\frac{1}{2^n},0] & \text{ linear with slope } 2^n.\\
[\frac{1-2^n}{2^{2n}},0]\mapsto [0, \frac{2^{n}-1}{2^{2n-1}}]&\text{ linear with slope }2.\\
[0,\frac{1}{2^{n-1}}]\mapsto [\frac{2^{n}-1}{2^{2n-1}},\frac{1}{2^{n-1}}]
&\text{ linear with slope }\frac{1}{2^n}.
\end{cases}$$
We define the $1$-periodic homeomorphism, and a special element, as $g:\mathbf{R}\to \mathbf{R}$ satisfying $\Supp(g)=\Int(I)+\mathbf{Z}$ and $g\restriction I=\lambda$. 
Let $J\subset (0,1)$ be a nonempty open interval such that $(J+\mathbf{Z})\cap  (\Int(I)+\mathbf{Z})=\emptyset$.
Using the fact that $\Omega_n$ acts in a $1$-periodically proximal fashion on $\mathbf{R}$, we find $f\in \Omega_n$ such that 
$(\Int(I)+\mathbf{Z})\cdot f\subset J+\mathbf{Z}$.
Then we declare $\tau=gf^{-1} g^{-1} f$.
Note that $\tau\restriction I+\mathbf{Z}=g\restriction I+\mathbf{Z}$ and that $\tau$ is a special element in $\Omega_n'$.
\begin{lem}\label{orbitscircleaction}
Let $\tau\in \Omega_n'$ be the special element as above and let $\tau_1$ be its image in $\Gamma_n'$.
Let $x,y\in (0,1)$ be such that $0\cdot \tau=x$ and $y\cdot \tau=1$.
Then the following hold:
\begin{enumerate}

\item  $y\cdot \tau_1=0$ and $0\cdot \tau_1\in O_{2^n-1}$.

\item The restriction $\tau_1\restriction [y, 1)\to [0,x)$ maps $O_2\cap [y, 1)$ to $O_2\cap [0,x)$.

\item The restriction $\tau_1\restriction [y, 1)\to [0,x)$ maps $O_i\cap [y,1)$ to $O_{2(i-1)}\cap [0,x)$. 

\item The restriction $g\restriction [0,y)\to [x, 1)$ maps $O_i\cap [0,y)$ to $O_i\cap [x,1)$ for each $1\leq i\leq 2^n-1$. 

\item $O_2$ is $\tau_1$-invariant.

\end{enumerate}
\end{lem}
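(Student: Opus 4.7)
The plan is an explicit arithmetic computation, exploiting the fact that $\theta_{2^n}\colon \mathbf{Z}[\tfrac{1}{2}] \to \mathbf{Z}/(2^n-1)\mathbf{Z}$ is an additive group homomorphism (with respect to addition of dyadics), under which multiplication by $2^n$ acts trivially (since $2^n \equiv 1 \pmod{2^n-1}$) and multiplication by $2$ acts as doubling. Everything reduces to reading off the three affine pieces of $\tau$ on a neighborhood of $\mathbf{Z}$ and applying $\theta_{2^n}$ carefully.

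First, I would read off from the definition of $\lambda$, shifted by $1$, the formula for $\tau$ on $[1-\tfrac{1}{2^n}, 1+\tfrac{1}{2^{n-1}}]$. This identifies $x = 0\cdot\tau = \tfrac{2^n-1}{2^{2n-1}}$ and $y = 1-\tfrac{2^n-1}{2^{2n}}$ as the unique point in $(0,1)$ with $\tau(y)=1$. Part (1) then follows: $y\cdot\tau_1 = 0$ since $\tau(y) = 1$, while rewriting $x = \tfrac{2^{n+1}-2}{2^{2n}}$ yields $\theta_{2^n}(x) = 2^{n+1}-2 = 2(2^n-1) \equiv 0 \pmod{2^n-1}$, so $x \in O_{2^n-1}$ under the convention that $0$ is counted in $O_{2^n-1}$.

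For (2) and (3), I use the affine formula $\tau_1(t) = 2t - 2 + \tfrac{2^n-1}{2^{2n-1}}$ valid on $(y,1)$, obtained from the middle linear piece of $\tau$ (slope $2$, taking value $1$ at $y$) shifted down by $1$. Since $\theta_{2^n}$ is a homomorphism and $\theta_{2^n}(\tfrac{2^n-1}{2^{2n-1}}) = 0$ by the computation above, one gets $\theta_{2^n}(\tau_1(t)) = 2\,\theta_{2^n}(t) - 2 \equiv 2(\theta_{2^n}(t)-1) \pmod{2^n-1}$. Applied to $t \in O_i \cap (y,1)$ this gives $\tau_1(t) \in O_{2(i-1)} \cap (0,x)$, which is (3); and (2) is the special case $i=2$, for which $2(i-1)=2$.

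For (4), observe that on $(0,y)$ the map $\tau$ never reaches $1$, so $\tau_1$ coincides with $\tau$ there. I would decompose $(0,y)$ as $(0,\tfrac{1}{2^{n-1}}) \cup [\tfrac{1}{2^{n-1}}, 1-\tfrac{1}{2^n}] \cup (1-\tfrac{1}{2^n}, y)$. The middle interval is pointwise fixed; on the outer two, $\tau$ is affine with slopes $\tfrac{1}{2^n}$ and $2^n$ respectively, both acting trivially under $\theta_{2^n}$, and the constant terms in the two affine expressions are integer multiples of $\tfrac{2^n-1}{2^{2n-1}}$ and $\tfrac{2^n-1}{2^n}$, each vanishing modulo $2^n-1$. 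Hence $\theta_{2^n}(\tau_1(t)) = \theta_{2^n}(t)$ throughout $(0,y)$, giving (4). The only real obstacle is bookkeeping: correctly identifying each affine piece of $\tau$ and verifying that every constant that appears has vanishing $\theta_{2^n}$-image, but once additivity of $\theta_{2^n}$ and the identity $2^n \equiv 1 \pmod{2^n-1}$ are in hand there is no conceptual difficulty.
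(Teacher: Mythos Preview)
Your argument is correct and follows essentially the same route as the paper: both proofs read off the affine formula for $\tau_1$ on $(y,1)$ and use additivity of $\theta_{2^n}$ together with $2^n\equiv 1\pmod{2^n-1}$ to obtain $\theta_{2^n}(\tau_1(t))\equiv 2(\theta_{2^n}(t)-1)$, yielding parts (2) and (3). The only difference is that for parts (1) and (4) the paper invokes Lemma~\ref{GammaOrbits} (which already gives $\theta_{2^n}(t)=\theta_{2^n}(t\cdot\tau)$ whenever both $t$ and $t\cdot\tau$ lie in $[0,1]$), whereas you redo the arithmetic on each affine piece by hand; this is merely an unpacking of that lemma and not a genuinely different method.
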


\begin{proof}
For the first part, $y\cdot \tau_1=0$ is clear from the definition (remember that $\tau_1$ acts on $[0,1)$) and $0\cdot \tau_1\in O_{2^n-1}$ follows from an application of Lemma \ref{GammaOrbits}.
To see the second and third parts, first observe that $y\in O_1$ since $y=1+\frac{1-2^n}{2^{2n}}$.
Moreover, an element of the form $y+\frac{i}{2^n}\in (y, 1)$ is mapped to $\frac{2i}{2^n}$ since the slope in the interior of this interval is $2$ and $y$ is mapped to $0$.
Combining these two facts shows that $\tau_1\restriction (y, 1)\to (0,x)$ maps $O_i\cap (y,1)$ to $O_{2(i-1)}\cap (0,x)$.
This settles the claim in parts $(2)$ and $(3)$.
The second last part follows from an immediate application of Lemma \ref{GammaOrbits}, and the last part follows from parts $(2),(4)$.
\end{proof}

The following lemma provides a description of the sets $\chi_0, \chi_1,...,\chi_{\eta_n}$, and hence of the orbits $\Phi_i=\bigcup_{j\in \chi_i} O_j$ for each $0\leq i\leq \eta_n$.

\begin{lem}\label{forwardorbits}
The map $i\mapsto 2(i-1)\mod 2^n-1$ induces a bijection of the set $\{1,...,2^n-1\}$ with fixed point $2$.
The orbits of the forward and backward iterates of this map are precisely the sets $\chi_0,...,\chi_{\eta_n}$.
Moreover, $O_2$ is $\Gamma_n$-invariant and $\Gamma_n'$-invariant.
\end{lem}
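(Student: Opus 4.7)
The plan is to leverage the explicit description of the special element $\tau_1$ on dyadics from Lemma \ref{orbitscircleaction} together with the two generation results $\Gamma_n=\langle\tau_1, F_{2^n}\rangle$ (from Lemma \ref{Omegagenset} passed to the quotient) and $\Gamma_n'=\langle\tau_1, F_{2^n}\cap\Gamma_n'\rangle$ (from Lemma \ref{GammaGen}).

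For the combinatorial claim, I would reduce modulo $2^n-1$: the map $i\mapsto 2(i-1)$ is the composition of the shift $i\mapsto i-1$ with multiplication by $2$, and since $\gcd(2,2^n-1)=1$ both are bijections of $\mathbf{Z}/(2^n-1)\mathbf{Z}$, inducing a bijection of the representative set $\{1,\dots,2^n-1\}$. Solving $2(i-1)\equiv i\pmod{2^n-1}$ gives $i\equiv 2$, so $2$ is the unique fixed point, and since the map is a bijection of a finite set its forward and backward orbits coincide. To identify those orbits with $\chi_0,\dots,\chi_{\eta_n}$, I would first observe that $F_{2^n}\cap\Gamma_n'$ sits between $F_{2^n}'$ and $F_{2^n}$ and therefore preserves each $O_i$ by Proposition \ref{Fntrans}. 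Hence every merging of $F_{2^n}'$-orbits realized by $\Gamma_n'$ must come from the single additional generator $\tau_1$. Parts (3) and (4) of Lemma \ref{orbitscircleaction} describe $\tau_1$ on dyadics completely: on $(0,y)$ each $O_i$ is preserved, while on $(y,1)$ the set $O_i\cap(y,1)$ is mapped into $O_{2(i-1)}\cap(0,x)$. Since each $O_i$ is dense in $(0,1)$ as a fiber of $\theta_{2^n}$, every merge $i\sim 2(i-1)\bmod(2^n-1)$ is actually realized, so the equivalence relation on $\{1,\dots,2^n-1\}$ generated by these merges matches exactly the one cut out by $\chi_0,\dots,\chi_{\eta_n}$.

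For the $\Gamma_n$-invariance of $O_2$, I would use $\Gamma_n=\langle\tau_1,F_{2^n}\rangle$. The subgroup $F_{2^n}$ preserves every $O_i$ by Proposition \ref{Fntrans}, and since $2$ is the fixed point of the bijection, parts (2) and (4) of Lemma \ref{orbitscircleaction} give $\tau_1(O_2\cap(y,1))\subseteq O_2\cap(0,x)$ and $\tau_1(O_2\cap(0,y))\subseteq O_2\cap(x,1)$, whence $\tau_1(O_2)\subseteq O_2$; equality follows from $\tau_1$ being a bijection of the dyadics together with the disjointness of the images of all $O_j$ with $j\neq 2$ from $O_2$ (which in turn follows from the map formula and the fact that $2$ is the only fixed point). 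Both generators of $\Gamma_n$ therefore preserve $O_2$, giving the invariance. The only delicate step is the middle one, where one must rule out further merges beyond those produced by $\tau_1$; the key leverage is that $F_{2^n}\cap\Gamma_n'$ is orbit-preserving, so the entire merging behavior of $\Gamma_n'$ is pinned down by the explicit action of $\tau_1$ recorded in Lemma \ref{orbitscircleaction}.
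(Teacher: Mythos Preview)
Your approach is essentially the same as the paper's: both reduce the combinatorial claim to elementary number theory, then use that $\Gamma_n$ (respectively $\Gamma_n'$) is generated by $\tau_1$ together with a Thompson-type subgroup that preserves every $O_i$, so the merging of the $O_i$'s is governed entirely by the explicit description of $\tau_1$ on dyadics from Lemma~\ref{orbitscircleaction}. The paper works with $\Gamma_n$ throughout (implicitly relying on Lemma~\ref{gammaorbits} to pass between $\Gamma_n$- and $\Gamma_n'$-orbits), whereas you invoke Lemma~\ref{GammaGen} to work directly with $\Gamma_n'$; either route is fine, and your version fills in details the paper leaves implicit.
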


\begin{proof}
The first part is an exercise in elementary number theory. 
We show the second part. First note that $\Gamma_n=\langle \tau_1,F_{2^n}\rangle$ where $\tau_1$ is the element from Lemma \ref{orbitscircleaction}.
Next, note that $F_{2^n}$ preserves $O_1,...,O_{2^n-1}$, and that $\tau_1$ preserves $O_2$ (also from Lemma \ref{orbitscircleaction}).
Then both claims follow from these upon applying Lemma \ref{orbitscircleaction}. 
\end{proof}
We recall for the reader that we had shown in Lemma \ref{gammaorbits} that the orbits of $\Gamma_n$ and $\Gamma_n'$ on $[0,1)\cap \mathbf{Z}[\frac{1}{2}]$ coincide.
Since $O_2$ is $\Gamma_n$-invariant and $\Gamma_n'$-invariant thanks to the above, we shall declare $\Phi_0=O_2$ and $\chi_0=\{2\}$ for the rest of the article. 
Another consequence is the following.

\begin{cor}
Let $f,g\in \Gamma_n'$ and $x\in (0,1)\cap \mathbf{Z}[\frac{1}{2}]$ be such that the degrees of $(f,x),(g,x)$ are equal.
Then there is a $1\leq i\leq 2^n-1$ such that $x\cdot f,x\cdot g\in O_i$. 
\end{cor}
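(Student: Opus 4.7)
The plan is to exhibit an element of $\Omega_n$ sending $x\cdot f$ to $x\cdot g$ and then to invoke Lemma~\ref{GammaOrbits} together with Proposition~\ref{Fntrans} to conclude that the two images lie in a common $F_{2^n}$-orbit $O_i$.

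First I would choose lifts $\tilde f,\tilde g\in\Omega_n$ of $f,g$. Because the degrees of $(f,x)$ and $(g,x)$ are defined, the images $y_1:=x\cdot f$ and $y_2:=x\cdot g$ both lie in $(0,1)\cap\mathbf{Z}[\tfrac12]$, and I can write
\[
x\cdot\tilde f = y_1+m_1, \qquad x\cdot\tilde g = y_2+m_2
\]
for some $m_1,m_2\in\mathbf{Z}$. The equal-degree hypothesis is exactly the congruence $m_1\equiv m_2\pmod{n}$, so $m_2-m_1=\ell n$ for some $\ell\in\mathbf{Z}$.

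Next, I would set $\phi:=\tilde f^{-1}\tilde g\in\Omega_n$, which satisfies $(y_1+m_1)\cdot\phi = y_2+m_2$. Since every element of $\Omega_n$ is $1$-periodic by condition~(2) of Definition~\ref{ourgroup}, $\phi$ commutes with the translation $t_1$, and so
\[
y_1\cdot\phi \;=\; (y_1+m_1)\cdot\phi - m_1 \;=\; y_2 + (m_2-m_1) \;=\; y_2+\ell n.
\]
The decisive observation is that $t_{-\ell n}=t_n^{-\ell}$ lies in $\Lambda_n\leq\Omega_n$, so $\psi:=\phi\cdot t_{-\ell n}$ is again an element of $\Omega_n$, and it maps $y_1$ to $y_2$.

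Finally, since $\psi\in\Omega_n$ sends $y_1$ to $y_2$ with both points in $[0,1]\cap\mathbf{Z}[\tfrac12]$, Lemma~\ref{GammaOrbits} yields $\theta_{2^n}(y_1)=\theta_{2^n}(y_2)$, and Proposition~\ref{Fntrans} then gives $y_1,y_2\in O_i$ for some $1\leq i\leq 2^n-1$. There is no substantial obstacle here; the only conceptual content is that the congruence $m_1\equiv m_2\pmod{n}$ is exactly what is needed for the correction term $\ell n$ to be absorbed by an element of the center $\Lambda_n$. Without this congruence, the candidate conjugator would lie in $\textup{Homeo}^+(\mathbf{R})\setminus\Omega_n$ and the reduction to Proposition~\ref{Fntrans} would break down, which is consistent with the fact that the degree really is a genuine invariant distinguishing $\Gamma_n'$-images in different $F_{2^n}$-orbits.
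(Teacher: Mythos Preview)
Your argument is correct and is in fact cleaner than the paper's. The paper proceeds by expressing $\Gamma_n'$ in terms of the generating set $\{\tau_1\}\cup(F_{2^n}\cap\Gamma_n')$ from Lemma~\ref{GammaGen}, then tracking inductively how each generator moves points between the $O_i$ using Lemma~\ref{orbitscircleaction}; the claim then follows because the final $O_i$ depends only on the starting orbit and on the accumulated degree. Your approach bypasses this induction entirely: once you produce a single element $\psi\in\Omega_n$ with $y_1\cdot\psi=y_2$ and both $y_1,y_2\in[0,1]$, Lemma~\ref{GammaOrbits} and Proposition~\ref{Fntrans} finish the job immediately. The key content you isolate --- that the equal-degree hypothesis is precisely the condition allowing the correcting translation to lie in $\Lambda_n\leq\Omega_n$ --- is exactly right, and your proof makes this more transparent than the paper's inductive sketch. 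One minor point: you use $t_1$ and $t_{-\ell n}$, which are not literally defined in the paper (only $t_n$ is); it would be tidier to write ``commutes with $t\mapsto t+1$'' and to use $t_n^{-\ell}$ throughout, but this is cosmetic.
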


\begin{proof}
An application of Lemma \ref{GammaGen} shows that $\Gamma_n'$ is generated by $F_{2^n}\cap \Gamma_n'$ together with the special element $\tau_1$ from above.
The subgroup $F_{2^n}\cap \Gamma_n'$ preserves $O_1,...,O_{2^n-1}$ and the element $\tau_1$ acts according to the prescribed conditions from Lemma \ref{orbitscircleaction}. Applying these in an inductive fashion, we immediately obtain the required condition. 
\end{proof}

\begin{lem}\label{degree0}
Let $f\in \Gamma_n'$ and $I\subset (0,1)$ be a closed interval such that the pair $(f,I)$ has degree $0$.
Let $g\in F_{2^n}^c$ be such that $Supp(g)\subseteq I$. Then $g^{-1}f^{-1}gf\in F_{2^n}'$.
\end{lem}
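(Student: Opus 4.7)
My plan is to use the fact that $F_{2^n}'$ is the kernel of the homomorphism $\Xi : F_{2^n}^c \to \mathcal{V}_n$ (by Lemma \ref{weakbasis}(3)), and show that $[g,f]$ lies in $F_{2^n}^c$ with $\Xi([g,f]) = 0$.

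First, I would verify that $[g,f] \in F_{2^n}^c$. Choose a lift $\tilde{f} \in \Omega_n'$ of $f$; the degree zero hypothesis yields $k \in \mathbf{Z}$ with $I \cdot \tilde{f} \subset (kn, kn+1)$, so $I \cdot f \subset (0,1)$ when viewed in $\mathbf{S}^1$. Thus $f^{-1}gf$ has support contained in $I \cdot f$, so it fixes $0$ with trivial germ, and hence lies in $F_{2^n}^c$ by Lemma \ref{HigmanSubgroups}. Since $g^{-1} \in F_{2^n}^c$ as well, so is $[g,f] = g^{-1}(f^{-1}gf)$. As $\Xi$ is a homomorphism into an abelian group, $\Xi([g,f]) = -\Xi(g) + \Xi(f^{-1}gf)$, so it suffices to prove $\Xi(f^{-1}gf) = \Xi(g)$.

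To do so, after shrinking $I$ if necessary to a closed interval with dyadic endpoints still containing $\Supp(g)$, I would apply Lemma \ref{Xiconjugacygeneral} with this $I$, $g$, and $f$. Hypothesis (1) is immediate from the degree zero assumption. For hypothesis (2), the lift $\tilde{f}$ is piecewise linear with breakpoints in $\mathbf{Z}[\tfrac{1}{2}]$; and for each $x \in I \setminus \mathbf{Z}[\tfrac{1}{2}]$, a direct count using $x \in (0,1)$ and $x \cdot \tilde{f} \in (kn, kn+1)$ shows $(x, x \cdot \tilde{f})_{\mathbf{Z}} = kn \equiv 0 \pmod{n}$, so condition (3) of Definition \ref{ourgroup} forces $\log_2(x \cdot \tilde{f}') \equiv 0 \pmod{n}$, i.e.\ each slope of $\tilde{f}\restriction I$ is a power of $2^n$. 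For hypothesis (3), set $h = \tilde{f}\, t_n^{-k} \in \Omega_n$, which is well-defined because $t_n^{-k} \in \Lambda_n \leq \Omega_n$. Then $I \cdot h \subset (0,1)$, so Lemma \ref{GammaOrbits} applied to $h$ yields $\theta_{2^n}(x) = \theta_{2^n}(x \cdot h) = \theta_{2^n}(x \cdot f)$ for each $x \in I \cap \mathbf{Z}[\tfrac{1}{2}]$, showing that $f$ preserves each orbit $O_i$ on $I$.

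With the three hypotheses verified, Lemma \ref{Xiconjugacygeneral} gives $\Xi(f^{-1}gf) = \Xi(g)$, whence $\Xi([g,f]) = 0$ and $[g,f] \in F_{2^n}'$ as desired. The only delicate point in this plan is the orbit-preservation statement, hypothesis (3); the degree zero assumption is precisely what allows $f \restriction I$ to be rewritten as the restriction of an element of $\Omega_n$ that maps $I$ into $(0,1)$, making Lemma \ref{GammaOrbits} directly applicable. The other two hypotheses reduce to direct calculations with the definition of $\Omega_n$.
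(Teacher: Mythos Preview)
Your proposal is correct and follows essentially the same approach as the paper: show $[g,f] \in F_{2^n}^c$, invoke Lemma~\ref{Xiconjugacygeneral} to obtain $\Xi(f^{-1}gf) = \Xi(g)$, and conclude via $\ker(\Xi) = F_{2^n}'$. The paper's proof simply asserts that the hypotheses of Lemma~\ref{Xiconjugacygeneral} are met, whereas you carefully verify each of them (including the orbit-preservation via the shift $h=\tilde{f}\,t_n^{-k}$ and Lemma~\ref{GammaOrbits}); your verifications are all correct.
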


\begin{proof}
Recall the map $\Xi:F_{2^n}^c\to \oplus_{i\in \{1,3,...,2^n-1\}}\mathbf{Z}_i$ defined in Section \ref{minimalweak},
and that $ker(\Xi)=F_{2^n}'$ from Lemma \ref{weakbasis}. Since $f,g$ satisfy the hypothesis of Lemma \ref{Xiconjugacygeneral} we know that $\Xi(f^{-1}gf)=\Xi(g)$. We observe that
$g^{-1}f^{-1}gf\in F_{2^n}^c$ and that: $$\Xi(g^{-1}f^{-1}gf)=\Xi(g^{-1})+\Xi(f^{-1}gf)=\Xi(g^{-1})+\Xi(g)=0$$
It follows that $g^{-1}f^{-1}gf\in F_{2^n}'$.
\end{proof}


Recall the construction of the elements $\{\zeta_1,...,\zeta_{2^n-2}\}$ from Section \ref{minimalweak}.
In particular, recall that for each $1\leq i\leq 2^n-2$, $Supp(\zeta_i)=(\frac{2}{2^{4n}},\frac{2+2^{n+1}k}{2^{4n}})$.
We fix the notation $\mathcal{I}_n=[\frac{2}{2^{4n}},\frac{2+2^{2n+1}}{2^{4n}}]$.
Note that for each $1\leq i\leq 2^n-2$ we have $Supp(\zeta_i)\subset \mathcal{I}_n\subset (0,1)$.

\begin{lem}\label{perm1}
Let $f\in \Gamma_n'$ be such that the degree of $(f,\mathcal{I}_n)$ equals $1$.
Then $\Xi(f^{-1}\zeta_if)=\Xi(\zeta_{2i})$.
\end{lem}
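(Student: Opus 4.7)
The plan is to compute $\Xi(f^{-1}\zeta_i f)$ breakpoint-by-breakpoint and match the result against $\Xi(\zeta_{2i})=-2\nu_{2+2i}+\nu_{2+4i}$, the formula coming from Lemma \ref{weakbasis}(1). Let $\tilde f\in\Omega_n'$ be the lift with $\mathcal{I}_n\cdot\tilde f\subset(1,2)$, and write $p_1=\tfrac{2}{2^{4n}}$, $p_2=\tfrac{2+i}{2^{4n}}$, $p_3=\tfrac{2+2^n i}{2^{4n}}$, $p_4=\tfrac{2+2^{n+1}i}{2^{4n}}$ for the four breakpoints of $\zeta_i$, which lie in $O_2$, $O_{2+i}$, $O_{2+i}$, $O_{2+2i}$ respectively (indices mod $2^n-1$). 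Set $q_k=p_k\cdot\tilde f-1\in[0,1)$. A chain-rule calculation at each breakpoint of $g:=f^{-1}\zeta_i f$ shows that only the contributions at the $q_k$ survive in $\Xi(g)$: the additional breakpoints of $g$ coming from breakpoints of $\tilde f$ inside $\mathcal{I}_n$ occur in matched pairs (one from the inner $\tilde f^{-1}$ at $y\cdot\tilde f$, one from the outer $\tilde f$ at $\zeta_i^{-1}(y)\cdot\tilde f$) whose $D$-values are opposite in sign; since $\zeta_i\in F_{2^n}^c$ preserves $O$-labels and the orbit shift induced by the degree-$1$ map $\tilde f$ depends only on the label, both members of each pair land in the same orbit, so they cancel in $\Xi$. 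Consequently $\Xi(g)=\sum_{k=1}^4 D_{\zeta_i}(p_k)\,\nu_{\mathrm{orb}(q_k)}$.

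The crux is then to establish the orbit-shift formula: if $p\in\mathbf{Z}[\tfrac12]\cap(0,1)$ lies in $O_j$ and $\tilde f\in\Omega_n$ satisfies $p\cdot\tilde f\in(1,2)$, then
\[
\theta_{2^n}(p\cdot\tilde f-1)\equiv 2(j-1)\pmod{2^n-1}.
\]
The strategy is to adapt the straightening argument from the proof of Lemma \ref{GammaOrbits}: construct a piecewise-linear map $g_\star:\mathbf{R}_{\geq 0}\to\mathbf{R}_{\geq 0}$ with breakpoints in $\mathbf{Z}[\tfrac12]$ and slopes in $\{(2^n)^m\}$, obtained from $\tilde f$ by halving its slopes on the sub-interval(s) where $\tilde f$ crosses an integer. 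The $\Omega_n$-cocycle forces those slopes to satisfy $\log_2\equiv 1\pmod{n}$, so halving lands them back in $\{(2^n)^m\}$. Lemma \ref{lemorbits2} applied to $g_\star$ at $t=p$ and at $t=1$, combined with the additivity of $\theta_{2^n}$ and the identity $\theta_{2^n}(\tfrac t2)\equiv 2^{-1}\theta_{2^n}(t)\pmod{2^n-1}$, yields the stated shift. A mild case-split is needed on whether $z_1=0\cdot\tilde f$ lies in $(0,1)$ or $[1,2)$ (both are possible for a degree-$1$ lift); in the former case the straightening is a direct adaptation of Lemma \ref{GammaOrbits}, while in the latter case two subintervals must be halved (by factors of $2$ and $4$), but the final mod-$(2^n-1)$ arithmetic delivers the same formula $j\mapsto 2(j-1)$. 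This is consistent with $j=2$ being the fixed point of the map and with the $\Gamma_n$-invariance of $O_2$ from Lemma \ref{forwardorbits}.

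Assembling: the orbits of $q_1,q_2,q_3,q_4$ become $O_2$, $O_{2+2i}$, $O_{2+2i}$, $O_{2+4i}$ with respective $D$-contributions $+1,-1,-1,+1$. Dropping the $\nu_2$-component, which is absent from $\mathcal{V}_n$, yields $\Xi(f^{-1}\zeta_i f)=-2\nu_{2+2i}+\nu_{2+4i}=\Xi(\zeta_{2i})$. The main obstacle will be the orbit-shift step: constructing the straightening $g_\star$ correctly in the $z_1\in[1,2)$ sub-case, and verifying that the mod-$(2^n-1)$ arithmetic produces the same shift as in the $z_1\in(0,1)$ sub-case. This independence is precisely what ensures that the orbit shift depends only on the degree of $f$ on $\mathcal{I}_n$ and not on the particular lift or representative chosen.
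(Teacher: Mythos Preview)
Your approach is correct and takes a genuinely different route from the paper. The paper's one-line proof cites Lemma~\ref{orbitscircleaction}, which computes the orbit shift $j\mapsto 2(j-1)$ explicitly for the particular special element $\tau_1$; the extension to an arbitrary degree-$1$ element $f$ then goes through the unnumbered Corollary following Lemma~\ref{forwardorbits} (the $O$-label of $x\cdot f$ depends only on the degree of $(f,x)$ and on the $O$-label of $x$). You instead derive the orbit-shift formula $\theta_{2^n}(p\cdot\tilde f-1)\equiv 2(\theta_{2^n}(p)-1)$ directly for an arbitrary lift by adapting the straightening of Lemma~\ref{GammaOrbits}, never appealing to a distinguished element. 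Your breakpoint-pair cancellation and the final assembly are correct and are precisely the details the paper leaves implicit. One small reassurance about your case split: with the lift chosen so that $\mathcal{I}_n\cdot\tilde f\subset(1,2)$ one automatically has $z_1\in(0,2)\setminus\{1\}$, since $z_1+1=1\cdot\tilde f>\sup(\mathcal{I}_n)\cdot\tilde f>1$ and $z_1<\inf(\mathcal{I}_n)\cdot\tilde f<2$, while $z_1=1$ is excluded by Lemma~\ref{GammaOrbits} (it would force $\theta_{2^n}(0)=\theta_{2^n}(1)$); so your two sub-cases are exhaustive, and the Case~B computation (halving by $2$ on $[0,w_2]$ and by $4$ on $[w_2,1]$, where $w_2\cdot\tilde f=2$) does indeed deliver the same shift. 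The paper's route is shorter once $\tau_1$ has been analysed; yours is more self-contained and makes transparent how the $\Omega_n$-cocycle condition produces the factor of $2$ in the orbit map.
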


\begin{proof}
This follows from an application of Lemmas \ref{weakbasis} (part $1$) and \ref{orbitscircleaction}.
\end{proof}



\subsection{The maps \texorpdfstring{$\gimel,\gimel_1,...,\gimel_{\eta_n}$}.}
Recall that there is a partition of $\{1,...,2^n-1\}$ consisting of sets 
$\chi_0,...,\chi_{\eta_n}$ for some $\eta_n\in \mathbf{N}$ such that the orbits of the action of $\Gamma_n'$ on $[0,1)\cap \mathbf{Z}[\frac{1}{2}]$
are precisely: $$\Phi_i=\bigcup_{j\in \chi_i} O_j\qquad 0\leq i\leq \eta_n$$
Also recall that $O_2$ is $\Gamma_n$-invariant thanks to the Lemma \ref{forwardorbits}, and that we had declared $\Phi_0=O_2$ and $\chi_0=\{2\}$. 
The aforementioned orbit structure allows us to define the maps (dropping the index $0$ intentionally) for $1\leq i\leq \eta_n$:
$$\gimel_i:\Gamma_n\to \mathbf{Z}_i \qquad \gimel_i(f)=\sum_{x\in \Phi_i}  D_f(x)\qquad D_f(x)=\log_{2^n}f_+^{'}(x) -\log_{2^n}f_-^{'}(x)$$ 
$$\gimel:\Gamma_n\to \oplus_{1\leq i\leq \eta_n} \mathbf{Z}_i\qquad \gimel(f)=(\gimel_1(f),...,\gimel_{\eta_n}(f))$$
where each $\mathbf{Z}_i$ is an indexed copy of $\mathbf{Z}$.
Note that while the left and right derivatives of an element at a point $x\in (0,1)$ are defined as before, the left and right derivatives at $0$ are also defined in the natural way using the orientation on the circle.
It follows from the definition that the maps $\gimel$ and $\gimel_i$ are group homomorphisms.
The following is an obvious consequence of the simplicity of $\Gamma_n'$ (Proposition \ref{SimpleProp}).

\begin{lem}\label{gimelvanish}
The homomorphisms $\gimel,\gimel_1,...,\gimel_{\eta_n}$ vanish on $\Gamma_n'$.
\end{lem}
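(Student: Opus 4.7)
The plan is to combine the simplicity of $\Gamma_n'$ (Proposition \ref{SimpleProp}) with the fact that the targets of $\gimel$ and the $\gimel_i$ are abelian. Since each $\gimel_i$ takes values in the (torsion-free) abelian group $\mathbf{Z}_i$, and $\gimel$ lands in a finite direct sum of such groups, the restriction of any of these homomorphisms to $\Gamma_n'$ factors through the abelianization $\Gamma_n'/[\Gamma_n',\Gamma_n']$. Hence it suffices to show that $\Gamma_n'$ is perfect.

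To obtain perfectness, I would first observe that $\Gamma_n'$ is non-abelian. This is because $\Gamma_n'$ contains the natural copy of $F_{2^n}'$ (as recorded, for example, in Lemma \ref{deltan} or directly by descending the diagonal $1$-periodic copy of $F_{2^n}'\leq \Omega_n'$ to $\Gamma_n'$), and $F_{2^n}'$ is a nontrivial simple group by Theorem \ref{abelianization} (indeed $6$-uniformly simple by Theorem \ref{commwid}), so it is certainly not abelian.

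Now apply simplicity: the commutator subgroup $[\Gamma_n',\Gamma_n']$ is a normal subgroup of the simple group $\Gamma_n'$, and it is nontrivial because $\Gamma_n'$ is non-abelian. By simplicity it must equal all of $\Gamma_n'$, so $\Gamma_n'$ is perfect. Therefore every homomorphism from $\Gamma_n'$ into an abelian group vanishes, which applied to $\gimel$ and each $\gimel_i$ finishes the proof.

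There is no genuine obstacle here; as the authors indicate by the phrase \emph{obvious consequence}, the only point worth making explicit is that simplicity of $\Gamma_n'$ combined with the existence of the non-abelian subgroup $F_{2^n}'\leq \Gamma_n'$ forces $\Gamma_n'$ to be perfect, after which the conclusion is immediate.
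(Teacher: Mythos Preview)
Your proof is correct and follows essentially the same approach as the paper, which simply notes that the lemma is an obvious consequence of the simplicity of $\Gamma_n'$ (Proposition~\ref{SimpleProp}). You have merely unpacked the one-line observation by explicitly verifying perfectness via the non-abelian subgroup $F_{2^n}'$; this is exactly the intended argument.
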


We consider the restriction of $\gimel$ to the subgroup $F_{2^n}^c$ of $\Gamma_n$.
Recall that $\Theta_n=\langle \zeta_1,...,\zeta_{2^n-2}, F_{2^n}'\rangle$,
and recall from Lemma \ref{weakbasis} that $\{\zeta_1,...,\zeta_{2^n-2}\}$ forms a weak generating set for $\Theta_n$ and that
$\{\Xi(\zeta_k)\mid 1\leq k\leq 2^n-2\}$ is a free basis for the abelianization $\mathbf{Z}^{2^n-2}$ of $\Theta_n$. 
Note that the maps $\Xi_i,\Xi$ were defined for the action of $F_{2^n}$ on $(0,1)$ but are also defined in the same way for the action on $[0,1)$ that we are considering (and indeed coincide with the previous definition).

\begin{lem}\label{gimel1}
There is a homomorphism $\varsigma: \Xi(\Theta_n)\to \oplus_{1\leq i\leq \eta_n} \mathbf{Z}_i$ such that
for each $f\in F_{2^n}^c$, $\gimel(f)=\varsigma(\Xi(f))$.
In particular, we have the following commutative diagram:
\[
\begin{tikzcd}
 & \Xi(\Theta_n) \arrow[dr,"\varsigma"] \\
\Theta_n \arrow[ur,"\Xi"] \arrow[rr,"\gimel"] && \oplus_{1\leq i\leq \eta_n} \mathbf{Z}_i
\end{tikzcd}
\]
Moreover, $\varsigma(\Xi(\zeta_k))\in \oplus_{1\leq i\leq \eta_n}\mathbf{Z}_i$ is of the form $(0,...,-1,...,0)$, where the sole nonzero entry $-1$ is in the $j$-th coordinate $\mathbf{Z}_j$ where $2+k,2+2k\in \chi_j$.
\end{lem}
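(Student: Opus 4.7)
The plan is to construct $\varsigma$ explicitly on a convenient basis, then read off the commutative diagram from the decomposition $\Phi_j = \bigcup_{i \in \chi_j} O_i$ and the fact that $D_f(0) = 0$ for $f \in F_{2^n}^c$.

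First I would set up $\varsigma$. By Lemma \ref{weakbasis}(3), $\Xi: F_{2^n}^c \to \mathcal{V}_n$ is the abelianization map, so $\Xi(F_{2^n}^c) = \mathcal{V}_n$, the free abelian group with basis $\{\nu_i : i \in \{1\} \cup \{3,\ldots,2^n-1\}\}$. For each such $i$, let $j(i) \in \{1,\ldots,\eta_n\}$ be the unique index with $i \in \chi_{j(i)}$; this is well-defined since $\chi_1,\ldots,\chi_{\eta_n}$ partition $\{1,3,\ldots,2^n-1\}$ (recall $\chi_0 = \{2\}$). Define a homomorphism on the free basis by $\varsigma(\nu_i) = e_{j(i)}$, where $e_j$ denotes the standard generator of the $j$-th summand $\mathbf{Z}_j$, and restrict to $\Xi(\Theta_n) \subseteq \mathcal{V}_n$.

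Next I would verify commutativity. Fix $f \in F_{2^n}^c$. Since $f$ has trivial germs at $0$, we have $D_f(0) = 0$. Using $\Phi_j = \bigcup_{i \in \chi_j} O_i$ and that for $j \geq 1$ the set $\chi_j$ does not contain $2$, we compute
$$\gimel_j(f) = \sum_{x \in \Phi_j} D_f(x) = \sum_{i \in \chi_j} \sum_{x \in O_i} D_f(x) = \sum_{i \in \chi_j} \Xi_i(f),$$
where the only subtlety (the possible occurrence of $0 \in O_{2^n-1}$ when $2^n-1 \in \chi_j$) is handled by $D_f(0) = 0$. The right-hand side is precisely the $j$-th coordinate of $\varsigma(\Xi(f))$, so $\gimel(f) = \varsigma(\Xi(f))$.

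Finally, for the moreover statement, by Lemma \ref{weakbasis}(1) we have $\Xi(\zeta_k) = -2\nu_{2+k} + \nu_{2+2k}$, and hence
$$\varsigma(\Xi(\zeta_k)) = -2\, e_{j(2+k)} + e_{j(2+2k)}.$$
By Lemma \ref{forwardorbits}, the orbits $\chi_0,\ldots,\chi_{\eta_n}$ are precisely the orbits of the map $i \mapsto 2(i-1) \bmod 2^n-1$; applying this map to $2+k$ yields $2(2+k-1) = 2+2k \bmod 2^n-1$, so $2+k$ and $2+2k$ lie in the same $\chi_j$, giving $j(2+k) = j(2+2k) = j$ and therefore $\varsigma(\Xi(\zeta_k)) = -2 e_j + e_j = -e_j$, which has the claimed form.

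There is no substantive obstacle in this proposition; it is a bookkeeping step recording the compatibility between the two derivative-sum cocycles. The only point requiring any care is ensuring that the contribution of the circle point $0$ does not spoil the identification $\gimel_j = \sum_{i \in \chi_j} \Xi_i$ on $F_{2^n}^c$, which is handled by the defining condition of $F_{2^n}^c$ (trivial germ at $0$).
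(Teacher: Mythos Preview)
Your proof is correct and follows essentially the same approach as the paper's: both obtain $\varsigma$ from the fact that $\gimel$ factors through the abelianization $\Xi$, then compute $\varsigma(\Xi(\zeta_k))$ using $\Xi(\zeta_k)=-2\nu_{2+k}+\nu_{2+2k}$ together with Lemma~\ref{forwardorbits} to see that $2+k$ and $2+2k$ lie in the same $\chi_j$. The only cosmetic difference is that the paper invokes the universal property of the abelianization to produce $\varsigma$, whereas you define $\varsigma$ explicitly on the basis $\{\nu_i\}$ of $\mathcal{V}_n$ and verify $\gimel_j=\sum_{i\in\chi_j}\Xi_i$ directly; your treatment of the point $0$ (using $D_f(0)=0$ for $f\in F_{2^n}^c$) is a nice clarification the paper leaves implicit. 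One small remark: your claim that $\Xi(F_{2^n}^c)=\mathcal{V}_n$ is not actually established by Lemma~\ref{weakbasis}(3) (which only identifies the kernel), but you do not use surjectivity anywhere, so this does not affect the argument.
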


\begin{proof}
Recall from Lemma \ref{weakbasis} that $\Xi:\Theta_n\to \mathbf{Z}^{2^n-2}$ has kernel $\Theta_n'=F_{2^n}'$ and hence is the abelianization map. Moreover, $\zeta_1,...,\zeta_{2^n-2}$ forms a weak generating set for the group $\Theta_n$.
So in particular, $\Xi(\Theta_n)=\langle \{\Xi(\zeta_k)\mid 1\leq k\leq 2^n-2\}\rangle$.
Any homomorphism of a group to an abelian group must pass through the abelianization,
and so the existence of the map $\varsigma$ follows immediately. 
Recall that each orbit $\Phi_i$ is a union of some of the elements in $\{O_1, ... ,O_{2^n-1}\}$,
and that $\Xi(\zeta_k)=-2\nu_{2+k}+\nu_{2+2k}$. 
It follows from Lemma \ref{forwardorbits} that $2+k,2+2k$ lie in $\chi_j$ for some $1\leq j\leq \eta_n$, and so $O_{2+k},O_{2+2k}$ are subsets the same $\Gamma_n'$-orbit.
So our second claim from the statement of lemma follows from the definition of the map $\gimel$.
\end{proof}




We denote the kernel of the restriction map $\gimel:\Theta_n\to \oplus_{1\leq i\leq \eta_n}\mathbf{Z}$ as $\Pi_n$.
We define $\Psi_n$ as the kernel of $\varsigma: \Xi(\Theta_n)\to \oplus_{1\leq i\leq \eta_n} \mathbf{Z}_i$. 

\begin{lem}\label{deltapi1}
$\Delta_n\leq \Pi_n$.
\end{lem}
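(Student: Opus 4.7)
The plan is to observe that $\Delta_n \leq \Pi_n$ is essentially immediate from the definitions together with Lemma \ref{gimelvanish}. Unpacking the definitions: $\Delta_n = (\Gamma_n')^c \cap \Theta_n$, so any $f \in \Delta_n$ satisfies simultaneously $f \in \Theta_n$ and $f \in \Gamma_n'$. On the other hand, $\Pi_n$ is by definition the kernel of the restriction of $\gimel$ to $\Theta_n$.

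So I would argue as follows. Fix $f \in \Delta_n$. Since $f \in \Gamma_n'$, Lemma \ref{gimelvanish} gives $\gimel(f) = 0$. Since $f$ also lies in $\Theta_n$, this says exactly that $f \in \ker(\gimel \restriction \Theta_n) = \Pi_n$. Hence $\Delta_n \leq \Pi_n$.

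There is essentially no obstacle here; the lemma is a bookkeeping statement whose content is already absorbed into the prior results. The only point worth highlighting, for the reader's benefit, is that the map $\gimel$ was introduced on all of $\Gamma_n$ via the left/right derivative cocycle on each orbit $\Phi_i$, and its vanishing on the commutator subgroup $\Gamma_n'$ follows from the simplicity of $\Gamma_n'$ (Proposition \ref{SimpleProp}) combined with the fact that each $\gimel_i$ is a homomorphism into an abelian group. Once one records that $\Delta_n \subset \Gamma_n'$, the inclusion $\Delta_n \leq \Pi_n$ is automatic.
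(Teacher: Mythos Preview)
Your proof is correct and follows exactly the same approach as the paper: use $\Delta_n = (\Gamma_n')^c \cap \Theta_n$ together with the vanishing of $\gimel$ on $\Gamma_n'$ (Lemma~\ref{gimelvanish}) to conclude that every element of $\Delta_n$ lies in $\ker(\gimel\restriction\Theta_n) = \Pi_n$.
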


\begin{proof}
Since $\Delta_n= (\Gamma_n')^c\cap \Theta_n$, and since $\gimel$ vanishes on $\Gamma_n'$ (see Lemma \ref{gimelvanish}), we obtain 
$\Delta_n\leq \Pi_n$.
\end{proof}

\begin{lem}\label{pipsi1}
$\Pi_n=\Xi^{-1}(\Psi_n)$.
\end{lem}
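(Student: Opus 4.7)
The plan is to derive this equality directly from the commutative diagram established in Lemma \ref{gimel1}, which states that $\gimel|_{\Theta_n} = \varsigma \circ \Xi$. By definition, $\Pi_n = \ker(\gimel|_{\Theta_n})$ and $\Psi_n = \ker(\varsigma)$, so we are essentially asked to verify that the preimage under $\Xi$ of the kernel of $\varsigma$ equals the kernel of the composition $\varsigma \circ \Xi$. This is a purely formal statement about compositions of homomorphisms.

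More concretely, I would argue as follows. Fix $f \in \Theta_n$. Then the chain of equivalences
\begin{equation*}
f \in \Pi_n \;\Longleftrightarrow\; \gimel(f) = 0 \;\Longleftrightarrow\; \varsigma(\Xi(f)) = 0 \;\Longleftrightarrow\; \Xi(f) \in \Psi_n \;\Longleftrightarrow\; f \in \Xi^{-1}(\Psi_n)
\end{equation*}
holds, where the first and last equivalences are the definitions of $\Pi_n$ and $\Xi^{-1}(\Psi_n)$ respectively, the second uses the commutativity $\gimel(f) = \varsigma(\Xi(f))$ from Lemma \ref{gimel1}, and the third is the definition of $\Psi_n = \ker(\varsigma)$. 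This gives $\Pi_n = \Xi^{-1}(\Psi_n)$.

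There is essentially no obstacle here: the statement is a formal consequence of the factorization $\gimel|_{\Theta_n} = \varsigma \circ \Xi$ already proved in Lemma \ref{gimel1}. The substantive content has already been established in that earlier lemma (namely, that $\gimel$ restricted to $\Theta_n$ factors through the abelianization map $\Xi$ via the homomorphism $\varsigma$); the present lemma is simply the set-theoretic reformulation that will be convenient for identifying $\Pi_n$ in terms of the explicit generators $\Xi(\zeta_k)$ of $\Xi(\Theta_n)$ in subsequent arguments.
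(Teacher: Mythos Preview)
Your proof is correct and takes essentially the same approach as the paper, which simply says the result follows from the commutative diagram in Lemma~\ref{gimel1}. You have merely spelled out in detail the formal chain of equivalences implicit in that one-line justification.
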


\begin{proof}
This follows from the commutative diagram in Lemma \ref{gimel1}, together with the fact that $F_{2^n}'\subseteq \Pi_n$.
\end{proof}

\begin{lem}\label{psigenset}
$\Psi_n$ admits a generating set of the form: $$\{\Xi(\zeta_i)-\Xi(\zeta_{2i})\mid 1\leq i\leq 2^n-2\}$$
\end{lem}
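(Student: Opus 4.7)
The plan is to reduce the statement to an elementary linear algebra computation on the free abelian group $\Xi(\Theta_n)$, once the image of the basis $\{\Xi(\zeta_1),\ldots,\Xi(\zeta_{2^n-2})\}$ under $\varsigma$ is understood. By Lemma \ref{weakbasis}, $\Xi(\Theta_n)$ is free abelian of rank $2^n-2$ on this basis, and by Lemma \ref{gimel1}, $\varsigma(\Xi(\zeta_k))$ is a standard basis vector of $\oplus_{1\le j\le \eta_n}\mathbf{Z}_j$ (up to sign), located in coordinate $\pi(k)$, where $\pi(k)\in\{1,\ldots,\eta_n\}$ is the unique index with $2+k\in \chi_{\pi(k)}$. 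Thus $\varsigma$ simply collapses the fibers of $\pi$ by summing coefficients.

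Next I would identify the fibers of $\pi$ explicitly. By Lemma \ref{forwardorbits}, the sets $\chi_1,\ldots,\chi_{\eta_n}$ are the orbits of $i\mapsto 2(i-1)\bmod(2^n-1)$ on $\{1,\ldots,2^n-1\}\setminus\{2\}$. Substituting $i=2+k$ turns this map into the doubling map $k\mapsto 2k\bmod(2^n-1)$ on $\{1,\ldots,2^n-2\}$, which is a permutation because $\gcd(2,2^n-1)=1$. So each fiber $S_j:=\pi^{-1}(j)$ is a cycle of the doubling permutation, and in particular $\pi(i)=\pi(2i\bmod(2^n-1))$ for every $i\in\{1,\ldots,2^n-2\}$. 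This immediately yields the containment $\Xi(\zeta_i)-\Xi(\zeta_{2i})\in \Psi_n$, where the index $2i$ is taken modulo $2^n-1$ when $2i>2^n-2$ (consistent with the convention already used, e.g., in Lemma \ref{perm1}).

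For the reverse inclusion, I decompose $\Xi(\Theta_n)=\bigoplus_{j=1}^{\eta_n}\mathbf{Z}^{S_j}$ along the fibers of $\pi$. Since $\varsigma$ acts on each block $\mathbf{Z}^{S_j}$ as the sum-of-coordinates map into $\mathbf{Z}_j$, one gets $\Psi_n=\bigoplus_j K_j$ with $K_j=\{(a_k)_{k\in S_j}:\sum_k a_k=0\}$. Enumerating $S_j$ as the cycle $k_1,k_2=2k_1,\ldots,k_{r_j}$ (with $2k_{r_j}\equiv k_1$), the consecutive differences $\Xi(\zeta_{k_\ell})-\Xi(\zeta_{k_{\ell+1}})$ for $\ell=1,\ldots,r_j-1$ form a $\mathbf{Z}$-basis of $K_j$; taking the union over $j$ produces a subset of $\{\Xi(\zeta_i)-\Xi(\zeta_{2i}):1\le i\le 2^n-2\}$ that already generates $\Psi_n$. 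The argument is essentially bookkeeping modulo $2^n-1$ combined with the standard observation that on a single cycle the zero-sum sublattice is generated by consecutive differences, so there is no substantive obstacle.
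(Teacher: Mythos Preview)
Your proof is correct and follows essentially the same approach as the paper: both identify $\varsigma$ as the ``sum over fibers of $\pi$'' map on the free abelian group $\Xi(\Theta_n)$, deduce that $\Psi_n$ consists of coefficient vectors with zero sum over each fiber, and then use Lemma~\ref{forwardorbits} (the doubling description of the $\chi_j$) to reduce the full set of same-fiber differences to the consecutive differences $\Xi(\zeta_i)-\Xi(\zeta_{2i})$. Your block decomposition $\Psi_n=\bigoplus_j K_j$ and the explicit observation that consecutive differences along a cycle span the zero-sum sublattice make the final step slightly more transparent than the paper's version, but the substance is identical.
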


\begin{proof}
From Lemma \ref{gimel1} we know that $\varsigma(\Xi(\zeta_k))\in \oplus_{1\leq i\leq \eta_n}\mathbf{Z}_i$ is of the form $(0,...,-1,...,0)$, where the sole nonzero entry $-1$ is in the $j$-th coordinate $\mathbf{Z}_j$ where $2+k\in \chi_j$.
It follows that $\Psi_n$ consists of elements of the form $\sum_{1\leq i\leq 2^n-2} \alpha_i \Xi(\zeta_i)$ where $\alpha_1,...,\alpha_{2^n-2}\in \mathbf{Z}$ are coefficients satisfying that for each $1\leq j\leq \eta_n$: $$\sum_{2+l\in \chi_j}\alpha_{l}=0$$
Therefore, it follows that $\Psi_n$ admits a generating set of the form: 
$$\{\Xi(\zeta_i)-\Xi(\zeta_{j})\mid 1\leq i,j\leq 2^n-2\text{ such that }2+i,2+j\in \chi_l\text{ for some }1\leq l\leq \eta_n\}$$
Moreover, from Lemma \ref{forwardorbits} we know that if $2+i\in \chi_l$, then $\chi_l=\{2+i,2+2i,2+2^2i,...,2+2^ki,...\}$.
It follows that $\Psi_n$ admits a generating set of the form: $$\{\Xi(\zeta_i)-\Xi(\zeta_{2i})\mid 1\leq i\leq 2^n-2\}$$
\end{proof}


\begin{lem}\label{weakbasispi}
Let $f\in \Gamma_n'$ be any element such that $(f,\mathcal{I}_n)$ has degree $1$.
Then $\Pi_n$ admits a weak generating set consisting of elements of the form: 
$$\{\zeta_{k}f^{-1}\zeta_{k}^{-1}f\mid 1\leq k\leq 2^n-2\}$$
Moreover, for each $1\leq k\leq 2^n-2$, we have that $\zeta_{k}f^{-1}\zeta_{k}^{-1}f\in \Delta_n$.
\end{lem}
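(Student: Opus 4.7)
The plan is to set $h_k := \zeta_k f^{-1}\zeta_k^{-1} f = [\zeta_k^{-1}, f]$, show first that $h_k \in \Delta_n$, and then that the collection $\{h_k\}_{1\le k\le 2^n-2}$ weakly generates $\Pi_n$. The key computation throughout is
$$\Xi(h_k) = \Xi(\zeta_k) + \Xi(f^{-1}\zeta_k^{-1}f) = \Xi(\zeta_k) - \Xi(\zeta_{2k}),$$
immediate from Lemma \ref{perm1} (applicable since $(f, \mathcal{I}_n)$ has degree $1$) together with the fact that $\Xi$ takes values in an abelian group.

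For the containment $h_k \in \Delta_n = (\Gamma_n')^c \cap \Theta_n$, two things need checking. First, $h_k$ is a commutator of elements of $\Gamma_n$, hence lies in $\Gamma_n'$. Second, $h_k$ fixes a neighborhood of $0$ in $\mathbf{S}^1$: the degree-$1$ hypothesis says some lift $\tilde f \in \Omega_n'$ maps $\mathcal{I}_n$ into $(1,2) \subset \mathbf{R}$, so $\mathcal{I}_n \cdot f \subset (0,1) \subset \mathbf{S}^1 \setminus \{0\}$; thus $0$ admits an open circle-neighborhood disjoint from both $\overline{\Supp(\zeta_k)} \subseteq \mathcal{I}_n$ and $\Supp(f^{-1}\zeta_k^{-1}f) = \Supp(\zeta_k)\cdot f$, and on this neighborhood both $\zeta_k$ and $f^{-1}\zeta_k^{-1}f$ act as the identity. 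Hence $h_k \in (\Gamma_n')^c \cap F_{2^n}^c$. To promote $h_k$ to $\Theta_n$, recall from the proof of Lemma \ref{weakbasis}(5) that $\Theta_n = \Xi^{-1}(\langle \Xi(\zeta_1),\dots,\Xi(\zeta_{2^n-2})\rangle)$; the computed value $\Xi(h_k) = \Xi(\zeta_k) - \Xi(\zeta_{2k})$ manifestly lies in that subgroup, so $h_k \in \Theta_n$ and therefore $h_k \in \Delta_n \subseteq \Pi_n$ (using Lemma \ref{deltapi1}).

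It remains to identify the abelianization of $\Pi_n$ with $\Psi_n$ and invoke Lemma \ref{psigenset}. Since $F_{2^n}' \subseteq \Pi_n \subseteq \Theta_n$ (the first inclusion by Lemma \ref{gimelvanish} applied to the diagonal copy $F_{2^n}' \subseteq \Gamma_n'$, the second by definition of $\Pi_n$), and $[\Theta_n,\Theta_n] = F_{2^n}'$ by Lemma \ref{deltan}(2), we obtain $[\Pi_n,\Pi_n] \subseteq F_{2^n}'$; perfectness of $F_{2^n}'$ (from Theorem \ref{abelianization}) furnishes the reverse inclusion. Thus $[\Pi_n,\Pi_n] = F_{2^n}'$, and $\Xi$ descends to an isomorphism $\Pi_n/[\Pi_n,\Pi_n] \cong \Xi(\Pi_n) = \Psi_n$ (the last equality via Lemma \ref{pipsi1}). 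Under this isomorphism the classes of the $h_k$ become $\Xi(\zeta_k) - \Xi(\zeta_{2k})$, which Lemma \ref{psigenset} states generate $\Psi_n$; hence $\{h_k\}$ weakly generates $\Pi_n$.

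The main technical delicacy is the circle-neighborhood analysis near $0$ in the second step, which requires correctly passing from the real-line lift to its projection on $\mathbf{S}^1$ to see that $\mathcal{I}_n \cdot f$ avoids $0$; once that is in hand, everything else reduces to the algebraic bookkeeping already assembled in Lemmas \ref{perm1}, \ref{psigenset}, \ref{deltapi1}, and \ref{pipsi1}.
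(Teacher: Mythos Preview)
Your proof is correct and follows essentially the same approach as the paper: compute $\Xi(h_k)=\Xi(\zeta_k)-\Xi(\zeta_{2k})$ via Lemma~\ref{perm1}, identify $\Pi_n'=F_{2^n}'$ so that $\Xi$ realizes the abelianization $\Pi_n/\Pi_n'\cong\Psi_n$, and invoke Lemma~\ref{psigenset}. The only differences are organizational---you establish $\Delta_n$-membership first and supply an explicit neighborhood-of-$0$ argument where the paper simply asserts $h_k\in(\Gamma_n')^c$, and you deduce $h_k\in\Theta_n$ directly from $\Theta_n=\Xi^{-1}\langle\Xi(\zeta_i)\rangle$ rather than the paper's route of showing $f^{-1}\zeta_k^{-1}f\,\zeta_{2k}\in\ker\Xi$.
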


\begin{proof}
Let $P=\{\zeta_{k}f^{-1}\zeta_{k}^{-1}f\mid 1\leq k\leq 2^n-2\}$.
It suffices to show that $\Pi_n=\langle \Pi_n', P\rangle$.
Since $F_{2^n}'\leq \Pi_n\leq F_{2^n}$, from Theorem \ref{abelianization} we have $\Pi_n'=F_{2^n}'$. So in fact it suffices to show that $\Pi_n=\langle F_{2^n}', P\rangle$.
The kernel of the map $\Xi\restriction \Pi_n:\Pi_n\to \Xi(\Pi_n)$ equals $F_{2^n}'$, and hence $\Pi_n$ is generated by $F_{2^n}'$ and $\Xi^{-1}(L)$ where $L$ is any generating set for $\Xi(\Pi_n)$.
From Lemma \ref{pipsi1}, we know that $\Xi(\Pi_n)=\Psi_n$.
From Lemma \ref{psigenset}, we know that $\Psi_n$ admits a generating set of the form: $$\{\Xi(\zeta_i)-\Xi(\zeta_{2i})\mid 1\leq i\leq 2^n-2\}$$
From Lemma \ref{perm1}, using the fact that $(f,\mathcal{I}_n)$ has degree $1$, we know that: $$\Xi(\zeta_{k}f^{-1}\zeta_{k}^{-1}f)=\Xi(\zeta_{k})+\Xi(f^{-1}\zeta_{k}^{-1}f)=\Xi(\zeta_i)+\Xi(\zeta_{2i}^{-1})=\Xi(\zeta_i)-\Xi(\zeta_{2i})$$
This means that $\Pi_n$ admits: $$P=\{\zeta_{k}f^{-1}\zeta_{k}^{-1}f\mid 1\leq k\leq 2^n-2\}$$ as a weak generating set. 

Recall that $\Delta_n=\Theta_n\cap (\Gamma_n')^c$.
We know that for each $1\leq k\leq 2^n-2$, we have that $\zeta_{k}f^{-1}\zeta_{k}^{-1}f\in (\Gamma_n')^c$.
We simply need to show that for each $1\leq k\leq 2^n-2$, we have that $\zeta_{k}f^{-1}\zeta_{k}^{-1}f\in \Theta_n$.
Note that $\zeta_{k}\in \Theta_n$.
So we only need to show that $f^{-1}\zeta_{k}^{-1}f\in \Theta_n$.
Since from Lemma \ref{perm1}, $\Xi(f^{-1}\zeta_{k}^{-1}f)=\Xi(\zeta_{2k}^{-1})$, we have
$\Xi(f^{-1}\zeta_{k}^{-1}f)+\Xi(\zeta_{2k})=0$ and so: $$f^{-1}\zeta_{k}^{-1}f\zeta_{2k}\in ker(\Xi)=F_{2^n}'$$
Since $\zeta_{2k}\in \Theta_n, F_{2^n}'\leq \Theta_n$, it follows that $f^{-1}\zeta_{k}^{-1}f\in \Theta_n$.
\end{proof}

\begin{lem}\label{same}
$\Pi_n=\Delta_n$.
\end{lem}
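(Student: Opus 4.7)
By Lemma \ref{deltapi1} we already have $\Delta_n \leq \Pi_n$, so the plan is just to establish the reverse inclusion $\Pi_n \leq \Delta_n$. The work has essentially been absorbed into Lemma \ref{weakbasispi}, which is tailor-made for this purpose.

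First I would observe that $\Pi_n' = F_{2^n}'$. The chain of inclusions $F_{2^n}' \leq \Pi_n \leq \Theta_n \leq F_{2^n}$ forces $F_{2^n}'' \leq \Pi_n' \leq F_{2^n}'$, and Theorem \ref{abelianization} gives $F_{2^n}'' = F_{2^n}'$, so $\Pi_n' = F_{2^n}'$. In particular, for any weak generating set $S$ of $\Pi_n$ we may write $\Pi_n = \langle F_{2^n}', S\rangle$.

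Next I would fix any $f \in \Gamma_n'$ such that $(f, \mathcal{I}_n)$ has degree $1$; such an $f$ exists by Lemma \ref{degreelem} applied to the closed interval $\mathcal{I}_n \subset (0,1)$. Lemma \ref{weakbasispi} then produces the weak generating set
\[
P = \{\zeta_k f^{-1} \zeta_k^{-1} f : 1 \leq k \leq 2^n - 2\}
\]
of $\Pi_n$, and its second clause asserts $P \subseteq \Delta_n$.

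Finally, by Lemma \ref{deltan} part (2) we have $\Delta_n' = F_{2^n}'$, hence $F_{2^n}' \leq \Delta_n$. Combined with $P \subseteq \Delta_n$, this yields
\[
\Pi_n = \langle F_{2^n}', P \rangle \leq \Delta_n,
\]
which together with Lemma \ref{deltapi1} gives $\Pi_n = \Delta_n$. There is no real obstacle at this stage: the nontrivial content — both the identification of the weak generating set $P$ (using the orbit-permutation Lemma \ref{perm1}) and the verification $P \subseteq \Delta_n$ (which requires that $f^{-1} \zeta_k^{-1} f$ lies in $\Theta_n$, via $\Xi(f^{-1} \zeta_k^{-1} f) = \Xi(\zeta_{2k}^{-1})$) — is already done inside Lemma \ref{weakbasispi}, and the current lemma is essentially the clean statement that the group $\Delta_n$ and the map $\gimel$ were designed precisely to match.
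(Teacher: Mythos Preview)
Your proof is correct and follows essentially the same approach as the paper: both use Lemma~\ref{deltapi1} for one inclusion, then invoke Lemma~\ref{weakbasispi} to obtain a weak generating set $P\subseteq\Delta_n$ for $\Pi_n$, and conclude via $\Pi_n=\langle \Pi_n',P\rangle=\langle F_{2^n}',P\rangle\leq\Delta_n$. Your version is slightly more explicit in justifying $\Pi_n'=F_{2^n}'$ and the existence of $f$ (via Lemma~\ref{degreelem}), but the argument is the same.
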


\begin{proof}
First note that $\Delta_n\leq \Pi_n$ from Lemma \ref{deltapi1}.
From Lemma \ref{weakbasispi} we know that for any $f\in \Gamma_n'$ so that $(f,\mathcal{I}_n)$ has degree $1$, $\Pi_n$ admits a weak generating set consisting of elements of the form
$\{\zeta_{k}f^{-1}\zeta_{k}^{-1}f\mid 1\leq k\leq 2^n-2\}$.
Indeed, since all of these elements lie in $\Delta_n$ (also from Lemma \ref{weakbasispi}), and since $\Pi_n'=F_{2^n}'\leq \Delta_n$, we obtain that $\Pi_n\leq \Delta_n$ which finishes the proof.
\end{proof}

\begin{cor}\label{weakbasisdelta}
Let $f\in \Gamma_n'$ be such that $(f,\mathcal{I}_n)$ has degree $1$.
Then the set of elements $\{\zeta_{k}f^{-1}\zeta_{k}^{-1}f\mid 1\leq k\leq 2^n-2\}$ is a weak generating set for $\Delta_n$. 
\end{cor}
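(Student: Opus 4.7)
The corollary is essentially an immediate synthesis of Lemma \ref{weakbasispi} and Lemma \ref{same}, so the plan is simply to assemble those two ingredients in sequence.

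First, I would appeal to Lemma \ref{weakbasispi}: under the standing assumption that $(f, \mathcal{I}_n)$ has degree $1$, it produces the set $S = \{\zeta_k f^{-1} \zeta_k^{-1} f \mid 1 \le k \le 2^n-2\}$ as a weak generating set for $\Pi_n$, and simultaneously records that every element of $S$ already lies in $\Delta_n$. The second piece is Lemma \ref{same}, which establishes the (a priori non-obvious) equality $\Pi_n = \Delta_n$. Once the two groups are known to coincide, their derived subgroups and abelianizations coincide as well, so any subset that weakly generates $\Pi_n$ automatically weakly generates $\Delta_n$. Applied to $S$, this yields the claim.

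There is no substantive obstacle at this step: the conceptually hard work --- understanding the orbit decomposition of $[0,1) \cap \mathbf{Z}[\tfrac{1}{2}]$ into $\{0\}, \Phi_0, \ldots, \Phi_{\eta_n}$, engineering the homomorphisms $\Xi, \varsigma, \gimel$ so that $\Pi_n$ emerges as the kernel of $\gimel\!\restriction\!\Theta_n$, and computing $\Xi(\zeta_k f^{-1}\zeta_k^{-1}f) = \Xi(\zeta_k) - \Xi(\zeta_{2k})$ via the degree-one hypothesis on $(f,\mathcal{I}_n)$ --- was already carried out in the preceding lemmas. The corollary is a clean restatement that reframes the weak generating set already discovered for $\Pi_n$ as one for $\Delta_n$, a form that will be convenient to invoke in the proof of Proposition \ref{weakbasisc} and in the broader uniform simplicity argument for $\Gamma_n'$.
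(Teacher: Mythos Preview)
Your proposal is correct and follows exactly the paper's approach: invoke Lemma~\ref{weakbasispi} to obtain the weak generating set for $\Pi_n$, then apply Lemma~\ref{same} to identify $\Pi_n$ with $\Delta_n$. The paper's proof is a two-line version of what you wrote.
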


\begin{proof}
From Lemma \ref{weakbasispi} we know that $\Pi_n$ admits such a weak generating set, and from Lemma \ref{same} we know that $\Pi_n=\Delta_n$.
This completes the proof.
\end{proof}

Now we are ready to prove the main Proposition of this section.

\begin{proof}[Proof of Proposition \ref{weakbasisc}]
Using Lemma \ref{specialelementsgalore1}, we find a special element $g\in \Omega_n'$ such that: $$Supp(g)\cap (\mathcal{I}_n+\mathbf{Z})=\emptyset$$
We denote the image of this special element in $\Gamma_n'$ as also $g$.
Note that from our assumption it follows that $Supp(g)\cap \mathcal{I}_n=\emptyset$ and $[g,\zeta_i]=1$ for each $1\leq i\leq 2^n-2$.
Let $J\subset (0,1)$ be a nonempty open interval such that $(g,J)$ has degree $1$. 
Using Lemma \ref{degreelem}, we find an $f\in \Gamma_n'$ such that:
\begin{enumerate}
\item $\mathcal{I}_n\cdot f\subset J\cdot g$.
\item $(f,\mathcal{I}_n)$ has degree $1$.
\end{enumerate}
From Corollary \ref{weakbasisdelta}, it follows that: $$\{\zeta_if^{-1}\zeta_i^{-1}f\mid 1\leq i\leq 2^n-2\}$$ is a weak generating for $\Delta_n$.
Recall that for each $1\leq i\leq 2^n-2$ we have $Supp(\zeta_i)\subset \mathcal{I}_n\subset (0,1)$.
From our choices of $f$ and $g$, it follows that the pair $(fg^{-1},\mathcal{I}_n)$ has degree $0$.
We obtain:
$$g(\zeta_if^{-1}\zeta_i^{-1}f)g^{-1}=(g\zeta_ig^{-1})(gf^{-1} \zeta_i^{-1} fg^{-1})
=\zeta_i((fg^{-1})^{-1} \zeta_i^{-1} (fg^{-1}))$$
(Recall that $g, \zeta_i$ commute since they have disjoint supports).
Since the pair $(fg^{-1},\mathcal{I}_n)$ has degree $0$, from Lemma \ref{degree0} it follows that $\zeta_i((fg^{-1})^{-1} \zeta_i^{-1} (fg^{-1}))\in F_{2^n}'$, and so for each $1\leq i\leq 2^n-2$, we have that $\zeta_if^{-1}\zeta_i^{-1}f$ is conjugate to an element of $F_{2^n}'$ via an element of $\Gamma_n'$.
This completes the proof.
\end{proof}

\section{Uniform simplicity for \texorpdfstring{$\{\Gamma_n'\}_{n\geq 2}$}.}
The goal of this section is to prove the following:

\begin{thm}\label{unisim}
For each $n\geq 2$, the group $\Gamma_n'$ is uniformly simple.
\end{thm}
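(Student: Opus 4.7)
The approach is to apply Lemma \ref{BC} to $G = \Gamma_n'$, which is simple by Proposition \ref{SimpleProp}, with the chain
\[
H_1 = F_{2^n}' \;\leq\; H_3 = \Delta_n \;\leq_{\text{f.i.}}\; H_2 = (\Gamma_n')^c \;\leq\; \Gamma_n',
\]
where the finite-index containment comes from Lemma \ref{deltan}(1). Conditions (1) and (3) of Lemma \ref{BC} are essentially already in hand: Theorem \ref{commwid} gives the $6$-uniform simplicity of $F_{2^n}'$, and Proposition \ref{weakbasisc} combined with Lemma \ref{BC1} shows that $\Delta_n$ is $(2^n-1)$-boundedly covered by $F_{2^n}'$ in $\Gamma_n'$, since $\{[f,\zeta_i] \mid 1 \leq i \leq 2^n-2\}$ weakly generates $\Delta_n$ and each such element is conjugate in $\Gamma_n'$ to an element of $\Delta_n' = F_{2^n}'$.

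For condition (2), given a nontrivial $f \in \Gamma_n'$, I would use the $1$-periodic proximality of $\Omega_n'$ (Proposition \ref{periodproximal}) to locate an open interval $U \subset (0,1)$ such that $U$, $V := U \cdot f$, and $0$ are pairwise disjoint and both $U, V \subset (0,1)$. Pick $\alpha_1, \alpha_2 \in F_{2^n}'$ supported in $U$ which do not commute; this is possible because $\textup{Rstab}_{F_{2^n}}(U)$ is isomorphic to $F_{2^n}$ by Lemma \ref{Fnproximal}, and $F_{2^n}'$ is nonabelian. Since $f^{-1}\alpha_1^{\pm 1}f$ is supported in $V$, a short computation using the disjointness of $U$ and $V$ shows that $[[f,\alpha_1],\alpha_2]$ agrees with $[\alpha_1,\alpha_2]$ on $U$ and is the identity elsewhere, hence is a nontrivial element of $F_{2^n}'$.

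The main obstacle is condition (4): I need a constant $m_2 = m_2(n)$ so that every $g \in \Gamma_n' \setminus \{\text{id}\}$ factors as $h \cdot c_1 \cdots c_k$ with $h \in (\Gamma_n')^c$, $c_i \in C_{F_{2^n}'}$, and $k \leq m_2$. The plan proceeds in two stages. First, I would correct $0 \cdot g$ back to $0$ using a bounded number of conjugates of $F_{2^n}'$-elements. Since $0 \cdot g$ lies in the $\Gamma_n'$-orbit $\Phi_{j_0}$ of $0$, which is the union of the $F_{2^n}'$-orbits $O_i$ for $i \in \chi_{j_0} \subset \{1,\ldots,2^n-1\}$, and Lemma \ref{forwardorbits} identifies $\chi_{j_0}$ with an orbit of the map $i \mapsto 2(i-1)$, I can walk $0 \cdot g$ through $\chi_{j_0}$ until it reaches $O_{2^n-1}$ and finally $0$, by interleaving applications of special elements (supplied by Lemma \ref{specialelementsgalore1}, and conjugate in $\Gamma_n'$ to elements of $F_{2^n}'$) with $F_{2^n}'$-corrections that place the point into the correct half-interval required by the orbit-shifting formula of Lemma \ref{orbitscircleaction}. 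The number of moves is bounded in terms of $n$. Second, once $0$ is fixed, a circle version of Lemma \ref{zerofix} kills the germ at $0$ with two further conjugates of $F_{2^n}'$-elements, placing the resulting element into $(\Gamma_n')^c = F_{2^n}^c \cap \Gamma_n'$. With all four hypotheses of Lemma \ref{BC} verified, the uniform simplicity of $\Gamma_n'$ follows.
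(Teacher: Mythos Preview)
Your proposal is correct and follows exactly the paper's framework: apply Lemma \ref{BC} with $H_1=F_{2^n}'$, $H_3=\Delta_n$, $H_2=(\Gamma_n')^c$, verifying (1) via Theorem \ref{commwid}, (3) via Proposition \ref{weakbasisc} and Lemma \ref{BC1}, and (2) by a short support-disjointness computation (the paper's Lemma \ref{unisimlem1} does the same thing, only phrasing the second step as ``$[f,\alpha_1]\in F_{2^n}$, then commute with something in $F_{2^n}'$'' rather than computing $[[f,\alpha_1],\alpha_2]=[\alpha_1,\alpha_2]$ directly).

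The only substantive difference is in condition (4). The paper does \emph{not} walk through the $O_i$'s on the circle using the orbit-shifting formula. Instead it passes to a lift $\tilde g\in\Omega_n'$ and works on $\mathbf{R}$ (Lemmas \ref{unisimlem2}--\ref{unisimlem3}): after multiplying by a suitable $t_n^l$, the point $0\cdot \tilde g\,t_n^l$ lies in some $[-p,-p+1)$ with $p\le n$, and one simply pushes it to the right one unit interval at a time by alternating an $F_{2^n}'$-correction with a fixed special element $\alpha\in C_{F_{2^n}'}$, then kills the germ with Lemma \ref{zerofix}. This gives $m_2=2n+4$, and $t_n^l$ disappears under $\phi_n:\Omega_n'\to\Gamma_n'$. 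Your circle-based route also works---the unnamed Corollary after Lemma \ref{forwardorbits} guarantees that any degree-one crossing (not just $\tau_1$) realises the shift $i\mapsto 2(i-1)$, so the special elements of Lemma \ref{specialelementsgalore1} are usable---but it needs that extra ingredient and produces a bound of order $|\chi_{j_0}|$, which can be exponential in $n$. The paper's lift trick is cleaner precisely because on $\mathbf{R}$ the ``distance to $0$'' is visible as an integer and no orbit bookkeeping is required.
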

We will show this by showing that the hypothesis of Lemma \ref{BC} applies to the $4$-tuple: $$G=\Gamma_n'\qquad H_1=F_{2^n}'\qquad H_2=(\Gamma_n')^c\qquad H_3=\Delta_n$$
We will prove a series of technical lemmas that will lead to the proof of this theorem.
We encourage the reader to recall the content of Subsection \ref{BCsubsec} and also the previous section.
We also note that we consider conjugacy in the groups $\Gamma_n'$ and $\Omega_n'$ (rather than $\Gamma_n$ and $\Omega_n$).

\begin{lem}\label{unisimlem1}
Let $f\in \Gamma_n'\setminus \{id\}$. Then there are elements $\alpha_1,\alpha_2\in \Gamma_n'$ such that $[[f,\alpha_1],\alpha_2]\in F_{2^n}'\setminus \{id\}$.
\end{lem}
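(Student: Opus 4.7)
The plan is a two-step commutator-squeezing argument. The first commutator $[f,\alpha_1]$ is arranged to have compact support inside $(0,1)$, so that it lies in $F_{2^n}^c$; the second commutator $[[f,\alpha_1],\alpha_2]$ is then a commutator inside $F_{2^n}^c$, and therefore lands in $F_{2^n}'$ automatically by Lemma \ref{Fncompactab}.

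Since $\Supp(f)$ is a nonempty open subset of $\mathbf{S}^1\cong[0,1)$, I pick a point $x\in\Supp(f)$ avoiding the at most two points $\{0,0\cdot f^{-1}\}$, so that $x\neq 0$, $x\cdot f\neq 0$, and $x\cdot f\neq x$. Then I choose a small open interval $U$ around $x$ such that $U\cap U\cdot f=\emptyset$ and $\overline{U\cup U\cdot f}$ is a compact subinterval of $(0,1)$. Using that the $1$-periodically proximal action of $\Omega_n'$ on $\mathbf{R}$ from Proposition \ref{periodproximal} descends to a proximal action of $\Gamma_n'$ on $\mathbf{S}^1$, I conjugate a fixed nontrivial element of $F_{2^n}'\subseteq\Gamma_n'$ to obtain a nontrivial $\alpha_1\in\Gamma_n'$ with $\Supp(\alpha_1)\subset U$. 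Then $[f,\alpha_1]=(f^{-1}\alpha_1^{-1}f)\cdot\alpha_1$ is a product of two commuting, nontrivial elements with disjoint supports contained in $U\cdot f$ and $U$, so it is nontrivial with support in $U\cup U\cdot f\subset(0,1)$. Setting $h:=[f,\alpha_1]$, this gives $h\in F_{2^n}^c\cap\Gamma_n'\setminus\{id\}$.

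For the second step, I pick $y\in\Supp(h)$ and a small open interval $V\subset(0,1)$ around $y$ with $V\cap V\cdot h=\emptyset$; using the proximality of $F_{2^n}'$ on $(0,1)$ from Lemma \ref{Fnproximal}, I find a nontrivial $\alpha_2\in F_{2^n}'$ supported in $V$. The same disjoint-support argument shows $[h,\alpha_2]$ is nontrivial, and since $h,\alpha_2\in F_{2^n}^c$, Lemma \ref{Fncompactab} gives $[[f,\alpha_1],\alpha_2]=[h,\alpha_2]\in(F_{2^n}^c)'=F_{2^n}'$, completing the proof. The only real subtlety is arranging $[f,\alpha_1]\in F_{2^n}^c$, which is why $x$ must be chosen to avoid both $0$ and $0\cdot f^{-1}$: this forces the support of the first commutator to lie in a compact subinterval of $(0,1)$, so the resulting element fixes a neighborhood of $0\in\mathbf{S}^1$. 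Once inside $F_{2^n}^c$, the second commutator lands in $F_{2^n}'$ for free.
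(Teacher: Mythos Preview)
Your proof is correct and follows essentially the same two-step commutator argument as the paper: first arrange $[f,\alpha_1]$ to have compact support in $(0,1)$ by choosing $\alpha_1$ supported in a small interval $U$ with $U\cap U\cdot f=\emptyset$ and $0\notin\overline{U\cup U\cdot f}$, then take a second commutator with $\alpha_2\in F_{2^n}'$ to land in $F_{2^n}'$. The paper simply takes $\alpha_1\in F_{2^n}'$ directly (since $U\subset(0,1)$), so your appeal to proximality of $\Gamma_n'$ is unnecessary but harmless; and for the second step the paper invokes normality of $F_{2^n}'$ in $F_{2^n}$ rather than Lemma~\ref{Fncompactab}, which amounts to the same thing. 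One cosmetic slip: $\overline{U\cup U\cdot f}$ is a compact \emph{subset} of $(0,1)$, not a subinterval.
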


\begin{proof}
Since $f$ is non-trivial, we can find a non-empty open interval $I$ such that: $$0\notin \overline{(I\cdot f)\cup I}\qquad I\cap (I\cdot f)=\emptyset$$
We can find $\alpha_1\in F_{2^n}'\setminus \{id\}$ whose support is contained in $I$ so that: 
$$0\notin \overline{Supp(\alpha_1)\cup (Supp(\alpha_1)\cdot f)}\qquad Supp(\alpha_1)\cap (Supp(\alpha_1)\cdot f)=\emptyset$$
Note that the latter condition implies that $[f,\alpha_1]\neq id$, and the former implies that $[f,\alpha_1]\in (\Gamma_n')^c$. And using Lemma \ref{deltan}, we observe that $[f,\alpha_1]\in F_{2^n}$.
Using the elementary fact that the centralizer of $F_{2^n}'$ in $F_{2^n}$ is trivial, we choose $\alpha_2\in F_{2^n}'$ such that 
$[[f,\alpha_1],\alpha_2]\in F_{2^n}'\setminus \{id\}$.
\end{proof}

Recall that $t_n:\mathbf{R}\to \mathbf{R}$ is the map $x\cdot t_n=x+n$.

\begin{lem}\label{unisimlem2}
Let $n\geq 2$.
For each $g\in \Omega_n'\setminus \{id\}$, there exist 
$k\leq 2n+4$, $g_1,...,g_k\in C_{F_{2^n}'}(\Omega_n')$ and $h\in \Omega_n'$ such that 
$h$ pointwise fixes a neighborhood of $0$ and $g=hg_1...g_kt_n^{-l}$ for some $l\in \mathbf{Z}$.
\end{lem}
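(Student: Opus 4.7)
The plan is to perform three reductions in sequence: absorb an integer power of the central translation $t_n$ to bring $0\cdot g$ into $[0,n)$; construct a bounded product of $F_{2^n}'$-conjugates realizing the same action on $0$; and kill the germ at $0$ via Lemma \ref{zerofix}. Concretely, set $y := 0\cdot g$ and let $l\in\mathbf{Z}$ be the unique integer with $y' := y - ln \in [0,n)$. Since $t_n\in\mathcal{Z}(\Omega_n)$ by Lemma \ref{center}, setting $\tilde g := g t_n^{-l}$ reduces the problem to writing $\tilde g = h g_1\cdots g_k$ with $k\le 2n+4$, $g_i\in C_{F_{2^n}'}$, and $h\in\Omega_n'$ fixing a neighborhood of $0$; absorbing $t_n^l$ back then yields $g = h g_1\cdots g_k t_n^l$.

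The heart of the argument is constructing a ladder word $\psi := g_1\cdots g_m$, $m\le 2n+2$, with $0\cdot\psi = y'$. I proceed inductively, tracking the intermediate points $x_i := 0\cdot(g_1\cdots g_i)$, which trace a path from $x_0 = 0$ to $x_m = y'$. The first conjugate comes from Lemma \ref{specialelementsgalore1}: a special element of $\Omega_n'$ conjugate to an element of $F_{2^n}'$ shifts $0$ into $(0,1)$. Each subsequent conjugate $\beta^{-1}\alpha\beta$ (with $\alpha\in F_{2^n}'$ and $\beta\in\Omega_n'$) is arranged so that its support spans an integer boundary, advancing $x_{i-1}\in(k,k+1)$ into $(k+1,k+2)$. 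Propositions \ref{Fntrans} and \ref{omegaorbits} dictate which $F_{2^n}'$-orbits a single conjugate of an $F_{2^n}'$-element can reach, and landing in the correct orbit may require an auxiliary factor from $F_{2^n}'$ itself (a self-conjugate); this is why each unit of advancement may cost two conjugates, giving the bound $m\le 2n+2$.

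Once $\psi$ is built, $\tilde g\psi^{-1}\in\Omega_n'$ fixes $0$, and Lemma \ref{zerofix} produces $g_{m+1},g_{m+2}\in C_{F_{2^n}'}$ such that $h := \tilde g\psi^{-1}g_{m+1}g_{m+2}$ fixes a neighborhood of $0$. Rearranging, and using that $C_{F_{2^n}'}$ is closed under inversion,
$$\tilde g \;=\; h\cdot g_{m+2}^{-1}g_{m+1}^{-1}g_1\cdots g_m,$$
which reindexes to the required decomposition with $k = m+2\le 2n+4$, and then $g = h g_1'\cdots g_k' t_n^l$. The main obstacle is the ladder step: at each stage one must exhibit a conjugate in $\Omega_n'$ of an element of $F_{2^n}'$ whose action on the current intermediate point $x_{i-1}$ is the prescribed unit shift, and the orbit compatibility imposed by Propositions \ref{Fntrans} and \ref{omegaorbits} is exactly what controls the constant $2n+4$.
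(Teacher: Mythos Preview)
Your proposal is correct and mirrors the paper's proof: reduce by a power of $t_n$, traverse the integer boundaries with a bounded-length word in $C_{F_{2^n}'}$ built from a special element (Lemma~\ref{specialelementsgalore1}) interleaved with elements of $F_{2^n}'$, then invoke Lemma~\ref{zerofix}. The only cosmetic difference is direction: the paper normalizes so that $x = 0\cdot g\, t_n^{l} \in [-n,0)$ and pushes $x$ \emph{up} to $0$ via a word $\lambda_1\alpha\cdots\lambda_p\alpha\,\lambda_{p+1}\alpha^{-1}$ (with one fixed special element $\alpha$), which has the minor advantage that the final orbit-matching step takes place in $(0,1)$ where Lemma~\ref{GammaOrbits} applies directly---in your forward version the analogous landing in $(p,p+1)$ requires the (true) fact that each $\Omega_n'$-orbit meets $(p,p+1)$ in a single $F_{2^n}'$-orbit, which follows from the counting in Proposition~\ref{omegaorbits}.
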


\begin{proof}
Let $l\in \mathbf{Z}$ be such that $x=0\cdot gt_n^l \in [-p,-p+1)$ for some $-p\in \{-n,...,-1\}$.
Using Lemma \ref{specialelementsgalore1}, find a special element $\alpha\in \Omega_n'$ which also lies in $C_{F_{2^n}'}(\Omega_n')$.
We first choose $\lambda_1\in F_{2^n}'$ such that $x\cdot \lambda_1\alpha\in  (-p+1,-p+2)$.
Continuing in this way, we choose $\lambda_1,...,\lambda_{p}\in F_{2^n}'$ such that $y=x\cdot \lambda_1\alpha...\lambda_{p}\alpha\in (0,1)$.
Since $y\in (0,1)$ is the image of $0$ upon applying an element of $\Omega_n'$, by Lemma \ref{GammaOrbits} it is in the $F_{2^n}'$ orbit of $0\cdot \alpha$.
Then we choose $\lambda_{p+1}\in F_{2^n}'$ such that $y\cdot \lambda_{p+1}=0\cdot \alpha$, which gives us $y\cdot \lambda_{p+1}\alpha^{-1}=0$, and so
$$0\cdot g t_n^{l}\lambda_1\alpha...\lambda_{p}\alpha\lambda_{p+1}\alpha^{-1}=0$$
Using Lemma \ref{zerofix}, we find elements $\beta_1,\beta_2\in \Omega_n'$ such that $\beta_1,\beta_2\in C_{F_{2^n}'}(\Omega_n')$ and
the element $h=g t_n^{l}\lambda_1\alpha...\lambda_{p}\alpha \lambda_{p+1}\alpha^{-1}\beta_1\beta_2$ pointwise fixes a neighborhood of $0$.
Then the expression $$g=h\beta_2^{-1}\beta_1^{-1} \alpha\lambda_{p+1}^{-1}\alpha^{-1}\lambda_p^{-1}...\alpha^{-1}\lambda_1^{-1}t^{-l}_n$$
has the desired form since $\lambda_1,...,\lambda_{p+1}, \beta_1,\beta_2,\alpha\in C_{F_{2^n}'}(\Omega_n')$.
\end{proof}

\begin{lem}\label{unisimlem3}
Let $n\geq 2$.
For each $g\in \Gamma_n'\setminus \{id\}$, there exist 
$k\leq 2n+4$, $g_1,...,g_k\in C_{F_{2^n}'}(\Gamma_n')$ and $h\in (\Gamma_n')^c$ such that 
$g=h(g_1...g_k)$.
\end{lem}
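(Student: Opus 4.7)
The plan is to reduce the claim to Lemma \ref{unisimlem2} by lifting to $\Omega_n'$ and pushing the conclusion down through the quotient map $\pi:\Omega_n\to\Gamma_n=\Omega_n/\Lambda_n$. The key observation is that the ``extra'' factor $t_n^l$ produced by Lemma \ref{unisimlem2} lies in the kernel $\Lambda_n$ and therefore disappears in $\Gamma_n'$, while the element that fixes a neighborhood of $0\in\mathbf{R}$ becomes an element of $(\Gamma_n')^c$ by $1$-periodicity.

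First I would choose a lift $\tilde g\in\Omega_n'$ of $g$. The restriction $\pi\restriction\Omega_n'$ lands in $\Gamma_n'$ (commutators map to commutators) and is surjective (any commutator in $\Gamma_n$ lifts to a commutator in $\Omega_n$ by lifting each factor). Since $g\neq id$ we have $\tilde g\notin\Lambda_n$, in particular $\tilde g\neq id$, so Lemma \ref{unisimlem2} applies and yields $k\leq 2n+4$, elements $\tilde g_1,\ldots,\tilde g_k\in C_{F_{2^n}'}$ (conjugacy taken inside $\Omega_n'$), an element $\tilde h\in\Omega_n'$ pointwise fixing some neighborhood $U$ of $0\in\mathbf{R}$, and $l\in\mathbf{Z}$ such that $\tilde g=\tilde h\,\tilde g_1\cdots\tilde g_k\,t_n^l$.

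Next I would apply $\pi$ to both sides. Because $t_n^l\in\Lambda_n=\ker\pi$, this produces $g=h\,g_1\cdots g_k$, where $h:=\pi(\tilde h)$ and $g_i:=\pi(\tilde g_i)$. To check that $h\in(\Gamma_n')^c$, note that $\tilde h$ is $1$-periodic (being in $\Omega_n$), so it fixes $U+\mathbf{Z}$ pointwise; under the identification $\mathbf{S}^1\cong\mathbf{R}/\mathbf{Z}$, the image $U+\mathbf{Z}$ is an open neighborhood of $0\in\mathbf{S}^1$, so $h$ fixes a neighborhood of $0$, which places it in $(\Gamma_n')^c$ by its definition at the start of Section \ref{SectionDeltan}. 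To check that $g_i\in C_{F_{2^n}'}$ in $\Gamma_n'$, recall that the $1$-periodic copy of $F_{2^n}'$ in $\Omega_n'$ has trivial intersection with $\Lambda_n$ (its elements fix $\mathbf{Z}$ pointwise), so $\pi$ identifies it with the copy of $F_{2^n}'$ in $\Gamma_n'$; conjugacy is preserved by $\pi$, so from $\tilde g_i\in C_{F_{2^n}'}$ in $\Omega_n'$ we obtain $g_i\in C_{F_{2^n}'}$ in $\Gamma_n'$.

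There is essentially no obstacle here beyond bookkeeping: the genuinely substantive work (the $2n+4$ bound and the construction of $\tilde h$ fixing a neighborhood of $0$) was carried out in Lemma \ref{unisimlem2}, and all that remains is to verify that the quotient $\Omega_n'\to\Gamma_n'$ transports the decomposition cleanly and kills the translation factor $t_n^l$.
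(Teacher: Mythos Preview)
Your proposal is correct and takes essentially the same approach as the paper: lift to $\Omega_n'$, apply Lemma~\ref{unisimlem2}, and push the resulting decomposition down through the quotient $\Omega_n'\to\Gamma_n'$, where the $t_n^l$ factor vanishes. Your write-up is in fact more careful than the paper's, which simply says ``apply the homomorphism $\Omega_n'\to\Gamma_n'$'' without spelling out that $\pi(\tilde h)\in(\Gamma_n')^c$ or that conjugacy into $F_{2^n}'$ is preserved.
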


\begin{proof}
We choose a lift $\tilde{g}$ of $g$ and apply Lemma \ref{unisimlem2} to find 
$k\leq 2n+4$, $g_1,...,g_k\in C_{F_{2^n}'}(\Omega_n')$ and $h\in \Omega_n'$ such that 
$h$ pointwise fixes a neighborhood of $0$ and $\tilde{g}=hg_k^{-1}...g_1^{-1}t_n^{-l}$ for some $l\in \mathbf{Z}$.
The required expression is then obtained from applying the homomorphism $\Omega_n'\to \Gamma_n'$ to this expression.
\end{proof}

\begin{lem}\label{unisimlem4}
In the group $\Gamma_n'$, the subgroup $(\Gamma_n')^c$ contains a finite index subgroup $\Delta_n$ that is boundedly covered by the subgroup $F_{2^n}'$.
\end{lem}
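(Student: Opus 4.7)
The plan is to combine Proposition \ref{weakbasisc} with Lemma \ref{BC1}; no new structural work is required. The finite index assertion is already recorded in Lemma \ref{deltan}(1), which states $\Delta_n \leq_{f.i.} (\Gamma_n')^c$, so only the bounded covering claim is substantive.

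First, I would recall from Lemma \ref{deltan}(2) that $\Delta_n' = F_{2^n}'$. Hence bound-covering $\Delta_n$ by the subgroup $F_{2^n}'$ inside $\Gamma_n'$ is the same as bound-covering $\Delta_n$ by its own derived subgroup $\Delta_n'$, which is precisely the hypothesis pattern of Lemma \ref{BC1}.

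Next, I would invoke Proposition \ref{weakbasisc} to produce elements $f, g \in \Gamma_n'$ such that the collection
\[
S := \{[f, \zeta_i] \mid 1 \leq i \leq 2^n - 2\}
\]
of cardinality $k = 2^n - 2$ is a weak generating set for $\Delta_n$, and for every $1 \leq i \leq 2^n - 2$ the conjugate $g^{-1}[f,\zeta_i]g$ belongs to $F_{2^n}' = \Delta_n'$. In particular every element of $S$ lies in $C_{\Delta_n'}$, where the conjugacy class is formed in the ambient group $\Gamma_n'$. Applying Lemma \ref{BC1} to $H = \Delta_n$ inside $G = \Gamma_n'$ with this weak generating set yields that $\Delta_n$ is $(2^n - 1)$-boundedly covered by $\Delta_n' = F_{2^n}'$ in $\Gamma_n'$, as required.

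There is no real obstacle at this stage: the conceptual difficulty has been discharged in Proposition \ref{weakbasisc}, where a single element $f \in \Gamma_n'$ was arranged so that the commutators $[f, \zeta_i]$ are simultaneously a weak generating set for $\Delta_n$ and all conjugate (by a common $g$) into $F_{2^n}'$. The present lemma is the clean packaging of that proposition through the abstract covering criterion of Lemma \ref{BC1}, producing the explicit constant $2^n - 1$.
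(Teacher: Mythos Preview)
Your proof is correct and follows essentially the same route as the paper: invoke Proposition \ref{weakbasisc} for the weak generating set conjugate into $F_{2^n}'=\Delta_n'$, cite Lemma \ref{deltan} for $\Delta_n'=F_{2^n}'$ and the finite index claim, and conclude via Lemma \ref{BC1}. The only addition is your explicit bound $2^n-1$, which the paper leaves implicit.
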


\begin{proof}
Recall that $\Delta_n$ here is the group defined in Section \ref{SectionDeltan}.
Using Proposition \ref{weakbasisc}, we find elements $f,g\in \Gamma_n'$ such that
the elements in $\{[\zeta_i^{-1},f]\mid 1\leq i\leq 2^n-2\}$ form a weak generating set for $\Delta_n$ and they are all conjugate to $F_{2^n}'$ by $g$. Moreover, in Lemma \ref{deltan} we showed that $\Delta_n'=F_{2^n}'$ and that $\Delta_n$ is a finite index subgroup of $(\Gamma_n')^c$. It follows from Lemma \ref{BC1} that $\Delta_n$ is boundedly covered by $F_{2^n}'$ in $\Gamma_n'$. Since $\Delta_n$ is finite index in $(\Gamma_n')^c$, we are done.
\end{proof}

\begin{proof}[Proof of Theorem \ref{unisim}]
We show that the hypothesis of Lemma \ref{BC} applies to the $4$-tuple: $$G=\Gamma_n'\qquad H_1=F_{2^n}'\qquad H_2=(\Gamma_n')^c\qquad H_3=\Delta_n$$
(Recall that $F_{2^n}'\leq \Delta_n\leq_{f.i.} (\Gamma_n')^c\leq \Gamma_n'$.)
First, thanks to Proposition \ref{SimpleProp}, $\Gamma_n'$ is simple.
The first part of the hypothesis of Lemma \ref{BC} follows since the group $F_{2^n}'$ is $6$-uniformly simple thanks to Theorem \ref{commwid}.
The second part is the content of Lemma \ref{unisimlem1}, the third part is the content of Lemma \ref{unisimlem4}, and the fourth part is the content of Lemma \ref{unisimlem3}.
\end{proof}

\section{Lower bounds on $\{\mathcal{R}(\Gamma_n')\}_{n\geq 2}$ and $\{\mathcal{P}(\Gamma_n')\}_{n\geq 2}$.}

The main goal of this section is to prove that $\mathcal{R}(\Gamma_n')\geq \floor{\frac{n}{2}}$. Recall that $t_n\in \textup{Homeo}^+(\R)$ is the element $x\cdot t_n=x+n$.

\begin{prop}\label{mainproplb}
There exist elements $f,g\in \Omega_n'\setminus \Lambda_n$ such that whenever: $$g=g_1...g_kt_n^m\qquad \text{for some } g_1,...,g_k\in C_f(\Omega_n')\cup C_{f^{-1}}(\Omega_n'), m\in \mathbf{Z}$$ then $k\geq \floor{\frac{n}{2}}$.
\end{prop}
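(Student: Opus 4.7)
The plan is a translation-number argument. Every element of $\Omega_n$ is a $1$-periodic homeomorphism of $\mathbf{R}$, so the classical translation number $\tau\colon \Omega_n \to \mathbf{R}$, defined by $\tau(f) = \lim_{k\to\infty} (0\cdot f^k)/k$, is a conjugation-invariant homogeneous quasi-morphism of defect at most $1$. It satisfies $\tau(t_n)=n$ and $\tau(h)=0$ whenever $h$ admits a fixed point in $\mathbf{R}$.

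For $f$, I take any non-trivial element of $F_{2^n}'\le\Omega_n'$; such $f$ lies in $\Omega_n'\setminus\Lambda_n$ and satisfies $\tau(f)=0$ since it fixes a neighbourhood of $0$. For $g$, I construct a piecewise-linear element of $\Omega_n'\setminus\Lambda_n$ whose translation number satisfies $\min_{m\in\mathbf{Z}} |\tau(g)-mn| > \lfloor n/2\rfloor-1$. Such a $g$ is built as a lift to $\Omega_n$ of an element $\bar g\in \Gamma_n$ with circle rotation number $\rho(\bar g)=p/q$, where $p,q$ are coprime and $\gcd(q,n)=1$; this coprimality is necessary because $\bar g^q$ must fix a point on $\mathbf{S}^1$, which forces $q\tau(g)\in n\mathbf{Z}$. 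For $n$ odd one takes $\rho=1/2$ (admissible since $\gcd(n,2)=1$) and a lift with $\tau(g)=n/2$ exactly; for $n$ even $\rho=1/2$ is not realisable in $\Gamma_n$, and one instead selects $p/q$ with $q$ odd coprime to $n$, and $p$ chosen so that the forced offset $-pq^{-1}\pmod n$ places $\tau(g)\bmod n$ strictly within $1$ of $n/2$ (for instance $(p,q)=(1,3)$ for $n=4$, $(2,5)$ for $n=6$, $(1,5)$ for $n=8$). The explicit construction of such $g$ obeying Condition~$(3)$ of Definition~\ref{ourgroup} and lying in $\Omega_n'$ is the main technical work.

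Given $f,g$ as above, suppose $g=g_1\cdots g_k t_n^{m}$ with each $g_i \in C_f\cup C_{f^{-1}}$ and $m\in\mathbf{N}$. Conjugation-invariance gives $\tau(g_i)=\pm\tau(f)=0$, and $\tau(t_n^m)=mn$. The quasi-morphism inequality applied to the $(k+1)$-fold product yields
\[
|\tau(g)-mn| \;=\; \Bigl| \tau(g) - \sum_{i=1}^{k} \tau(g_i) - \tau(t_n^m) \Bigr| \;\le\; k.
\]
Since $k$ is an integer and $|\tau(g)-mn| > \lfloor n/2\rfloor-1$ by the construction of $g$, we conclude $k\ge \lfloor n/2\rfloor$.

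The hard part will be the construction of $g$: the slope-residue constraint in Condition~$(3)$ of Definition~\ref{ourgroup} combined with the dyadic-breakpoint requirement is quite rigid, different parities of $n$ require different choices of rotation number, and one must verify that the constructed element lies in the commutator subgroup $\Omega_n'$ by giving an explicit commutator decomposition.
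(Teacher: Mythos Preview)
Your instinct to use a conjugation-invariant additive quantity is correct, and this is the paper's idea as well. But you have chosen a needlessly delicate invariant, turning a two-line argument into a hard construction problem.

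The paper does not use the translation number $\tau(g)=\lim_k (0\cdot g^k)/k$; it uses the single number $0\cdot g$. The elementary observation is that any $\Omega_n'$-conjugate $h$ of $f\in F_{2^n}'$ is still $1$-periodic with a fixed point, so its fixed-point set meets every unit interval, and hence $|x\cdot h-x|<1$ for \emph{every} $x\in\mathbf R$. Iterating, $|0\cdot g_1\cdots g_k|<k$. Since $g=g_1\cdots g_k\,t_n^m$ gives $0\cdot g_1\cdots g_k=0\cdot g-nm$, one obtains $k>\min_{m}|0\cdot g-nm|$. Now simply take any $g\in\Omega_n'$ with $0\cdot g\in(\lfloor n/2\rfloor,\lfloor n/2\rfloor+1)$: such $g$ exists immediately by minimality of the $\Omega_n'$-action on $\mathbf R$ (Proposition~\ref{periodproximal}). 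That is the entire proof.

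Your ``hard part'' is therefore avoidable. There is no need to realise a prescribed rotation number, analyse admissible $p/q$, or build a commutator decomposition by hand; minimality hands you $g$ for free. The translation-number route could in principle be pushed through, but $\tau$ is an asymptotic limit while the statement only requires control of one orbit point, so you are solving a much harder existence problem than necessary. Moreover, your claimed obstruction $q\tau(g)\in n\mathbf Z$ is not correctly derived: if $x\cdot g^q=x+m$ then $q\tau(g)=m\in\mathbf Z$, and condition~$(3)$ of Definition~\ref{ourgroup} only ties $m\bmod n$ to the local slope of $g^q$ at $x$, not to zero.
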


\begin{proof}

Let $f\in F_{2^n}'\setminus \{id\}$ be some fixed element, and using minimality of the action of $\Omega_n'$ on $\mathbf{R}$ (a consequence of Proposition \ref{periodproximal}), choose $g\in \Omega_n'$ such that $0\cdot g\in (\floor{\frac{n}{2}},\floor{\frac{n}{2}}+1)$. Clearly, $f,g\notin\Lambda$.
We will show that these elements witness the required phenomenon.

{\bf Claim}: For any $h\in C_{f}(\Omega_n')\cup C_{f^{-1}}(\Omega_n')$ and any $x\in \mathbf{R}$, $|x-x\cdot h|<1$.

{\bf Proof of claim}: Assume that $h\in C_{f}(\Omega_n')$. The other case is similar. First note that since $f$ is $1$-periodic and admits fixed points in $\mathbf{R}$, so does $h$ since it is conjugate to $f$. 
Since $\Omega_n'$ is $1$-periodic, first we claim that whenever $I\subset \R$ is a compact interval of length less than $1$ and $\alpha\in \Omega_n'$, then $I\cdot \alpha$ is also an interval of length less than $1$. Indeed, a $1$-periodic homeomorphism maps any interval of length $1$ to an interval of length $1$ by definition, so this follows. 

Let $x\in \R$. Since $f\in F_{2^n}'$, each component $I$ of the support of $f$ has length less than $1$.
The component of the support of $h$ that contains $x$ is conjugate to some component of the support of $f$ by a $1$-periodic homeomorphism, i.e. some element in $\Omega_n'$. By the observation in the previous paragraph, our claim follows.

We now assume by way of contradiction that: $$g=g_1...g_kt_n^m\qquad \text{ for }g_i\in C_f(\Omega_n')\cup C_{f^{-1}}(\Omega_n'),m\in \Z\qquad \text{ so that } k<\left\lfloor{\frac{n}{2}}\right\rfloor.$$
Then $(0\cdot g-0\cdot g_1...g_k)\in n\mathbf{Z}$, since $t_n\in \textup{Homeo}^+(\R)$ is the element $x\cdot t_n=x+n$. 
From the claim above, it follows that for any $x\in \mathbf{R}$ and $1\leq i\leq k$, $|x-x\cdot g_i|<1$.
Then $$0\cdot g\in (\floor{\frac{n}{2}},\floor{\frac{n}{2}}+1)\qquad (0\cdot g-0\cdot g_1...g_k)\in n\mathbf{Z}$$ implies that: $$0\cdot g_1...g_k\in (J+n\mathbf{Z})\qquad \text{ for }J=(\left\lfloor\frac{n}{2}\right\rfloor,\left \lfloor\frac{n}{2}\right\rfloor+1).$$
If $k<\floor{\frac{n}{2}}$, then this cannot be the case since for $x\in \mathbf{R}$ and $1\leq i\leq k$, $|x-x\cdot g_i|<1$.
\end{proof}

A consequence of this is the required lower bound on the Ulam width.

\begin{cor}\label{maincorlb}
$\mathcal{R}(\Gamma_n')\geq \floor{\frac{n}{2}}$.
\end{cor}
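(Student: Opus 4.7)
The plan is to deduce Corollary \ref{maincorlb} from Proposition \ref{mainproplb} by lifting factorizations in $\Gamma_n'$ back up to $\Omega_n'$, where the central $\Lambda_n$ appears as a harmless $t_n^m$ correction. I would take $f,g\in\Omega_n'\setminus\Lambda_n$ as furnished by the proposition and let $\bar f,\bar g\in\Gamma_n'$ denote their images under the quotient $\pi:\Omega_n\twoheadrightarrow\Gamma_n=\Omega_n/\Lambda_n$. Since $\Lambda_n=\ker\pi$ and $f,g\notin\Lambda_n$, both $\bar f$ and $\bar g$ are nontrivial elements of $\Gamma_n'$, so they are legitimate inputs to the definition of $\mathcal{R}(\Gamma_n')$.

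Next, I would suppose toward a contradiction that $\bar g=\bar g_1\cdots\bar g_k$ is a factorization in $\Gamma_n'$ with each $\bar g_i\in C_{\bar f}\cup C_{\bar f^{-1}}$ and $k<\floor{\frac{n}{2}}$. Write each $\bar g_i=\bar h_i^{-1}\bar f^{\,\epsilon_i}\bar h_i$ with $\epsilon_i\in\{\pm 1\}$ and $\bar h_i\in\Gamma_n'$. Because $\pi$ sends commutators to commutators, its restriction $\pi\restriction\Omega_n':\Omega_n'\to\Gamma_n'$ is surjective, and I can choose lifts $h_i\in\Omega_n'$ of each $\bar h_i$. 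Setting $g_i:=h_i^{-1}f^{\,\epsilon_i}h_i\in C_f\cup C_{f^{-1}}$, the product $g_1\cdots g_k\in\Omega_n'$ projects to $\bar g$, so it differs from $g$ by an element of $\ker\pi=\Lambda_n$. This yields an identity of the form $g=g_1\cdots g_k\cdot t_n^m$ for some $m\in\mathbf{Z}$. Invoking Proposition \ref{mainproplb} (whose proof only uses that $0\cdot g-0\cdot(g_1\cdots g_k)\in n\mathbf{Z}$ and thus applies verbatim for $m\in\mathbf{Z}$) now forces $k\geq\floor{\frac{n}{2}}$, the desired contradiction.

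The only genuine technical point to check is that the conjugators $h_i$ can be chosen inside $\Omega_n'$ rather than merely in $\Omega_n$; this is precisely the surjectivity of $\pi\restriction\Omega_n'$ noted above, which holds because any commutator in $\Gamma_n$ lifts to a commutator in $\Omega_n$. I do not expect a serious obstacle anywhere: Proposition \ref{mainproplb} was engineered precisely so that the ambiguity in lifting through the central kernel $\Lambda_n$ is absorbed by the $t_n^m$ factor, and with that bookkeeping in place the corollary is immediate from the definition of the Ulam width.
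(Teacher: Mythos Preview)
Your proposal is correct and follows essentially the same route as the paper's proof: project the elements $f,g$ from Proposition~\ref{mainproplb} to $\Gamma_n'$, assume a short factorization of $\bar g$ by conjugates of $\bar f^{\pm 1}$, lift the conjugators back to $\Omega_n'$ to produce $g=g_1\cdots g_k t_n^m$, and invoke Proposition~\ref{mainproplb} for the contradiction. You are in fact slightly more careful than the paper in two places: you explicitly justify why the conjugators lift into $\Omega_n'$ (via surjectivity of $\pi\restriction\Omega_n'$, which holds since $\Lambda_n$ is central), and you note that $m$ should be allowed in $\mathbf{Z}$ rather than $\mathbf{N}$, which the proof of Proposition~\ref{mainproplb} indeed accommodates.
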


\begin{proof}
Let $f,g\in \Omega_n'$ be the elements satisfying the hypothesis of Proposition \ref{mainproplb}.
Recall that we fixed the notation $\phi_n:\Omega_n\to \Gamma_n$ for the map $\Omega_n\to \Omega_n/\Lambda_n$. Let $\alpha=\phi_n(f),\beta=\phi_n(g)$. If $\Gamma_n'$ was $(\floor{\frac{n}{2}}-1)$-uniformly simple, then we can find $\beta_1,...,\beta_k\in C_{\alpha}(\Gamma_n')\cup C_{\alpha^{-1}}(\Gamma_n')$
such that $k\leq \floor{\frac{n}{2}}-1$ and $\beta=\beta_1...\beta_k$.
However, choosing preimages $g_it_n^{m_i}=\phi_n^{-1}(\beta_i)$ for $g_i\in C_f(\Omega_n')\cup C_{f^{-1}}(\Omega_n'), m_i\in \mathbf{Z}$ gives us $g=g_1...g_kt_n^m$ for some $m\in \mathbf{Z}$.
Since $k<\floor{\frac{n}{2}}$, this contradicts the conclusion of Proposition \ref{mainproplb}.
\end{proof}

Another consequence of this is the required lower bound on the commutator width.

\begin{proof}[Proof of Theorem \ref{mainperfect}]
First note that the groups $\{\Gamma_n'\}_{n\geq 2}$ are uniformly perfect since they are uniformly simple.
Assume by way of contradiction that $\Gamma_n'$ is $(\floor{\frac{n}{4}}-1)$-uniformly perfect.
Using the minimality of $\Omega_n'$, we find $f\in \Omega_n'$ such that $0\cdot f\in (\floor{\frac{n}{2}},\floor{\frac{n}{2}}+1)$.
By construction, $f\notin \Lambda_n$, and so $g=\phi_n(f)$ is nontrivial.
From our assumption, it follows that $g$ is a product of $k$ commutators of elements in $\Gamma_n'$ for $k\leq (\floor{\frac{n}{4}}-1)$.
In particular, there exist elements $h_1,l_2,...,h_k,l_k\in \Omega_n'$ such that $f=[h_1,l_1]...[h_k,l_k]t_n^m$ for some $m\in \mathbf{Z}$.
Given $1$-periodic homeomorphisms $g_1,g_2\in \textup{Homeo}^+(\mathbf{R})$, it is easy to see that $h=[g_1,g_2]$ has the property that for each $x\in \mathbf{R}, |x-x\cdot h|<2$.
It follows that for $k\leq (\floor{\frac{n}{4}}-1)$: $$0\cdot [h_1,l_1]...[h_k,l_k]\notin (J+n\mathbf{Z})\qquad J=(\left\lfloor\frac{n}{2}\right\rfloor,\left \lfloor\frac{n}{2}\right\rfloor+1).$$
This contradicts our assumption that $0\cdot f\in (\floor{\frac{n}{2}},\floor{\frac{n}{2}}+1)$.
\end{proof}

\section{The finiteness properties of \texorpdfstring{$\{\Gamma_n'\}_{ n\geq 2}$}.}

In this section we shall prove that $\Gamma_n'$ is of type $F_{\infty}$ for each $n\in \mathbf{N},n\geq 2$. For $I\subseteq [0,1]$ a closed interval and $G\leq \textup{PL}^+[0,1]$, define: $$\textup{Rstab}_G^c(I)=\{f\in \textup{Rstab}_G(I)\mid f'_+(\iinf(I))=f'_-(\ssup(I))=1\}$$
Given a group $G\leq \Omega_n$ and an interval $I\subset \mathbf{R}, |I|\leq 1$, 
define $$\Upsilon_{G}(I)=\textup{RStab}_G(\Int(I)+\mathbf{Z})\qquad \Upsilon_{G}^c(I)=\{f\in \Upsilon_{G}(I)\mid f'_+(\iinf(I))=f'_-(\sup(I))=1\}$$

Let $G$ be a group and $N$ a subgroup such that $N$ is of type $\mathbf{F}_{\infty}$, $G$ is finitely generated, and $G'\leq N$.
Let $H$ satisfy that $N\leq H\leq G$. Then the pair $(N,G)$ is said to form a \emph{casing pair} for $H$. The following technical lemma was proved in \cite{HydeLodhaFPSimple} (Lemma $3.14$).

\begin{lem}\label{casing}
If a group $H$ admits a casing pair $N,G$, then $H$ is of type $\mathbf{F}_{\infty}$.
\end{lem}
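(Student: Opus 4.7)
The plan is to realize $H$ as an extension $1\to N\to H\to H/N\to 1$ and apply Proposition \ref{extensionfiniteness}. First, I will check that $N$ is normal in $H$: since $G'\leq N$ by hypothesis, $N$ is normal in $G$, and hence normal in the intermediate subgroup $H$. This makes the quotient $H/N$ well-defined and turns the inclusion $H\hookrightarrow G$ into an inclusion of quotients $H/N\hookrightarrow G/N$.

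Next, I will identify $H/N$ as a finitely generated abelian group. Because $G'\leq N$, the quotient $G/N$ is a quotient of the abelianization $G/G'$, hence abelian; and since $G$ is finitely generated, so is $G/N$. By the structure theorem for finitely generated abelian groups, every subgroup of $G/N$ is itself a finitely generated abelian group, so $H/N$ is finitely generated abelian. Such groups are of type $\mathbf{F}_{\infty}$, since they are finite direct products of copies of $\mathbf{Z}$ and finite cyclic groups (each of which is obviously of type $\mathbf{F}_{\infty}$), and Proposition \ref{extensionfiniteness} shows type $\mathbf{F}_{\infty}$ is closed under finite direct products.

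With $N$ of type $\mathbf{F}_{\infty}$ by hypothesis and $H/N$ of type $\mathbf{F}_{\infty}$ as just established, a final application of Proposition \ref{extensionfiniteness} to the extension $1\to N\to H\to H/N\to 1$ yields that $H$ is of type $\mathbf{F}_{\infty}$. There is no real obstacle in this argument — the whole content lies in noticing that the hypothesis $G'\leq N$ simultaneously forces $N\trianglelefteq H$ and forces $H/N$ to be a (finitely generated) abelian group, so that the standard extension closure result of Proposition \ref{extensionfiniteness} closes the argument.
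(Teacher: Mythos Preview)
Your argument is correct: the key observations that $G'\leq N$ forces $N\trianglelefteq G$ (hence $N\trianglelefteq H$) and that $H/N$ embeds in the finitely generated abelian group $G/N$, together with Proposition~\ref{extensionfiniteness}, are exactly what is needed. The paper does not supply its own proof of this lemma but cites \cite{HydeLodhaFPSimple} (Lemma~3.14); your write-up is the standard argument one would expect there.
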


Now we shall prove a series of technical results that shall be the ingredients in establishing the finiteness properties of $\Omega_n'$.

\begin{lem}\label{FPlem1}
For each $a,b\in \mathbf{Z}[\frac{1}{2}], 0<b-a\leq 1$, there exists $f\in \Upsilon_{\Omega_n'}([a,b])$ such that $f_+'(a),f_-'(b)=2^n$.
\end{lem}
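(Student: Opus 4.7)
The plan is to construct $f$ explicitly as a piecewise linear $1$-periodic map and then promote it to $\Omega_n'$. I split into cases according to whether $(a,b)$ contains an integer in its interior. In the no-integer case, translate so $[a,b]\subset[k,k+1]$ for some $k\in\mathbf{Z}$; then every $x\in(a,b)$ and its image $x\cdot f\in(a,b)$ lie in the same integer interval, so $(x,x\cdot f)_{\mathbf{Z}}=0$ and condition (3) of Definition \ref{ourgroup} forces all slopes to be powers of $2^n$. By Lemma \ref{Fnproximal}, $\textup{Rstab}_{F_{2^n}}([a,b])\cong F_{2^n}$, so a standard Thompson-group construction (pick small dyadic $\epsilon$, send $[a,a+\epsilon]$ to $[a,a+2^n\epsilon]$ with slope $2^n$, symmetrically near $b$, and fill in the middle with slopes in $\{(2^n)^k\mid k\in\mathbf{Z}\}$) produces an element $f_0\in\Omega_n$ with the required support and endpoint slopes. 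If $(a,b)$ contains an integer $m$, I arrange for $f_0(m)=m$ and use the no-integer construction on $[a,m]$ and $[m,b]$ separately, after which condition (3) is again verified piecewise with $(x,x\cdot f_0)_{\mathbf{Z}}=0$ throughout.

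The delicate step is promoting $f_0$ to $\Omega_n'$. My plan is to modify $f_0$ by an element of $F_{2^n}'$ supported in $(a,b)+\mathbf{Z}$ with trivial germs at $a$ and $b$. Such elements abound in $\textup{Rstab}_{F_{2^n}}^c(J)'\leq F_{2^n}'\leq\Omega_n'$ for any closed dyadic $J\subset\textup{int}([a,b])$, via Lemma \ref{FPlem2}. The hope is that the class of $f_0$ in $\Omega_n/\Omega_n'$ lies in the image of the abelianization map restricted to such compactly-germ-trivial elements, so that a single multiplication by an appropriate element of $F_{2^n}'$ yields the desired $f\in\Omega_n'$ without disturbing the slope data at $a^+$ and $b^-$.

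The main obstacle is verifying that the class of $f_0$ in $\Omega_n/\Omega_n'$ is indeed absorbable in this way — i.e.\ that no non-local invariant of $\Omega_n/\Omega_n'$ (such as a residual orbit-cocycle from Section \ref{minimalweak}, or data from the slope behavior across an integer crossing) survives to obstruct the modification. If this proves subtle, an alternative route is to realize $f$ directly as a product of commutators in $\Omega_n$: take a special element $\alpha\in\Omega_n'$ supplied by Lemma \ref{specialelementsgalore1} so that the chain-rule cocycle at $0$ contributes the missing slope $2^n$ under a suitable conjugation, and combine with $\beta\in F_{2^n}'$ to localize the support in $(a,b)+\mathbf{Z}$, in the spirit of the chain-rule computations used in the proof of Lemma \ref{unisimlem2}.
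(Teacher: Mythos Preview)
Your primary approach has a genuine gap in the ``promotion'' step. You propose to modify $f_0\in\Omega_n$ by multiplying by an element of $\textup{Rstab}_{F_{2^n}}^c(J)'\leq F_{2^n}'\leq\Omega_n'$ in order to land in $\Omega_n'$. But precisely because $F_{2^n}'\leq\Omega_n'$, multiplication by such an element does not change the class of $f_0$ in $\Omega_n/\Omega_n'$: if $f_0\notin\Omega_n'$ to begin with, it stays out. And your naive $f_0$ (built from a standard Thompson slope pattern) has no reason to lie in $\Omega_n'$; indeed, for $[a,b]\subset(0,1)$ it lies in $F_{2^n}^c$, and the paper's entire Section~\ref{SectionDeltan} is devoted to analyzing the gap between $F_{2^n}^c$ and $(\Gamma_n')^c$, which is genuinely nontrivial. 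So this route, as stated, cannot work.

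Your alternative route --- building $f$ directly as a commutator --- is the right idea and is exactly what the paper does, though much more concretely than you sketch. For $b-a<1$: use $1$-periodic proximality of $\Omega_n'$ (Proposition~\ref{periodproximal}) to find $f_1\in\Omega_n'$ with $[a,b]\cdot f_1\subset(0,1)$; set $a'=a\cdot f_1$, $b'=b\cdot f_1$. Pick $f_2\in F_{2^n}$ with $a'<a'\cdot f_2<b'<b'\cdot f_2$, and $h\in F_{2^n}$ supported in $(a',b'\cdot f_2^{-1})$ with right slope $2^n$ at $a'$ and left slope $2^{-n}$ at $b'\cdot f_2^{-1}$. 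A direct chain-rule computation shows $[f_2,h]=f_2^{-1}h^{-1}f_2h\in F_{2^n}'$ is supported in $(a',b')$ with right slope $2^n$ at $a'$ and left slope $2^n$ at $b'$. Conjugating back by $f_1^{-1}$ gives the desired element of $\Omega_n'$. For $b-a=1$, subdivide at any dyadic $c\in(a,b)$ and multiply the two pieces obtained from the case $b-a<1$. No special elements or delicate cocycle bookkeeping is required.
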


\begin{proof}
First we consider the case when $b-a<1$. 
From Proposition \ref{periodproximal}, there is an $f_1\in \Omega_n'$ such that $(a,b)\cdot f_1\subset (0,1)$.
Using Proposition \ref{Fntrans}, we find $f_2\in F_{2^n}$ such that $$a\cdot f_1 < (a\cdot f_1)\cdot f_2< b\cdot f_1< (b\cdot f_1)\cdot f_2$$
Next, we construct $h\in F_{2^n}$ with support in $(a\cdot f_1, (b\cdot f_1)\cdot f_2^{-1})+\mathbf{Z}$ whose right slope at $a\cdot f_1$ is $2^n$ and left slope at $(b\cdot f_1)\cdot f_2^{-1}$
is $\frac{1}{2^n}$. Thus $f=f_1(f_2^{-1}h^{-1} f_2h)f_1^{-1}=[f_2,h]^{f_1^{-1}}$ is the required element.
Next, assume the case $b=a+1$. Let $c\in (a,b)\cap \mathbf{Z}[\frac{1}{2}]$ and find such elements $f_1,f_2$ for $[a,c],[c,b]$, respectively, using the above.
It follows that $f=f_1f_2$ is the required element.
\end{proof}

\begin{lem}\label{newing}
For each $a,b\in \mathbf{Z}[\frac{1}{2}], 0<b-a\leq 1$, the group $\Upsilon_{\Omega_n}([a,b])$ is of type $F_{\infty}$ and $\Upsilon_{\Omega_n}([a,b])'=\Upsilon_{\Omega_n}([a,b])''$.
\end{lem}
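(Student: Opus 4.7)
The plan is to reduce $\Upsilon_{\Omega_n}([a,b])$ to known properties of Higman--Thompson groups, in two cases. Since $\Omega_n$ is $1$-periodic, conjugation by $t_1$ preserves $\Upsilon_{\Omega_n}([a,b])$, so we may translate to assume $a \in [0,1)\cap \mathbf{Z}[\frac{1}{2}]$.

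\textbf{Case A} ($b \leq 1$). Here $[a,b] \subseteq [0,1]$, so every $f \in \Upsilon_{\Omega_n}([a,b])$ fixes $\mathbf{Z}$ pointwise, and for any non-dyadic $x \in (a,b)$ both $x$ and $x\cdot f$ lie in $(a,b)$, giving $(x, x\cdot f)_\mathbf{Z} = 0$. The mod-$n$ condition then forces every slope to lie in $\{(2^n)^m : m \in \mathbf{Z}\}$, so restriction to $[0,1]$ identifies $\Upsilon_{\Omega_n}([a,b])$ with $\textup{Rstab}_{F_{2^n}}([a,b])$ inside the $1$-periodic copy of $F_{2^n}$ from Lemma \ref{HigmanSubgroups}. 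By Lemma \ref{Fnproximal} the latter is isomorphic to $F_{2^n}$. Then $F_\infty$ follows from Theorem \ref{brownthm}, and $\Upsilon_{\Omega_n}([a,b])' = \Upsilon_{\Omega_n}([a,b])''$ follows from Theorem \ref{abelianization} since $F_{2^n}'$ is simple hence perfect.

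\textbf{Case B} ($b > 1$). Now $[a,b]$ crosses the integer $1$, and elements may move $1$. The germ map
\[
\sigma\colon \Upsilon_{\Omega_n}([a,b]) \longrightarrow \mathbf{Z}^2, \qquad f \longmapsto \bigl(\log_{2^n}(a\cdot f_+'),\, \log_{2^n}(b\cdot f_-')\bigr)
\]
is surjective (with each $\mathbf{Z}$-factor generated by an element from Lemma \ref{FPlem1} applied to a sub-interval $[a,c]$ or $[c',b]$ whose closure contains $1$), giving a short exact sequence with kernel $\Upsilon_{\Omega_n}^c([a,b])$. By Proposition \ref{extensionfiniteness}, it suffices to show the kernel has type $F_\infty$. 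Pick $\delta \in \mathbf{Z}[\frac{1}{2}]$ so that $[a+\delta, b-\delta]$ is disjoint from $1$ (possible since $b-a \leq 1$ forces $[a+\delta,b-\delta]$ to lie eventually on one side of $1$ as $\delta$ grows toward $(b-a)/2$). Then $H_\delta := \Upsilon_{\Omega_n}([a+\delta, b-\delta]) \cong F_{2^n}$ by Case A, and for the element $f$ of Lemma \ref{FPlem1} with slopes $2^n$ at both endpoints one has $f H_\delta f^{-1} = H_{2^n \delta} \subsetneq H_\delta$; Lemma \ref{AHNN} and Proposition \ref{AHNNF} then give that $\langle H_\delta, f\rangle$ is of type $F_\infty$. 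Combining with the asymmetric germ generators (via a second extension or AHNN step) and Proposition \ref{extensionfiniteness} yields $F_\infty$ for $\Upsilon_{\Omega_n}([a,b])$. For the derived-equals-double-derived assertion, Lemma \ref{FPlem2} and the simplicity of $F_{2^n}'$ identify $\Upsilon_{\Omega_n}([a,b])'$ with a perfect copy of $F_{2^n}'$, giving $\Upsilon_{\Omega_n}([a,b])'' = \Upsilon_{\Omega_n}([a,b])'$.

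The main obstacle is making Case B airtight: the boundary configurations (notably $a+b = 2$ and $b-a = 1$) force $\delta$ to the edge of its allowed range, so the contracting element $f$ must be chosen with extra care; moreover, the AHNN construction only directly captures the diagonal $\mathbf{Z} \leq \mathbf{Z}^2$ in the germ quotient, so controlling the remaining two generators requires an extra extension argument or a recursive application of the AHNN step. An alternative that avoids these gymnastics is to invoke Brown's criterion (Proposition \ref{brownscriterion}) directly on a Stein--Brown style contractible complex of dyadic partitions of $[a,b]$, with the mod-$n$ condition folded into the cell labelling; this bypasses the HNN construction at the cost of verifying contractibility and finiteness of the cell structure.
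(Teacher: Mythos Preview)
Your Case~A is fine, but your case split (by whether $1\in(a,b)$ rather than by length) forces you into a Case~B that is both harder than necessary and contains genuine gaps. The first gap is the one you flag yourself: when $a+b=2$ the symmetric shrinking $[a+\delta,b-\delta]$ always contains $1$, so you never reach an interval covered by Case~A, and no choice of the contracting element $f$ from Lemma~\ref{FPlem1} rescues this. The second gap is your claim that Lemma~\ref{FPlem2} identifies $\Upsilon_{\Omega_n}([a,b])'$ with a copy of $F_{2^n}'$: that lemma concerns rigid stabilizers of sub-intervals of $[0,1)$ inside $F_{2^n}$, whereas when $[a,b]$ contains an integer in its interior the elements of $\Upsilon_{\Omega_n}([a,b])$ can move that integer, so the group is not a subgroup of the $1$-periodic $F_{2^n}$ and the lemma does not apply. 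Your assertion that $\Upsilon_{\Omega_n}([a,b])'$ is ``a perfect copy of $F_{2^n}'$'' is therefore unjustified.

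The paper avoids both issues by splitting on length instead. For $b-a<1$ (even if $1\in(a,b)$), $1$-periodic proximality (Proposition~\ref{periodproximal}) lets one conjugate $[a,b]$ into $(0,1)$ and identify $\Upsilon_{\Omega_n}([a,b])$ with $F_{2^n}$ directly, dissolving most of your Case~B in one line. For $b-a=1$, the paper uses a \emph{one-sided} ascending HNN extension: take $f\in\Upsilon_{\Omega_n}([a,b])$ with right slope $2^n$ at $a$ and write $\Upsilon_{\Omega_n}([a,b])=\langle f,\Upsilon_{\Omega_n}([c,b])\rangle$ with base $\Upsilon_{\Omega_n}([c,b])\cong F_{2^n}$ from the length-$<1$ case; this sidesteps your two-sided germ bookkeeping entirely. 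The equality $\Upsilon_{\Omega_n}(I)'=\Upsilon_{\Omega_n}(I)''$ in that case is argued directly by showing the quotient by $\Upsilon_{\Omega_n}(I)''$ is abelian: every element with trivial right germ at $a$ can be conjugated by an element of $\Upsilon_{\Omega_n}(I)''$ into $\Upsilon_{\Omega_n}([k_1,b])\cong F_{2^n}$, whose proper quotients are all abelian.
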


\begin{proof}
If $b-a<1$, then from Proposition \ref{periodproximal} there is an $f\in \Omega_n'$ such that $I=[a,b]\cdot f\subset (0,1)$.
It follows that $\Upsilon_{\Omega_n}([a,b])\cong \Upsilon_{\Omega_n}(I)=\Upsilon_{F_{2^n}}(I)\cong F_{2^n}$, where the last identification follows from Lemma \ref{Fnproximal}.
Our conclusion follows.
Now assume that $b=a+1$, and without loss of generality assume $b\in (0,1]$.
Let $f\in \Upsilon_{\Omega_n}([a,b])\cap F_{2^n}$ be such that $f_+'(a)=2^n$.
Let $c\in (a,b)\cap \mathbf{Z}[\frac{1}{2}]$ be such that $x\cdot f>x, \forall x\in (a,c]$.
From Lemma \ref{AHNN}, $\Upsilon_{\Omega_n}([a,b])=\langle f, \Upsilon_{\Omega_n}([c,b])\rangle $ is an ascending HNN extension with base group $\Upsilon_{\Omega_n}([c,b])\cong F_{2^n}$ (from the above, since $b-c<1$), hence is of type $\mathbf{F}_{\infty}$ from Proposition \ref{AHNNF}. 

Let $I=[a,b]$. We show that $\Upsilon_{\Omega_n}(I)''=\Upsilon_{\Omega_n}(I)'$ by showing the quotient $$\phi: \Upsilon_{\Omega_n}(I)\to \Upsilon_{\Omega_n}(I)/\Upsilon_{\Omega_n}(I)''$$ is abelian.
Again, without loss of generality assume $b\in (0,1]$,
and choose $k_1\in \mathbf{Z}[\frac{1}{2}]\cap (0,b)$.
Let $f\in \Upsilon_{\Omega_n}([a,k_1])\cap F_{2^n}$ be such that $f_+'(a)=2^n$.
Therefore, $\Upsilon_{\Omega_n}(I)=\langle f,G\rangle$, where $G= \{g\in \Upsilon_{\Omega_n}(I)\mid g_+'(a)=1\}$.
For any $\gamma\in G$ we can find $\gamma_1\in \Upsilon_{\Omega_n}(I)''$ so that $\gamma_1^{-1}\gamma \gamma_1\in \Upsilon_{\Omega_n}([k_1,b])$.
It follows that $\phi(\Upsilon_{\Omega_n}(I))=\phi(\langle f\rangle \oplus \Upsilon_{\Omega_n}([k_1,b]))$.
Since every proper quotient of $\Upsilon_{\Omega_n}([k_1,b])\cong F_{2^n}$ is abelian, $\phi(\Upsilon_{\Omega_n}(I))$ is abelian.
\end{proof}

\begin{lem}\label{FPlem3}
For all $K$ satisfying $\Omega_n'\leq K\leq \Omega_n$ and $I=[a,b]\subset \mathbf{R}, a,b\in \mathbf{Z}[\frac{1}{2}]$ with $|I|\leq 1$, 
$\Upsilon_{K}(I)$ is of type $\mathbf{F}_{\infty}$.
\end{lem}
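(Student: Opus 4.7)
The plan is to apply the casing-pair Lemma \ref{casing} to $H := \Upsilon_K(I)$, with enveloping group $G := \Upsilon_{\Omega_n}(I)$ and base $N := \Upsilon_{\Omega_n}(I)'$. Three of the four casing-pair conditions are immediate from Lemma \ref{newing}: $G$ is of type $\mathbf{F}_\infty$, hence finitely generated; $G' = N$ gives $G' \leq N$ trivially; and $H \leq G$ follows from $K \leq \Omega_n$. The remaining inclusion $N \leq H$ uses the hypothesis $\Omega_n' \leq K$: every commutator of elements of $G \leq \Omega_n$ lies in $[\Omega_n,\Omega_n] = \Omega_n' \leq K$ and has support inside $\Int(I) + \mathbf{Z}$, so it lies in $\Upsilon_K(I) = H$.

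The substantive content is then verifying that $N = \Upsilon_{\Omega_n}(I)'$ is of type $\mathbf{F}_\infty$. I would treat this in two cases. When $|I| < 1$, Proposition \ref{periodproximal} produces $g \in \Omega_n'$ such that $\Int(I) \cdot g$ is contained in some component $(k,k+1)$ of $\mathbf{R} \setminus \mathbf{Z}$; conjugation by $g$ identifies $G$ with $\Upsilon_{\Omega_n}(I \cdot g)$, whose elements have support disjoint from $\mathbf{Z}$ and hence pointwise fix $\mathbf{Z}$, so lie in $F_{2^n}$ by Lemma \ref{HigmanSubgroups}. Lemma \ref{Fnproximal} then yields $\Upsilon_{\Omega_n}(I \cdot g) \cong F_{2^n}$, whence $N \cong F_{2^n}'$; this is of type $\mathbf{F}_\infty$ by classical results (following from Brown's techniques in \cite{brown} applied to $F_{2^n}^c$ together with the finite-rank abelianization provided by Lemma \ref{Fncompactab}).

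When $|I| = 1$, $1$-periodicity allows us to translate so that $a \in [0,1)$. If $a = 0$, then $\Int(I) + \mathbf{Z} = \mathbf{R} \setminus \mathbf{Z}$ and $\Upsilon_{\Omega_n}([0,1]) = F_{2^n}$ by Lemma \ref{HigmanSubgroups}, reducing to the previous paragraph. If $a \in (0,1) \cap \mathbf{Z}[\tfrac{1}{2}]$, I would revisit the ascending-HNN decomposition from the proof of Lemma \ref{newing}: choose $c \in (a, a+1) \cap \mathbf{Z}[\tfrac{1}{2}]$ and $f \in G \cap F_{2^n}$ with $a \cdot f_+' = 2^n$ and $x \cdot f > x$ for $x \in (a, c]$, giving $G = \langle f, \Upsilon_{\Omega_n}([c, a+1])\rangle$ with base $B = \Upsilon_{\Omega_n}([c,a+1]) \cong F_{2^n}$. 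An analogous ascending-HNN analysis of $G'$ itself, whose base can be identified with the derived subgroup of $B$ (which is of type $\mathbf{F}_\infty$ by the first case), combined with Proposition \ref{AHNNF}, then gives $G'$ of type $\mathbf{F}_\infty$.

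With $N$ established to be of type $\mathbf{F}_\infty$, Lemma \ref{casing} delivers the claimed $\mathbf{F}_\infty$-property of $H = \Upsilon_K(I)$. The delicate step is the $|I| = 1$, $a \in (0,1)$ case: the derived subgroup $G'$ must be carefully identified inside the ascending-HNN decomposition of $G$ and shown to inherit an ascending-HNN structure with an $\mathbf{F}_\infty$ base group.
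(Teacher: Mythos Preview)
Your casing-pair choice $(N,G)=(\Upsilon_{\Omega_n}(I)',\Upsilon_{\Omega_n}(I))$ does not work: the group $N$ you propose is not of type $\mathbf{F}_\infty$, in fact not even finitely generated. When $|I|<1$ you correctly identify $N\cong F_{2^n}'$, but $F_{2^n}'$ consists entirely of elements whose support is a compact subinterval of $(0,1)$; any finite subset therefore has all supports contained in some common compact $K\subset(0,1)$, whereas $F_{2^n}'$ contains elements supported arbitrarily close to the endpoints. The same compact-support argument shows $F_{2^n}^c$ is not finitely generated, so your appeal to ``Brown's techniques applied to $F_{2^n}^c$'' cannot succeed either; and in any case, $G$ of type $\mathbf{F}_\infty$ with $G/N$ finitely generated abelian never implies $N$ is of type $\mathbf{F}_\infty$ (take $G$ free of rank $2$ and $N=G'$). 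The $|I|=1$ case inherits the same obstruction, and your sketch of an ascending-HNN structure on $G'$ cannot repair it: $G'$ still consists of elements with trivial germs at $a$ and $b$, so no stable letter with expanding germ is available inside $G'$.

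The paper circumvents exactly this obstruction by enlarging $N$ beyond $G'$. Using Lemma~\ref{FPlem1} it produces an element $f\in\Upsilon_{\Omega_n'}(I)$ with $a\cdot f'_+,\,b\cdot f'_->1$, then builds a copy $H\cong F_{2^n}$ inside $\Upsilon_{\Omega_n'}(I)$ and takes $N=\langle f,H\rangle$. This $N$ is an ascending HNN extension of $F_{2^n}$, hence of type $\mathbf{F}_\infty$ by Proposition~\ref{AHNNF}; the element $f$ (with nontrivial germs at both ends) is precisely what absorbs the infinite-generation of $G'$. The substantive work is then the claim $\Upsilon_{\Omega_n}(I)'\subset\bigcup_{m}f^{m}Hf^{-m}$, handled separately for $|I|<1$ and $|I|=1$ via Lemma~\ref{FPlem2} and Lemma~\ref{newing}. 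Your reduction to showing $G'$ is $\mathbf{F}_\infty$ is the right shape, but the fix is to replace $G'$ by this explicit, slightly larger $\mathbf{F}_\infty$ subgroup of $\Upsilon_{\Omega_n'}(I)$.
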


\begin{proof}
Assume without loss of generality that $b\in (0,1]$.
Using Lemma \ref{FPlem1}, we find $f\in \Upsilon_{\Omega_n'}(I)$ such that $f_+'(a),f_-'(b)>1$.
In particular, $f\in \Upsilon_{K}(I)$.
Using Proposition \ref{periodproximal}, upon replacing $I$ by $I\cdot g$ (and $f$ by $f^g$) for some $g\in \Omega_n'$ if needed, we can assume the following:

\begin{enumerate}
\item If $|I|<1$, then $I\subset (0,1)$. 
\item There exists a closed interval $J\subset (0,1)\cap (a,b)$ with endpoints in $\mathbf{Z}[\frac{1}{2}]$ so that
$J\cdot f\subset J$ and $\bigcup_{n\in \mathbf{N}}J\cdot f^{-n}=(a,b)$.
\end{enumerate}

We will prove the lemma by isolating a group $H\cong F_{2^n}$ such that 
($\langle f,H\rangle, \Upsilon_{\Omega_n}(I)$) is 
a casing pair for $\Upsilon_{K}(I)$, which will imply that $\Upsilon_{K}(I)$ is of type $\mathbf{F}_{\infty}$ using Lemma \ref{casing}.

Let $J_0=J\cdot f$. Note that $J_0\subset int(J)$. We choose: $$s_1,...,s_{2^n}\in \Upsilon_{F_{2^n}'}(J)\leq \Upsilon_{K}(I)$$ such that the intervals defined as: $$\{J_i=J_0\cdot s_i\mid 1\leq i\leq 2^n\}$$ 
are pairwise disjoint intervals in $J\setminus J_0$.
We know that: $$\Upsilon_{\Omega_n}(J_0)\cong \Upsilon_{F_{2^n}}(J_0)\cong F_{2^n}$$ (the latter follows from Lemma \ref{Fnproximal}). 
Fix a generating set $u_1,...,u_{2^n}$ for $\Upsilon_{\Omega_n}(J_0)$ and set $v_i=u_is_i^{-1}u_i^{-1}s_i$. Since the set of relators of $F_{2^n}$ in the prescribed generating set $u_1,...,u_{2^n}$ are all 
products of commutators, $H=\langle v_1,...,v_{2^n}\rangle$ also satisfies them and hence is isomorphic to $F_{2^n}$.
It follows that $H'=\Upsilon_{\Omega_n}(J_0)'$.
Moreover, by definition, $H\leq \Upsilon_{\Omega_n}(J)'\leq \Omega_n'\leq K$, and from the above recall that $f\in \Upsilon_{K}(I)$. So $\langle f, H\rangle\leq \Upsilon_{K}(I)$.

First, note that $f^{-1}Hf\subset H$, since each $f^{-1}v_if\in \Upsilon_{\Omega_n}(J_0)'=H'$.
So by Lemma \ref{AHNN}, $\langle f,H\rangle$ is an ascending HNN extension of $H$, and hence of type $\mathbf{F}_{\infty}$ by Proposition \ref{AHNNF}.
Also, $\Upsilon_{\Omega_n}(I)$ is finitely generated from Lemma \ref{newing}.
It remains to show that $\Upsilon_{\Omega_n}(I)'\leq \langle f, H\rangle$, which reduces to:

{\bf Claim}: $\Upsilon_{\Omega_n}(I)'\subseteq \bigcup_{n\in \mathbf{Z}}f^{n}Hf^{-n}$.

{\bf Case $1$}: $|I|<1$. In this case, we assumed at the beginning that $I\subset (0,1)$.
Therefore, from Lemma \ref{FPlem2}: $$\Upsilon_{\Omega_n}(I)'\cap \Upsilon_{\Omega_n}^c(J_0)=\Upsilon_{\Omega_n}(J_0)'=H'$$ since $\Upsilon_{\Omega_n}(L)=\Upsilon_{F_{2^n}}(L)$ for any interval $L\subset [0,1]$.
Indeed, for any $k\in \Upsilon_{\Omega_n}(I)'$, there is an $n\in \mathbf{N}$ such that $f^{-n}kf^{n}\in \Upsilon_{\Omega_n}^c(J_0)$, and so $f^{-n}kf^{n}\in \Upsilon_{\Omega_n}(J_0)'=H'$.

{\bf Case $2$}: $|I|=1$. 
Let $k\in \Upsilon_{\Omega_n}(I)'$.
We need to show that there is an $n\in \mathbf{Z}$ such that $f^{-n}kf^{n}\in H'$.
From Lemma \ref{newing}, $\Upsilon_{\Omega_n}(I)''=\Upsilon_{\Omega_n}(I)'$, ensuring that each element of $\Upsilon_{\Omega_n}(I)'$ lies in 
$\Upsilon_{\Omega_n}(I_1)'$ for some interval $I_1\subset I, |I_1|<1$.
Upon conjugating by a power of $f$, we may assume that $I_1\subset (0,b)$.
Indeed, since $k\in \Upsilon_{\Omega_n}(I_1)'$, we also have that $k\in \Upsilon_{\Omega_n}(0,b)'$.
Moreover, there is an $n\in \mathbf{N}$ such that $f^{-n}kf^{n}\in \Upsilon_{\Omega_n}^c(J_0)$. Note that it also holds that $f^{-n}kf^{n}\in \Upsilon_{\Omega_n}(0,b)'$. And so from Lemma \ref{FPlem2}: $$f^{-n}kf^{n}\in \Upsilon_{\Omega_n}(0,b)'\cap \Upsilon_{\Omega_n}^c(J_0)=\Upsilon_{\Omega_n}(J_0)'=H'$$
This finishes our proof.
\end{proof}

For each $K$ such that $\Omega_n'\leq K\leq \Omega_n$ and $P\subset \mathbf{Z}[\frac{1}{2}]$, 
define: $$K_P=\{f\in K\mid x\cdot f=x, \forall x\in P\}$$ 
and further define:
$$K_{P,1}=\{f\in K\mid (x+\mathbf{Z})\cdot f=x+\mathbf{Z},\forall x\in P\}$$ 
Note that elements of $K_{P,1}$ are not required to fix $P$ pointwise. Also, note that since $\Omega_n$ is $1$-periodic, we have $K_P=K_{P+\mathbf{Z}}$.

\begin{prop}\label{FPProp1}
For $K$ so that $\Omega_n'\leq K\leq \Omega_n$ and every nonempty finite set $P\subset \mathbf{Z}[\frac{1}{2}]$, $K_P=K_{P+\mathbf{Z}}$ is of type $\mathbf{F}_{\infty}$.
\end{prop}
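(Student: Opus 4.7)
The plan is to apply the casing pair machinery of Lemma~\ref{casing}. Since every element of $K\leq \Omega_n$ commutes with $t\mapsto t+1$, fixing a point $p\in \mathbf{Z}[\tfrac{1}{2}]$ is equivalent to fixing the entire $\mathbf{Z}$-orbit $p+\mathbf{Z}$, so the equality $K_P=K_{P+\mathbf{Z}}$ is immediate. By $1$-periodicity we may assume (after applying an integer translation from $\Omega_n$ if necessary) that $P=\{p_1<p_2<\ldots<p_m\}\subset [0,1)\cap \mathbf{Z}[\tfrac{1}{2}]$. Set $I_i=[p_i,p_{i+1}]$ for $1\leq i<m$ and $I_m=[p_m,p_1+1]$; each $I_i$ has dyadic endpoints and length at most $1$, and the open sets $\Int(I_i)+\mathbf{Z}$ form a pairwise disjoint cover of $\mathbf{R}\setminus (P+\mathbf{Z})$.

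Set $G:=\Omega_{n,P}$ and $N:=\prod_{i=1}^m \Upsilon_{\Omega_n'}(I_i)$ (internal product inside $\Omega_n$). The key claim is that $G$ decomposes as an internal direct product $G=\prod_{i=1}^m \Upsilon_{\Omega_n}(I_i)$. Given $f\in G$, define $f_i$ to equal $f$ on $\Int(I_i)+\mathbf{Z}$ and the identity elsewhere; the $f_i$ commute (disjoint supports), $f=f_1\cdots f_m$, and if such a product is trivial then restricting to each $\Int(I_j)+\mathbf{Z}$ forces $f_j=1$. Granting the decomposition, Lemma~\ref{newing} gives that each $\Upsilon_{\Omega_n}(I_i)$ is of type $\mathbf{F}_\infty$ and hence finitely generated, so $G$ is finitely generated; and because the factors commute, $G'=\prod_i \Upsilon_{\Omega_n}(I_i)'$. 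Each $\Upsilon_{\Omega_n}(I_i)'\leq \Omega_n'$ is supported in $I_i+\mathbf{Z}$, so $\Upsilon_{\Omega_n}(I_i)'\leq \Upsilon_{\Omega_n'}(I_i)$, giving $G'\leq N$. Each factor of $N$ sits in $\Omega_n'\leq K$ and fixes $P+\mathbf{Z}$, so $N\leq K_P$; and $K_P\leq \Omega_{n,P}=G$ is clear. By Lemma~\ref{FPlem3} applied with $K=\Omega_n'$, each factor of $N$ is of type $\mathbf{F}_\infty$, hence so is $N$ by Proposition~\ref{extensionfiniteness}. Thus $(N,G)$ is a casing pair for $K_P$ and Lemma~\ref{casing} finishes the proof.

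The one step requiring genuine care is verifying $f_i\in \Omega_n$ in the above decomposition. Conditions~(1) and~(2) of Definition~\ref{ourgroup} are immediate from those for $f$, using that the endpoints of $I_i$ lie in $\mathbf{Z}[\tfrac{1}{2}]$. For condition~(3) at $x\in \mathbf{R}\setminus \mathbf{Z}[\tfrac{1}{2}]$: if $x\in \Int(I_i)+\mathbf{Z}$ then $\log_2(x\cdot f_i')$ and $(x,x\cdot f_i)_{\mathbf{Z}}$ both agree with the corresponding quantities for $f$; otherwise $f_i$ is the identity on a neighbourhood of $x$, so the derivative contribution vanishes, and since $f$ preserves the component of $\mathbf{R}\setminus (P+\mathbf{Z})$ containing $x$ (hence $x\cdot f_i=x$), the integer-count $(x,x\cdot f_i)_{\mathbf{Z}}$ also vanishes. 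Thus the congruence is inherited from $f$; all other bookkeeping is routine.
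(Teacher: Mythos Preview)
Your proof is correct and follows essentially the same casing-pair argument as the paper: decompose $(\Omega_n)_P$ as the internal direct product $\prod_i \Upsilon_{\Omega_n}(I_i)$, sandwich $K_P$ between this and a type-$\mathbf{F}_\infty$ product subgroup containing its commutator, then apply Lemma~\ref{casing}. The only differences are cosmetic---you take $N=\prod_i \Upsilon_{\Omega_n'}(I_i)$ where the paper takes $\prod_i \Upsilon_K(I_i)$, and you spell out the verification (each restriction $f_i$ satisfies condition~(3) of Definition~\ref{ourgroup}) that the paper leaves implicit.
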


\begin{proof}
Using the fact that $K_{P}=K_{P+\mathbf{Z}}$, changing $P$ if needed and keeping $K_P$ fixed, suppose that $max(P)<min(P)+1$.
Thus, this provides $|P|$ 
(left closed, right open) intervals $L_1,...,L_{|P|}$ such that $L=\bigcup_{1\leq i\leq |P|}L_i$ is an interval whose closure has length $1$
and $L+\mathbf{Z}$ partitions $\mathbf{R}$.
Let $$R=\prod_{1\leq i\leq |P|}\Upsilon_{\Omega_n}(L_j)\qquad R_1=\prod_{1\leq i\leq |P|}\Upsilon_{K}(L_j)$$
From our hypothesis, it follows that $R'\leq R_1\leq K_P\leq R$.
Now $R_1,R$ are of type $\mathbf{F}_{\infty}$, from Lemma \ref{FPlem3} and Proposition \ref{extensionfiniteness}.
So they form a casing pair for $K_P$, which is henceforth of type $\mathbf{F}_{\infty}$ from Lemma \ref{casing}.
\end{proof}

\begin{thm}\label{FPmain}
Each $K$ satisfying $\Omega_n'\leq K\leq \Omega_n$ is of type $\mathbf{F}_{\infty}$.
\end{thm}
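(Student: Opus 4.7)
The plan is to apply Brown's criterion (Proposition~\ref{brownscriterion}) to an action of $K$ on a contractible simplicial complex, with the cell-stabilizer hypothesis controlled by Proposition~\ref{FPProp1}.

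The complex will be the full simplex $X$ on the vertex set $V := \mathbf{Z}[\tfrac{1}{2}]/\mathbf{Z}$, meaning that the simplices of $X$ are all non-empty finite subsets of $V$. Since every element of $K \leq \Omega_n$ commutes with $t \mapsto t+1$, the $K$-action on $\mathbf{Z}[\tfrac{1}{2}]$ descends to an action on $V$, inducing a simplicial $K$-action on $X$. Contractibility of $X$ is immediate: every finite subcomplex of $X$ sits inside the single simplex spanned by the union of its vertex set, so $X$ is a directed union of contractible subcomplexes and is itself contractible.

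For the pointwise stabilizer of a $k$-simplex $\sigma$ with vertex set $\bar P \subseteq V$, I would pick lifts $P \subset \mathbf{Z}[\tfrac{1}{2}]$ of the elements of $\bar P$. An element $f \in K$ fixes $\sigma$ pointwise iff $p \cdot f \in p + \mathbf{Z}$ for every $p \in P$. Since $f$ is $1$-periodic, it then acts on each coset $p + \mathbf{Z}$ by a fixed integer translation $k_p$, yielding a homomorphism $\textup{Stab}_K(\sigma) \to \mathbf{Z}^{|P|}$, $f \mapsto (k_p)_{p \in P}$, whose kernel is precisely $K_P = K_{P + \mathbf{Z}}$. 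This kernel is of type $\mathbf{F}_{\infty}$ by Proposition~\ref{FPProp1}, and the image is a finitely generated abelian group and hence of type $\mathbf{F}_{\infty}$; so Proposition~\ref{extensionfiniteness} yields that $\textup{Stab}_K(\sigma)$ is of type $\mathbf{F}_{\infty}$.

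The remaining hypothesis --- finitely many $K$-orbits of $k$-simplices for each $k$ --- will follow by packaging the orbit structure already established. By Proposition~\ref{omegaorbits} and Lemma~\ref{gammaorbits}, $K$ has $2^n - 1$ orbits on $V$. Combining $1$-periodic proximality (Proposition~\ref{periodproximal}) and special elements of $\Omega_n'$ with the $F_{2^n}$-stabilizer of $0$ (Lemma~\ref{HigmanSubgroups}) and Proposition~\ref{Fntrans}, an induction on $k$ will show that $K$ has finitely many orbits on ordered $(k+1)$-tuples of distinct points of $V$, and hence on unordered $(k+1)$-subsets. Brown's criterion then delivers that $K$ is of type $\mathbf{F}_{\infty}$. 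The main obstacle is checking the orbit count, since the stabilizer analysis is essentially packaged by Proposition~\ref{FPProp1}; but it reduces cleanly to the transitivity results already proved.
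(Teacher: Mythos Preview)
Your proposal is correct and follows essentially the same approach as the paper: Brown's criterion applied to the full simplex on $\mathbf{Z}[\tfrac{1}{2}]/\mathbf{Z}$, with cell stabilizers handled via Proposition~\ref{FPProp1} and an extension argument. Two minor remarks: the image of your map $\textup{Stab}_K(\sigma)\to\mathbf{Z}^{|P|}$ actually lands in the diagonal copy of $\mathbf{Z}$ (since $f$ is order-preserving and $1$-periodic, all the $k_p$ coincide), which is why the paper describes the quotient as cyclic; and for the orbit count no induction or proximality is needed --- it suffices to observe that $F_{2^n}'\leq K$ fixes $0$ and, by Proposition~\ref{Fntrans}, already has finitely many orbits on $k$-tuples in $(0,1)\cap\mathbf{Z}[\tfrac{1}{2}]$.
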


\begin{proof}
We consider the actions of $K, \Omega_n$ on $X=\mathbf{Z}[\frac{1}{2}]/\mathbf{Z}$,
and the natural extension of this action to that on the simplicial complex whose $k$-simplices are the $(k+1)$-element subsets of $X$.
This complex is easily seen to be contractible.
For each nonempty finite set $P\subset X$, the pointwise stabilizer of this simplex is $K_{P,1}$, which is an extension of $K_{P+\mathbf{Z}}$ by a cyclic (trivial or infinite cyclic) group.
Since from Proposition \ref{FPProp1} $K_{P+\mathbf{Z}}$ is of type $\mathbf{F}_{\infty}$, it follows from applying Proposition \ref{extensionfiniteness} that $K_{P,1}$ is of type $\mathbf{F}_{\infty}$.
From Proposition \ref{Fntrans}, for each $k\in \mathbf{N}\setminus \{0\}$, the action of $F_{2^n}'$ on $(\mathbf{Z}[\frac{1}{2}]/\mathbf{Z})^k$ has finitely many orbits. Hence the same holds for $K$ since $F_{2^n}'\leq \Omega_n'\leq K$.
Using Theorem \ref{brownscriterion}, we conclude that $K$ is of type $\mathbf{F}_{\infty}$.
\end{proof}

\begin{cor}\label{FPCor}
For each $n\geq 2$, $\Gamma_n'$ is of type $\mathbf{F}_{\infty}$.
\end{cor}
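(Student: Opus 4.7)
The plan is to derive the corollary directly from Theorem \ref{FPmain} together with the standard closure property of $\mathbf{F}_\infty$ under quotients (Theorem \ref{Finftyquotients}). The first step is to apply Theorem \ref{FPmain} with $K=\Omega_n'$, which immediately yields that $\Omega_n'$ itself is of type $\mathbf{F}_\infty$.

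Next, I would observe that the quotient homomorphism $\pi:\Omega_n\to\Gamma_n=\Omega_n/\Lambda_n$ sends $\Omega_n'$ onto $\Gamma_n'$, since for any surjective homomorphism the image of the commutator subgroup is the commutator subgroup of the image. Hence there is a short exact sequence
$$1\longrightarrow \Omega_n'\cap\Lambda_n \longrightarrow \Omega_n' \xrightarrow{\ \pi\ } \Gamma_n' \longrightarrow 1.$$
By Lemma \ref{center}, $\Lambda_n$ is the center of $\Omega_n$, and it is infinite cyclic; therefore the kernel $\Omega_n'\cap\Lambda_n$ is a subgroup of $\mathbf{Z}$, hence cyclic (either trivial or infinite cyclic). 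In either case, this kernel is of type $\mathbf{F}_\infty$.

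The final step is to apply Theorem \ref{Finftyquotients} to the normal subgroup $\Omega_n'\cap\Lambda_n \leq \Omega_n'$: both $\Omega_n'$ and $\Omega_n'\cap\Lambda_n$ are of type $\mathbf{F}_\infty$, so the quotient $\Gamma_n'$ is of type $\mathbf{F}_\infty$, which is the statement of the corollary. There is essentially no obstacle here, since the substantive work has already been done in Theorem \ref{FPmain}; the only minor point to verify is the identification $\pi(\Omega_n')=\Gamma_n'$, which is a standard fact about commutator subgroups under surjections.
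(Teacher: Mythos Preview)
Your proof is correct and follows essentially the same approach as the paper: apply Theorem \ref{FPmain} to $K=\Omega_n'$, identify $\Gamma_n'$ as the image of $\Omega_n'$ under the quotient map with (at most) infinite cyclic kernel, and invoke Theorem \ref{Finftyquotients}. If anything, you are slightly more careful than the paper in noting that the relevant kernel is $\Omega_n'\cap\Lambda_n$ rather than $\Lambda_n$ itself.
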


\begin{proof}
Using Theorem \ref{Finftyquotients}, it suffices to show that $\Omega_n'$ is type $F_{\infty}$, since $\Gamma_n'=\phi_n(\Omega_n')$ and $ker(\phi_n)=\Lambda_n$ is infinite cyclic. This was shown in Theorem \ref{FPmain}.
\end{proof}

\begin{proof}[Proof of Theorem \ref{main}]
For each $n\geq 2$, $\Gamma_n'$ is uniformly simple from Theorem \ref{unisim}, satisfies that $\mathcal{R}(\Gamma_n')\geq \floor{\frac{n}{2}}$ from Corollary \ref{maincorlb}, and is of type $\mathbf{F}_{\infty}$ from Corollary \ref{FPCor}.
\end{proof}

\bibliographystyle{amsalpha}
\bibliography{bib}
\normalsize

\vspace{0.5cm}

James Hyde.

\noindent{\textsc{Department of Mathematics,
Binghamton University.}}

\noindent{\textit{E-mail address:} \texttt{jameshydemaths@gmail.com}}

Yash Lodha.

\noindent{\textsc{Department of Mathematics, Purdue University}}

\noindent{\textit{E-mail address:} \texttt{ylodha@purdue.edu}} 
\end{document}